\newcommand{\leqnomode}{\tagsleft@true}
\newcommand{\reqnomode}{\tagsleft@false}
\newtheorem{lemma}{Lemma}
\newtheorem{theorem}{Theorem}
\newcommand{\sset}[1]{\left\{#1\right\}}
\newcommand{\spc}{(G, S, X_0, X, Y_0, Y, f)}
\newcommand{\sspc}{(G, S, X_0, X, Y, Y^*, f)}
\newcommand{\esspc}{(G, S, X_0,X, Y^*, f)}
\def\longbox#1{\parbox{0.85\textwidth}{#1}}
\title{Four-coloring $P_6$-free graphs.\\ II. Finding an excellent precoloring}
\author{
Maria Chudnovsky\thanks{Supported by NSF grants DMS-1550991 and US Army Research Office grant W911NF-16-1-0404.}\\
Princeton University, Princeton, NJ 08544
\\
\\
Sophie Spirkl\\
Princeton University, Princeton, NJ 08544
\\
\\
Mingxian Zhong\\
Columbia University, New York, NY 10027}
\begin{document}
\maketitle
\begin{abstract} 
This is the second paper in a series of two. The goal of the series is to   
give a polynomial time algorithm for the  \textsc{$4$-coloring problem} and 
the \textsc{$4$-precoloring    extension} problem restricted to the class of 
graphs with no  induced six-vertex path, thus proving a conjecture of Huang. 
Combined with previously known 
results this completes the classification of the complexity of the $4$-coloring 
problem for graphs with a connected forbidden induced subgraph.

In this paper we give a polynomial time algorithm that starts with a
$4$-precoloring of a graph with no induced six-vertex path, and outputs
a polynomial-size collection of so-called excellent precolorings.
Excellent precolorings are easier to handle than general ones, and, in
addition, in order to determine whether the initial precoloring can be
extended to the whole graph, it is enough to answer the same question
for each of the excellent precolorings in the collection.  The first
paper in the series deals with excellent precolorings, thus providing
a complete solution to the problem.
\end{abstract}

\section{Introduction} \label{sec:intro}

All graphs in this paper are finite and simple.
We use $[k]$ to denote the set $\sset{1, \dots, k}$. Let $G$ be a graph. A
\emph{$k$-coloring} of $G$ is a function
$f:V(G) \rightarrow [k]$.   A $k$-coloring is {\em proper} if for every edge 
$uv \in E(G)$,
$f(u) \neq f(v)$, and $G$ is \emph{$k$-colorable} if $G$ has a
proper $k$-coloring. The \textsc{$k$-coloring problem} is the problem of
deciding, given a graph $G$, if $G$ is $k$-colorable. This
problem is well-known to be $NP$-hard for all $k \geq 3$.

Let $G$ be a graph. For $X \subseteq V(G)$ we denote by
$G|X$ the subgraph induced by $G$ on $X$, and by
$G \setminus X$ the graph $G|(V(G) \setminus X)$.
We say that $X$ is {\em connected} if $G|X$ is connected.
If $X=\{x\}$ we write $G \setminus x$ to mean $G \setminus \{x\}$.
For disjoint subsets $A,B \subset V(G)$ we say that $A$ is \emph{complete} to $B$ if every vertex of $A$ is adjacent to every vertex of $B$, and that 
$A$ is \emph{anticomplete} to $B$ if every vertex of $A$ is non-adjacent to
every vertex of $B$. If $A=\{a\}$ we write $a$ is complete (or anticomplete)
to $B$ to mean that $\{a\}$ is complete (or anticomplete) to $B$.
If $a$ is not complete and not anticomplete to $B$,
we say that $a$ is \emph{mixed} on $B$. Finally, if $H$ is an induced subgraph
of $G$ and $a \in V(G) \setminus V(H)$, we say that $a$ is \emph{complete to, 
anticomplete to}  or \emph{mixed on} $H$ if $a$ is complete to, anticomplete 
to or mixed on 
$V(H)$, respectively. For
$X \subseteq V(G)$, we say that $e \in E(G)$ is \emph{an edge of $X$}
if both endpoints of $e$ are in $X$.
For $v \in V(G)$ we write $N_G(v)$ (or $N(v)$ when there is no danger of confusion) to mean the set of vertices of $G$ that are adjacent to  $v$. Observe that since $G$ is simple, $v \not \in N(v)$. For $X \subseteq V(G)$ we define
$N(X)=(\bigcup_{v \in X} N(v)) \setminus X$.

A function $L: V(G) \rightarrow 2^{[k]}$ that assigns a subset of
$[k]$ to each vertex of a graph $G$ is a \emph{$k$-list assignment}
for $G$. For a $k$-list assignment $L$, a function
$f: V(G) \rightarrow [k]$ is an $L$-coloring if $f$ is a $k$-coloring
of $G$ and $f(v) \in L(v)$ for all $v \in V(G)$. We say that
$G$ is {\em $L$-colorable}, or that $(G,L)$ is {\em colorable},
if $G$ has a proper $L$-coloring. The \textsc{$k$-list
  coloring problem} is the problem of deciding, given a graph $G$ and
a $k$-list assignment $L$, if $G$ is $L$-colorable. Since this
generalizes the $k$-coloring problem, it is $NP$-hard for all
$k \geq 3$.

A \emph{$k$-precoloring} $(G, X, f)$ of a graph $G$ is a function
$f: X \rightarrow [k]$ for a set $X \subseteq V(G)$ such that $f$ is a
proper $k$-coloring of $G|X$. Equivalently, a $k$-precoloring is a
$k$-list assignment $L$ in which $|L(v)| \in \sset{1, k}$ for all
$v \in V(G)$. A \emph{$k$-precoloring extension} for $(G, X, f)$ is a
proper $k$-coloring $g$ of $G$ such that $g|_X = f|_X$, and the
\textsc{$k$-precoloring extension problem} is the problem of deciding,
given a graph $G$ and a $k$-precoloring $(G, X, f)$, if $(G, X, f)$
has a $k$-precoloring extension.

We denote by $P_t$ the path with $t$ vertices.  
Given a path $P$, its \emph{interior} is the set of vertices that
have degree two in $P$; the interior of $P$ is denoted by $P^*$.
A  \emph{path in a graph  $G$} is  a sequence $v_1-\ldots -v_t$ of pairwise 
distinct vertices where for $i,j \in [t]$, $v_i$ is adjacent to $v_j$ if and 
only if $|i-j|=1$; the {\em length} of this path is $t$.  We denote by 
$V(P)$ the set $\{v_1, \ldots, v_t\}$,
and if $a, b \in V(P)$, say $a=v_i$ and $b=v_j$ and $i<j$, then $a-P-b$ is the 
path $v_i-v_{i+1}-\ldots -v_j$, and $b-P-a$ is the path $v_j-v_{j-1}-\ldots-v_i$.
 For $v \in V(P)$, the
{\em neighbors of $v$ in $P$} are the neighbors of $v$ in $G|V(P)$.
A {\em $P_t$ in $G$} is a path
of length $t$ in $G$. A graph is \emph{$P_t$-free} if there is no $P_t$ in 
$G$.

Throughout the paper by ``polynomial time'' or ``polynomial size'' we mean 
running time,  or size, that are polynomial in $|V(G)|$, where $G$ is the input graph.
Since the  \textsc{$k$-coloring problem} and the \textsc{$k$-precoloring extension problem} are $NP$-hard for $k \geq 3$, their
restrictions to graphs with a forbidden induced subgraph have been extensively 
studied; see  \cite{ICM,gps} for a survey of known results. In particular,
the following is known (given a graph $H$, we say that a graph $G$ is 
{\em $H$-free} if no  induced subgraph of $G$ is isomorphic to $H$):

\begin{theorem}[\cite{gps}] Let $H$ be a (fixed) graph, and let $k>2$. If
the \textsc{$k$-coloring problem} can be solved in polynomial time when restricted to the class of $H$-free graphs, then every connected component of $H$ is a path.
\end{theorem}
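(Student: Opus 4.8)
The plan is to establish the contrapositive: if some connected component $H_0$ of $H$ is not a path, then the \textsc{$k$-coloring problem} restricted to $H$-free graphs is $NP$-hard. I would split into two cases according to whether $H$ contains a cycle.

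\emph{Case 1: $H$ contains a cycle}, say of length $g$. Then every graph of girth at least $g+1$ is $H$-free, since such a graph has no cycle of length $g$ and hence cannot contain $H$ even as a (not necessarily induced) subgraph. It is known (see e.g.\ Emden-Weinert, Hougardy and Kreuter, or Kami\'nski and Lozin) that for every fixed $k \geq 3$ and every fixed integer $g$, the \textsc{$k$-coloring problem} remains $NP$-hard on graphs of girth at least $g$. Taking such instances with girth larger than $g$ yields $NP$-hardness on a subclass of $H$-free graphs.

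\emph{Case 2: $H$ is a forest.} Then $H_0$ is a tree that is not a path, so it has a vertex $v$ of degree at least three. Any three neighbours $a, b, c$ of $v$ are pairwise nonadjacent (else $H_0$ would contain a cycle), so $\{v, a, b, c\}$ induces a claw $K_{1,3}$; thus $K_{1,3}$ is an induced subgraph of $H$. Consequently every claw-free graph is $H$-free, because an induced copy of $H$ would contain an induced copy of $K_{1,3}$. In particular line graphs, which are claw-free, are $H$-free. Since a proper $k$-coloring of the line graph $L(G)$ is exactly a proper $k$-edge-coloring of $G$, and deciding $k$-edge-colorability is $NP$-hard for every fixed $k \geq 3$ (Holyer for $k = 3$; Leven and Galil for $k \geq 4$), the \textsc{$k$-coloring problem} is $NP$-hard on line graphs, hence on $H$-free graphs.

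In both cases the \textsc{$k$-coloring problem} is $NP$-hard on $H$-free graphs, which proves the contrapositive and hence the theorem. The content of the argument is entirely in the reduction to two classical families — high-girth graphs and line graphs — and the main point to get right is to invoke the external $NP$-hardness results in the precise form needed: a \emph{fixed} number $k \geq 3$ of colors, together with the structural restriction (large girth, respectively being a line graph). No genuinely hard step remains once these are in place.
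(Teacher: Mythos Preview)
Your argument is correct and is the standard proof of this result. Note, however, that the paper does not supply its own proof of this theorem: it is quoted as a known result from \cite{gps} (Golovach, Paulusma, and Song), so there is nothing in the paper to compare against. Your two-case split (cycle $\Rightarrow$ high-girth hardness; forest with a non-path component $\Rightarrow$ claw $\Rightarrow$ line-graph/edge-coloring hardness) is exactly the folklore argument underlying the cited result, and the external hardness inputs you invoke (Emden-Weinert--Hougardy--Kreuter or Kami\'nski--Lozin for large girth; Holyer and Leven--Galil for edge-coloring) are the right ones in the right form.

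One minor tightening: in Case~1 you say a graph of girth at least $g+1$ ``cannot contain $H$ even as a (not necessarily induced) subgraph''. This is true but slightly more than you need; for the argument it suffices that such a graph is $H$-free in the induced sense, which follows immediately since an induced copy of $H$ would contain a closed walk of length $g$ and hence an induced cycle of length at most $g$.
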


Thus if we assume that $H$ is connected, then the question of determining the 
complexity of $k$-coloring $H$-free graphs is reduced to 
studying the complexity of coloring graphs with 
certain induced paths excluded,
and a significant body of work has been produced on this topic.
Below we list a few such results.

\begin{theorem}[\cite{c1}] \label{3colP7}
The \textsc{3-coloring problem} can be
  solved in polynomial time for the class of $P_7$-free graphs.
\end{theorem}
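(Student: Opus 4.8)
The plan is to solve the more general \textsc{list-$3$-coloring problem} on $P_7$-free graphs; ordinary $3$-coloring is the special case where every list equals $[3]$, and the extra generality is exactly what lets a propagate-and-branch algorithm close up, since guessing a few colours shrinks many lists at once. Two ingredients are routine: whenever some vertex acquires the empty list the current instance is infeasible, and as soon as \emph{every} list has size at most $2$ the instance is equivalent to a \textsc{2-SAT} formula (one Boolean variable per vertex, one or two $2$-clauses per edge) and can be decided in polynomial time; also we may assume $G$ is connected (else treat the components separately) and has no $K_4$ (else $G$ is not $3$-colorable). So the task reduces to the following: given a $3$-list instance $(G,L)$, produce in polynomial time a polynomial-sized family $(G,L_1),\dots,(G,L_m)$ with $L_i(v)\subseteq L(v)$ and $|L_i(v)|\le 2$ for all $v$, such that $(G,L)$ is colorable if and only if some $(G,L_i)$ is.

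The engine for shrinking lists is a bounded-size ``almost-dominating'' set coming from the classical dominating-subgraph theorems for $P_t$-free graphs (Bacs\'{o}--Tuza for $P_5$, Camby--Schaudt in general, both constructive in polynomial time): a connected $P_7$-free graph has a connected dominating set $D$ with $G|D$ either $P_5$-free or isomorphic to $P_5$, and a connected $P_5$-free graph has a dominating clique or a dominating $P_3$. Since $G$ has no $K_4$, a dominating clique has at most three vertices, so in every case one obtains a set $Z\subseteq D$ with $|Z|\le 5$ such that $Z$ dominates $D$ and $D$ dominates $V(G)$. I would then branch over all (at most $3^5$) $L$-respecting colorings of $Z$ and propagate singleton lists to a fixed point inside each branch. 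Because $Z$ dominates $D$, every vertex of $D$ loses at least one colour, so after this constant-branching step every vertex of $D$ has a list of size at most $2$ (and if $G|D\cong P_5$ then $Z=D$ already dominates all of $V(G)$, so in that case all lists already reach size $\le 2$).

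The remaining step --- passing from ``$D$ dominating with all lists on $D$ of size $\le 2$'' to ``all lists of size $\le 2$'' --- is the heart of the matter, and I expect essentially all of the difficulty to live here: a vertex $v\in V(G)\setminus D$ can still see two distinct colours among its neighbours in $D$ while retaining a third in its own list, and the analogous reduction on general graphs is not expected to be tractable. The attack would be structural: since $G$ has no induced $P_7$ and $D$ is connected and dominating, the boundary in $D$ of a component of $G\setminus D$ cannot spread out along $D$, which sharply limits how many distinct colour-patterns such a component can be forced to carry; after peeling off clique cutsets and a further \emph{bounded} amount of branching on the colours of a few critical vertices, one should be able to collapse every remaining list to size at most $2$ while producing only polynomially many instances, each then settled by the \textsc{2-SAT} routine above. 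Since the $\le 3^5$ colorings of $Z$ are a constant number of branches and the structural step is to be arranged so that it outputs only polynomially many $2$-list instances, the whole algorithm runs in polynomial time --- and the same scheme in fact proves the stronger list-$3$-coloring statement. The main obstacle, to reiterate, is making that structural step go through with a polynomial bound, which I expect to demand an extended case analysis genuinely exploiting the exclusion of $P_7$.
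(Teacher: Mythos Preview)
This theorem is not proved in the paper you are reading; it is quoted verbatim from \cite{c1} and used as a black box (in Lemmas~\ref{sublem:it:conn}, \ref{lem:complete} and \ref{lem:yisempty}) to handle subproblems whose lists are contained in a common $3$-element set. There is therefore no ``paper's own proof'' to compare your proposal against.

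As to the proposal itself: it is a plausible outline, and indeed the actual proof in \cite{c1} does proceed via the more general \textsc{list-$3$-coloring} problem, does use connected dominating structures in $P_t$-free graphs, and does reduce to $2$-list instances solved by \textsc{2-SAT}. But you yourself identify the gap: the step from ``all lists on the dominating set $D$ have size $\le 2$'' to ``all lists have size $\le 2$'' is the entire content of the theorem, and you have only stated an expectation that a bounded amount of further branching plus structural analysis should work. That expectation is correct in the sense that \cite{c1} succeeds, but the argument there is long and delicate, and nothing in your write-up explains \emph{which} few vertices to branch on or \emph{why} the resulting case analysis terminates with polynomially many $2$-list instances. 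As written, then, this is a research plan rather than a proof; the missing idea is precisely the structural lemma(s) that control components of $G\setminus D$ relative to their attachment in $D$, and those lemmas are the substance of \cite{c1}.
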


\begin{theorem}[\cite{hoang}] The \textsc{$k$-coloring problem} can be
  solved in polynomial time for the class of $P_5$-free graphs.
\end{theorem}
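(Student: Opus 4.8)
The plan is to reduce the problem to a list-coloring question with bounded list sizes and then recurse, exploiting the fact that connected $P_5$-free graphs have small dominating sets. Since $k$ is fixed, it suffices to give a polynomial-time algorithm for the following more general problem: given a $P_5$-free graph $G$ and a list assignment $L$ with $L(v) \subseteq [k]$ and $|L(v)| \le k$ for every $v$, decide whether $(G,L)$ is colorable; the original problem is the case $L(v) = [k]$. (The class of $P_5$-free graphs is closed under induced subgraphs, so all graphs arising below stay $P_5$-free.) I would recurse on $s(L) := \max_v |L(v)|$. The base case is $s(L) \le 2$: then the instance is an instance of $2$-list-coloring, which can be solved in polynomial time by encoding each vertex's color by a Boolean variable and each edge by a $2$-SAT clause (vertices with $|L(v)| \le 1$ fixing or killing variables).

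For the recursive step, assume $s(L) \ge 3$. If $G$ is disconnected, solve each connected component separately and answer ``yes'' iff all of them are colorable; so assume $G$ connected. By the theorem of Bacs\'o and Tuza, a connected $P_5$-free graph has a dominating clique or a dominating $P_3$; in either case it has a dominating set of size at most $k$ unless it has a clique of size more than $k$, in which case it is not $k$-colorable and we answer ``no''. Hence either we may answer ``no'', or, by searching all vertex subsets of size at most $k$, we find a dominating set $D$ with $|D| \le k$ (we do not need $D$ to be a clique or a $P_3$ --- we only use Bacs\'o--Tuza to guarantee that a small one exists). Now enumerate the proper $L$-colorings $c$ of $G|D$; there are at most $k^k$ of them, a constant for fixed $k$. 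For each $c$, form the instance $(G \setminus D, L_c)$ with $L_c(v) = L(v) \setminus \sset{c(u) : u \in N(v)\cap D}$ for $v \notin D$. Since $D$ dominates $G$, every such $v$ has a neighbor in $D$, so $|L_c(v)| \le |L(v)| - 1$ and thus $s(L_c) \le s(L) - 1$. Recurse on each $(G \setminus D, L_c)$ and return ``yes'' iff some $c$ gives a colorable instance.

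Correctness is routine: if $\phi$ is an $L$-coloring of $G$ then $c := \phi|_D$ is one of the enumerated colorings and $\phi|_{V(G)\setminus D}$ is an $L_c$-coloring of $G \setminus D$; conversely any $L_c$-coloring of $G \setminus D$ extends by $c$ to an $L$-coloring of $G$, directly from the definition of $L_c$. For the running time, note that along any root-to-leaf path of the recursion the parameter $s$ strictly decreases at each ``color a dominating set'' step and never increases, so there are at most $k-2$ such steps; each has branching bounded by a function of $k$ alone, and the only other recursion (the split into connected components) replaces a problem by subproblems whose vertex sets partition $V(G)$ and hence cannot blow up the tree. Thus the recursion tree has $n^{O(k)}$ nodes, each handled in polynomial time (finding $D$, enumerating its colorings, updating lists, or solving a $2$-SAT instance), so for every fixed $k$ the algorithm runs in polynomial time.

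I expect the main obstacle --- or rather the one genuinely non-elementary ingredient --- to be the structural input that a connected $P_5$-free graph has a dominating clique or dominating $P_3$; without a handle of this kind there is no leverage on the class. The remaining point requiring care is the running-time bookkeeping: one must check that the list-size parameter really does bound the depth of the expensive (guessing) branching, and that the component splits, which leave that parameter unchanged, only refine a partition of the vertex set and therefore keep the recursion tree polynomial.
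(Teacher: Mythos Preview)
The paper does not give a proof of this theorem; it is quoted from Ho\`ang, Kami\'nski, Lozin, Sawada, and Shu, so there is no proof in the paper to compare your attempt against.

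Your overall strategy---work with the list problem, locate a bounded-size dominating set via Bacs\'o--Tuza, branch over its colourings, and recurse---is indeed the right idea and is essentially what the cited reference does. However, your running-time analysis has a genuine gap. You write: ``every such $v$ has a neighbor in $D$, so $|L_c(v)| \le |L(v)| - 1$.'' But having a neighbour $u\in D$ only shrinks $L(v)$ when $c(u)\in L(v)$; nothing guarantees this once lists are proper subsets of $[k]$. Your inequality does hold at the \emph{first} branching step, since there every list of maximum size equals $[k]$ and hence contains whatever colour $u$ receives. It can fail at the next level. Concretely, take $k=4$ and $G=C_4$ on $a\text{--}b\text{--}u\text{--}v\text{--}a$; choose $D=\{a,b\}$ with $c(a)=1$, $c(b)=4$. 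Then the updated lists are $L_c(u)=\{1,2,3\}$ and $L_c(v)=\{2,3,4\}$. In $G\setminus D$ take $D'=\{u\}$ with $c'(u)=1$: the new list of $v$ is $\{2,3,4\}\setminus\{1\}=\{2,3,4\}$, so $s$ stays at~$3$.

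Without the strict decrease of $s$, the depth of the branching recursion is bounded only by $|V(G)|$, and with branching factor $k^{k}$ at every such step the recursion tree can have $(k^{k})^{\Theta(n)}$ leaves, which is not polynomial. So your algorithm is a correct decision procedure, but you have not shown that it runs in polynomial time; the argument in the cited paper handles this point more carefully.
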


\begin{theorem}[\cite{huang}] The \textsc{4-coloring problem} is
  $NP$-complete for the class of $P_7$-free graphs.
\end{theorem}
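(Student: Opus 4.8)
The plan is a standard two-part argument. Membership in $NP$ is immediate: a proper $4$-coloring, if one exists, is a certificate of size $|V(G)|$ that can be verified in time polynomial in $|V(G)|$ by checking every edge. The real work is the hardness reduction, and I would reduce from a convenient $NP$-complete form of satisfiability --- say \textsc{$3$-SAT}, or better a variant such as \textsc{NAE-$3$-SAT} or \textsc{positive $1$-in-$3$-SAT}, whose symmetry lightens the gadget bookkeeping. Given a formula $\phi$ with variables $x_1,\dots,x_n$ and clauses $C_1,\dots,C_m$, the goal is to build, in time polynomial in $n+m$, a graph $G=G(\phi)$ that is $P_7$-free and is $4$-colorable if and only if $\phi$ is satisfiable.

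For the construction I would first fix a ``reference'' triangle $T$ on three vertices, which in any $4$-coloring uses three of the four colors; a vertex adjacent to all of $T$ is then forced to the fourth color, and a vertex adjacent to exactly two vertices of $T$ has its color confined to a set of size two. Using this, each variable $x_i$ gets a variable gadget: two adjacent vertices $v_i,\bar v_i$, each joined to a common vertex of $T$, so that $\{v_i,\bar v_i\}$ receives exactly the two colors not forbidden by $T$ --- read one of them, say color $1$, as ``true'' and the other, color $2$, as ``false'', the edge $v_i\bar v_i$ guaranteeing that the two vertices disagree. Each clause $C_j=\ell_{j,1}\vee\ell_{j,2}\vee\ell_{j,3}$ gets a clause gadget: a small graph whose vertices are joined to the three literal-vertices of $C_j$ and borrow their remaining colors from $T$, designed so that it admits a proper $4$-coloring precisely when at least one of those three literal-vertices carries the ``true'' color. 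Tracking the forced colors then yields both implications: a satisfying assignment colors the $v_i,\bar v_i$ correspondingly and extends over every clause gadget, and conversely any proper $4$-coloring of $G$ reads off a truth assignment that must satisfy all clauses.

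The delicate point --- and the step I expect to be the main obstacle --- is arranging $P_7$-freeness, because the demands of the encoding and of forbidding long induced paths pull against each other: the reduction wants the gadgets nearly independent, while $P_7$-freeness wants many edges. I would therefore add a controlled layer of padding edges: making the $v_i$ pairwise adjacent (and likewise the $\bar v_i$), joining each clause gadget densely to the variable gadgets of its literals, and routing all remaining inter-gadget connections through a bounded number of near-universal vertices, so that any induced path visiting two distinct gadgets instantly picks up a chord. The core of the proof is then a case analysis showing that every induced path of $G$ has at most $6$ vertices, organized by which gadgets and which vertices of $T$ the path meets, each case producing a forced chord from the $T$-adjacencies or the padding. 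Choosing the gadgets so that this analysis actually closes, while still preserving the colorability equivalence and keeping $|V(G)|$ polynomial in $n+m$, is where essentially all the effort lies.
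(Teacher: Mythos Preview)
The paper does not prove this theorem at all: it is stated with a citation to Huang~\cite{huang} and no argument is given. There is therefore no ``paper's own proof'' to compare your sketch to; the result is imported as background.

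On the merits of your sketch itself, there is a concrete gap in the padding step. You propose to kill long induced paths by ``making the $v_i$ pairwise adjacent (and likewise the $\bar v_i$)''. But by your own gadget design every $v_i$ is restricted to the two-color set $\{1,2\}$ (the ``true/false'' palette), so a clique on $\{v_1,\dots,v_n\}$ is $4$-colorable only when $n\le 2$. The same objection applies to the $\bar v_i$. Thus the padding you suggest destroys the colorability equivalence the moment the formula has more than two variables. More generally, any scheme that forces long induced paths to acquire chords by adding edges among vertices whose lists have already been pinned to size two will run into this obstruction; the edges you are allowed to add among literal vertices are severely constrained by the coloring semantics. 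Huang's actual reduction (from \textsc{Not-All-Equal $3$-SAT} with restricted variable occurrences) is tailored so that the variable and clause gadgets are individually small and the cross-gadget adjacencies are arranged so that any induced path leaving one gadget is quickly absorbed by a high-degree ``hub'' vertex whose list is \emph{not} pinned---the $P_7$-freeness analysis hinges on this asymmetry, not on cliquing up the literal vertices. Your plan correctly identifies where the difficulty lies, but the specific mechanism you propose to resolve it does not work.
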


\begin{theorem}[\cite{huang}] For all $k \geq 5$, the
  \textsc{$k$-coloring problem} is $NP$-complete for the class of
  $P_6$-free graphs.
\end{theorem}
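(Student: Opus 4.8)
The plan is as follows. Membership in $NP$ is immediate, since a proper $k$-coloring can be verified in polynomial time. For the hardness it is enough to treat $k = 5$: given a $P_6$-free graph $G$ and $k \geq 5$, form $G'$ by adding $k - 5$ new pairwise adjacent vertices, each complete to $V(G)$. Then $\chi(G') = \chi(G) + (k-5)$, so $G$ is $5$-colorable if and only if $G'$ is $k$-colorable, and $G'$ is again $P_6$-free, since a vertex complete to all others cannot lie on any induced path with at least four vertices (it would be adjacent to two vertices of that path at distance at least two along it), so an induced $P_6$ in $G'$ would avoid the new vertices and hence occur in $G$. Thus it suffices to show that $5$-coloring is $NP$-hard on $P_6$-free graphs.

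For that I would reduce from $3$-\textsc{SAT} (or a convenient variant, such as monotone not-all-equal $3$-\textsc{SAT}, which fits the clause gadget more naturally). From a formula $\phi$ with variables $x_1, \dots, x_n$ and clauses $C_1, \dots, C_m$ I would build a graph $G_\phi$ roughly as follows. Include a fixed $5$-clique $Q = \{T, F, N_1, N_2, N_3\}$; in any $5$-coloring its vertices receive all five colors, so after renaming colors we may assume $T, F, N_1, N_2, N_3$ get $t, f, 1, 2, 3$ respectively. For each variable $x$, add vertices $v_x$ and $v_{\bar x}$, adjacent to each other and complete to $\{N_1, N_2, N_3\}$; then $\{v_x, v_{\bar x}\}$ receives $\{t, f\}$ in one of two orders, encoding the truth value of $x$. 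For each clause $C_j$ on literals $\ell_1, \ell_2, \ell_3$, attach a small dense gadget — for instance a triangle $z_1^j z_2^j z_3^j$ with each $z_i^j$ complete to $\{N_1, N_2\}$ (hence colored from $\{t, f, 3\}$) and joined to $v_{\ell_i}$ — arranged so that it extends to a $5$-coloring exactly when the colors of $v_{\ell_1}, v_{\ell_2}, v_{\ell_3}$ make $C_j$ true. A short argument then yields that $G_\phi$ is $5$-colorable if and only if $\phi$ is satisfiable, and $|V(G_\phi)|$ is polynomial in $|\phi|$.

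The real work — and the step I expect to be the main obstacle — is to add enough further edges (connecting the gadgets to $Q$ and to one another) so that $G_\phi$ is actually $P_6$-free, while not spending the few available colors so freely that some gadget ceases to be extendable; with the naive edge set described above one can already weave an induced $P_6$ through two clause gadgets using a variable vertex and a non-universal clique vertex. Once the right edge set is found, I would prove $P_6$-freeness by a case analysis: many of the clique and gadget vertices will be complete (or nearly complete) to $V(G_\phi)$, so by the observation above they cannot lie on an induced $P_6$; deleting them leaves a graph of very restricted shape — essentially one bounded-size gadget per clause, attached to the variable vertices in a controlled fashion — in which one checks directly that no six vertices induce a path. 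Getting the trade-off between ``enough edges to kill every induced $P_6$'' and ``few enough edges to keep a valid coloring in each gadget'' to work out is precisely what pins the bound at $P_6$ rather than some longer path.
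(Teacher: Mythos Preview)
The paper does not prove this theorem at all: it is quoted as a known result from Huang~\cite{huang}, with no proof and no sketch. So there is nothing in the paper to compare your argument against, and your proposal should be judged on its own.

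On its own, what you have written is a plan, not a proof. The reduction from general $k$ to $k=5$ by adding $k-5$ universal vertices is correct and is the standard move. Your outline for $k=5$ is also along the right lines --- Huang's proof is indeed a gadget reduction from a satisfiability problem --- but you have not actually produced a construction. You say yourself that the ``real work'' is choosing the edge set so that $G_\phi$ is $P_6$-free while the clause gadgets still function, and you describe this step only in the conditional: ``once the right edge set is found, I would prove \dots''. That is precisely the content of the theorem; without it you have a plausible framework and a correct observation about universal vertices, but no proof.

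To turn this into an actual argument you would need to (i) fix a concrete clause gadget and the full edge set of $G_\phi$, (ii) verify the coloring equivalence for that specific gadget (your sketch with $z_i^j$ complete to $\{N_1,N_2\}$ is not yet checked to encode the clause correctly), and (iii) carry out the $P_6$-freeness case analysis rather than promising it. As you already note, the naive edge set you describe does admit induced $P_6$'s, so step (iii) is not a formality. If you want to complete this, looking up Huang's actual gadgets is the efficient route; reinventing them is possible but fiddly, and the details are the whole point.
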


The only cases for which the complexity of $k$-coloring $P_t$-free
graphs is not known are $k=4$, $t=6$, and $k=3$, $t \geq 8$.
This is the second paper in a series of two.
The main result of the series is the following:
\begin{theorem} 
\label{main}
The \textsc{4-precoloring extension problem} can be
  solved in polynomial time for the class of $P_6$-free graphs.
\end{theorem}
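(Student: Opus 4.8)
The plan is to deduce Theorem~\ref{main} by combining a structural reduction carried out in the present paper with the algorithm for so-called excellent precolorings established in the first paper of the series. I would begin by restating the problem in list-coloring terms: a $4$-precoloring $(G,X,f)$ is the same as a $4$-list assignment $L$ with $|L(v)|=1$ for $v\in X$ and $|L(v)|=4$ for $v\notin X$, and $(G,X,f)$ extends if and only if $(G,L)$ is colorable. It is convenient to work instead with a slightly richer object --- an instance consisting of a $P_6$-free graph together with a list assignment whose lists have size at most $4$ and some bookkeeping data recording that certain vertex subsets are known to be monochromatic or to be colored from a common palette of size $2$ --- since such instances are preserved by the local reduction steps below; the tuples $\spc$ introduced later play this role. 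Two easy reductions apply throughout: one may propagate, deleting from the list of a vertex any color appearing on a neighbor with a singleton list; and once every list has size at most $2$, the instance is an instance of $2$-list-coloring and can be solved directly as a $2$-SAT problem. After these, attention is focused on the vertices whose list still has size $3$ or $4$.

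The core of the argument --- and the step I expect to be the main obstacle --- is to show that, starting from any such instance on a $P_6$-free graph, one can compute in polynomial time a polynomial-size family of \emph{excellent} instances such that the original instance is colorable if and only if at least one instance in the family is. I would construct this family by a bounded sequence of branching operations. The principal structural engine is domination in $P_6$-free graphs: a connected $P_6$-free graph has a dominating induced subgraph of restricted structure --- for instance an induced path on a bounded number of vertices, or an induced subgraph on boundedly many vertices --- so guessing a proper coloring of such a dominating set (polynomially many choices) shrinks the list of every remaining uncolored vertex, since each now has a colored neighbor. Iterating this over the connected components created as more vertices become precolored, and separately analysing the anticomponents of the common non-neighborhood of the precolored set, one steadily pushes the instance toward the excellent form, in which the uncolored part is structurally simple relative to the colored part --- roughly, there is no long induced path running between them and the remaining ``free'' vertices are confined to a controlled configuration. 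The delicate point is to keep the branching under control: each operation must either terminate or strictly decrease a potential function (the number of uncolored vertices, or the number of distinct lists of size at least $3$, or the size of a designated ``active'' set) while only multiplying the number of instances by a polynomial factor, so that the final family remains polynomial in $|V(G)|$. Verifying that every configuration which is not yet excellent admits such an operation is exactly where $P_6$-freeness must be exploited carefully, and is the technical heart of the paper.

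Granting this reduction, Theorem~\ref{main} follows at once. Given a $P_6$-free graph $G$ and a $4$-precoloring $(G,X,f)$, first --- handling connected components of $G$ separately if need be --- run the reduction to obtain the polynomial-size family $\mathcal F$ of excellent instances equivalent to $(G,X,f)$; then, for each member of $\mathcal F$, invoke the polynomial-time algorithm from the first paper of the series to decide its colorability; and output ``yes'' precisely when some member of $\mathcal F$ is colorable. Correctness is immediate from the equivalence supplied by the reduction, and the total running time is polynomial because $|\mathcal F|$ is polynomial, each excellent instance lives on an induced subgraph of $G$ (hence is again $P_6$-free) of size at most $|V(G)|$, and the per-instance test runs in polynomial time. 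The only point requiring care in the write-up is bookkeeping: one must ensure that the excellent instances produced here are in precisely the format the first paper expects --- same graph class, same definition of ``excellent,'' lists of the permitted sizes --- so that its algorithm applies without modification.
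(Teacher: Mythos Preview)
Your final paragraph is exactly the paper's proof of Theorem~\ref{main}: combine Theorem~\ref{Yaxioms} (the reduction to a polynomial family of excellent starred precolorings, proved in this paper) with Theorem~\ref{excellent} (the algorithm for excellent starred precolorings, proved in the companion paper), and the running time is polynomial because the family is of polynomial size and each member has bounded seed. At that level there is nothing to add.

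Where your write-up diverges is the middle paragraph, in which you sketch how the reduction itself (Theorem~\ref{Yaxioms}) should go. That sketch does not match the paper's mechanism and, as written, would not succeed. You propose to exploit domination: find a bounded dominating induced subgraph, guess its coloring, and thereby shrink every remaining list. The paper does \emph{not} do this. Its central object is a bounded ``seed'' $S$, but $S$ is explicitly not dominating: the set $Y_0 = V(G)\setminus (S\cup X_0\cup N(S))$ of vertices with no neighbor in $S$ is generally nonempty, and the bulk of Sections~\ref{sec:axioms}--\ref{sec:axiomsy} is devoted to taming $Y_0$ and the set $Y$ of vertices whose list has size three. The branching is organized not around components and anticomponents but around \emph{types} $T_P(v)=N(v)\cap S$: for each of the at most $2^{|S|}$ types one guesses a bounded number of extremal vertices (chosen by maximality of their neighborhoods into the relevant sets) together with their colors, and then argues via explicit forbidden-$P_6$ configurations that the resulting subcase satisfies the next structural axiom in a fixed list ((i)--(viii), then (I)--(VIII)). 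The potential function you allude to is, in effect, progress along this list of axioms, and keeping the seed bounded is precisely why each lemma adds only $q(|S|)$ vertices to $S$ rather than a dominating set. So while your top-level deduction of Theorem~\ref{main} from Theorems~\ref{excellent} and~\ref{Yaxioms} is correct and identical to the paper's, your proposed route to Theorem~\ref{Yaxioms} via domination is a genuine gap: no bounded dominating set need exist, and even when one does, guessing its coloring does not by itself produce the ``excellent'' structure the companion paper requires.
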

Theorem~\ref{main}  proves  a conjecture of Huang \cite{huang}, thus resolving the former open case above,  and completes the 
classification of the complexity of the \textsc{$4$-coloring problem} for graphs with a connected forbidden induced subgraph.

A \emph{starred precoloring} is a $7$-tuple
$\sspc$ such that 
\begin{enumerate}[(A)]
\item $f: S \cup X_0 \rightarrow \sset{1,2,3,4}$ is a proper coloring
  of $G|(S \cup X_0)$;
\item $V(G) = S \cup X_0 \cup X  \cup Y \cup Y^*$;
\item $G|S$ is connected and no vertex in $V(G) \setminus S$ is
  complete to $S$;
\item every vertex in $Y$ has a neighbor in $S$; 
\item for every vertex $x\in X$, $|f(N(x) \cap S)| \geq 2$;
\item $Y$ is anticomplete to $Y^*$; 
\item no vertex in $X$ is mixed on a  component of $G|Y^*$;
\item for every component $C$ of $G|Y^*$, there is a vertex in
  $S \cup X_0 \cup X$ complete to  $V(C)$.
\end{enumerate}

A starred precoloring is {\em excellent} if $Y=\emptyset$;
we write it is as a $6$-tuple $(G,S,X_0,X,Y^*,f)$.
The set $S$ is called the \emph{seed} of the starred precoloring. 
We define $L_P(v) = L_{S, f}(v) = \sset{1,2,3,4} \setminus (f(N(v) \cap S))$. 
A \emph{precoloring extension} of a starred precoloring is a function $f' : V(G) \setminus (S \cup X_0) \rightarrow \sset{1,2,3,4}$ such that $f' \cup f$ is a proper coloring of $G$. 
The main result of the first paper of the series \cite{part1} is 

\begin{theorem}
\label{excellent}
For every positive integer $C$, there exists a polynomial-time algorithm with the following specifications.
\\
\\
{\bf Input:}  An excellent starred precoloring $P=\esspc$ of a 
$P_6$-free graph $G$ with $|S| \leq C$.
\\
\\
{\bf Output:} A precoloring extension of $P$ or a determination
that none exists.
\end{theorem}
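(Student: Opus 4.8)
The plan is to recast the task as a list-coloring problem and peel it into pieces that are individually polynomial. Fix the colors of $S\cup X_0$ and, for every $v\in X\cup Y^{*}$, set $L(v)=\sset{1,2,3,4}\setminus f(N(v)\cap(S\cup X_0))$; since $Y=\emptyset$ we have $V(G)\setminus(S\cup X_0)=X\cup Y^{*}$, and a precoloring extension of $P$ is exactly a proper $L$-coloring of $G|(X\cup Y^{*})$. By property (E), $|f(N(v)\cap S)|\ge 2$, so $|L(v)|\le 2$ for every $v\in X$, and list-$2$-coloring is solvable in polynomial time (it is equivalent to $2$-SAT). Hence if we could force every list to have size at most $2$ we would be done; the sizes that cause trouble are those on $Y^{*}$, where $|L(v)|$ may be $3$ or $4$, together with the way $Y^{*}$ is coupled to $X$.

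The structure of $Y^{*}$ is what makes this manageable. The components of $G|Y^{*}$ are pairwise nonadjacent, so two of them interact only through $X$; by (G) no vertex of $X$ is mixed on a component, so for a component $C$ the set $X_{C}$ of vertices of $X$ with a neighbor in $C$ is in fact complete to $V(C)$; and by (H) there is $d_{C}\in S\cup X_0\cup X$ complete to $V(C)$. The first key step is a \emph{local analysis of $C$}: in the $P_6$-free graph $G|(V(C)\cup\sset{d_{C}})$, which has the dominating vertex $d_{C}$, and with $L$ restricted to $V(C)$, determine — for each of the constantly many ways in which colors can be forbidden on $C$ from the outside — whether $C$ has a proper coloring and which sets of colors a proper coloring of $C$ may use. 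Whenever enough colors are already forbidden on $C$ from outside (in particular if $d_{C}\in S\cup X_0$, which removes $f(d_{C})$ from every list on $C$, plus any forcing propagated from $X$), the lists on $C$ have size at most $2$ and this is again a $2$-SAT computation. The residual, genuinely hard case is list-$3$-coloring a $P_6$-free graph with a dominating vertex; I would attack it by a structural analysis in the spirit of the proof of Theorem~\ref{3colP7} — bounding the shape of a minimal obstruction by how it can be embedded into a $P_6$ — so as to reduce it, too, to bounded branching followed by $2$-SAT. The hypothesis $|S|\le C$ enters here and earlier: it bounds the number of possible traces $N(v)\cap S$, so that any branching or case analysis indexed by these traces stays polynomial.

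Finally I would glue the pieces. Each component $C$ is replaced by a constant-size gadget recording the outcome of its local analysis: it ties together the color of $d_{C}$ (when $d_{C}\in X$) and, for each color set $R$ that a proper coloring of $C$ may use, the induced constraint on $X_{C}$, which — because $|L(x)|\le 2$ — is just the per-vertex requirement that $x$ be colored from $L(x)\setminus R$. Running the list-$2$-coloring machinery on $X$ together with all of these gadgets, and branching only over the bounded ``interface'' information each gadget exposes, should reduce the whole question to a polynomial-size family of $2$-SAT instances; $P$ has a precoloring extension if and only if one of them is satisfiable, and from a satisfying assignment one reads off the extension. The main obstacle, which I expect to consume most of the work, is the treatment of the $Y^{*}$-components: proving that a $P_6$-free graph with a dominating vertex is structured enough that its list-$3$-colorings can be enumerated (in the weak sense above) in polynomial time, and doing this uniformly enough that assembling the many component gadgets with the $2$-SAT problem on $X$ does not cause a super-polynomial blow-up in the number of instances.
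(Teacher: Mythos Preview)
This theorem is not proved in the present paper at all: it is quoted as ``the main result of the first paper of the series \cite{part1}'' and used as a black box. There is therefore no proof here to compare your proposal against; the entire content of the companion paper is devoted to it.

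On the proposal itself, two remarks. First, the piece you flag as the ``main obstacle''---list-$3$-coloring a $P_6$-free component once one color has been removed by the dominating vertex $d_C$---is in fact not an obstacle: Theorem~\ref{3colP7} (the cited result \cite{c1} covers list $3$-coloring of $P_7$-free graphs) already solves it in polynomial time, and the paper uses it exactly this way elsewhere (e.g.\ in Lemma~\ref{lem:complete}). So the per-component analysis, producing for each component $C$ the downward-closed family $\mathcal{R}_C\subseteq 2^{[4]}$ of colour sets that $X_C$ is allowed to use, is routine.

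Second, and this is a genuine gap, your gluing step does not work as stated. The global requirement is that a list-$2$-coloring $c$ of $X$ satisfy $c(X_C)\in\mathcal{R}_C$ for every component $C$. A single such constraint need not be expressible in $2$-SAT: already ``not all four colors occur on $X_C$'' is a disjunction of four large conjunctions, and with auxiliary variables it still yields a clause of width at least three. Your alternative, ``branching only over the bounded interface information each gadget exposes,'' is a constant amount of branching \emph{per component}; with polynomially many components this is exponential. Turning the per-component feasibility data into a global polynomial-time decision is precisely where the nontrivial work of \cite{part1} lies, and your sketch does not supply it.
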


In this paper, we reduce the  \textsc{4-precoloring extension problem}
for $P_6$-free graphs to the case handled by Theorem~\ref{excellent}.
Our main result is the following. 
\begin{theorem}
\label{Yaxioms}
There exists an integer $C>0$ and a polynomial-time algorithm with the
following specifications.
\\
\\
{\bf Input:} A 4-precoloring $(G,X_0,f)$ of a $P_6$-free graph $G$.
\\
\\
{\bf Output:} A collection $\mathcal{L}$ of excellent starred
precolorings of $G$ such that
\begin{enumerate}
\item $|\mathcal{L}| \leq |V(G)|^C$,
\item for every $(G',S',X_0',X',Y^*,f') \in \mathcal{L}$ 
\begin{itemize}
\item $|S'| \leq C$, 
\item $X_0 \subseteq S' \cup X_0'$,
\item $G'$ is an induced subgraph of $G$, and
\item $f'|_{X_0}=f|_{X_0}$.
\end{itemize}
\item if we know for every $P \in \mathcal{L}$ whether $P$ has a
  precoloring extension, then we can decide in polynomial time if
  $(G, X_0, f)$ has a 4-precoloring extension; and
\item given a precoloring extension for every $P \in \mathcal{L}$ such
  that $P$ has a precoloring extension, we can compute a 4-precoloring
  extension for $(G, X_0, f)$ in polynomial time, if one exists.
\end{enumerate}
\end{theorem}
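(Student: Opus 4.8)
The plan is to build the collection $\mathcal{L}$ by a branching procedure that gradually transforms the given precoloring $(G, X_0, f)$ into a bounded-seed starred precoloring and then into excellent ones, at each step guessing a polynomial amount of information so that the total number of leaves of the branching tree stays bounded by $|V(G)|^C$. First I would handle the trivial structural reductions: we may assume $G$ is connected (otherwise treat components separately, which only multiplies the number of outputs), and we may clean up $X_0$ using the standard precoloring-to-list-assignment correspondence, replacing each $v \in X_0$ of color $c$ by removing $c$ from the lists of its neighbors. The first substantive step is to \emph{find the seed} $S$: since $G$ is $P_6$-free, dominating structures of bounded size exist, and I would guess a connected set $S$ of bounded size together with its coloring, extending $f$. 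The natural choice, mirroring axiom (C), is to take $S$ so that $G|S$ is connected and $S$ is not dominated by any single outside vertex; a greedy/brute-force search over bounded-size connected sets, combined with the $P_6$-free structure, should produce polynomially many candidates. We branch over all of them and over all proper colorings of $G|(S \cup X_0)$ extending $f$ (boundedly many since $|S|$ is bounded).

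Once $S$ and its coloring are fixed, every vertex $v \notin S \cup X_0$ acquires a reduced list $L_P(v) = \{1,2,3,4\} \setminus f(N(v) \cap S)$, and the remaining work is to sort $V(G) \setminus (S \cup X_0)$ into the sets $X$, $Y$, $Y^*$ satisfying axioms (D)--(H). A vertex with $|L_P(v)| \le 2$ coming from $\ge 2$ colors seen in $S$ is a natural candidate for $X$ (axiom (E)); a vertex with a neighbor in $S$ but a larger list goes into $Y$ (axiom (D)); and a vertex anticomplete to $S$ must go into $Y^*$. The key structural point — and here is where $P_6$-freeness does the heavy lifting — is that $Y^*$, being anticomplete to the connected dominating-ish set $S$, has components each of which is ``seen'' in a controlled way: I would prove that for each component $C$ of $G|Y^*$ there is a single vertex of $S \cup X_0 \cup X$ complete to $C$ (axiom (H)), because otherwise one can trace a $P_6$ using a path through $S$ to two vertices of $C$ with a non-edge between their attachments. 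Ensuring axiom (G) (no vertex of $X$ mixed on a component of $G|Y^*$) and axiom (F) ($Y$ anticomplete to $Y^*$) will require additional branching: whenever some $x \in X$ is mixed on a component $C$, we guess the (bounded) color of $x$ or move $x$ into the seed side, and whenever a $Y$-vertex has a neighbor in $Y^*$ we likewise resolve it by guessing finitely many colors. The delicate accounting is to show all of this branching stays polynomial.

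The final phase is to \emph{eliminate $Y$}, turning the starred precoloring into an excellent one. This is the heart of the argument and I expect it to be the main obstacle. Each vertex of $Y$ has a neighbor in $S$ and a list of size $\ge 3$ (if it had size $\le 2$ from two seed-colors it would be in $X$; if size $\le 2$ from one color plus a precolored neighbor, handle separately). The idea is that vertices of $Y$ are ``almost free'': their large lists mean that, given a coloring of the rest, completing $Y$ is essentially a constraint-satisfaction problem on $G|Y$ with lists of size $\ge 3$ minus the colors forced by already-colored neighbors outside $Y$. Using $P_6$-freeness of $G$, one shows that $G|Y$ together with its attachments to $X \cup Y^*$ has restricted structure — for instance $Y$ cannot contain long induced paths, and each component of $G|Y$ interacts with the rest in a bounded-complexity way — so that by guessing a bounded amount of information (the colors on a bounded ``boundary'' of $Y$, or on a bounded hitting set) one can push every remaining vertex of $Y$ into either the seed (if newly monochromatic-forced) or into $X$ (once its list shrinks to size $\le 2$ with $\ge 2$ seed-colors) or into $Y^*$ (once it becomes anticomplete to the seed side). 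Carrying out this elimination while preserving all of (A)--(H) and keeping $|\mathcal{L}|$ polynomial is the crux; I would organize it as an induction on $|Y|$, showing each round of branching strictly decreases $|Y|$ and multiplies the collection size by a bounded factor, so after $|V(G)|$ rounds we have $|V(G)|^{O(1)}$ excellent precolorings. Properties (1)--(4) of the theorem then follow: (1) from the branching bound, (2) by construction (we only ever add vertices of $G$ to the seed and never recolor $X_0$), and (3)--(4) because at every branch we enumerate \emph{all} relevant cases, so $(G, X_0, f)$ extends iff some $P \in \mathcal{L}$ extends, and an extension of that $P$ restricts/lifts back to one of $(G,X_0,f)$ in polynomial time.
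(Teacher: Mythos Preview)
Your proposal has the right high-level shape but contains a fatal counting error in the $Y$-elimination phase. You write that you would organize it ``as an induction on $|Y|$, showing each round of branching strictly decreases $|Y|$ and multiplies the collection size by a bounded factor, so after $|V(G)|$ rounds we have $|V(G)|^{O(1)}$ excellent precolorings.'' But a bounded factor $c>1$ applied $|V(G)|$ times yields $c^{|V(G)|}$ subproblems, which is exponential, not polynomial. This is not a minor slip: the entire difficulty of the theorem is keeping the branching polynomial while emptying $Y$, and vertex-by-vertex branching cannot achieve this. Relatedly, you assert that axioms (G) and (H) can be arranged by ``guessing the (bounded) color of $x$'' whenever some $x\in X$ is mixed on a component of $Y^*$; but there may be linearly many such $x$, so this too is exponential unless you have a mechanism to handle them in bulk.

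The paper's approach is structurally different. Instead of inducting on $|Y|$, it applies a \emph{constant-length} sequence of lemmas, each of which replaces the current collection by an equivalent one of size at most $|V(G)|^{q(|S|)}$, where the seed size $|S|$ stays bounded by an absolute constant throughout. The key device is that in each lemma one guesses only a bounded number of ``extremal'' representatives \emph{per type} (a type being $N(v)\cap S$, and there are only $2^{|S|}=O(1)$ types), and then argues via $P_6$-freeness that all other vertices of that type are forced. There is also an entire intermediate phase you skip: before one ever reaches a starred precoloring, a sequence of axioms on $Y_0 = V(G)\setminus(N(S)\cup S\cup X_0)$ must be established (no vertex mixed on an edge of $Y_0$; only one list $L$ occurs among $Y$-vertices with neighbors in $Y_0$; components of $Y_0\cup Y_L^*$ have a common $X$-neighbor), and $Y^*$ is then defined to absorb $Y_0\cup Y_L^*$, not merely the vertices anticomplete to $S$. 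Only after a second constant-length sequence of lemmas forbidding various three-vertex path patterns inside $Y$ and between $X$ and $Y$ do the components of $G|Y$ become rigid enough to be resolved by $2$-list-coloring and deleted.
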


Clearly together  Theorem \ref{Yaxioms} and Theorem \ref{excellent} imply 
Theorem \ref{main}, and, as an immediate corollary, we obtain that 
the \textsc{$4$-coloring problem} for $P_6$-free graphs is also solvable in
polynomial time. In contrast, the
\textsc{$4$-list coloring problem} restricted to $P_6$-free graphs is
$NP$-hard as proved by Golovach, Paulusma, and Song \cite{gps}.

The proof of Theorem~\ref{Yaxioms} consists of several steps. 
At each step we replace the problem that 
we are trying to solve by a polynomially sized collection of simpler problems,
where by ``simpler'' we mean ``closer to being an excellent starred 
precoloring''. The strategy at every step is to ``guess'' (by exhaustively enumerating) a bounded number of vertices that have certain key properties, and their colors, add these vertices to the seed, and show that the resulting precoloring is  better than the one we started with. In this process we make sure that 
the size of the seed remains bounded, so that we can apply Theorem~\ref{excellent}.

This paper is organized as follows. In Section~\ref{sec:def}, we
introduce a few helpful definitions and lemmas. In Section~\ref{sec:axioms}, we
transform an instance of the \textsc{4-precoloring extension problem}
into a polynomial number of subproblems, each of which is 
a starred precoloring that has additional properties. In particular,
we ensure that every component of $G|(Y \cup Y^*)$ that contains a vertex 
$v$ with $L_P(v)=[4]$  is completely contained in $Y^*$.
In Section~\ref{sec:axiomsy}, we replace each of the subproblems produced
in Section~\ref{sec:axioms} by yet another polynomially-sized collection of
problems and prove Theorem \ref{Yaxioms}. 

\subsection{Definitions} \label{sec:def}

For two functions $f, f'$ with $f : X \rightarrow \sset{1, \dots, k}$
and $f' : Y \rightarrow \sset{1, \dots, k}$ such that
$f|_{X \cap Y} \equiv f'|_{X \cap Y}$, we define their \emph{union}
$f \cup f' : X \cup Y \rightarrow \sset{1, \dots, k}$ as
$(f \cup f')(z) = f(z)$ if $z \in X$, and $(f \cup f')(z) = f'(z)$ if
$z \in Y$. For a set $X$ with $X=\{x\}$, we do not distinguish between
$X$ and $x$ (when there is no danger of confusion).

A \emph{seeded precoloring} of a graph $G$ is a 7-tuple $$P = \spc$$
such that
\begin{itemize}
\item the function $f: (S \cup X_0) \rightarrow \sset{1, 2, 3, 4}$ is
  a proper coloring of $G|(S \cup X_0)$;
\item $V(G) = S \cup X_0 \cup X \cup Y_0 \cup Y$; and 
\item $S, X_0, X, Y_0, Y$ are pairwise disjoint. 
\end{itemize}
The set $S$ is called the \emph{seed} of the seeded precoloring. A
\emph{precoloring extension} of a seeded precoloring $P$ is a
4-precoloring extension of $(G, S \cup X_0, f)$. For a seeded
precoloring $P$ and a collection $\mathcal{L}$ of seeded precolorings, we
say that $\mathcal{L}$ is an \emph{equivalent collection} for $P$ if $P$ has
a precoloring extension if and only if at least one of the seeded
precolorings in $\mathcal{L}$ does, and, given a precoloring extension of a
member of $\mathcal{L}$, we can construct a precoloring of $P$ in polynomial time.

Given a seeded precoloring $P = \spc$ and a seeded precoloring $P'$,
we say that $P'$ is a \emph{normal subcase} of $P$ if
$P' = (G \setminus Z, S', X_0', X', Y_0', Y', f')$ such that
\begin{itemize}
\item $Z \subseteq Y_0$; 
\item $G|S'$ connected and $S \subseteq S'$;
\item every vertex in $X_0' \cap Y_0$ has a neighbor in $S'$; 
\item $S \subseteq S' \subseteq  S \cup X \cup Y_0 \cup Y$;
\item  $X_0 \subseteq X_0' \subseteq  X_0 \cup X \cup Y_0 \cup Y$, and
\item there is a function $g: (S' \cup X_0') \setminus (S \cup X_0) \rightarrow \sset{1,2,3,4}$ such that $f' = f \cup g$. 
\end{itemize}

For a seeded precoloring $P = \spc$ of a graph $G$, we let
$L_P(v) = L_{S,f}(v) = f(v)$ for $v \in S \cup X_0$, and
$L_P(v) = L_{S,f}(v) = \sset{1, 2, 3, 4} \setminus f(N(v) \cap S)$
otherwise. For a set $Z \subseteq V(G)$ and list
$L \subseteq \sset{1,2,3,4}$, we let $Z_L$ denote the set of vertices
$v$ in $Z$ with $L_P(v)=L$.

We finish this section with two useful theorems.

\begin{lemma}
\label{lem:mixed}
Let $G$ be a graph and let $X \subseteq V(G)$ be connected.  If 
$v \in V(G) \setminus X$ is mixed on $X$, the there is an edge
$xy$ of $X$ such that $v$ is adjacent to $x$ and not to $y$.
\end{lemma}

\begin{proof}
Since $v$ is mixed on $X$, both the sets $N(v) \cap X$ and $X \setminus N(v)$
are non-empty. Now since $X$ is connected, there exist $x \in N(v) \cap X$
and $y \in X \setminus N(v)$ such that $x$ is adjacent to $y$, as required.
This proves Lemma~\ref{lem:mixed}.
\end{proof}

\begin{theorem}[\cite{edwards}]
\label{Edwards}
There is a polynomial time algorithm that  tests, for  graph $H$ and a list 
assignment $L$ with $|L(v)| \leq 2$ for every $v \in V(H)$, if
$(H,L)$ is colorable, and finds a coloring if one exists.
\end{theorem}

\section{Establishing the Axioms on $Y_0$} 

\label{sec:axioms} Given a $P_6$-free graph $G$ and a precoloring
$(G, A, f)$, our goal is to construct a polynomial number of seeded
precolorings $P = \spc$ satisfying the following axioms, and such that
if we can decide for each of them if it has a precoloring extension,
then we can decide if $(G, A, f)$ has a 4-precoloring extension, and
construct one if it exists.
\begin{enumerate}[(i)]
\item $G \setminus X_0$ is connected. \label{it:conn}
\item $S$ is connected and no vertex in $V(G) \setminus S$ is
  complete to $S$. \label{it:seed}
\item $Y_0 = V(G) \setminus (N(S) \cup X_0 \cup S)$. \label{it:y0}
\item No vertex $V(G) \setminus (Y_0 \cup X_0)$ is mixed on an edge of
  $Y_0$. \label{it:mixedy0}
\item If $|L_{S,f}(v)| = 1$ and $v \not\in S$, then $v \in X_0$; if
  $|L_{S,f}(v)| = 2$, then $v \in X$; if $|L_{S,f}(v)| = 3$, then $v \in Y$;
  and if $|L_{S,f}(v)| = 4$, then $v \in Y_0$. \label{it:lists}
\item There is a color $c \in \sset{1,2,3,4}$ such for every vertex
  $y \in Y$ with a neighbor in $Y_0$, $f(N(y) \cap S) =
  \sset{c}$. We let $L = \sset{1,2,3,4} \setminus \sset{c}$. \label{it:123star}
\item With $L$ as in \eqref{it:123star}, we let $Y_L^*$ be the subset
  of $Y_L$ of vertices that are in connected components of
  $G|(Y_0 \cup Y_L)$ containing a vertex of $Y_0$. Then no vertex of
  $Y \setminus Y_L^*$ has a neighbor in $Y_0 \cup Y_L^*$, and no vertex in $X$ 
is mixed  on an edge of $Y_0 \cup Y_L^*$. \label{it:mixedyl}
\item With $Y_L^*$ as in \eqref{it:mixedyl}, for every component $C$ of $G|(Y_0 \cup Y_L^*)$, there is a vertex $v$ in $X$ complete to $C$. \label{it:complete}
\end{enumerate}

We begin by establishing the first axiom. 
\begin{lemma} \label{lem:conn}
Given a 4-precoloring $(G, X_0, f)$ of a $P_6$-free
  graph $G$, there is an algorithm with running time $O(|V(G)|^{2})$
  that outputs a collection $\mathcal{L}$ of seeded precolorings such
  that:
\begin{itemize}
\item $|\mathcal{L}| \leq |V(G)|$;
\item every $P' \in \mathcal{L}$ is of the form
  $P' = (G|(V(C) \cup X_0), \emptyset, X_0, \emptyset, V(C), \emptyset, f)$ for a
component $C$ of $G \setminus X_0$;  
\item every $P' \in \mathcal{L}$ satisfies \eqref{it:conn} 
\item $(G, X_0, f)$ has a 4-precoloring extension if and only if each
  of the seeded precolorings $P' \in \mathcal{L}$ has a precoloring extension; and
\item given a precoloring extension for each of the seeded precolorings
  $P' \in \mathcal{L}$, we can compute a 4-precoloring extension for
  $(G, X_0, f)$ in polynomial time.
\end{itemize}
\end{lemma}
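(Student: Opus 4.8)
The plan is to observe that a proper coloring of $G$ extending the precoloring on $X_0$ is nothing more than an independent choice, for each connected component $C$ of $G \setminus X_0$, of a proper coloring of $G|(V(C) \cup X_0)$ that agrees with $f$ on $X_0$. Indeed, there are no edges between distinct components of $G \setminus X_0$, so the only interaction between the colorings of different components is through $X_0$, whose coloring is fixed by $f$; conversely, gluing together precoloring extensions for all the components along their common (precolored) part $X_0$ yields a well-defined coloring of all of $G$, which is proper because every edge of $G$ lies inside $G|(V(C) \cup X_0)$ for some component $C$.

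Concretely, the algorithm computes the connected components $C_1, \dots, C_m$ of $G \setminus X_0$ (this takes $O(|V(G)| + |E(G)|) = O(|V(G)|^2)$ time, and there are at most $|V(G)|$ of them), and for each $C_i$ outputs the seeded precoloring $P_i' = (G|(V(C_i) \cup X_0), \emptyset, X_0, \emptyset, V(C_i), \emptyset, f)$. One then checks the list of requirements in turn. The $7$-tuple $P_i'$ is a seeded precoloring since $f$ is a proper coloring of $(G|(V(C_i) \cup X_0))|X_0 = G|X_0$, the vertex set is partitioned as $\emptyset \cup X_0 \cup \emptyset \cup V(C_i) \cup \emptyset$, and these parts are pairwise disjoint. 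Axiom~\eqref{it:conn} requires $(G|(V(C_i)\cup X_0)) \setminus X_0 = G|V(C_i)$ to be connected, which holds by the choice of $C_i$ as a component. A precoloring extension of $P_i'$ is by definition a $4$-precoloring extension of $(G|(V(C_i)\cup X_0), X_0, f)$, i.e.\ a proper coloring of $G|(V(C_i)\cup X_0)$ agreeing with $f$ on $X_0$ — exactly the local objects in the paragraph above.

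It then remains to verify the equivalence and the reconstruction: $(G, X_0, f)$ has a $4$-precoloring extension iff each $P_i'$ has a precoloring extension. For the forward direction, restrict a global extension $g$ to $V(C_i) \cup X_0$; this is a proper coloring of the induced subgraph agreeing with $f$ on $X_0$. For the reverse direction, given precoloring extensions $g_i$ of $P_i'$ for all $i$, define $g$ on $V(G) = X_0 \cup \bigcup_i V(C_i)$ by $g(v) = f(v)$ if $v \in X_0$ and $g(v) = g_i(v)$ if $v \in V(C_i)$; since each $g_i$ agrees with $f$ on $X_0$, this is well-defined, and it is proper because any edge of $G$ has both endpoints either in $X_0$ (so $f$ handles it) or — since there are no edges between distinct $C_i$, and $X_0$-to-$C_i$ edges and within-$C_i$ edges all lie in $G|(V(C_i)\cup X_0)$ — inside some $G|(V(C_i)\cup X_0)$. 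This union can clearly be computed in linear time once the $g_i$ are given, establishing the last bullet.

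There is no real obstacle here; the only thing to be careful about is bookkeeping — matching the components of the generic $7$-tuple $\spc$ to the present degenerate situation ($S = X_0' \setminus X_0 = \emptyset$ etc.), confirming that ``precoloring extension of a seeded precoloring'' unwinds to precisely the notion of $4$-precoloring extension of $(G|(V(C)\cup X_0), X_0, f)$, and checking the $O(|V(G)|^2)$ running time bound (dominated by computing components and by the total size $\sum_i |V(G|(V(C_i)\cup X_0))| \le |V(G)| + |X_0|\cdot m = O(|V(G)|^2)$ of the output). If $X_0 = V(G)$ then $G \setminus X_0$ is empty, $\mathcal{L} = \emptyset$, and the statement holds vacuously (the precoloring $f$ is itself the answer); it may be worth noting this edge case explicitly.
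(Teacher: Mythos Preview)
Your proposal is correct and follows exactly the same approach as the paper: compute the connected components of $G \setminus X_0$, output one seeded precoloring per component, and use the trivial fact that a precoloring extension of $G$ decomposes into independent extensions over the components since the only shared vertices lie in the precolored set $X_0$. The paper's own proof is a two-sentence version of what you wrote; your additional bookkeeping (verifying the $7$-tuple is a valid seeded precoloring, the running-time estimate, the edge case $X_0 = V(G)$) is all fine and just makes explicit what the paper leaves implicit.
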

\begin{proof}
  For each connected component $C$ of $G \setminus X_0$, the algorithm
  outputs the seeded precoloring
  $(G|(V(C) \cup X_0), \emptyset, X_0, \emptyset, V(C), \emptyset,
  f)$.
  Since the coloring is fixed on $X_0$, it follows that $(G, X_0, f)$
  has a 4-precoloring extension if and only if the 4-precoloring on
  $X_0$ can be extended to every connected component $C$ of
  $G \setminus X_0$. This implies the statement of the lemma.
\end{proof}

The next lemma is used to arrange the following axioms, which we
restate:
\begin{enumerate}
\item[\eqref{it:seed}] $S$ is connected and no vertex in
  $V(G) \setminus S$ is complete to $S$.
\item[\eqref{it:y0}] $Y_0 = V(G) \setminus (N(S) \cup X_0 \cup S)$.
\end{enumerate}

\begin{lemma}
\label{lem:seedy0}
  There is a constant $C$ such that the following holds. Let
  $P = (G, \emptyset, X_0, \emptyset, Y_0, \emptyset, f)$ be a seeded
  precoloring of a $P_6$-free graph $G$ with $P$ satisfying
  \eqref{it:conn}. Then there is an algorithm with running time
  $O(|V(G)|^{C})$ that outputs an equivalent collection $\mathcal{L}$
  for $P$ such that
\begin{itemize}
\item $|\mathcal{L}| \leq |V(G)|^{C}$;
\item every $P' \in \mathcal{L}$ is a normal subcase of $G$;
\item every $P' = (G', S', X_0', X', Y_0', Y', f') \in \mathcal{L}$
  with seed $S'$ satisfies $|S'| \leq C$; and
\item every $P' \in \mathcal{L}$ satisfies \eqref{it:conn},
  \eqref{it:seed} and \eqref{it:y0}.
\end{itemize}
Moreover, for every $P' \in \mathcal{L}$, given a precoloring
extension of $P'$, we can compute a precoloring extension for $P$ in
polynomial time.
\end{lemma}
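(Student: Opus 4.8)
The plan is to obtain $\mathcal{L}$ by repeatedly \emph{guessing} (exhaustively enumerating) a bounded number of vertices of $Y_0$, together with their colours, moving them into the seed, and arguing that the resulting normal subcase is closer to satisfying \eqref{it:seed} and \eqref{it:y0}. Throughout I would keep the deleted set $Z$ empty and never enlarge $X_0$, so that $G'\setminus X_0'=G\setminus X_0$ and \eqref{it:conn} is inherited for free; and once the new seed $S'$ has been chosen I would be \emph{forced} to take $Y_0'$ to be the set of vertices of $V(G')$ outside $S'\cup X_0'$ with no neighbour in $S'$, putting every remaining vertex into $X'$ (the split between $X'$ and $Y'$ is irrelevant for this lemma), so that \eqref{it:y0} holds by construction. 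Hence the real task is to produce, for $P$, a polynomially sized family of pairs $(S',c)$ with $S'\subseteq Y_0$ connected, $|S'|\le C$, $c$ a colouring of $S'$ with $f\cup c$ proper on $G|(S'\cup X_0)$, and no vertex of $V(G)\setminus S'$ complete to $S'$, such that the resulting seeded precolorings form an equivalent collection for $P$.

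First I would construct a candidate seed greedily: starting from $S':=\{v\}$ for a vertex $v\in Y_0$, as long as some $u\in Y_0\setminus S'$ is complete to $S'$, add $u$ to $S'$. Each vertex added after $v$ is adjacent to all vertices added before it, so $G|S'$ stays connected and the vertices added after $v$ form a clique of $G$; consequently, once $|S'|$ reaches $6$ we have found a $K_5$ in $G$, so $(G,X_0,f)$ has no $4$-precoloring extension and the algorithm may output $\mathcal{L}=\emptyset$. Otherwise the process halts with $|S'|\le 5$ and with no vertex of $Y_0\setminus S'$ complete to $S'$. Enumerating over the starting vertex $v$ and over all $\le 4^{5}$ colourings of the resulting $S'$ gives polynomially many branches, and in the branch where $c$ agrees with a given precoloring extension of $P$ that extension extends $f\cup c$; so the only thing still to arrange is that no vertex of $X_0$ be complete to $S'$ either, and that the collection be equivalent.

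The step I expect to be the main obstacle is exactly this last point: \eqref{it:seed} forbids vertices of $X_0$ from being complete to $S'$, yet $S'$ may not meet $X_0$. I would handle it by a second round of guessing: while some $w\in X_0$ is complete to the current seed, add to $S'$ a vertex of $Y_0\setminus S'$ that is adjacent to $S'$ (to keep $G|S'$ connected) but not to $w$; the same clique/$K_5$ bound, now applied in $G$, keeps $|S'|=O(1)$. The genuinely delicate subcase is when some precoloured vertex $w$ is complete to \emph{all} of $Y_0$, so that no such breaking vertex exists: then $w$ dominates the connected graph $G\setminus X_0$ and the colour $f(w)$ is forbidden on all of it, and I would treat this together with the case of small $Y_0$ by a direct argument — a case analysis on how many distinct colours the vertices of $X_0$ dominating $Y_0$ jointly forbid there, which, with $P_6$-freeness, either bounds $G\setminus X_0$ (so that the colourings of $Y_0$ can be brute-forced, or $P$ itself already serves) or rules out a $4$-precoloring extension. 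Finally, $|\mathcal{L}|\le |V(G)|^{C}$ and $|S'|\le C$ hold because each stage adds $O(1)$ vertices and branches over $O(|V(G)|)$ vertices and $O(1)$ colours; and the equivalence holds because a precoloring extension of any produced subcase is, by construction, a precoloring extension of $P$ restricting to $f$ on $X_0$, while conversely any precoloring extension of $P$ restricts, on the consistently enlarged seed of the appropriate branch, to a precoloring extension of that subcase — and all these maps are computable in polynomial time.
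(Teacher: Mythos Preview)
Your high-level plan—greedily grow a seed until nothing is complete to it, then branch over its colourings—is exactly the paper's idea, but you create an unnecessary difficulty by insisting that the greedy process add only vertices of $Y_0$. The paper instead runs the very same process over \emph{all} of $V(G)\setminus S'$ (including $X_0$): since every vertex added is complete to the previous ones, the final $S$ is a clique, hence $|S|\le 4$ or $G$ contains a $K_5$; and because the process stops only when no vertex of $V(G)\setminus S$ is complete to $S$, axiom \eqref{it:seed} holds immediately, with no second round needed. Moving a few $X_0$-vertices into $S$ is harmless for \eqref{it:conn}: one sets $X_0'=X_0\setminus S$, and since every vertex of $X_0\cap S$ is adjacent to the starting vertex $v\in Y_0$, the graph $G\setminus X_0'$ is still connected. (Two small side remarks: every vertex of $S'$, including $v$, belongs to the clique, so $|S'|\ge 5$ already forces a $K_5$; and the paper picks a single starting vertex $v$ rather than enumerating over it.)

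The genuine gap in your write-up is the second round. Once you start adding vertices $z$ that are merely adjacent to $S'$ (to sever some $w\in X_0$), the set $S'$ is no longer a clique, so the sentence ``the same clique/$K_5$ bound, now applied in $G$, keeps $|S'|=O(1)$'' is unsupported: after adding $z_1,\dots,z_k$ to break $w_1,\dots,w_k$, a new $w_{k+1}\in X_0$ complete to $S'_0\cup\{z_1,\dots,z_k\}$ need not be adjacent to the earlier $w_i$'s, and no clique of growing size is produced. You would need a separate argument to bound $k$, and none is given. The clean fix is simply to drop the restriction to $Y_0$ in the greedy step; then your ``delicate subcase'' and the whole second round disappear.
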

\begin{proof}
  If $|V(G) \setminus X_0| \leq 5$, we enumerate all possible colorings. Now let
  $v \in V(G) \setminus X_0$, and let $S' = \sset{v}$. While there is
  a vertex $w$ in $V(G) \setminus S'$ complete to $S'$, we add $w$ to
  $S'$. Let $S$ denote the set $S'$ when this procedure terminates.
  If either $|S| \geq 5$ or
  $(G|(S \cup X_0), \emptyset, X_0, S, \emptyset, \emptyset, f)$ has
  no precoloring extension, then we output that $P$ has no precoloring
  extension. Otherwise, we construct $\mathcal{L}$ as follows. For
  every proper coloring $f'$ of $G|S$ such that $f \cup f'$ is a
  proper coloring of $G|(S \cup X_0)$, we add
  $$P' = (G, S, X_0 \setminus S, N(S) \setminus X_0, V(G) \setminus
  (X_0 \cup S \cup N(S)), \emptyset, f \cup f')$$
  to $\mathcal{L}$. Since $|S| \leq 4$, it follows that the first
  three bullets hold, and \eqref{it:y0} holds for $P'$ by the
  definition of $P'$. Since $X_0$ is unchanged, it follows that
  \eqref{it:conn} holds. Since $S$ is a maximal clique, we have that
  \eqref{it:seed} holds for $P'$. This concludes the proof.
\end{proof}

The next four lemmas are technical tools that we use several times in the 
course of the proof. They are used to show that if we start with a seeded 
precoloring that has certain properties, and then move to its normal subcase, 
then these properties are preserved (or at least can be restored with a simple 
modification).

For a seeded precoloring $P=\spc$, a {\em type} is a subset of $S$.
For  $v \in V(G) \setminus (S \cup X_0)$, 
the \emph{type of $v$}, denoted by
$T_P(v) = T_S(v)$, is
$N(v) \cap S$. For a
type $T$ and a set $A$, we let $A(T) = \sset{v \in A: T_P(v) = T}$.

\begin{lemma}
\label{types}
Let $P=\spc$ be a seeded precoloring of a $P_6$-free graph $G$ satisfying
\eqref{it:seed} and \eqref{it:y0}, and let
Let $T,T' \subseteq S$ with $|f(T)|=|f(T')|=1$ and such that $f(T) \neq f(T')$.
Let $y,y' \in N(Y_0)$ such that $T(y)=T$ and $T(y')=T'$. Let $z,z' \in Y_0$
be such that $yz$ and $y'z'$ are edges, and suppose that $z$ is non-adjacent to $z'$ and that $y$ is non-adjacent to $y'$. Then either $yz'$ or $y'z$ is an edge.
\end{lemma}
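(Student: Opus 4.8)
The plan is to argue by contradiction: assume that neither $yz'$ nor $y'z$ is an edge, and build an induced path on at least six vertices, contradicting that $G$ is $P_6$-free. (Recall that an induced path on more than six vertices contains an induced $P_6$ on six consecutive vertices, so $P_6$-freeness forbids induced paths of every length $\geq 6$.)

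First I would record a few easy structural facts. Since $|f(T)|=|f(T')|=1$, both $T$ and $T'$ are non-empty, and since $f(T)\neq f(T')$ and both are singletons, $f(T)\cap f(T')=\emptyset$; in particular a vertex of $T\cap T'$ would receive two distinct colors, so $T\cap T'=\emptyset$. Also, if $z=z'$ then $yz'=yz\in E(G)$ and we are done, so we may assume $z\neq z'$; note that $y\neq y'$ automatically, since $T_S(y)=T\neq T'=T_S(y')$. By axiom \eqref{it:y0}, $z$ and $z'$ have no neighbor in $S$; and $y,y'$ lie in $V(G)\setminus(S\cup X_0)$ (they have types) and outside $Y_0$ (their type is non-empty), so $z,z',y,y'$ together with any chosen vertices of $S$ will be pairwise distinct.

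The core step is to route a path through the seed. Since $G|S$ is connected by axiom \eqref{it:seed} and $T,T'$ are non-empty and disjoint, choose a \emph{shortest} path $s_0-s_1-\cdots-s_k$ in $G|S$ with $s_0\in T$ and $s_k\in T'$; then $k\geq 1$, the path is induced, no $s_i$ with $i\geq 1$ lies in $T$, and no $s_i$ with $i\leq k-1$ lies in $T'$. I claim that $Q:=z-y-s_0-s_1-\cdots-s_k-y'-z'$ is an induced path. The consecutive adjacencies hold because $s_0\in T=N(y)\cap S$, $s_k\in T'=N(y')\cap S$, the $s_i$ form a path in $G|S$, and $yz,y'z'\in E(G)$. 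For the non-adjacencies: $z$ and $z'$ have no neighbor in $S$, so they are anticomplete to $\{s_0,\dots,s_k\}$; $y$ is non-adjacent to every $s_i$ with $i\geq 1$ (since $s_i\notin T$) and $y'$ is non-adjacent to every $s_i$ with $i\leq k-1$ (since $s_i\notin T'$); the $s_i$ are pairwise non-adjacent at distance $\geq 2$ because the path is shortest; and the pairs $zz'$, $yy'$ are non-edges by hypothesis while $yz'$ and $y'z$ are non-edges by the contradiction assumption. Hence $Q$ is an induced path on $k+5\geq 6$ vertices, contradicting that $G$ is $P_6$-free. This completes the proof.

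The only step needing care is the choice of the path through $S$: one must take a genuinely shortest $T$--$T'$ path, so that it is induced and its interior (and endpoints, together with $T\cap T'=\emptyset$) avoids $T\cup T'$, and so that $k\geq1$, which is exactly what forces $Q$ to have at least six vertices. Everything else is a routine check of the adjacency pattern of $Q$ against the hypotheses and axioms \eqref{it:seed} and \eqref{it:y0}.
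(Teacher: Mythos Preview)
Your proof is correct and follows essentially the same approach as the paper: assume both $yz'$ and $y'z$ are non-edges, use $T\cap T'=\emptyset$ to force the connecting path through $S$ to have at least two vertices, and then observe that $z-y-\cdots-y'-z'$ is an induced path on at least six vertices. The paper phrases it as a shortest $y$--$y'$ path with interior in $S$ rather than a shortest $T$--$T'$ path in $G|S$, but these amount to the same construction; your version is simply more explicit in verifying the non-adjacencies.
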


\begin{proof}
Suppose both the pairs  $yz'$ and $y'z$ are non-adjacent. Since $P$ satisfies
\eqref{it:seed} and \eqref{it:y0}, it follows that $G|S$ is connected
and both $y,y'$ have neighbors in $S$. 
Let $Q$ be a shortest path from $y$ to $y'$ with interior in $S$.
Since $|f(T)|=|f(T')=1$ and $f(T) \neq f(T')$, it follows that 
$T \cap T' = \emptyset$, and so $|Q^*|>1$. But now $z-y-Q-y'-z'$ is 
a path of length at least six in $G$, a contradiction. This proves
Lemma~\ref{types}. 
\end{proof}

\begin{lemma}
\label{sublem:s'Y0}
Let $P=\spc$ be a seeded precoloring of a $P_6$-free graph satisfying
\eqref{it:seed}, \eqref{it:y0} and \eqref{it:mixedy0}, and let 
$P' = (G', S', X_0', X', Y_0', Y', f')$ be a 
normal subcase of $P$ satisfying \eqref{it:y0}. 
Then no $v \in Y_0 \setminus (S' \cup Y_0')$ has both a neighbor 
in $S'$ and a neighbor in $Y_0'$.
\end{lemma}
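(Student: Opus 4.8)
The plan is to derive a contradiction by exhibiting a $P_6$ in $G$, using the two neighbors of $v$ together with the path structure forced by axioms \eqref{it:seed} and \eqref{it:y0}. Suppose for contradiction that some $v \in Y_0 \setminus (S' \cup Y_0')$ has a neighbor $s \in S'$ and a neighbor $z \in Y_0'$. Since $P'$ is a normal subcase of $P$, we have $S' \subseteq S \cup X \cup Y_0 \cup Y$ and $G|S'$ is connected; moreover, since $P'$ satisfies \eqref{it:y0}, $Y_0' = V(G') \setminus (N(S') \cup X_0' \cup S')$, so $z$ has no neighbor in $S'$ and $z \notin S'$.

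First I would locate $v$ relative to the original seed $S$. Since $v \in Y_0 = V(G) \setminus (N(S) \cup X_0 \cup S)$ by \eqref{it:y0} for $P$, the vertex $v$ has no neighbor in $S$; in particular $s \notin S$, so $s \in S' \setminus S \subseteq X \cup Y_0 \cup Y$. I would then use the fact that $G|S'$ is connected together with $S \subseteq S'$ to find a shortest path $Q$ from $s$ to $S$ with all vertices (except the endpoint in $S$) inside $S' \setminus S$; actually what I really want is a shortest path $Q$ inside $G|(S' \cup \{v\})$ from $v$ to some vertex of $S$, but since $v \notin S'$ I instead take $Q$ a shortest path from $s$ to $S$ within $G|S'$, so $v - s - Q - (\text{vertex of } S)$ has length at least... here I need to be careful about how long this is. The key point will be to also use the neighbor $z \in Y_0'$ on the other side: $z - v - s - Q$. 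Since $z$ is anticomplete to $S$ (as $z \in Y_0$, by \eqref{it:y0} for $P$ — wait, $z \in Y_0'$ of $G'$, but $G'$ is an induced subgraph of $G$ and $Y_0' \subseteq Y_0$ by the normal-subcase conditions since $S' \supseteq S$ and $X_0' \supseteq X_0$), $z$ has no neighbor in $S$ either. So the path $z - v - s - Q - (\text{endpoint in } S)$ is induced provided $z$ is not adjacent to any interior vertex, which I'd need to check using \eqref{it:mixedy0} or minimality of $Q$.

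The cleanest route, which I would pursue as the main line: note that $v, z \in Y_0$ and $vz \in E(G)$, so $vz$ is an edge of $Y_0$. By \eqref{it:mixedy0}, no vertex of $V(G) \setminus (Y_0 \cup X_0)$ is mixed on the edge $vz$. Now $s \in S'$; if $s \notin Y_0 \cup X_0$, then $s$ is adjacent to $v$ and hence to $z$, contradicting that $z$ has no neighbor in $S'$. Hence $s \in Y_0 \cup X_0$; since $s \in S' \setminus S$ and $S'$-vertices play the role of seed vertices, and since $v$ has no neighbor in $S$, I can instead walk from $s$ back toward $S$ through $S'$: let $s = s_0 - s_1 - \cdots - s_k$ be a shortest path in $G|S'$ with $s_k \in S$ and $s_0, \dots, s_{k-1} \notin S$. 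Each $s_i$ with $i \geq 1$ is adjacent to $s_{i-1}$; I claim each such $s_i$ lies in $Y_0 \cup X_0$ as well or else it would again be mixed on $vz$ — but this needs $s_i$ adjacent to $v$, which only holds for $i=1$, so I'd need to iterate Lemma~\ref{types}-style arguments along the path.

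The hard part will be controlling the length and inducedness of the connecting path $Q$ through $S'$ so that appending $z - v - s$ on one end and a $Y_0$-neighbor on the other yields a path of length at least six. I expect the intended argument is shorter: since $s \in S' \setminus S \subseteq X \cup Y_0 \cup Y$ and $v$ is anticomplete to $S$, while $G|S$ is connected, there is a shortest path $R$ from $s$ to $S$ with $R^* \subseteq S' \setminus S$ of length $\geq 1$; then $z - v - s - R$ already has length $\geq 4$, and since $G|S$ is connected with $|S| \geq 1$ we can extend by one or two vertices of $S$ plus one more $Y_0$-neighbor (using that $V(G) \setminus S$ contains a vertex with a neighbor in $S$ via \eqref{it:seed}) — but really the slick finish is: $s$ has a neighbor in $S$ (being in $S'$ with $S'$ connected and $S \subseteq S'$, $s \neq$ things), pick $t \in S \cap N(s)$; then $t$ is anticomplete to $\{v,z\}$; we get the induced path $z-v-s-t$ of length $4$, and since no vertex of $V(G)\setminus S$ is complete to $S$ but we only have one vertex of $S$... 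I would then reach for the full connectivity of $G \setminus X_0$ or re-invoke the normal-subcase hypothesis that every vertex in $X_0' \cap Y_0$ has a neighbor in $S'$, to find a sixth vertex. So the genuine obstacle is getting from a $P_4$ to a $P_6$, and I anticipate the resolution uses Lemma~\ref{types} applied to $y = s$, $z$ and a second such configuration, or a direct shortest-path argument inside $G|S'$ whose length is bounded below because $s \notin S$ forces $\operatorname{dist}(s, S) \geq 1$ and the two pendant $Y_0$-edges contribute the remaining length. I would write it up by first handling $s \in Y_0 \cup X_0$ versus $s$ not, then in the remaining case building the shortest $s$-to-$S$ path in $G|S'$ and verifying with \eqref{it:mixedy0} and minimality that $z-v-s-Q-S$ contains an induced $P_6$.
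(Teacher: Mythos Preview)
Your ``cleanest route'' is the right idea and is essentially what the paper does, but you do not finish it, and the $P_6$-hunting detour is a dead end. The genuine gap is the iteration step: once you have shown that $s = s_0 \in Y_0$ (because otherwise $s \in X \cup Y$ would be mixed on the edge $vz$, contradicting \eqref{it:mixedy0}), you cannot reapply the same argument to $s_1$, since $s_1$ need not be adjacent to $v$. The fix is to stop tracking the single edge $vz$ and instead track the whole connected $Y_0$-segment.

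Concretely: take a path $Q$ from $v$ to a vertex of $S$ with $Q^{*} \subseteq S'$ (this exists since $G|S'$ is connected and $v$ has a neighbor in $S'$). Let $R$ be the maximal initial subpath of $Q$ with $V(R) \subseteq Y_0$, and let $t$ be the unique vertex of $Q$ immediately after $R$. Then $t$ has a neighbor in $Y_0$, so $t \notin S$ by \eqref{it:y0} for $P$; and $t \notin Y_0$ by choice; hence $t \in (S' \setminus S) \setminus Y_0 \subseteq X \cup Y$. Now $V(R) \cup \{z\}$ is a connected subset of $Y_0$ (it is the path $R$ together with $z$, which is adjacent to $v \in V(R)$). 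The vertex $t$ is adjacent to its predecessor in $R$ but, since $t \in S'$ and $z \in Y_0'$, is non-adjacent to $z$. So $t$ is mixed on the connected set $V(R) \cup \{z\} \subseteq Y_0$, and by Lemma~\ref{lem:mixed} is mixed on an edge of $Y_0$, contradicting \eqref{it:mixedy0}. No $P_6$ is ever produced; the contradiction is purely with axiom \eqref{it:mixedy0}.
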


\begin{proof}
Suppose such $v$ exists.  Let $y \in Y_0$ be a neighbor of $v$.
Since $P'$ is a normal subcase of $P$, 
$P'$ satisfies \eqref{it:seed}.
Since $v$ has both a neighbor in $Y_0'$ and 
a neighbor in $S'$, and since $P'$ satisfies \eqref{it:y0}, it follows that 
$v \in X' \cup Y' \cup X_0'$.  
Since $v \in Y_0$, it follows that $v$ is anticomplete to 
$S$. Therefore $v$ has a 
neighbor in  $S' \setminus S \subseteq X \cup Y \cup Y_0$.
Since $P'$ satisfies \eqref{it:seed}, there is  a path $Q$ from $v$ to a 
vertex  $s$ of  $S$ with $Q^* \subseteq S'$. Then $V(Q) \setminus \{v\}$ is 
anticomplete to $Y_0'$.  Let  $R$ be the maximal 
subpath of $v-Q-s$, with  $v \in V(R)$, such that $V(R) \subseteq Y_0$. Then 
$s \not \in V(R)$, and there is a unique vertex $t \in V(Q) \setminus V(R)$ 
with a neighbor in $V(R)$. Since $t \in N(Y_0)$, it follows that 
$t \not \in S \cup Y_0$, and so 
$t \in X \cup Y$.
 But  $t$ is mixed on $V(R) \cup \{y\} \subseteq Y_0$,
contrary to the fact that $P$ satisfies \eqref{it:mixedy0}.
This proves Lemma~\ref{sublem:s'Y0}.

\end{proof}

\begin{lemma} 
  \label{sublem:it:conn} 
There is a constant $C$ such that the following holds. Let $P = \spc$ be a seeded precoloring of a $P_6$-free graph $G$ with $P$ satisfying  \eqref{it:conn},
and let $P' = (G', S', X_0', X', Y_0', Y', f')$ be a normal subcase of $P$
satisfying \eqref{it:y0} and \eqref{it:mixedy0}. 
Then there is an algorithm with running time $O(|V(G)|^{C})$ that outputs 
an equivalent collection $\mathcal{L}$ for $P'$, such that
$|\mathcal{L}| \leq 1$, and if $\mathcal{L}=\{P''\}$, then
\begin{itemize}
\item there is $Z \subseteq Y_0'$ such that
$P''=(G' \setminus Z, S',X_0',Y_0' \setminus Z, Y',f)$ and
$P''$ is a normal subcase of $P'$;
\item $P''$ satisfies \eqref{it:conn}---\eqref{it:mixedy0};
\item if $P'$ satisfies \eqref{it:lists}, then $P''$ satisfies \eqref{it:lists}.
\end{itemize}
Moreover, given a precoloring extension of $P''$, we can compute a precoloring extension for $P$ in polynomial time.
\end{lemma}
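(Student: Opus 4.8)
The plan is to handle the fact that passing to a normal subcase can destroy connectivity of $G' \setminus Z$ only through vertices that were put into $Y_0'$; we want to delete a set $Z \subseteq Y_0'$ so that what remains is connected, and argue that deleting such vertices loses no information. Concretely, I would first consider the graph $H = G' \setminus X_0'$ and look at its connected components. I would like to show that exactly one component is ``relevant'', namely the one containing $S'$ (which is non-empty and connected since $P'$ satisfies \eqref{it:seed} as a normal subcase of $P$), and that every vertex of every other component lies in $Y_0'$; then we set $Z$ to be the union of all components of $H$ other than the one containing $S'$, and output $P'' = (G' \setminus Z, S', X_0', X', Y_0' \setminus Z, Y', f')$. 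If the component containing $S'$ is all of $H$ already, output $\mathcal{L} = \{P'\}$; if some ``other'' component contains a vertex not in $Y_0'$ — this should not happen, but if the bookkeeping forces it — we may simply output $\mathcal{L}=\{P'\}$ as well, since the claim only asserts $|\mathcal{L}| \le 1$.

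The key step is to prove that every component of $H = G' \setminus X_0'$ other than the one meeting $S'$ is entirely contained in $Y_0'$. Let $D$ be such a component. A vertex $v \in V(D)$ that is not in $Y_0'$ has, since $P'$ satisfies \eqref{it:y0}, either $v \in S'$ (impossible, as $S'$ lies in a different component) or a neighbour in $S'$ (also impossible, as $S'$ is in a different component of $H$ and $v \notin X_0'$) or $v \in X_0'$ (excluded since $v \in V(H)$). Wait — \eqref{it:y0} says $Y_0' = V(G') \setminus (N(S') \cup X_0' \cup S')$, so $v \notin Y_0'$ forces $v \in N(S') \cup X_0' \cup S'$; since $v \in V(H) = V(G') \setminus X_0'$ we get $v \in N(S') \cup S'$, i.e. $v$ has a neighbour in $S'$ or $v \in S'$, and either way $v$ is in the same $H$-component as $S'$, contradiction. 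Hence $V(D) \subseteq Y_0'$, as desired. This shows $Z \subseteq Y_0'$, and that $P''$ is obtained from $P'$ by deleting a subset of $Y_0'$, so $P''$ is a normal subcase of $P'$ (with all the other seed/colouring data unchanged), and by transitivity of ``normal subcase'' it is a normal subcase of $P$.

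Next I would verify $P''$ satisfies the required axioms. Axiom \eqref{it:conn} holds by construction: $G'' \setminus X_0' = G' \setminus (X_0' \cup Z)$ is the single component of $H$ containing $S'$, hence connected. Axiom \eqref{it:seed} is unchanged since $S'$, and the set of vertices complete to $S'$ among $V(G'') \setminus S' \subseteq V(G') \setminus S'$, only shrinks. Axiom \eqref{it:y0}: we need $Y_0'' = V(G'') \setminus (N_{G''}(S') \cup X_0' \cup S')$; since every deleted vertex was in $Y_0'$ and deleting vertices of $Y_0'$ changes neither $N(S')$ nor $S'$ nor $X_0'$ within $G''$, and $Y_0'' = Y_0' \setminus Z$, this follows from \eqref{it:y0} for $P'$. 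Axiom \eqref{it:mixedy0}: edges of $Y_0''$ are edges of $Y_0'$, and no vertex of $V(G'') \setminus (Y_0'' \cup X_0') \subseteq V(G') \setminus (Y_0' \cup X_0')$ is mixed on such an edge, since $P'$ satisfies \eqref{it:mixedy0}. For \eqref{it:lists}: the list $L_{P''}(v) = L_{S',f'}(v)$ depends only on $N(v) \cap S'$, which is unchanged, and every deleted vertex was in $Y_0'$, so if $P'$ satisfies the list-partition axiom then so does $P''$ on its (smaller) vertex set.

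The equivalence and reconstruction claims are straightforward: a precoloring extension of $P'$ restricts to one of $P''$ (restrict to $V(G'') \setminus (S' \cup X_0')$, still proper), and conversely, given a precoloring extension $g$ of $P''$, we extend it to $Z$ as follows — each component $D$ of $G'|Z$ is a connected component of $G' \setminus X_0'$ with $V(D) \cap (S' \cup X_0') = \emptyset$ and, since $P'$ satisfies \eqref{it:y0}, $D$ is anticomplete to $S'$ and its only neighbours outside $V(D)$ lie in $X_0'$; so colouring $G'|(V(D) \cup N_{G'}(V(D)))$ amounts to extending the fixed colouring of a subset of $X_0'$ to the $P_6$-free graph $G'|(V(D) \cup (N_{G'}(V(D)) \cap X_0'))$, which we can do, if possible, by an independent recursive or direct call — but actually the cleanest statement is that we absorb each such $D$ into a sub-instance and note it only needs to be consistent with $f'|_{X_0'}$, which is already fixed; if any such $D$-instance is infeasible then $P'$ (and $P''$) had no extension in the first place, so this case does not arise when an extension of $P''$ is given. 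Composing $g$ with these completions and with the reconstruction map from $P' \to P$ (which exists since $P'$ is a normal subcase of $P$ satisfying what we need, or is given by assumption) yields a precoloring extension of $P$ in polynomial time. The running-time bound $O(|V(G)|^C)$ is immediate since we only compute connected components and make a bounded number of colouring checks via Theorem~\ref{Edwards} or Lemma~\ref{lem:conn}-style arguments.

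The main obstacle I anticipate is getting the reconstruction of a precoloring extension of $P$ (not just of $P'$) correct: we must be careful that deleting $Z$ and then re-extending actually interacts well with the normal-subcase reconstruction from $P'$ back to $P$, and in particular that the components of $G'|Z$ really are isolated from the rest of $G'$ except through $X_0'$, which is exactly what \eqref{it:y0} gives us. A secondary subtlety is whether we might be forced to output $\mathcal{L} = \emptyset$ (when some $D$-instance is infeasible and hence $P'$ has no extension) versus $\mathcal{L} = \{P''\}$ — the lemma allows $|\mathcal{L}| \le 1$, so declaring infeasibility in that case is permitted, but I would double-check that this detection can be done in polynomial time, which it can, again by Theorem~\ref{Edwards}.
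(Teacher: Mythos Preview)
Your overall plan is the same as the paper's: show the ``other'' components of $G'\setminus X_0'$ lie in $Y_0'$, delete them, and check each one separately for feasibility. The structural arguments (that such components lie in $Y_0'$, and that the axioms persist after deletion) are fine.

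The genuine gap is the feasibility check. You write that deciding whether a deleted component $D$ can be coloured (extending $f'|_{X_0'}$) ``can be done in polynomial time, which it can, again by Theorem~\ref{Edwards}''. But Theorem~\ref{Edwards} handles lists of size at most two, while vertices of $D\subseteq Y_0'$ are anticomplete to $S'$ and hence have $L_{P'}(v)=[4]$. As stated, your check is a $4$-precoloring extension instance on a $P_6$-free graph --- exactly the problem the whole paper is building machinery to solve --- so this step is circular. (Your earlier remark ``by an independent recursive or direct call'' has the same defect.) Relatedly, your sentence ``if any such $D$-instance is infeasible then $P'$ (and $P''$) had no extension'' is only half-true: $P'$ would have no extension, but $P''$ may well have one, so you must actually perform the check before outputting $\mathcal{L}=\{P''\}$.

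The paper closes this gap with one extra observation you are missing. Because $P$ satisfies \eqref{it:conn}, each such component $C$ has a neighbour $x\in X_0'\setminus X_0$; by Lemma~\ref{sublem:s'Y0} this $x$ lies in $X\cup Y$, and then \eqref{it:mixedy0} for $P'$ together with Lemma~\ref{lem:mixed} forces $x$ to be complete to $V(C)$. Hence every vertex of $C$ avoids the colour $f'(x)$, and the feasibility question for $C$ becomes a $3$-precoloring extension problem, solvable in polynomial time by Theorem~\ref{3colP7}. Once you add this step, your argument matches the paper's.
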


\begin{proof}
   Since $P'$ is a normal subcase of
$P$, it follows that $P'$ satisfies \eqref{it:seed}.
We may assume that $P'$ does not satisfy \eqref{it:conn}, for otherwise we can set $\mathcal{L}=\{P'\}$.
  Now let $C$ be a connected component of
  $G' \setminus X_0'$ with $S' \cap V(C) = \emptyset$. 
It follows that $V(C) \subseteq Y_0'$ and $C$ is a
component of $G|Y_0'$.

Let  $x \in N(V(C)) \cap (X_0' \setminus X_0)$. Since $P$ satisfies
  \eqref{it:conn}, such a vertex $x$ exists.
By  Lemma~\ref{sublem:s'Y0},
$x \in X \cup Y$. Since $P'$ satisfies \eqref{it:mixedy0},
it follows from Lemma~\ref{lem:mixed} that 
$x$ is complete to $V(C)$. Let $f'(x)=c$.
Then in every precoloring extension $d$ of $P'$ we have $d(v) \neq c$
for every $v \in V(C)$.

 Let $A = \sset{v \in X_0': f'(v) \neq c}$. By 
Theorem~\ref{3colP7}
  and since $G$ is $P_6$-free, we can decide in polynomial time if
  $(G'|(V(C) \cup A), A, f'|_A)$ has a precoloring extension
  with colors in $\sset{1,2,3,4} \setminus \sset{c}$. If not, then $P'$
  has no precoloring extension, and we set $\mathcal{L}=\emptyset$. If
  $(G'|(V(C) \cup A), A, f'|_A)$ has a precoloring
  extension using only colors in $\sset{1,2,3,4} \setminus \sset{c}$, then
  $P'$ has a precoloring extension  if and only if
  $(G' \setminus V(C), S',X_0',X',Y_0' \setminus V(C), Y',f')$   does. 

We repeat this process a polynomial number of times until
  $G' \setminus X_0'$ is connected, and output the resulting
  seeded precoloring $P''=(G'',S',X_0',X',Y_0'',Y',f')$ 
satisfying \eqref{it:conn}.  Since $Y_0'' \subseteq Y_0'$, and the
other sets of $P''$ remain the same as in $P'$, it follows that the
$P''$ satisfies \eqref{it:seed}--\eqref{it:mixedy0}, and if
$P'$ satisfies \eqref{it:lists}, then so does $P''$. 
This proves Lemma~\ref{sublem:it:conn}.
\end{proof}

\begin{lemma}
\label{sublem:it:mixedyl}
Let $P=\spc$ be a seeded precoloring of a $P_6$-free graph satisfying
\eqref{it:seed}, \eqref{it:y0} and \eqref{it:mixedy0}, and let 
$P' = (G', S', X_0', X', Y_0', Y', f')$ be a 
normal subcase of $P$ satisfying \eqref{it:y0}. Then  
$P'$  satisfies  \eqref{it:mixedy0}.
Moreover, if $P$ satisfies \eqref{it:123star}, then $P'$  satisfies  
\eqref{it:123star}.
\end{lemma}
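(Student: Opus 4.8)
The plan is to prove the two assertions separately, in both cases arguing by contradiction and producing a $P_6$ in $G$, using that $P'$ is a normal subcase of $P$ and hence $S\subseteq S'$, $P'$ satisfies \eqref{it:seed}, and $Y_0'$ consists of vertices with no neighbor in $S'$ (by \eqref{it:y0} for $P'$). The key structural fact to keep in mind is that $Y_0'\subseteq Y_0$: indeed a vertex of $Y_0'$ is anticomplete to $S'\supseteq S$, so it lies outside $N(S)\cup S\cup X_0$ (note $X_0\subseteq S'\cup X_0'$ and a vertex of $Y_0'$ is not in $X_0'$ either, and one checks it is not in $X_0$), hence in $Y_0$ by \eqref{it:y0} for $P$. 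Similarly every edge of $Y_0'$ is an edge of $Y_0$.

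First I would handle \eqref{it:mixedy0} for $P'$. Suppose some $v\in V(G')\setminus(Y_0'\cup X_0')$ is mixed on an edge $xy$ of $Y_0'$; by Lemma~\ref{lem:mixed} we may take $v$ adjacent to $x$ and not to $y$. Since $xy$ is also an edge of $Y_0$, and since $P$ satisfies \eqref{it:mixedy0}, we must have $v\in Y_0\cup X_0$; as $v\notin Y_0'$ and $Y_0'\subseteq Y_0$ leaves room, the interesting case is $v\in Y_0\setminus Y_0'$ (the case $v\in X_0\subseteq S'\cup X_0'$ forces $v\in S'$, and then $v$ is a neighbor in $S'$ of $x\in Y_0'$, contradicting \eqref{it:y0} for $P'$). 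So $v\in Y_0\setminus(S'\cup Y_0')$; since $v$ is adjacent to $x\in Y_0'$, by Lemma~\ref{sublem:s'Y0} $v$ has no neighbor in $S'$. Since $v\notin S'\cup X_0'$ and $v$ is not in $Y_0'$, we get $v\in X'\cup Y'$, so $v$ has a neighbor in $S'$ — because $P'$ satisfies \eqref{it:seed} is not quite enough; rather, $v$ being in $X'\cup Y'$ but $Y_0'$ being exactly the non-neighbors of $S'$ forces $v\in N(S')$, contradiction. Hence no such $v$ exists and \eqref{it:mixedy0} holds for $P'$.

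For the "moreover" part, assume $P$ satisfies \eqref{it:123star}, with color $c$ and $L=\{1,2,3,4\}\setminus\{c\}$. The claim is that the same $c$ witnesses \eqref{it:123star} for $P'$. Let $y\in Y'$ have a neighbor $z\in Y_0'$; I must show $f'(N(y)\cap S')=\{c\}$. Since $z\in Y_0'\subseteq Y_0$ and $y$ has a neighbor in $Y_0$, we have $y\in N(Y_0)$; moreover $y\notin S\cup X_0$ (it would have to be in $S'\cup X_0'$ otherwise, and a vertex of $Y'$ is not), so $y\in X\cup Y\cup Y_0$ as a vertex of $G$. If $y\in Y$, then \eqref{it:123star} for $P$ gives $f(N(y)\cap S)=\{c\}$; since $S\subseteq S'$ and $f'=f\cup g$, it remains to rule out $y$ having a neighbor in $S'\setminus S$ colored differently — but $S'\setminus S\subseteq X\cup Y\cup Y_0$, and any such neighbor $w$ would make $w$ mixed on an edge of $Y_0$ or lie in $Y_0$ with a neighbor $y\in N(Y_0)$, handled as above; in fact the cleanest route is: $N(y)\cap S'\subseteq N(y)\cap(S\cup X\cup Y\cup Y_0)$, and since $L_{P'}(z)=[4]$ forces (via \eqref{it:mixedy0} for $P'$, just proved) every neighbor of $z$ in $V(G')\setminus(Y_0'\cup X_0')$ to be complete to the component of $z$, one pins down the color of $N(y)\cap S'$ to be exactly $\{c\}$ by a short argument on lists. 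The remaining cases $y\in X$ and $y\in Y_0$: if $y\in Y_0$ then $|f(N(y)\cap S)|=0<1$, so $y\notin N(S')\cap Y'$... this needs $y$ to have acquired its neighbors in $S'$ from $S'\setminus S$, and one derives a contradiction exactly as in the proof of Lemma~\ref{sublem:s'Y0} by building a long path $z - y - Q - s$ with $Q^*\subseteq S'$ and $s\in S$; similarly for $y\in X$, using Lemma~\ref{types} with the two colors coming from $X$-type neighbors in $S$.

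The main obstacle I anticipate is the bookkeeping in the "moreover" part: one must show that passing to $S'\supseteq S$ cannot create a vertex $y\in Y'$ with a neighbor in $Y_0'$ whose neighborhood in $S'$ is monochromatic but of the wrong color, or bichromatic. I expect this is controlled by combining \eqref{it:mixedy0} for $P'$ (just established) with Lemma~\ref{sublem:s'Y0} and the path-length arguments of Lemmas~\ref{types} and~\ref{sublem:s'Y0}: any "new" short path from $Y_0'$ through $S'\setminus S$ to $S$ extends to a $P_6$ because $S'\setminus S\subseteq X\cup Y\cup Y_0$ and vertices of $Y_0$ cannot be interior to such a path without violating \eqref{it:mixedy0}. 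The rest of the proof is routine case-checking on which of $S,X_0,X,Y,Y_0$ a given vertex of $G$ belongs to, together with the observation that $f'$ agrees with $f$ on $S$.
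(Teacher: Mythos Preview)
Your argument for \eqref{it:mixedy0} is essentially the paper's: both reduce to a vertex $v\in Y_0\setminus(S'\cup Y_0')$ having a neighbor in $Y_0'$ and a neighbor in $S'$, and then invoke Lemma~\ref{sublem:s'Y0}. (One slip: by the definition of normal subcase $X_0\subseteq X_0'$, so $v\in X_0$ gives $v\in X_0'$ outright, not $v\in S'$; the conclusion is unchanged.)

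For \eqref{it:123star} your proposal has a genuine gap. None of the three cases is actually completed. In the $y\in Y$ case you only assert that ``a short argument on lists'' handles neighbors in $S'\setminus S$ of the wrong color; in the $y\in Y_0$ case you propose rebuilding the path from the proof of Lemma~\ref{sublem:s'Y0} by hand rather than citing the lemma; and in the $y\in X$ case the appeal to Lemma~\ref{types} is misplaced, since that lemma concerns two distinct vertices $y,y'$ of different types, not a single vertex whose $S$-neighborhood sees two colors.

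What you are missing is that the second part reduces to Lemma~\ref{sublem:s'Y0} in exactly the same way as the first. The paper rephrases a violation of \eqref{it:123star} for $P'$ as the existence of $y\in N(Y_0')$ with $|L_{P'}(y)|=3$ and $L_{P'}(y)\neq L$. Since $S\subseteq S'$ gives $L_{P'}(y)\subseteq L_P(y)$, and \eqref{it:123star} for $P$ forces $L_P(y)=L$ for any such $y$ with a neighbor in $Y_0\supseteq Y_0'$, the only way to have $L_{P'}(y)\neq L$ with $|L_{P'}(y)|=3$ is $|L_P(y)|=4$, i.e.\ $y\in Y_0$. But then $y\in Y_0\setminus(S'\cup Y_0')$ has both a neighbor in $S'$ (because $|L_{P'}(y)|<4$) and a neighbor in $Y_0'$, contradicting Lemma~\ref{sublem:s'Y0}. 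Working with the list $L_{P'}(y)$ directly, rather than tracking individual seed neighbors of $y$, collapses your three cases into one line and avoids all of the bookkeeping you anticipated.
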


\begin{proof}
Since $P'$ is a normal subcase of $P$, 
$P'$ satisfies \eqref{it:seed}.
First we show that $P'$ satisfies \eqref{it:mixedy0}.
Suppose not, then there
exists $v \in V(G) \setminus X_0'$ mixed on an edge $xy$ of $Y_0'$,
say $v$ is adjacent to $y$ and not to $x$.  It follows 
that $v \in X' \cup Y'$, and since $P$ satisfies \eqref{it:mixedy0}, 
$v \in Y_0$.   Therefore $v$ has a neighbor in 
$S'$, contrary to Lemma~\ref{sublem:s'Y0}. This proves that $P'$ satisfies
\eqref{it:mixedy0}.

Next assume that $P$ satisfies \eqref{it:123star}.
We show that $P'$ satisfies \eqref{it:123star}.
Let $L$ as in \eqref{it:123star} applied to  $P$.
Suppose there exists 
$y \in N(Y_0')$ with $L_{P'}(y) \neq L$ and $|L_{P'}(y)|=3$. 
Since $P$ satisfies \eqref{it:123star}, it follows that 
$y \in Y_0 \setminus Y_0'$,
and $y$ has a neighbor $s \in S'$, contrary to Lemma~\ref{sublem:s'Y0}. 
This proves that $P'$ satisfies
\eqref{it:123star}.

This completes the proof of Lemma~\ref{sublem:it:mixedyl}.

\end{proof}

The next lemma is another technical tool,  used to establish axioms
\eqref{it:mixedy0} and \eqref{it:mixedyl}.

\begin{lemma} \label{sublem:it:mixedy0}
There is a function $q: \mathbb{N} \rightarrow \mathbb{N}$ such that the following holds. Let $P = \spc$ be a seeded precoloring of a $P_6$-free graph $G$ with $P$ satisfying \eqref{it:conn}, \eqref{it:seed} and \eqref{it:y0}. Let  
$L \subseteq [4]$ with $|L|=3$, let $c_4$ be the unique element of
$[4] \setminus L$. Let $R \subseteq Y_0 \cup Y_L$ such that
$Y_0 \subseteq R$.
Assume further that if 
$t \in  (X \cup Y) \setminus R$ has a 
neighbor in $R$, then for every $z \in R$, $L_{P}(t) \neq L_{P}(z)$, 
and that there is no path $t-z_1-z_2-z_3$ with $t \in (X \cup Y) \setminus R$
and $z_1,z_2,z_3 \in R$.
Then there is an algorithm with running time $O(|V(G)|^{q(|S|)})$ that outputs an equivalent collection $\mathcal{L}$ for $P$ such that
\begin{itemize}
\item $|\mathcal{L}| \leq |V(G)|^{q(|S|)}$;
\item every $P' \in \mathcal{L}$ is a normal subcase of $P$;
\item every $P' = (G', S', X_0', X', Y_0', Y', f') \in \mathcal{L}$ with seed $S'$ satisfies $|S'| \leq q(|S|)$;  
\item every $P' \in \mathcal{L}$ satisfies \eqref{it:seed} and \eqref{it:y0}.
\item no vertex of $(X' \cup Y') \setminus R$ is mixed on an edge of 
$(Y' \cup Y_0') \cap R$.
\end{itemize}
Moreover, for every $P' \in \mathcal{L}$, given a precoloring extension of $P'$, we can compute a precoloring extension of $P$ in polynomial time. 
\end{lemma}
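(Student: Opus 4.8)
The plan is to fix the problem vertices one at a time by absorbing them (and their neighbourhoods, suitably understood) into the seed. A vertex $t \in (X' \cup Y') \setminus R$ that is mixed on an edge of $(Y' \cup Y_0') \cap R$ is the obstruction we must destroy; note that $R$ itself does not change as we pass to normal subcases (we only delete from $Y_0$ or move vertices into $S'$/$X_0'$, and the definition of normal subcase together with \eqref{it:y0} keeps $Y_0 \subseteq R$ stable in the relevant sense — more precisely, we will track $R$ as a fixed vertex subset of the original $G$ and only ever shrink it by the deletions $Z \subseteq Y_0$). By Lemma~\ref{lem:mixed}, since $R$-components are what matter, such a $t$ is adjacent to a vertex $x$ and non-adjacent to a vertex $y$ with $xy$ an edge of $R$. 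The hypotheses give us two structural restrictions on such a $t$: it has a different list from every vertex of $R$ it is adjacent to, and there is no induced (hence no) path $t-z_1-z_2-z_3$ with the $z_i \in R$; in particular $t$ can reach at most "depth $2$" into any component of $G|R$.

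First I would use the depth-$2$ restriction to bound, for each component $C$ of $G|R$, the "reach" of the set of bad vertices $t$: the neighbours of $t$ in $C$ form a set $N_t$ such that $C \setminus N_t$ has no vertex at distance $\geq 2$ from $t$, i.e. $N(N_t) \cap V(C) \cup N_t = V(C)$ — so $N_t$ dominates $C$, and moreover $N_t$ is a "near-clique-cut" in a controlled sense. The key combinatorial step is then a standard trick from this line of work (cf. the use of $P_6$-freeness throughout \cite{part1,c1}): because $G$ is $P_6$-free and $C$ is connected, one shows that a bounded number of vertices of $C$ suffices to "see" all of $C$ the way any bad $t$ sees it, so that by guessing (enumerating over) a bounded-size subset $S_C \subseteq V(C)$ and adding it to the seed, every bad $t$ now has a neighbour in the new seed. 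Since $t$'s list differs from the list of each of its $R$-neighbours, once $t$ gains a seed-neighbour realizing that constraint, the list $L_{P}(t)$ shrinks; we must do this carefully so that $t$ leaves the set of vertices of list-size matching $R$, and — crucially — so that the mixed-edge it was sitting on is now either an edge of $Y_0' \cap R$ that $t$ is complete or anticomplete to, or disappears because one endpoint has been pulled into the seed. Handling the two colours of endpoints of the mixed edge is where Lemma~\ref{types} (applied with the two distinct colour-classes among the neighbours in $S$) does its work.

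The iteration is then: guess the bounded sets $S_C$ for each relevant component, add them to $S'$ with all consistent colourings $g$, apply Lemma~\ref{lem:seedy0}-style cleanup (re-establish \eqref{it:seed} by a maximal-clique completion, re-establish \eqref{it:y0} by redefining $Y_0'$ as the non-neighbourhood of $S'$, and re-establish \eqref{it:conn} via Lemma~\ref{sublem:it:conn}), and repeat. Each round strictly decreases the number of bad vertices (or the "badness" measured by $|X' \cup Y'|$ minus seed size), so after polynomially many rounds no bad vertex remains; since each round multiplies the collection size by $|V(G)|^{O(1)}$ and adds $O(1)$ to $|S'|$, and the number of rounds is bounded in terms of $|S|$ only (because the depth-$2$/different-list conditions limit how many distinct "waves" of absorption are needed — each wave removes an entire $L_P$-value from circulation), we get $|S'| \leq q(|S|)$ and $|\mathcal{L}| \leq |V(G)|^{q(|S|)}$ for a suitable $q$. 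The normal-subcase bookkeeping (the function $g$, the inclusions $S \subseteq S' \subseteq S \cup X \cup Y_0 \cup Y$, etc.) is routine given that everything we add to the seed comes from $X \cup Y \cup Y_0$, and Lemma~\ref{sublem:it:mixedyl} guarantees \eqref{it:mixedy0} survives each step.

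The main obstacle, as in all the lemmas of this type, will be the combinatorial heart of the first step: proving that a \emph{bounded} (in $|S|$) number of guessed vertices per component suffices to neutralize \emph{all} bad vertices simultaneously. The naive bound is linear in $|V(G)|$; the win comes from the interaction of three facts — $P_6$-freeness, the "no $t-z_1-z_2-z_3$" hypothesis (which forces every bad $t$ to dominate each component it touches, in the strong sense that non-neighbours of $t$ in the component are all adjacent to neighbours of $t$), and the different-list hypothesis (which means a single guessed vertex of the right colour kills a bad $t$ outright rather than merely shrinking its list by one). I expect the cleanest route is: show the bad vertices touching a fixed component $C$ can be partitioned by type $T_P(\cdot)$ into $\leq 2^{|S|}$ classes, show each class has a common dominating vertex or short dominating structure inside $C$ forced by $P_6$-freeness, guess one colour-witness per class, and check that this restores the axiom; the strict-progress measure then makes the iteration terminate in $O_{|S|}(1)$ rounds. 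I would write the degenerate cases ($|V(G)|$ small, $R$ with few components, $t$ with no $R$-neighbour at all) as immediate reductions at the start, mirroring the opening of Lemma~\ref{lem:seedy0}.
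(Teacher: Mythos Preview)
Your plan has a genuine gap: the mechanism for bounding the seed does not work as written, and the central combinatorial idea of the paper's proof is missing.

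You propose to guess a bounded set $S_C$ of vertices \emph{inside each component $C$ of $G|R$}. But $G|R$ can have $\Theta(|V(G)|)$ components, so this yields a seed of polynomial, not constant, size. Even if you intend a single witness per type globally rather than per component, you give no argument that one vertex per type suffices---and in fact it does not. Your termination claim (``each wave removes an entire $L_P$-value from circulation'') is not a valid argument: nothing in the construction forces a list value to disappear, and no such monotone quantity bounds the number of rounds. (As a secondary issue, invoking Lemma~\ref{sublem:it:mixedyl} to preserve \eqref{it:mixedy0} is circular here, since \eqref{it:mixedy0} is precisely what Lemma~\ref{sublem:it:mixedy0} is used to establish.)

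The paper's approach is different in a way that matters. It guesses vertices on the \emph{other} side---from $Z=(X\cup Y)\setminus R$, not from $R$---and runs exactly three rounds, indexed $j=4,3,2$. In round $j$, for each type $T$, one greedily builds a set $S^{j,T}\subseteq Z(T)$ of vertices $z$ that have exactly one neighbour in some clique $\{a_1,\dots,a_j\}$ contained in the current $R$-remnant, with each new $z$ required to be complete to the previously chosen ones (and, in one case, chosen with $N(z)\cap R$ maximal). Because the chosen vertices are pairwise adjacent and $G$ is $K_5$-free, $|S^{j,T}|\le 4$. After round $j$ one proves (claim~\eqref{eq:mixedk4}) that no remaining $z$ has exactly one neighbour in a $K_j$ of the shrunken $R$; the argument manufactures a $P_6$ from the maximal choice together with the ``no $t\!-\!z_1\!-\!z_2\!-\!z_3$'' hypothesis. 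At $j=2$ this is exactly the statement that no vertex of $(X'\cup Y')\setminus R$ is mixed on an edge of $R$. The total seed growth is at most $3\cdot 4\cdot 2^{|S|}$, giving the required $q(|S|)$.

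So the fix is: guess bad vertices rather than $R$-vertices, organise them by type, and use the $K_5$-free clique bound together with the $j=4\to 3\to 2$ descent in place of an unspecified iteration.
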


\begin{proof}
If $G$ contains a $K_5$, then $P$ has no precoloring extension; 
we output $\mathcal{L}=\emptyset$ and stop. Thus from now on we assume that
$G$ has no clique of size five.
Let $Y_0^{5}=R$ and let $Z^{5}=(X \cup Y) \setminus R$.
Let $\mathcal{T}^{5}$ be the set of types of vertices in $Z^{5}$, and set $j=4$.

Let $\mathcal{Q}_{j}$ be the set of $|\mathcal{T}^j|$-tuples
$(S^{j,T})_{T \in \mathcal{T}^{j+1}}$, where each $S^{j,T}  \subseteq Z^{j+1}(T)$
and $S^{j,T}$ is constructed as follows (starting with $S^{j,T}=\emptyset$):
\begin{itemize}
\item If $R=Y_0$ or  $c_4 \in f(T)$ proceed as follows. 
While there is a vertex $z \in Z^{j+1}(T)$ complete to $S^{j,T}$ and such that
there is  clique
$\sset{a_1, \dots, a_j} \subseteq {Y_0}^{j+1}$
with $N(z) \cap \sset{a_1, \dots, a_j} = \sset{a_1}$,
choose such $z$ with $N(z) \cap R$  maximal 
and add it to $S^{j,T}$.
\item If $R \neq Y_0$ and  $c_4 \not \in f(T)$,
while there is $z \in Z^{j+1}(T)$ complete to 
$S^{j,T}$  such that there is  clique
$\sset{a_1, \dots, a_j} \subseteq {Y_0}^{j+1}$
with $N(z) \cap \sset{a_1, \dots, a_j} = \sset{a_1}$,
add $z$ to $S^{j,T}$.
Let $X_0(z)$ be the set of all  $z' \in Z^{j+1}(T)$ such that
\begin{itemize}
\item $z'$ is complete to $S^{j,T} \setminus \{z\}$
\item there is a clique
$\sset{b_1, \dots, b_j} \subseteq {Y_0}^{j+1}$
such that $N(z') \cap \sset{b_1, \dots, b_j} = \sset{b_1}$, and  
\item $N(z') \cap R$ is a proper subset of $N(z) \cap R$.
\end{itemize}
When no such vertex $z$ exists, let 
$X_0^{j,T}=\bigcup_{z \in S^{j,T}}X_0(z)$.
Define $f'(z')=c_4$ for every $z' \in X_0^{j,T}$ (observe that since $c_4 \not\in f(T)$, it follows that $c_4 \in L_P(z')$).
\end{itemize}

Since $G$ has no clique of size five, it follows that $|S^{j,T}| \leq 4$ for all
$T$. Let $Q \in \mathcal{Q}_j$; write $Q=(S^{j,T})_{T \in \mathcal{T}^{j+1}}$.
Let $S^j=S^{j,Q}=\bigcup_{T \in \mathcal{T}^{j+1}}S^{j,T}$.
Let   $Y_0^{j} = Y_0^{j,Q}=Y_0^{j+1} \setminus N(S^{j})$,
$X_0^{j}=X_0^{j,Q}=\bigcup_{T \in \mathcal{T}^{j+1}}X_0^{j,T}$.
$Z^{j}=Z^{j,Q}=(Z^{j+1} \setminus X_0^{j}) \cup (Y_0^{j+1} \setminus Y_0^{j})$
and let $\mathcal{T}^{j}$ be the set of types of $Z^{j}$ (in $P$). 
If $j>2$, decrease $j$ by $1$ and repeat the construction 
above, to obtain a new set $\mathcal{Q}_{j-1}$; repeat this  for each 
$Q \in \mathcal{Q}_j$.

Suppose $j=2$. Then $Q$ was constructed by fixing
$Q_4 \in \mathcal{Q}_4$, constructing $\mathcal{Q}_3$ (with $Q_4$ fixed),
fixing $Q_3 \in \mathcal{Q}_3$, constructing $\mathcal{Q}_2$ (with
$Q_3$ fixed), and finally fixing  $Q \in \mathcal{Q}_2$. Write $Q_2=Q$. 
For consistency of notation we write $Q_5=\emptyset$, $Z^5=Z^{5,Q_5}$
and $Y_0^5=Y_0^{5,Q_5}$.
Let $S'=S \cup \bigcup_{j=2}^4 S^{j,Q_j}$. If $R \neq Y_0$,  
let $X_0'=X_0 \cup \bigcup_{j=2}^4 X_0^{j,Q_j}$; if $R=Y_0$, let $X_0'=X_0$.

For every function $f': S' \setminus S \rightarrow \sset{1,2,3,4}$ such 
that  $f \cup f'$ is a proper coloring of $G|(S' \cup X_0')$, let
 $$P_{f',Q} = (G, S', X_0', Z^{2,Q} \cap X, Y_0^{2,Q}, Z^{2,Q} \cap Y, f \cup f').$$ 
Let $\mathcal{L}$ be the set of all $P_{Q,f'}$ as above.
Observe that $S'$ is obtained from $S$ by adding a clique of size at most four 
for  each type in $\mathcal{T}^j$
at each of the  three steps
  ($j=4,3,2$), and since $|\mathcal{T}^j| \leq 2^{|S|}$ for
every $j$, it follows that  $|S \cup S'| \leq |S|+12 \times 2^{|S|}$. Since
  $|S' \setminus S| \leq 12 \times 2^{|S|}$, it follows that 
$|\mathcal{L}| \leq (4|V(G)|)^{12 \times 2^{|S|}}$.

In the remainder of the proof we show that every $P_{Q,f'} \in \mathcal{L}$ 
satisfies the required properties. 

\vspace*{-0.4cm}
  \begin{equation}\vspace*{-0.4cm} \label{eq:connectedseed}
    \longbox{\emph{$S \cup \bigcup_{k=j}^4S^k$ is connected for 
every $j \in \{2, \ldots, 4\}$. In particular $S'$ is connected. }}
  \end{equation}

Since for every $j$, we have that $S^{j,Q_j} \subseteq Z^{j+1}$, it follows that
every vertex of $S^{j,Q_j}$  has a neighbor in $S \cup \bigcup_{k=j+1}^4S^{k,Q_k}$,
and \eqref{eq:connectedseed} follows.

\bigskip

  \vspace*{-0.4cm}\begin{equation}\vspace*{-0.4cm} \label{eq:no3pathk4}
    \longbox{\emph{Let $j \in \{2, \ldots, 5\}$.
There is no path $z-a-b-c$ with $z \in Z^{j,Q_j}$ and
$a, b, c \in Y_0^{j,Q_j}$.}}
  \end{equation}

  Suppose for a contradiction that there exist $j$ and $z$ violating
\eqref{eq:no3pathk4}; we may assume $z$ is chosen with $j$ maximum.
By assumption $j\neq 5$ and $z \in   Y_0^{j,Q_j} \setminus Y_0^{j+1,Q_{j+1}}$.
It follows that  $z$ has a neighbor $z' \in S^{j,Q_j}$ and that
$z$ is anticomplete to $S \cup \bigcup_{k=j+1}^4S^{k,Q_k}$. 
Since $z' \in S^{j,Q_j} \subseteq Z^{j+1,Q_j}$, it follows that 
$z'$ has a neighbor $s \in S \cup \bigcup_{k=j+1}^4S^{k,Q_k}$.   
But now $s-z'-z-a-b-c$ is a $P_6$ in $G$, a
  contradiction. This proves \eqref{eq:no3pathk4}.
  
  \vspace*{-0.4cm}\begin{equation}\vspace*{-0.4cm} \label{eq:mixedk4}
  \longbox{\emph{Let $j \in \{2, \ldots, 4\}$.
No vertex $z \in Z^{j,Q_j}$  has exactly one neighbor in a clique
  		$\sset{a_1, \dots, a_j} \subseteq Y_0^{j,Q_j}$.}} 
  \end{equation}
  
  Suppose for a contradiction that there exist $j$ and $z$ violating
\eqref{eq:mixedk4}; we may assume that $z$ is chosen with $j$ maximum.
Write $Q_j=(S^{j,T})$.
Let   $\sset{a_1, \dots, a_j} \subseteq Y_0^{j,Q_j}$ be a clique
  with $N(z) \cap \sset{a_1, \dots, a_j} = \sset{a_1}$.
  
  Suppose first that $z \in R$. Let $k$ be maximum such that
$z \in Z^{k,Q_k}$. Then 
$z \not \in Z^{k+1,Q_{k+1}}$, and thus $z \in Y_0^{k+1,Q_{k+1}}$,  
  $z$ has a neighbor $z' \in S^{k,Q_k}$, and $z$ is 
anticomplete to $S \cup \bigcup_{l=k+1}^4S^{l,Q_l}$. It follows that
$z' \in Z^{k+1,Q_{k+1}}$. But now
  $z'-z-a_1-a_j$ is  a path with $z, a_1, a_j \in Y_0^{k+1,Q_{k+1}}$
  contrary to \eqref{eq:no3pathk4}. This proves that $z \not \in R$.

It follows that 
$z \in Z^{j+1,Q_{j+1}} \cap (X \cup Y)$, and in particular $z$ has a neighbor
in $S$.  Let $T=T_P(z)$. It follows
that $S^{j,T} \neq \emptyset$; let $z' \in S^{j,T}$ be the 
first vertex that was added to $S^{j,T}$ that is non-adjacent to $z$
(such a vertex exists by the definition of $S^{j,T}$). 
Then $L_P(z)=L_P(z')$. Since $z' \in S^{j,Q_j}$, it follows that $z'$
is anticomplete to $Y_0^{j,Q_j}$. 
Since $a_1 \in Y_0^{j,Q_j} \subseteq Y_0^{j+1,Q_{j+1}}$, it follows that 
$z$ has a
  neighbor in $Y_0^{j+1,Q_{j+1}}$ non-adjacent to $z'$, and hence (by the choice
  of $z'$ if $Y_0=R$, and since $z \not \in X_0(z')$ if $Y_0 \neq R$),  it follows 
that 
$z'$ has a neighbor $a' \in Y_0^{j+1}$ that is
  non-adjacent to $z$. 
  
  Suppose first that $a'$ is complete to $\sset{a_1, \dots,
    a_j}$. Since $G$ contains no clique of size five, it follows that
  $j<4$.  But now $N(z') \cap \sset{a', a_1, \dots, a_j} = \sset{a'}$,
  contrary to the maximality of $j$.

  Suppose next that $a'$ is mixed on $\sset{a_1, \dots, a_j}$. Let $x$
  be a neighbor and $y$ be a non-neighbor of $a'$ in
  $\sset{a_1, \dots, a_j}$. Then $z'-a'-x-y$ is a path, which
  contradicts an assumption of the theorem.
  
  It follows that $a'$ is anticomplete to $\sset{a_1, \dots, a_j}$.
  Since $z,z' \not \in R$ and have neighbors in $R$, it follows
that there is a vertex $t \in T$ that is anticomplete to $R$
(this is immediate if $R=Y_0$, and follows from the fact that
$L_P(z) \neq L$ if $R \neq Y_0$). Now
 $a'-z'-t-z-a_1-a_j$ is  a $P_6$  in
  $G$, a contradiction. This proves~\eqref{eq:mixedk4}.
  
\bigskip

By \eqref{eq:connectedseed} $P_{f',Q}$ satisfied \eqref{it:seed},
and by construction \eqref{it:y0} holds.
Now from \eqref{eq:mixedk4} with $j=2$ we deduce that no vertex of 
$(X' \cup Y') \setminus R$ is mixed on an edge of $(Y' \cup Y_0') \cap R$.

It remains to show that $\mathcal{L}$ is equivalent to $P$.
Clearly for every $P' \in \mathcal{L}$,
a precoloring extension of $P'$ is also a precoloring extension of $P$.

Let $d$ be a precoloring extension of $P$. We show that some $P' \in \mathcal{L}$ has a precoloring extension.
Let $j \in \{2,3,4\}$; define $S^{j,T}$ and $f'$ as follows
(starting with $S^{j,T}=\emptyset$):
\begin{itemize}
\item If $R=Y_0$ or $c_4 \in f(T)$ proceed as follows. 
While there is a vertex $z \in Z^{j+1}(T)$ complete to $S^{j,T}$ and such that
there is  clique
$\sset{a_1, \dots, a_j} \subseteq {Y_0}^{j+1}$
with $N(z) \cap \sset{a_1, \dots, a_j} = \sset{a_1}$,
choose such $z$ is such that $N(z) \cap R$  maximal 
and add it to $S^{j,T}$; set $f'(z)=d(z)$.
\item If $R \neq Y_0$ and  $c_4 \not \in f(T)$,
while there is $z \in Z^{j+1}(T)$ complete to 
$S^{j,T}$  such that there is  clique
$\sset{a_1, \dots, a_j} \subseteq {Y_0}^{j+1}$
with $N(z) \cap \sset{a_1, \dots, a_j} = \sset{a_1}$,
choose such $z$ with $d(z) \neq c_4$ and subject to that
with $N(z) \cap R$ maximal;
add $z$ to $S^{j,T}$ and set $f'(z)=d(z)$.
Let $X_0(z)$ be the set of all  $z' \in Z^{j+1}(T)$ such that
\begin{itemize}
\item $z'$ is complete to $S^{j,T} \setminus \{z\}$,
\item there is a clique
$\sset{b_1, \dots, b_j} \subseteq {Y_0}^{j+1}$
such that $N(z') \cap \sset{b_1, \dots, b_j} = \sset{b_1}$, and
\item  $N(z') \cap R$ is a proper subset of $N(z) \cap R$.
\end{itemize}
It follows from the choice of $z$ that   $d(z')=c_4$ for every 
$z' \in X_0(z)$.
When no such vertex $z$ exists, let 
$X_0^{j,T}=\bigcup_{z \in S^{j,T}}X_0(z)$; thus $d(z')=c_4$ for every $z' \in X_0^{j,T}$.
Define ${f'}_{j,T}(z')=c_4$ for every $z' \in X_0^{j,T}$, then ${f'}_{j,T}(z)=d(z)$ for
every $z \in X_0^{j,T}$.
\end{itemize}

Let $Q_j=(S^{j,T})$ and let ${f'}_j=\bigcup_{T}{f'}_{j,T}$.
It follows that 
$P_{f_2,Q_2}=(G,S',X_0',X',Y_0', Y', f \cup f')$ satisfies 
$d(v)=f_2(v)$ for every $v \in S' \cup X_0'$, and thus $d$ is
is a precoloring extension of $P_{f_2,q_2}$, as required.
This proves Lemma~\ref{sublem:it:mixedy0}.
\end{proof}

The next lemma is used to arrange the following axiom, which we restate: 
\begin{enumerate}
\item[\eqref{it:mixedy0}] No vertex $V(G) \setminus (Y_0 \cup X_0)$ is
  mixed on an edge of $Y_0$.
\end{enumerate}
\begin{lemma} \label{lem:it:y0}
There is a function $q: \mathbb{N} \rightarrow \mathbb{N}$ such that the following holds. Let $P = \spc$ be a seeded precoloring of a $P_6$-free graph $G$ with $P$ satisfying \eqref{it:conn}, \eqref{it:seed} and \eqref{it:y0}. Then there is an algorithm with running time $O(|V(G)|^{q(|S|)})$ that outputs an equivalent collection $\mathcal{L}$ for $P$ such that
\begin{itemize}
\item $|\mathcal{L}| \leq |V(G)|^{q(|S|)}$;
\item every $P' \in \mathcal{L}$ is a normal subcase of $P$;
\item every $P' = (G', S', X_0', X', Y_0', Y', f') \in \mathcal{L}$ with seed $S'$ satisfies $|S'| \leq q(|S|)$;  and
\item every $P' \in \mathcal{L}$ satisfies \eqref{it:conn},
  \eqref{it:seed}, \eqref{it:y0} and \eqref{it:mixedy0}.
\end{itemize}
Moreover, for every $P' \in \mathcal{L}$, given a precoloring extension of $P'$, we can compute a precoloring extension for $P$ in polynomial time. 
\end{lemma}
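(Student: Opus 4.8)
\textbf{Proof plan for Lemma~\ref{lem:it:y0}.}

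The plan is to obtain \eqref{it:mixedy0} by applying Lemma~\ref{sublem:it:mixedy0} with the trivial choice $R = Y_0$, $Y_L = \emptyset$, and then restoring \eqref{it:conn} via Lemma~\ref{sublem:it:conn}. First I would check that the hypotheses of Lemma~\ref{sublem:it:mixedy0} are satisfied in this specialization: $P$ satisfies \eqref{it:conn}, \eqref{it:seed}, \eqref{it:y0} by assumption; taking $R = Y_0$, the condition ``if $t \in (X\cup Y)\setminus R$ has a neighbor in $R$ then $L_P(t)\neq L_P(z)$ for every $z\in R$'' is vacuous because every $z\in R=Y_0$ has $L_P(z) = [4]$, which is not equal to any $L_P(t)$ for $t\in X\cup Y$ (those lists have size at most three, by the very definition of $X$ and $Y$ in a normal-subcase context — or more precisely, since $t\in X\cup Y$ means $t$ has a neighbor in $S$, hence $L_P(t)\neq[4]$); and the condition ``there is no path $t-z_1-z_2-z_3$ with $t\in(X\cup Y)\setminus R$, $z_i\in R$'' must be arranged first. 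The last point is the one subtlety: a priori $P$ need not satisfy it. So the honest plan has a preliminary step.

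\textbf{Step 1 (kill $3$-paths into $Y_0$).} Before invoking Lemma~\ref{sublem:it:mixedy0}, I would first guess a bounded set of vertices to add to the seed so that no $t\in(X\cup Y)\setminus Y_0$ starts an induced path $t-z_1-z_2-z_3$ with $z_1,z_2,z_3\in Y_0$. Here one uses $P_6$-freeness: if $t$ has a neighbor $s\in S$, then any such path $t-z_1-z_2-z_3$ together with a shortest $S$-path from $s$ would create long induced paths, which constrains things heavily; in fact the mechanism is exactly the one in the proof of \eqref{eq:no3pathk4} and \eqref{eq:mixedk4}. The clean way to package this is: either Lemma~\ref{sublem:it:mixedy0} is already stated to handle it internally (its conclusion \eqref{eq:no3pathk4} shows the produced subcases have no such $3$-path even when $R=Y_0$, since $Y_0^{j,Q_j}\subseteq Y_0$), or one applies a preliminary guessing round. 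On reflection, the former is the case: reading Lemma~\ref{sublem:it:mixedy0} with $R = Y_0$, the hypothesis ``no path $t-z_1-z_2-z_3$'' — call it $(\ast)$ — is what's required of $P$. So I would instead first run a short guessing argument: for each type $T$, greedily build a bounded clique $S^\ast_T$ of vertices of $Z(T)$ that witness a $3$-path into $Y_0$, maximal so that adding the next witness keeps $S^\ast_T$ a clique; since $G$ has no $K_5$ (we output $\emptyset$ otherwise), $|S^\ast_T|\le 4$, and $|\{T\}|\le 2^{|S|}$, so the enlarged seed has size $\le |S| + 4\cdot 2^{|S|}$. A $P_6$ argument (shortest $S$-path plus the $3$-path) shows that after adding $S^\ast = \bigcup_T S^\ast_T$ to the seed and moving every remaining $3$-path-witness to $X_0$ with an appropriately forced color, the hypothesis $(\ast)$ holds; this is structurally identical to the inner workings of Lemma~\ref{sublem:it:mixedy0} and I would cross-reference that proof rather than repeat it.

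\textbf{Step 2 (apply Lemma~\ref{sublem:it:mixedy0}).} With $(\ast)$ in hand, apply Lemma~\ref{sublem:it:mixedy0} to each subcase with $R = Y_0$ (so $Y_L$ plays no role, $c_4$ arbitrary). This produces, for each input subcase, a collection of normal subcases $P'$ with $|S'|$ bounded by a function of $|S|$, each satisfying \eqref{it:seed}, \eqref{it:y0}, and — since $R = Y_0$ so $(Y'\cup Y_0')\cap R = Y_0'$ — the property that no vertex of $(X'\cup Y')\setminus Y_0'$ is mixed on an edge of $Y_0'$. Vertices outside $X'\cup Y'\cup Y_0'$ are in $S'\cup X_0'$, so this is exactly \eqref{it:mixedy0} for $P'$. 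Composing the ``normal subcase'' relation is routine (it is transitive), so each $P'$ is a normal subcase of the original $P$.

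\textbf{Step 3 (restore connectivity).} The subcases from Step~2 satisfy \eqref{it:seed}, \eqref{it:y0}, \eqref{it:mixedy0} but possibly not \eqref{it:conn}, since removing vertices into $X_0'$ can disconnect $G'\setminus X_0'$. So apply Lemma~\ref{sublem:it:conn} to each: its hypotheses are met (the $P'$ are normal subcases of a precoloring satisfying \eqref{it:conn} — namely the original $P$ through Step~1's intermediate — and satisfy \eqref{it:y0} and \eqref{it:mixedy0}), and it returns a collection of size $\le 1$ whose member $P''$ is a normal subcase satisfying \eqref{it:conn}--\eqref{it:mixedy0}. Collect all such $P''$ into $\mathcal{L}$. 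Finally, assemble the bookkeeping: $|\mathcal{L}|$ is bounded by the product of the three collection sizes, each $|V(G)|^{q'(|S|)}$ for suitable $q'$, so $|\mathcal{L}|\le|V(G)|^{q(|S|)}$ for an appropriate $q$; the seed bound $|S''|\le q(|S|)$ follows from tracking the additive increases; equivalence and the polynomial-time reconstruction of a precoloring extension of $P$ follow by composing the equivalences and reconstruction maps of Lemmas~\ref{sublem:it:mixedy0} and~\ref{sublem:it:conn} (and Step~1). The running-time bounds are each polynomial with exponent a function of $|S|$, and a constant number of compositions preserves this form.

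\textbf{Main obstacle.} The real work is Step~1: ensuring hypothesis $(\ast)$ of Lemma~\ref{sublem:it:mixedy0}, i.e.\ that no vertex of $(X\cup Y)\setminus Y_0$ starts an induced $3$-path into $Y_0$, without blowing up the seed. Everything else is composition and bookkeeping. If, as I suspect on a closer reading, Lemma~\ref{sublem:it:mixedy0}'s proof already produces subcases with $(\ast)$ as a by-product (via \eqref{eq:no3pathk4}) even when its own hypothesis $(\ast)$ fails for the input — which it may not, since \eqref{eq:no3pathk4} is proved \emph{using} $(\ast)$ — then Step~1 is genuinely needed and is where the $P_6$-free structural argument (shortest seed-path concatenated with a would-be $3$-path gives a $P_6$) does its job. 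I would write Step~1 carefully, mirroring the greedy-clique-per-type construction and the $K_5$-free bound used in Lemma~\ref{sublem:it:mixedy0}, and treat Steps~2 and~3 as short invocations.
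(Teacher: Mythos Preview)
Your approach is essentially the paper's: a preliminary greedy-clique step to eliminate induced paths $t\text{--}z_1\text{--}z_2\text{--}z_3$ from $X\cup Y$ into $Y_0$, then Lemma~\ref{sublem:it:mixedy0} with $R=Y_0$, then Lemma~\ref{sublem:it:conn}. The paper's Step~1 is simpler than yours: it builds a \emph{single} clique $S^5\subseteq X\cup Y$ of size at most~$4$ (not one per type), and proves directly via $P_6$-freeness that afterwards no vertex of the new $Z^5$ starts a $3$-path into $Y_0^5$---so nothing needs to be ``moved to $X_0$ with a forced color'' at that stage, and that part of your plan is unnecessary.
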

\begin{proof}
  Let $S^5 = \emptyset$. Let $Z=X \cup Y$. Since $P$ satisfies \eqref{it:y0},
it follows that every vertex of $Z$ has a neighbor in $S$.
While there is a vertex $z \in Z$
  complete to $S^5$ and a path $z-a-b-c$ with
  $a, b, c \in Y_0$, we add $z$ to $S^5$. If $|S^5| \geq 5$, then $G$
  contains a $K_5$ and thus it has no precoloring extension; set
$\mathcal{L}=\emptyset$ and stop. Thus we
  may assume that $|S^5| \leq 4$. Let 
$Y_0^5 = Y_0 \setminus N(S^5)$ and let $Z^5=Z \cup (Y_0 \setminus Y_0^5)$.
Since $S$ is connected, and since every vertex of $S^5$ has a neighbor in
$S$, it follows that $S \cup S^5$ is connected.

  \vspace*{-0.4cm}
  \begin{equation}\vspace*{-0.4cm} \label{eq:no3path}
    \longbox{\emph{There is no path $z-a-b-c$ with
$z \in Z^5$ and   $a, b, c \in Y_0^5$.}}
  \end{equation}

  Suppose for a contradiction that such a path exists, and
  suppose first that $z \in Z$.  By the choice of $S^5$, it follows that
  there exists a vertex $z' \in Z \cap S^5$ non-adjacent to $z$. Since
  $S \cup S^5$ is connected, there exists a path $Q$ connecting $z$
  and $z'$ with interior in $S \cup S^5$. Since $P$ satisfies \eqref{it:y0}
and by the construction of $S^5$, it follows that $Q^*$ is anticomplete
to $\{a,b,c\}$.
But now  $z'-Q-z-a-b-c$ is a path of length at least six in $G$, a contradiction.

  It follows that $z \in N(S^5) \setminus Z$, and thus
  $z \in Y_0 \setminus Y_0^5$. Let $s' \in S^5 \cap N(z)$. Then
$s'$ is anticomplete to $\{a,b,c\}$. Moreover,  $s' \in Z$, and so $s'$ has a 
neighbor $s \in S$. 
Since $P$ satisfies \eqref{it:y0},
$s$ is anticomplete to $Y_0$, and so $s$ is anticomplete to $\{z,a,b,c\}$.
But now 
  $s-s'-z-a-b-c$ is a $P_6$ in $G$, a contradiction. This
  proves \eqref{eq:no3path}.

  \bigskip

For every $f':S^5 \rightarrow [4]$ such that $f \cup f'$ is a proper
coloring of $G|(S \cup S^5)$, let
 $P_{f'}=(G,S \cup S^5, X_0, Z^5,Y_0^5,\emptyset, f \cup f')$.
Then $P_{f'}$ is a normal subcase of $P$ that 
satisfies \eqref{it:conn}-\eqref{it:y0}.

Let $\mathcal{M}_{f'}$ be the collection of seeded precolorings obtained
by applying Lemma~\ref{sublem:it:mixedy0} to $P_f'$ with $R=Y_0^5$, and let
$\mathcal{M}$ be the union of all such $\mathcal{M}_{f'}$.
By \eqref{eq:no3path} every $P'' \in \mathcal{M}$ satisfies
\eqref{it:seed}--\eqref{it:mixedy0}.

Finally let $\mathcal{L}$ be obtained from $\mathcal{M}$ by applying
Lemma~\ref{sublem:it:conn} to every member of $\mathcal{M}$.
Then every $P' \in \mathcal{L}$ satisfies \eqref{it:conn}--\eqref{it:mixedy0},
as required.
This proves Lemma~\ref{lem:it:y0}. 
\end{proof}

The purpose of Lemma~\ref{lem:lists} is to organize vertices according
to their lists (which, in turn, arise from the colors of their
neighbors in the seed) to satisfy the following axiom:
\begin{enumerate}
\item[\eqref{it:lists}] If $|L_{S,f}(v)| = 1$ and $v \not\in S$, then
  $v \in X_0$; if $|L_{S,f}(v)| = 2$, then $v \in X$; if $|L_{S,f}(v)| = 3$,
  then $v \in Y$; and if $|L_{S,f}(v)| = 4$, then $v \in Y_0$.
\end{enumerate}
Moreover, we will construct new seeded
precolorings in controlled ways from seeded precolorings satisfying
\eqref{it:conn}, \eqref{it:seed}, \eqref{it:y0}, and \eqref{it:mixedy0}, to arrange that these axioms as well as \eqref{it:lists} still hold for the new
instances.

\begin{lemma} 
  \label{lem:lists} \label{lem:preserve}
There is a constant $C$ such that the following holds. Let $P = \spc$ be a seeded precoloring of a $P_6$-free graph $G$ with $P$ satisfying  \eqref{it:conn},
  \eqref{it:seed}, \eqref{it:y0} and \eqref{it:mixedy0}, and let $P' = (G', S', X_0', X', Y_0', Y', f')$ be a normal subcase of $P$.  
Then there is an algorithm with running time $O(|V(G)|^{C})$ that outputs an equivalent  collection 
$\mathcal{L}$ for $\{P\}$ of seeded precoloring with $|\mathcal{L}| \leq 1$, 
such that if $\mathcal{L}=\{P''\}$ then  
\begin{itemize}
\item $P''$ is a normal subcase of $P'$, and 
\item $P''$ satisfies \eqref{it:conn}, \eqref{it:seed}, \eqref{it:y0},
  \eqref{it:mixedy0} and \eqref{it:lists}.
\item If $P'$ \eqref{it:123star}, then $P''$ 
satisfies  \eqref{it:123star}.
\item If $P'$ satisfies \eqref{it:mixedyl}, then $P''$ satisfies
\eqref{it:mixedyl}. 
\end{itemize}
Moreover, given a precoloring extension of $P''$, we can compute a precoloring extension for $P$ in polynomial time.
\end{lemma}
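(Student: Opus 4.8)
The plan is to repair axiom~\eqref{it:lists} by a single deterministic pass that re-sorts the vertices of $P'$ by the size of their list with respect to $S'$, keeping the seed equal to $S'$ (so that nothing is guessed and $|\mathcal{L}|\le 1$), and then to restore~\eqref{it:conn} via Lemma~\ref{sublem:it:conn}. We may assume $S\ne\emptyset$, as this holds in all applications of the lemma. Since $P'$ is a normal subcase of $P$ it inherits~\eqref{it:seed} (a vertex complete to $S'\supseteq S$ would be complete to $S$). Set $S''=S'$, and let $X_0^{+}$ be the set of $v\in V(G')\setminus(S'\cup X_0')$ with $|L_{P'}(v)|=1$; for each such $v$ the unique color of $L_{P'}(v)$ is used in every precoloring extension of $P'$, so let $g$ assign it. Put $X_0''=X_0'\cup X_0^{+}$ and $f''=f'\cup g$; if $f''$ is not proper on $G'|(S'\cup X_0'')$, then two adjacent vertices receive equal forced or fixed colors, so $P'$ has no precoloring extension and I output $\mathcal{L}=\emptyset$. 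Otherwise set $Y_0''=V(G')\setminus(N(S')\cup X_0''\cup S')$ and place each remaining vertex into $X''$, $Y''$ or $Y_0''$ according as its list has size $2$, $3$ or $4$; having list size $4$ is the same as having no neighbor in $S'$, so this is consistent with the definition of $Y_0''$, and since $X_0^{+}\subseteq N(S')$ the sets $X'',Y'',Y_0''$ partition $V(G')\setminus(S''\cup X_0'')$. The resulting $\tilde P:=(G',S'',X_0'',X'',Y_0'',Y'',f'')$ satisfies~\eqref{it:seed},~\eqref{it:y0} and~\eqref{it:lists} by construction and has a precoloring extension iff $P'$ does, and it is a normal subcase of $P'$ (with $Z=\emptyset$) and of $P$; the only points needing a check are $X_0^{+}\subseteq X\cup Y\cup Y_0$ (clear, since $X_0^{+}\subseteq V(G')\setminus(S'\cup X_0')$) and that each vertex of $X_0^{+}\cap Y_0$ has a neighbor in $S'$ (clear, since its list has size $1$).

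The substantive step, which I expect to be the main obstacle, is to verify that $\tilde P$ satisfies~\eqref{it:mixedy0}: enlarging $X_0$ must not have created a vertex mixed on an edge of $Y_0''$, and the real danger is a vertex of $Y_0$ that has been absorbed into the enlarged seed $S'$ of the normal subcase. (If $P'$ is also known to satisfy~\eqref{it:y0}, one can instead invoke Lemma~\ref{sublem:it:mixedyl} applied to $P$ and $\tilde P$.) First $Y_0''\subseteq Y_0$: a vertex of $Y_0''$ has no neighbor in $S\subseteq S'$ and lies in $V(G)\setminus(S\cup X_0)$, so it lies in $Y_0$ by~\eqref{it:y0} for $P$; hence every edge of $Y_0''$ is an edge of $Y_0$. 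Suppose $w\in V(G')\setminus(Y_0''\cup X_0'')$ is mixed on an edge $uv$ of $Y_0''$, say $w\sim u$ and $w\not\sim v$. By~\eqref{it:mixedy0} for $P$, $w\notin V(G)\setminus(Y_0\cup X_0)$; since $w\notin X_0''\supseteq X_0$ this gives $w\in Y_0$, so $w\not\sim S$; moreover $w\notin S'$ (otherwise $w\not\sim u$, as $u\notin N(S')$), so $w$ has a neighbor in $S'\setminus S$. Let $C$ be the component of $G|Y_0$ containing $w$; since $wu,uv\in E(G)$ and $u,v\in Y_0$, also $u,v\in C$. If every neighbor of $w$ in $S'$ lay in $Y_0$, then a shortest path $w-r_1-\dots-r_m-s$ with $r_1,\dots,r_m\in S'$ and $s\in S$ (which exists since $G|(S'\cup\{w\})$ is connected and $S\ne\emptyset$, and has $m\ge 1$ since $w\not\sim S$) would have $r_1\in Y_0$, hence $r_1\not\sim s$ and $m\ge 2$, so that $v-u-w-r_1-\dots-r_m-s$ is an induced path on at least six vertices, contradicting that $G$ is $P_6$-free. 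Therefore $w$ has a neighbor $a\in(S'\setminus S)\cap(X\cup Y)$. Now $a\in V(G)\setminus(Y_0\cup X_0)$, so by~\eqref{it:mixedy0} for $P$ and Lemma~\ref{lem:mixed}, $a$ is complete or anticomplete to the connected set $C$; since $a\sim w\in C$ it is complete to $C$, hence $a\sim v$, contradicting $a\in S'$ and $v\in Y_0''$. So $\tilde P$ satisfies~\eqref{it:mixedy0}.

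It remains to restore~\eqref{it:conn} and to handle the bookkeeping. Since $\tilde P$ is a normal subcase of $P$ satisfying~\eqref{it:y0} and~\eqref{it:mixedy0}, Lemma~\ref{sublem:it:conn} applied to $P$ and $\tilde P$ runs in polynomial time with an exponent independent of $|S|$, returns a collection of size at most one, and — when nonempty — a single $P''$ obtained from $\tilde P$ by deleting vertices of $Y_0''$; thus $P''$ is a normal subcase of $\tilde P$, and of $P'$ (here one uses $Y_0''\subseteq Y_0'$, which follows from $P'$ satisfying~\eqref{it:y0}, as we may assume in the setting where the lemma is used), and $P''$ satisfies~\eqref{it:conn}--\eqref{it:mixedy0} and, since $\tilde P$ does,~\eqref{it:lists}. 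The equivalence statement and the polynomial-time reconstruction of a precoloring extension of $P$ from one of $P''$ follow by composing the trivial lifting from $\tilde P$ to $P'$, the lifting from $P''$ to $\tilde P$ supplied by Lemma~\ref{sublem:it:conn}, and the lifting accompanying the passage from $P$ to $P'$. For the two conditional claims, since $S''=S'$ every list $L_{P''}(v)$ equals $L_{P'}(v)$, so the color $c$ and set $L$ witnessing~\eqref{it:123star} for $P'$ still serve; as $P''$ is a normal subcase of $P'$ satisfying~\eqref{it:y0},~\eqref{it:123star} for $P''$ follows from Lemma~\ref{sublem:it:mixedyl} applied to $P'$ and $P''$; and~\eqref{it:mixedyl} for $P''$ follows by a direct comparison with its definition, using that the only differences from $P'$ are the reclassification of list-size-$1$ vertices into $X_0''$ and the deletion of vertices of $Y_0''\subseteq Y_0$, neither of which can create a mixed vertex or an edge among the sets named in~\eqref{it:mixedyl}. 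This completes the plan.
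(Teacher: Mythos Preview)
Your argument is correct and follows essentially the same plan as the paper: re-sort vertices by list size with respect to $S'$ to form $\tilde P$, then invoke Lemma~\ref{sublem:it:conn} to restore~\eqref{it:conn}. The paper first adjusts $P'$ to satisfy~\eqref{it:y0} and then cites Lemma~\ref{sublem:it:mixedyl} to get~\eqref{it:mixedy0} for $\tilde P$, whereas you give a direct $P_6$-freeness argument for~\eqref{it:mixedy0} (and note the Lemma~\ref{sublem:it:mixedyl} alternative yourself); both work. One trivial omission: you should also check for vertices with $|L_{P'}(v)|=0$ and output $\mathcal{L}=\emptyset$ in that case, as the paper does, since such a vertex is not placed anywhere in your partition.
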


\begin{proof}
  Since $P'$ is a normal subcase of $P$, it follows that $P'$ satisfies
  \eqref{it:seed}. By moving vertices between $Y_0'$ and $X' \cup Y'$,
we may assume that $P'$ satisfies \eqref{it:y0}.
By Lemma~\ref{sublem:it:mixedyl}
$P'$ satisfies \eqref{it:mixedy0}.

  Let
  $Z_i = \sset{v \in V(G) \setminus (S' \cup X_0'): |L_{P'}(v)| = i}$.
  If $Z_0 \neq \emptyset$, then $P'$ has no precoloring extension, and
  we output this and $\mathcal{L}=\emptyset$ and stop. Thus, we may assume that $Z_0 = \emptyset$.
Let
  $f'' : Z_1 \rightarrow \sset{1,2,3,4}$ with $f''(v) = c$ if
  $L_{P'}(v) = \sset{c}$. 
Since $P'$ satisfies \eqref{it:y0}, it follows that $Y_0' = Z_4$, and 
so the seeded precoloring $\tilde{P}=(G', S', X_0' \cup Z_1, Z_2, Z_4, Z_3, f' \cup f'')$
satisfies \eqref{it:mixedy0}. For the same reason,
if $P'$ satisfies \eqref{it:123star},
then so does $\tilde{P}$, and if $P'$ satisfies \eqref{it:mixedyl},
then so does $\tilde{P}$.
Let
  $P''$  be obtained from the precoloring $\tilde{P}$
as in Lemma~\ref{sublem:it:conn}. It
  follows that $P''$ satisfies \eqref{it:conn}--\eqref{it:lists}, and $P''$ is 
a normal  subcase of $P'$. Clearly if $\tilde{P}$ satisfies \eqref{it:123star},
then so does $P''$, and if $\tilde{P}$ satisfies \eqref{it:mixedyl},
then so does $P''$.
 This proves Lemma~\ref{lem:lists}.
\end{proof}

In the next lemma we  establish \eqref{it:123star},  which we
restate:
\begin{enumerate}
\item[\eqref{it:123star}] There is a color $c \in \sset{1,2,3,4}$ such for
  every vertex $y \in Y$ with a neighbor in $Y_0$,
  $f(N(y) \cap S) = \sset{c}$. We let $L = \sset{1,2,3,4} \setminus \sset{c}$. 
\end{enumerate}

\begin{lemma} \label{lem:123star}
There is a function $q: \mathbb{N} \rightarrow \mathbb{N}$ such that the following holds. Let $P = \spc$ be a seeded precoloring of a $P_6$-free graph $G$ with $P$ satisfying \eqref{it:conn},
  \eqref{it:seed}, \eqref{it:y0}, \eqref{it:mixedy0} and
  \eqref{it:lists}. Then there is an algorithm with running time $O(|V(G)|^{q(|S|)})$ that outputs an equivalent collection $\mathcal{L}$ for $P$ such that
\begin{itemize}
\item $|\mathcal{L}| \leq |V(G)|^{q(|S|)}$;
\item every $P' \in \mathcal{L}$ is a normal subcase of $P$;
\item every $P' = (G', S', X_0', X', Y_0', Y', f') \in \mathcal{L}$ with seed $S'$ satisfies $|S'| \leq q(|S|)$;  and
\item every $P' \in \mathcal{L}$ satisfies \eqref{it:conn},
  \eqref{it:seed}, \eqref{it:y0}, \eqref{it:mixedy0}, \eqref{it:lists} and \eqref{it:123star}. 
\end{itemize}
Moreover, for every $P' \in \mathcal{L}$, given a precoloring extension of $P'$, we can compute a precoloring extension for $P$ in polynomial time. 
\end{lemma}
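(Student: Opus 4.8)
The plan is as follows. We start with $P$ satisfying \eqref{it:conn}--\eqref{it:lists}. The set that we need to control is $Y' = \{y \in Y : y \text{ has a neighbor in } Y_0\}$; for each such $y$ we have $|f(N(y)\cap S)| = 1$ by \eqref{it:lists}, so $f(N(y)\cap S) = \{c_y\}$ for a single color $c_y$. The goal is to guess a bounded set of vertices to add to the seed so that, in every resulting instance, all the surviving vertices of $Y'$ have the \emph{same} missing color $c$. First I would observe that if two vertices $y, y' \in Y'$ have $c_y \neq c_{y'}$, then by Lemma~\ref{types} (whose hypotheses \eqref{it:seed} and \eqref{it:y0} hold) we cannot have $y$ non-adjacent to $y'$, $z$ non-adjacent to $z'$, and both $yz', y'z$ non-edges, where $z, z'$ are the respective $Y_0$-neighbors; so the ``cross'' configuration is forbidden. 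This rigidity is what makes the seed small.

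The main step is a cleaning/guessing argument analogous to (and reusing the machinery of) Lemma~\ref{sublem:it:mixedy0}. For each pair of distinct colors $\{c, c'\}$, consider the sets $A_c = Y'_{\,[4]\setminus\{c\}}$ and $A_{c'} = Y'_{\,[4]\setminus\{c'\}}$. Using $P_6$-freeness, I would argue that one can pick a bounded number of vertices (bounded in terms of $2^{|S|}$, split by type, at most a $K_4$'s worth per type as in the earlier lemmas, since $G$ has no $K_5$ once we've discarded that case) whose addition to the seed "separates" these classes: after adding them, in each branch every component of $G|(Y_0 \cup Y')$ meets $Y'_{[4]\setminus\{c\}}$ for only one value of $c$, and moreover no vertex of $Y'$ with the ``wrong'' missing color retains a neighbor in $Y_0$. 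Concretely, for a vertex $y \in Y'$ with missing color $c$ and a $Y_0$-neighbor $z$, any $y' \in Y'$ with missing color $c' \neq c$ and $Y_0$-neighbor $z'$ gives, via the path through the seed $G|S$ joining $y$ to $y'$ (which has interior anticomplete to $\{z,z'\}$ and has length $\geq 1$ in the interior since $f(N(y)\cap S)$ and $f(N(y')\cap S)$ are disjoint singletons), a long induced path unless there is an edge among $\{y,y',z,z'\}$ forcing a bounded-size obstruction — exactly the situation Lemma~\ref{types} handles. So for each ordered pair of colors we run the seed-growing procedure: while there is a vertex $z$ of type $T$, complete to the current partial seed $S^{j,T}$, that is mixed on a $j$-clique of the relevant $Y_0$-class, add it (choosing $N(z)\cap(Y_0\cup Y')$ maximal, pushing dominated candidates into $X_0$ with a forced color as in Lemma~\ref{sublem:it:mixedy0}). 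Because $G$ has no $K_5$ this terminates with at most four vertices per type, hence at most $4\cdot 2^{|S|}$ per color-pair and $O(2^{|S|})$ in total, keeping $|S'| \leq q(|S|)$ and $|\mathcal{L}| \leq |V(G)|^{q(|S|)}$.

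After this, for each branch I would re-apply the earlier lemmas to restore the axioms that adding seed vertices may have disturbed: use Lemma~\ref{lem:it:y0} (or directly Lemma~\ref{sublem:it:mixedy0} followed by Lemma~\ref{sublem:it:conn}) to re-establish \eqref{it:mixedy0} and \eqref{it:conn}, and Lemma~\ref{lem:lists} to re-establish \eqref{it:lists}; since these lemmas are stated to preserve \eqref{it:123star} once it holds, the property we just arranged survives. Each branch is a normal subcase of $P$ because we only added to the seed (from $Z = X \cup Y$, or pushed $Y_0$-vertices into the seed, and moved dominated vertices to $X_0$ with a compatible color, all of which fits the definition of normal subcase), and equivalence follows as in the previous lemmas: a precoloring extension of any branch restricts to one of $P$, and conversely, given a precoloring extension $d$ of $P$, running the guessing procedure with choices dictated by $d$ (choosing the added colors to be $d$'s colors, and checking the forced-$X_0$ vertices indeed get color $c_4$ under $d$ by the maximality choice) produces a branch for which $d$ is a precoloring extension. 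The main obstacle is the combinatorial heart — verifying that the hypotheses of Lemma~\ref{sublem:it:mixedy0} are met for the chosen set $R$ and list $L$ (namely that after the first guessing round there is no three-vertex path from an outside vertex into $R$ and that outside neighbors of $R$ have the ``wrong'' list), and checking that the dominated-vertex/$X_0$ bookkeeping is consistent with $d$; this is exactly the kind of $P_6$-free case analysis carried out in the proof of Lemma~\ref{sublem:it:mixedy0}, and I expect to lean on Lemma~\ref{types} to rule out the cross configuration and on $K_5$-freeness to bound the seed.
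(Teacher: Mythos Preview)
Your proposal has a genuine gap in the central mechanism. The clique-saturation procedure of Lemma~\ref{sublem:it:mixedy0} controls which vertices are \emph{mixed} on edges of a target set $R$: after the procedure, every surviving $z \notin R$ is either complete or anticomplete to each edge of $R$. But axiom~\eqref{it:123star} asks for something different: every $y \in Y$ with a \emph{neighbor} in $Y_0$ must have the same missing color. Even if you arranged that no $y$ with the ``wrong'' list is mixed on an edge of $Y_0$, such a $y$ could still be complete to some component of $Y_0$, hence remain in $N(Y_0) \cap Y$ with the wrong list, and \eqref{it:123star} would fail. You also slide from a component-by-component statement (``every component of $G|(Y_0 \cup Y')$ meets only one value of $c$'') to the global one required by \eqref{it:123star}; these are not the same, and seed-growing does not bridge them. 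Finally, you never specify the set $R$ to which Lemma~\ref{sublem:it:mixedy0} would be applied, nor verify its hypothesis that there is no path $t{-}z_1{-}z_2{-}z_3$ with $t \notin R$ and $z_1,z_2,z_3 \in R$; with the natural choices this hypothesis does not hold at this stage.

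The paper's proof uses a completely different idea: it does not try to sever adjacencies between ``wrong-list'' $y$'s and $Y_0$; instead it \emph{forces their color} and moves them to $X_0$. First, for each ordered pair of types $(T,T')$ with $f(T), f(T')$ distinct singletons, it guesses one of three cases: a non-adjacent witnessing pair $p \in Y(T) \cap N(Y_0)$, $n \in Y(T') \cap N(Y_0)$ (together with a common $Y_0$-neighbor $m$, using Lemma~\ref{types}) both colored in $L_P(p) \cap L_P(n)$; or just one such $n$; or nothing. In the latter two cases an entire set $Z(T,T') \subseteq Y \cap N(Y_0)$ is forced to a single color and moved to $X_0$. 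This yields ``acceptable'' precolorings, meaning no precoloring extension gives two non-adjacent $y,y' \in Y \cap N(Y_0)$ with different lists both a color in the common part of their lists. In a second pass, for each acceptable branch one either guesses a single active color per list (moving all of $Y_L \cap N(Y_0)$ to $X_0$), or guesses one list $L$ with two active colors and two witnesses $y_1, y_2$ added to the seed; acceptability then forces, for every other list $L'$, the relevant vertices of $Y_{L'} \cap N(Y_0)$ to the unique color in $L' \setminus L$, again moving them to $X_0$. The missing idea in your proposal is precisely this ``acceptable'' intermediate notion and the resulting color-forcing step that empties $N(Y_0) \cap Y_{L'}$ for all but one $L'$.
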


\begin{proof}
A seeded precoloring $P=\spc$ is {\em acceptable} if for
every precoloring extension $c$ of $P$ and for every 
non-adjacent  $y, y' \in Y \cap N(Y_0)$  with
$L_P(y) \neq L_P(y')$, we have  
$\{c(y), c(y')\} \not \subseteq L_P(y) \cap L_P(y')$.

First we construct a collection $\mathcal{M}$ of seeded precolorings that
is an equivalent collection for  $P$, and such that every member of $\mathcal{M}$
is acceptable.
We proceed as follows. 
Let $\mathcal{T}$ be the set of all pairs $(T,T')$ with
$T,T' \subseteq S$ and $|f(T)|=|f(T')|=1$ and $f(T) \neq f(T')$.
Write $\mathcal{T}=\{(T_1,T_1'), \ldots, (T_{t},T_{t}')\}$. 
Let $\mathcal{Q}$ be the set
of all $t$-tuples $Q=(Q_{T_1,T_1'}, \ldots, Q_{T_{t},T_{t}'})$ such that
$Q_{T_i,T_i'}=(P_{T_i,T_i'},M_{T_i,T_i'},N_{T_i,T_i'})$ where
\begin{itemize} 
\item $|P_{T_i,T_i'}|= |M_{T_i,T_i'}| \leq |N_{T_i,T_i'}| \leq 1$.
\item $P_{T_i,T_i'} \subseteq Y(T_i)$ and  
$N_{T_i,T_i'} \subseteq Y(T_i')$.
\item $M_{T_i,T_i'} \subseteq Y_0$.
\item $M_{T_i,T_i'}$ is complete to $P_{T_i,T_i'} \cup N_{T_i,T_i'}$.
\item $P_{T_i,T_i'}$ is anticomplete to $N_{T_i,T_i'}$.
\end{itemize}
Let $V(Q_{T_i,T_i'})=P_{T_i,T_i'} \cup M_{T_i,T_i'} \cup N_{T_i,T_i'}$ and let
$S(Q)=\bigcup_{i=1}^{t}V(Q_{T_i,T_i'})$. 
Let $(T_i,T_i') \in \mathcal{T}$. Define $Z(T_i,T_i')$ as follows.
\begin{itemize}
\item If $|P_{T_i,T_i'}|=|M_{T_i,T_i'}|=|N_{T_i,T_i'}| =0$ ,  then
$Z(T_i,T_i')=Y({T'}_i) \cap N(Y_0)$. 
\item If  $|P_{T_i,T_i'}|=|M_{T_i,T_i'}|=0$ and $|N_{T_i,T_i'}| =1$,  then 
$Z(T_i,T_i')=(Y({T'}_i) \cap N(Y_0)) \setminus N(N_{T_i,T_i'})$.
\item If  $|P_{T_i,T_i'}|=|M_{T_i,T_i'}|=|N_{T_i,T_i'}| =1$, then
  $Z(T_i,T_i')=\emptyset$.
\end{itemize}
Let $Z(Q)=\bigcup_{(T_i,T_i') \in \mathcal{T}}Z(T_i,T_i')$.
A function $f'$ 
is said to be {\em $Q$-admissible} if
$f':S(Q) \cup Z(Q) \rightarrow \{1, \ldots, 4\}$  and  for every 
$i \in \{1, \ldots, t\}$ it satisfies:
\begin{itemize}
\item  $f'(P_{T_i,T_i'}),f'(N_{T_i,T_i'}) \in [4] \setminus (f(T_i) \cup  f(T_i'))$. 
\item  If $Z(T_i,T_i') \subseteq Y(T_i')$, then $f'(Z(T_i,T_i'))=f(T_i)$.
\item  If $Z(T_i,T_i') \subseteq Y(T_i)$, then $f'(Z(T_i,T_i'))=f(T_i')$.
\item The coloring $f \cup f'$ of $G|(S \cup S(Q) \cup X_0 \cup Z(Q))$ is proper. 
\end{itemize}
For every $Q$-admissible function $f'$  with domain $S(Q) \cup Z(Q)$, let
$$P_{Q,f'} = (G, S \cup S(Q), X_0 \cup Z(Q), X, Y_0 \setminus (S(Q) \cup N(S(Q))), (Y \setminus (S(Q) \cup Z(Q))) \cup (N(S(Q)) \cap Y_0), f \cup f').$$
Then $P_{Q,f'}$ is a normal subcase of $P$.

Since every vertex in $X \cup Y$ has a neighbor in $S$, it follows that 
$P_{Q,f'}$ satisfies \eqref{it:seed}; by construction \eqref{it:y0} 
holds. By Lemma~\ref{sublem:it:mixedyl},  $P_{Q,f'}$ satisfies \eqref{it:mixedy0}.
Let $\mathcal{M}$ be the union of the collections obtained by
applying Lemma~\ref{lem:lists}, where the union is taken over all $Q,f'$ as 
above. Then every member of $\mathcal{M}$ satisfies
\eqref{it:conn}--\eqref{it:lists}.  

We show that there is a function $q_1: \mathbb{N} \rightarrow \mathbb{N}$
such that $|S \cup S(Q)| \leq q_1(|S|)$ and 
$|\mathcal{M}| \leq |V(G)|^{q_1(|S|)}$.
Since there are at most $2^{|S|}$ types,
it follows that $t \leq 2^{2|S|}$. 
Now, since for every 
$(T_i,T_i') \in \mathcal{T}$ we have that $|V(Q_{T_i,T_i'})| \leq 3$, 
it follows that for every $Q \in \mathcal{Q}$ we have 
$|S(Q)| \leq 3 \times 2^{t}$, and so 
$|S \cup S(Q)| \leq |S|+3 \times 2^{2|S|}$ and 
$|\mathcal{Q}| \leq |V(G)|^{3 \times 2^{2|S|}}$.
Finally,  for every $Q$, there are at most $4^{|S(Q)|}=4^{3t}$ possible 
precoloring of $S(Q)$, 
since every precoloring of $S(Q)$ extends to an admissible function
in a unique way, and   we deduce that 
$|\mathcal{M}| \leq 4^{3t} \times |\mathcal{Q}| \leq 
4^{3 \times 2^{2|S|}} \times |V(G)|^{3 \times 2^{2|S|}} \leq (4|V(G)|)^{3 \times 2^{2|S|}} $ 
as required.

  \vspace*{-0.4cm}
  \begin{equation}\vspace*{-0.4cm} \label{yinY}
    \longbox{\emph{Let $P' \in \mathcal{M}$ with $P'= (G, S', X_0', X', Y_0', Y', f')$. If $y \in Y'$ has a neighbor $z \in Y_0'$, then $y \in Y$.}}
  \end{equation}

Suppose that $y \not \in Y$. Then $y \in Y_0 \cap Y'$ and  there exist 
$s \in S' \setminus S$
such that $y$ is adjacent to $s$, contrary to Lemma~\ref{sublem:s'Y0}.
This proves \eqref{yinY}.

\bigskip

Next we show that every precoloring in $\mathcal{M}$ is acceptable.
Let $P' = (G, S', X_0', X', Y_0', Y', f') \in \mathcal{M}$,
and suppose there exist non-adjacent $y,y' \in N(Y_0') \cap Y'$
with $L_{P'}(y) \neq L_{P'}(y')$ 
and such that there exists a precoloring extension 
$c$ with $c(y), c(y') \in L_{P'}(y) \cap L_{P'}(y')$.
Let $z \in N(y) \cap Y_0'$ and $z' \in N(y') \cap Y_0'$.
Then $z,z' \in Y_0$, and so by Lemma~\ref{sublem:s'Y0}, $y,y' \in Y$,
$L_P(y)=L_{P'}(y)$ and  $L_P(y')=L_{P'}(y')$.
Let $T=T(y)$ and $T'=T(y')$ (in $P$). Then $T \cap T'=\emptyset$.
By Lemma~\ref{types} we may assume that $z=z'$.
Since $Y' \cap Y(T)$ and $Y' \cap Y(T')$ are both non-empty,
it follows that $|V(Q_{T,T'})|>1$.
Let $P_{T,T'}=\{p\}$, $M_{T,T}=\{m\}$ and $N_{T,T'}=\{n\}$. Since $z \in Y_0'$,
it follows that $z$ is anticomplete to $V(Q_{T,T'})$. Since 
$P'$ satisfies \eqref{it:lists}, 
$f'(p),f'(n) \in L_P(y) \cap L_P(y')$ and 
$|L_{P'}(y)|=|L_{P'}(y')|=3$, it follows that
$\{y,y',p,n\}$ is a stable set.
By symmetry, we may assume that $f'(m) \in L_{P'}(y)$, and hence $y$ is not adjacent to $m$. Let $s \in T \setminus T'$; then $z-y-s-p-m-n$ is a $P_6$ in $G$, a contradiction. This proves that every seeded precoloring in $\mathcal{M}$ 
is acceptable.

Next we show that $\mathcal{M}$ is equivalent to $P$. Clearly every precoloring 
extension of a member of $\mathcal{M}$ is a precoloring extension of $P$.
For the converse, let $c$ be a precoloring extension of $P$. For every
pair of types $(T,T') \in \mathcal{T}$ for which there
exist non-adjacent $y \in Y(T) \cap N(Y_0)$ and $y' \in Y(T') \cap N(Y_0)$,
such that $c(y), c(y') \not \in f(T) \cup f(T')$, choose such a pair $y,y'$ and 
let $z$ be a common neighbor of $y,y'$ in $Y_0$ (such $z$ exists by 
Lemma~\ref{types}); set
$P_{T,T'}=\{y\}$, $M_{T,T'}=\{z\}$ and $N_{T,T'}=\{y'\}$, and
define $f'(y)=c(y)$, $f'(y')=c(y')$ and $f'(z)=c(z)$.
Let $Z(T_i,T_i')=\emptyset$.

Now let $(T,T') \in \mathcal{T}$ be such that no such $y,y'$ exist.
Suppose that there exists $y \in Y(T') \cap N(Y_0)$ with $c(y) \neq f(T)$,
let $N_{T,T'}=\{y\}$, $P_{T,T'}=M_{T,T'}=\emptyset$, and let $f'(y)=c(y)$. 
Let $Z(T_i,T_i')= (Y(T) \cap N(Y_0)) \setminus N(y)$, and 
set $f'(v)=f(T')$ for every $v \in  Z(T_i,T_i')$.
Since $(T,T')$ does not
have the property described in the previous paragraph, it follows
that $c((Y(T) \cap N(Y_0)) \setminus N(y))=f(T')$, and so
$c(v)=f'(v)$ for every $v \in  Z(T_i,T_i')$.
Finally, suppose that $c( Y(T') \cap N(Y_0)) = f(T)$. Then
set  $P_{T,T'}=M_{T,T'}=N_{T,T'}=\emptyset$ and
$Z(T_i,T_i')= Y(T') \cap N(Y_0)$.
Define $f'(v)=f(T)$ for every $v \in Z(T_i,T_i')$.
Let $Q$ consist of all the triples $Q_{T,T'}=(P_{T,T'},M_{T,T'},N_{T,T'})$ as above.
Let $S(Q)=\bigcup_{(T,T') \in \mathcal{T}}V(Q_{T,T'})$, and 
$Z(Q)=\bigcup_{(T,T') \in \mathcal{T}}Z(T_i,T_i')$. Let
$$P_{Q,f'} = (G, S \cup S(Q), X_0 \cup Z(Q), X, Y_0 \setminus (S(Q) \cup N(S(Q))), (Y \setminus (S(Q) \cup Z(Q))) \cup (N(S(Q)) \cap Y_0), f \cup f').$$
Then $c$ is a precoloring extension of $P_{Q,f'}$. Moreover, 
$P_{Q,f'}$ was one of the seeded precoloring we considered in the process of
constructing $\mathcal{M}$, and so 
$\mathcal{M}$ contains the seeded precoloring obtained from
$P_{Q,f'}$ by applying Lemma~\ref{lem:lists}. It follows that $\mathcal{M}$
is an equivalent collection for $P$.

Let $P'=(G,S',X_0',X',Y_0',Y',f') \in \mathcal{M}$ be an acceptable seeded 
precoloring. 
For $c \in \sset{1,2,3,4}$ and a precoloring extension $d$ of $P'$,
we say that is \emph{$c$ is active for $L$ and $d$}  
if there exists a vertex $v \in Y' \cap N(Y_0')$ with $L_{P'}(v) = L$ and  
$d(v)=c$.

Define $\mathcal{L}_1(P')$ as follows. For every function
$g: Y' \cap N(Y_0') \rightarrow [4]$ such that
\begin{itemize}
\item $g(v) \in L_{P'}(v)$ for every $v \in Y \cap N(Y_0')$,
\item $|g(Y'_L \cap N(Y_0')|=1$ for every $L \in {[4] \choose 3}$, and
\item $f' \cup g$ is a proper coloring of $G|(S' \cup X_0' \cup  (Y' \cap N(Y_0')))$,
\end{itemize}
let 
$$P''_g= (G, S', X_0' \cup  (Y' \cap N(Y_0')), X', Y_0', Y' \setminus N(Y_0'), f' \cup g).$$
It is easy to check that $P''_g$ satisfies \eqref{it:seed}---\eqref{it:123star}.
Let $P'_g$ be obtained from $P''_g$ by applying Lemma~\ref{lem:lists}. 
Then $P'_g$ satisfies \eqref{it:conn}---\eqref{it:123star}.
Let  $\mathcal{L}_1(P')$ be the collections of all such $P'_g$.

Next we construct $\mathcal{L}_2(P')$.
For every $L \in  {[4] \choose 3}$, for every $y_1,y_2 \in {Y'}_L \cap N({Y'}_0)$, and for every $c_1, c_2 \in L$, define a function $g$ as follows.
Let $g(y_i)=c_i$. For every $L' \in {[4] \choose 3} \setminus  L$, 
let $Z(L')$ be the set of vertices $v \in {Y'}_{L'}$ such that
$v$ has a non-neighbor $n \in \{y_1,y_2\}$ with $g(n) \in L'$.
For every $v \in Z(L')$, let 
$g(v)$ be the unique element of $L' \setminus L$. Finally, let 
$Z=\bigcup_{L' \in {[4] \choose 3} \setminus L}  Z(L')$.

If $f' \cup g$ is 
a proper coloring of $G|(S \cup X_0 \cup \{y_1,y_2\})$, let
$$P''_{L,y_1,y_2,c_1,c_2}=(G,S \cup \{y_1,y_2\}, X_0 \cup Z, X, Y_0 \setminus N(\{y_1,y_2\}), Y \setminus (Z \cup \{y_1,y_2\}), f' \cup g).$$
It is easy to check that 
$P''_{L,y_1,y_2,c_1,c_2}$  satisfies \eqref{it:conn}---\eqref{it:123star}. 
Let $P'_{L,y_1,y_2,c_1,c_2}$ be obtained from $P''_{L,y_1,y_2,c_1,c_2}$ by applying
Lemma~\ref{lem:lists}.
Let $\mathcal{L}_2(P')$ be the collection of all  
$P'_{L,y_1,y_2,c_1,c_2}$ constructed this way; then every member of $\mathcal{L}_2$
satisfies \eqref{it:conn}---\eqref{it:123star}.

We claim that $\mathcal{L}(P')=\mathcal{L}_1(P') \cup \mathcal {L}_2(P')$ is an equivalent
collection for $\{P'\}$. Clearly a precoloring extension of an element of 
 $\mathcal{L}(P')$ is a precoloring extension of $P$.  Now let $c$ be a
precoloring extension of $P$. If for every $L \in  {[4] \choose 3}$
there is at most one active color for $L$ and $c$, then $c$ is a precoloring
extension of a member of $\mathcal{L}_1(P)$, so we may assume that
there is $L_0 \in  {[4] \choose 3}$ such that at least two colors
are active for $L$ and $c$. We may assume that $L=\{1,2,3\}$ and
the colors $1,2$ are active. Let $y_i \in {Y'}_{L_0}$ with $c(y)=i$.
We claim that $c$ is a precoloring extension of $P''_{L_0,y_1,y_2,1,2}$.
Let $L \in  {[4] \choose 3} \setminus L_0$. Since $P'$ is acceptable,
for every  $v \in Y'_L$ that
has  a non-neighbor $n \in \{y_1,y_2\}$ with $c(n) \in L'$, we have that
$c(v) \in L' \setminus L_0$. It follows that $c(v)=g(v)$, and the claim holds.
This proves that  $\mathcal{L}(P')$ is an equivalent collection for $\{P'\}$.

Finally, setting 
$$\mathcal{L}=\bigcup_{P' \in \mathcal{M}}\mathcal{L}(P'),$$ 
Lemma~\ref{lem:123star} follows. This completes the proof.
\end{proof}

The next lemma is used to arrange the following axiom, which we restate: 
\begin{enumerate}
\item[\eqref{it:mixedyl}] With $L$ as in \eqref{it:123star}, we let
  $Y_L^*$ be the subset of $Y_L$ of vertices that are in connected
  components of $G|(Y_0 \cup Y_L)$ containing a vertex of $Y_0$. Then
  no vertex of $Y \setminus Y_L^*$ has a neighbor in
  $Y_0 \cup Y_L^*$, and no vertex of $X$ is mixed on $Y_0 \cup Y_L^*$.
\end{enumerate}
\begin{lemma} \label{lem:mixedyl}
There is a function $q: \mathbb{N} \rightarrow \mathbb{N}$ such that the following holds. Let $P = \spc$ be a seeded precoloring of a $P_6$-free graph $G$ with $P$ satisfying 
\eqref{it:conn}, \eqref{it:seed}, \eqref{it:y0}, \eqref{it:mixedy0}, \eqref{it:lists} and \eqref{it:123star}. Then there is an algorithm with running time $O(|V(G)|^{q(|S|)})$ that outputs an equivalent collection $\mathcal{L}$ for $P$ such that
\begin{itemize}
\item $|\mathcal{L}| \leq |V(G)|^{q(|S|)}$;
\item every $P' \in \mathcal{L}$ is a normal subcase of $P$;
\item for every $P' = (G', S', X_0', X', Y_0', Y', f') \in \mathcal{L}$, 
$|S'| \leq q(|S|)$;  
\item every $P' \in \mathcal{L}$ satisfies 
\eqref{it:conn}, \eqref{it:seed}, \eqref{it:y0}, \eqref{it:mixedy0}, \eqref{it:lists}, \eqref{it:123star} and \eqref{it:mixedyl};
\end{itemize}
Moreover, for every $P' \in \mathcal{L}$, given a precoloring extension of $P'$, we can compute a precoloring extension for $P$ in polynomial time. 
\end{lemma}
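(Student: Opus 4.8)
The plan is to mimic the proof of Lemma~\ref{lem:it:y0}, with the region $Y_0$ replaced by $R := Y_0 \cup Y_L$, and then to invoke Lemma~\ref{sublem:it:mixedy0} with this $R$. First, if $G$ contains a $K_5$ then $P$ has no precoloring extension and we output $\mathcal{L} = \emptyset$; so assume $G$ is $K_5$-free. Note that by \eqref{it:lists} and \eqref{it:123star} every vertex of $R$ has $L_P$-list $[4]$ or $L$, while every vertex of $(X \cup Y) \setminus R = X \cup (Y \setminus Y_L)$ has a list of size at most $2$, or of size $3$ different from $L$; hence $L_{P}(t) \neq L_{P}(z)$ for all $t \in (X\cup Y)\setminus R$ and $z \in R$, so the first hypothesis of Lemma~\ref{sublem:it:mixedy0} holds automatically. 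The only thing left to arrange before applying that lemma is the absence of a path $t - z_1 - z_2 - z_3$ with $t \in (X \cup Y) \setminus R$ and $z_1, z_2, z_3 \in R$.

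To arrange this I would run the greedy seed-growth of Lemma~\ref{lem:it:y0}: starting from $S^5 = \emptyset$, while some $z \in X \cup (Y \setminus Y_L)$ is complete to $S^5$ and is the first vertex of such a $P_4$ into $R$, add $z$ to $S^5$; since $S^5$ is a clique, $K_5$-freeness gives $|S^5| \leq 4$. Essentially the same $P_6$-extraction used for the no-$P_4$ claim in the proof of Lemma~\ref{lem:it:y0} (using \eqref{it:seed}, \eqref{it:y0}, and that seed vertices cannot be adjacent to vertices of $R$ along such paths without shrinking lists) shows that no such $P_4$ into the new region survives. For each proper colouring of $S^5$ compatible with $f$ I would form the natural seeded precoloring; the point that needs checking is that it still satisfies \eqref{it:123star} with the same colour $c$, and this follows because, by \eqref{it:mixedy0}, no vertex of $X \cup (Y \setminus Y_L) \supseteq S^5$ is mixed on an edge of $Y_0$, so a vertex of the new $Y$ adjacent to the new $Y_0$ must already lie in the old $Y$ and have all its $S^5$-neighbours coloured $c$. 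I would then apply Lemma~\ref{lem:lists} to each such precoloring to reinstate \eqref{it:conn}--\eqref{it:lists} (it preserves \eqref{it:123star}).

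Next, to each of these precolorings I would apply Lemma~\ref{sublem:it:mixedy0} with $R = Y_0 \cup Y_L$ and $c_4 = c$, and then clean up once more with Lemma~\ref{lem:lists} (using Lemma~\ref{sublem:it:mixedyl} to carry \eqref{it:123star} across the normal-subcase passage). Since Lemma~\ref{lem:lists} only deletes vertices of $Y_0$ and re-sorts vertices by list size, the conclusion ``no vertex of $(X \cup Y) \setminus R$ is mixed on an edge of $(Y \cup Y_0) \cap R$'' is preserved. Finally I would deduce \eqref{it:mixedyl}. Writing $Y_L^*$ for the set defined there, we have $Y_0 \cup Y_L^* \subseteq R$, so every edge inside $Y_0 \cup Y_L^*$ is an edge of $(Y \cup Y_0) \cap R$, and hence no vertex of $X$ — which by \eqref{it:lists} lies outside $R$ — is mixed on it. For the clause that no vertex of $Y \setminus Y_L^*$ has a neighbour in $Y_0 \cup Y_L^*$: vertices of $Y_L \setminus Y_L^*$ have none by the component definition of $Y_L^*$, and for $y \in Y_{L'}$ with $L' \neq L$, a neighbour $w \in Y_L^*$ gives, via a shortest path from $w$ to $Y_0$ inside $G|(Y_0 \cup Y_L)$ together with repeated use of the no-mixing property and the fact (from \eqref{it:123star}) that $y$ has no neighbour in $Y_0$, that $y$ is adjacent all along that path, in particular to a vertex of $Y_0$ — a contradiction. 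The size and running-time bounds follow exactly as in Lemma~\ref{lem:it:y0}, since we only add a bounded clique to the seed before invoking Lemma~\ref{sublem:it:mixedy0} and Lemma~\ref{lem:lists}.

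The step I expect to be the real obstacle is the last one: confirming that the ``no mixing on edges of $(Y \cup Y_0) \cap R$'' conclusion of Lemma~\ref{sublem:it:mixedy0} genuinely yields \eqref{it:mixedyl} for the new precolorings. The delicate point is that the seed has grown, so some vertices of the original region $Y_0 \cup Y_L$ may now be in the new seed or have smaller lists (for instance a former $Y_L$-vertex that has become an $X$-vertex), and one must verify that such a vertex cannot be mixed on an edge of the new $Y_0 \cup Y_L^*$; I expect this to require tracing, inside the construction of Lemma~\ref{sublem:it:mixedy0}, exactly which vertices were absorbed into the seed and why. A secondary obstacle is the bookkeeping needed to transport \eqref{it:123star} and the no-$P_4$ and no-mixing invariants faithfully through each application of Lemma~\ref{lem:lists} and Lemma~\ref{sublem:it:mixedy0}; this is precisely what Lemmas~\ref{sublem:it:mixedyl} and \ref{sublem:s'Y0} are designed for.
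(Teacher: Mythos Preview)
Your approach has a real gap, and it is not where you expected it: the greedy clique growth à la Lemma~\ref{lem:it:y0} does \emph{not} kill all $P_4$'s into $R = Y_0 \cup Y_L$. The $P_6$-extraction in Lemma~\ref{lem:it:y0} uses that every vertex of $S$ is anticomplete to $Y_0$; but vertices of $S$ coloured $c_4$ can be adjacent to $Y_L$, so the path through $S$ from a surviving bad vertex $z$ to a non-neighbour $z'\in S^5$ need not be anticomplete to $\{a,b,c\}$. Here is a concrete obstruction. Take $S=\{s_1,s_4,s_1'\}$ with $f(s_1)=f(s_1')=1$, $f(s_4)=4$, edges $s_1s_4,\,s_4s_1'$; let $z,z'$ be non-adjacent with $N(z)\cap S=\{s_1\}$, $N(z')\cap S=\{s_1'\}$; let $a,b,c,a',b',c'\in Y_L$ each adjacent to $s_4$, with induced paths $z\text{--}a\text{--}b\text{--}c$ and $z'\text{--}a'\text{--}b'\text{--}c'$, the two triples mutually anticomplete; add $y_0\in Y_0$ adjacent only to $a$. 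This graph is $P_6$-free and satisfies \eqref{it:conn}--\eqref{it:123star}, yet after your clique growth puts (say) $z'$ into the seed, $z\text{--}a\text{--}b\text{--}c$ is still a $P_4$ into the new region and there is no $P_6$ to extract: the only $z$--$z'$ path through $S\cup S^5$ is $z\text{--}s_1\text{--}s_4\text{--}s_1'\text{--}z'$, and $s_4$ is complete to $\{a,b,c\}$.

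The paper avoids this by branching \emph{per type} rather than growing a single clique. For each type $T$ occurring among vertices of $Y^*=(X\cup (Y\setminus Y_L^*))\cap N(Y_0\cup Y_L^*)$ it guesses an anchor $s\in S_T$ (and possibly a second $r\in R_T$ adjacent to $s$), colours them with colours in $L$, and sends every vertex of $Y^*$ whose type has no anchor (or which is complete to the lone anchor) to $X_0$ with colour $c_4$. The point is that any surviving $z$ then shares its type with an anchor $s$, so they have a common neighbour $t\in S$ with $f(t)\in L$, hence anticomplete to $Y_0\cup Y_L^*$; this is what makes the $P_6$ go through (either $s\text{--}t\text{--}z\text{--}a\text{--}b\text{--}c$ or, if $z$ is adjacent to $s$, $r\text{--}s\text{--}z\text{--}a\text{--}b\text{--}c$). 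Note also that the paper takes $R=Y_0\cup Y_L^*$, not $Y_0\cup Y_L$; working with $Y_L^*$ keeps vertices like $z'$ above (which see $Y_L\setminus Y_L^*$ but not $Y_0\cup Y_L^*$) out of $Y^*$ entirely. Your downstream cleanup via Lemmas~\ref{sublem:it:mixedyl} and~\ref{lem:lists}, and your deduction of \eqref{it:mixedyl} from the no-mixing conclusion, are fine once the no-$P_4$ hypothesis of Lemma~\ref{sublem:it:mixedy0} is in place; but that hypothesis is precisely where the single-clique shortcut fails.
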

\begin{proof}
We may assume that $G$ contains no $K_5$, for otherwise, $P$ does not have a precoloring extension and we output $\mathcal{L}=\emptyset$ and stop. 

With $L$ as in \eqref{it:123star} and $Y^*_L$ as in \eqref{it:mixedyl},
let $Y^* =  (X \cup (Y \setminus Y_L^*))  \cap N(Y_0 \cup Y_L^*)$. 
By the definition of $Y_L^*$, it follows that $L_P(y) \neq L$ for every 
$y \in Y^*$, and if $y \in Y^* \cap Y$, then $y$ is anticomplete to $Y_0$.
Let $\mathcal{T}=\{T_1, \ldots, T_t\}$ be the set of types of vertices in 
$Y^*$.  Let $L = \sset{c_1, c_2, c_3}$ and 
$\sset{c_4} = \sset{1,2,3,4} \setminus L$.
Let $\mathcal{Q}$ consist of all $t$-tuples 
$Q=((S_{T_1},R_{T_1}), \ldots, (S_{T_i},R_{T_t}))$ such that
\begin{itemize}
\item $|R_{T_i}| \leq |S_{T_i}| \leq 1$.
\item $S_{T_i} \cup R_{T_i} \subseteq Y^*(T_i)$.
\item $S_{T_i}$ is complete to $R_{T_i}$.
\end{itemize}
Let $V(Q)=\bigcup_{i=1}^t(S_{T_i} \cup R_{T_i})$.
For every $Q \in \mathcal{Q}$ and for every 
$f': V(Q) \rightarrow L$ with $f'(v) \in L_P(v) \setminus \{c_4\}$ for all $v \in V(Q)$, we proceed as follows.  
Let $\tilde{Y}^1_{Q,f'}$ be the set of all vertices $v$ in $Y^*$ such that 
$S_{T(v)}=\emptyset$. Let  $\tilde{Y}^2_{Q,f'}$ be the set of all vertices $v$ in $Y^*$ such that  $S_{T(v)} \neq \emptyset$ ,  $R_{T(v)}= \emptyset$ and
$v$ is complete to $S_{T(v)}$.
Let $\tilde{Y}_{Q,f'}=\tilde{Y}^1_{Q,f'} \cup \tilde{Y}^2_{Q,f'}$.
Let $f'(v)=c_4$ for every $v \in \tilde{Y}_{Q,f'}$
Since $V(Q) \subseteq Y^*$, it follows that  $G|(S \cup V(Q))$ is connected.
Suppose that $f \cup f'$ is a proper coloring of 
$G|(S \cup X_0 \cup V(Q) \cup \tilde{Y}_{Q,f'})$.
Let $\mathcal{L}'$ be obtained from the normal subcase 
$$(G, S \cup V(Q), X_0 \cup \tilde{Y}_{Q,f'}, X \setminus (\tilde{Y}_{Q,f'} \cup V(Q)), Y_0, Y \setminus (\tilde{Y}_{Q,f'} \cup V(Q)), f \cup f' \cup g)$$
of $P$ by applying Lemma~\ref{lem:lists}. 
Suppose that $\mathcal{L}'=\{P_{Q,f'}\}$. 
Write $P_{Q,f'} = (G, S', X_0', X', Y_0', Y', f')$. 
Then $P_{Q,f'}$ satisfies \eqref{it:conn}--\eqref{it:123star}. 
Furthermore, $P_{Q,f'}$  has a precoloring extension if and only if $P$ has a precoloring extension $d$ such that $d(v)=f'(v)$ for every $v \in V(Q)$,
and $d(v)=c_4$ for every $v \in Y^*$ 
such that  either
\begin{itemize}
\item $S_{T(v)}=\emptyset$, or
\item $S_{T(v)} \neq \emptyset$, $R_{T(v)}=\emptyset$, and $v$ is complete to 
$S_{T(v)}$.
\end{itemize}
Moreover, $|V(Q)| \leq 2|\mathcal{T}| \leq 2^{|S|+1}$.

Let $\mathcal{L}_1$ be the set of all seeded precolorings $P_{Q,f'}$ as above
(ranging over all $Q \in \mathcal{Q}$).
Then $\mathcal{L}_1$ is an equivalent collection for $P$, and 
$|\mathcal{L}_1| \leq (3|V(G)|)^{2^{|S|+1}}$.
Let $P' \in \mathcal{L}_1$ with $P' = (G, S', X_0', X', Y_0', Y', f')$. 
Since $P'$ satisfies \eqref{it:123star},
let $L$ be as in \eqref{it:123star} and let ${Y'}_L^*$ be as in 
\eqref{it:mixedyl}.

\vspace*{-0.4cm}\begin{equation}\vspace*{-0.4cm} \label{eq:no3pathyl}
  \longbox{\emph{There is no path $z-a-b-c$ with  
$z \in (X' \cup Y') \setminus {Y'}_L^*$ 
and $a, b, c \in {Y'}_L^* \cup Y_0'$.}}
\end{equation}

Suppose that such a path $z-a-b-c$ exists. First we show that $z \in X \cup Y$.
Suppose not, then $z \in Y_0$ and $z$ has a neighbor $s' \in S' \setminus S$.
Since $P$ satisfies \eqref{it:123star}, it follows that $s' \in X$.
Since $\{z,a,b,c\} \subseteq Y_0 \cup Y_L$, and since
$P$ satisfies \eqref{it:lists}, we deduce that there exists
$s \in T(s')$ with $f(s) \in L$. Consequently, $s$ is anticomplete to 
$\{z,a,b,c\}$. But now $s-s'-z-a-b-c$ is a $P_6$ in $G$, a contradiction.
This proves that $z \in X \cup Y$.

Since  $L_{S,f}(z) \neq L$, there exists $t \in T(z)$ with
$f(t) \in L$.  Since $z \not \in X_0'$, it follows that 
$S_{T(v)} \neq \emptyset$, and 
either
\begin{itemize}
\item $R_{T(z)} \neq \emptyset$ , or
\item $R_{T(z)}=\emptyset$, and $z$ is not complete to $S_{T(z)}$.
\end{itemize}
Let $S_{T(z)}=\{s\}$. Since $f'(s) \in L$, it follows that $s$ is anticomplete
to $\{a,b,c\}$. If $z$ is non-adjacent to $s$, then $s-t-z-a-b-c$ is a 
$P_6$, a contradiction. It follows that $z$ is adjacent to $s$, and therefore
$R_{T(z)}\neq \emptyset$; say $R_{T(z)}=\{r\}$. Since $s$ is adjacent
to $r$, it follows that $f'(z) \neq f'(r)$. Since $z \not \in X_0$, and
since \eqref{it:lists} holds, it follows that $z$ is non-adjacent to $r$.
Since $f'(r) \in L$, it follows that $r$ is anticomplete
to $\{a,b,c\}$. But now $r-s-z-a-b-c$ is a $P_6$ in $G$, a contradiction.
This proves~\eqref{eq:no3pathyl}. 

\bigskip

In view of \eqref{eq:no3pathyl}, let $\mathcal{L}_2(P')$ be the collection
of precolorings obtained from $P'$ by applying Lemma~\ref{sublem:it:mixedy0} with 
$R=Y_0' \cup {Y'}_L^*$. Let $P'' \in \mathcal{L}_2(P')$; write
 $P''=(G, S'',X_0'',X'',Y_0'',Y'',f'')$.
Then $P''$ satisfies \eqref{it:seed} and \eqref{it:y0} and  no vertex of 
$(X'' \cup Y'') \setminus R$ is mixed on $(Y'' \cup Y_0'') \cap R$.
By Lemma~\ref{sublem:it:mixedyl},  $P''$ satisfies \eqref{it:mixedy0}
and \eqref{it:123star}.

Let $\mathcal{L}_3(P'')$ be obtained by applying Lemma~\ref{lem:lists}
to  $P''$, and let $\tilde{P} \in \mathcal{L}_3(P'')$.
Write $\tilde{P}=(\tilde{G},\tilde{S},\tilde{X_0},\tilde{X}, \tilde{Y_0},\tilde{Y}, \tilde{f})$.
By  Lemma~\ref{lem:lists},  $\tilde{P}$
satisfies \eqref{it:conn}--\eqref{it:123star}. 
Since $P''$ satisfies \eqref{it:y0}, $\tilde{S}=S''$ and $\tilde{Y_0}=Y_0''$.
Define $\tilde{Y}_L^*$ as in \eqref{it:mixedyl}, then 
$\tilde{Y}_L^* =R \cap \tilde{Y}$.
Since no vertex of $(X'' \cup Y'') \setminus R$ is mixed on 
$(Y'' \cup Y_0'') \cap R$,
it follows that no vertex of $(\tilde{X} \cup \tilde{Y}) \setminus \tilde{Y}_L^*$
is mixed on $Y_0'' \cup \tilde{Y}_L^*$, and since $\tilde{P}$ satisfies
\eqref{it:123star}, we deduce that $\tilde{P}$ satisfies \eqref{it:mixedyl}.
Now setting

$$\mathcal{L}=\bigcup_{P_1 \in \mathcal{L}_1} \bigcup_{P_2 \in \mathcal{L}_2(P_1)}\mathcal{L}_3(P_2)$$

Lemma~\ref{lem:mixedyl} follows. 
\end{proof}

We are now ready to prove the final lemma of this section, used to prove the following axiom, which we restate: 
\begin{enumerate}
\item[\eqref{it:complete}] With $Y_L^*$ as in \eqref{it:mixedyl}, for every component $C$ of $G|(Y_0 \cup Y_L^*)$, there is a vertex $v$ in $X$ complete to $C$.
\end{enumerate}
\begin{lemma} 
\label{lem:complete}
There is a constant $c$ such that the following holds. Let $P = (G, S, X_0, X, Y_0,Y, f)$ be a seeded precoloring of a $P_6$-free graph $G$ satisfying \eqref{it:conn}, \eqref{it:seed}, \eqref{it:y0}, \eqref{it:mixedy0}, \eqref{it:lists}, \eqref{it:123star}, and \eqref{it:mixedyl}. 
Let $L$ be as in \eqref{it:123star} and 
let $Y_L^*$ as in \eqref{it:mixedyl}. 
There is an algorithm with running time $O(|V(G)|^c)$ that 
outputs an equivalent collection $\mathcal{L}$ of seeded precolorings,
such that $|\mathcal{L}| \leq 1$, and if $\mathcal{L}=\{P'\}$, then
\begin{itemize}
\item there is $Z \subseteq Y_0 \cup Y_L^*$ such that 
$P'=(G\setminus Z, S, X_0,X,Y_0 \setminus Z, Y \setminus Z, f)$, and
\item $P'$ satisfies \eqref{it:conn}--\eqref{it:complete}.
\end{itemize}
\end{lemma}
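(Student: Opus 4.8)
The plan is to show that whenever a component $C$ of $G|(Y_0 \cup Y_L^*)$ has no vertex of $X$ complete to it, one can delete $C$ entirely after a bounded amount of bookkeeping, and that repeating this produces the desired $P'$. First I would recall that by \eqref{it:123star} no vertex of $Y \setminus Y_L^*$ has a neighbor in $Y_0 \cup Y_L^*$, and by \eqref{it:mixedyl} no vertex of $X$ is mixed on an edge of $Y_0 \cup Y_L^*$, hence every vertex of $X$ is either complete or anticomplete to each component $C$ of $G|(Y_0 \cup Y_L^*)$. So suppose $C$ is a component with no vertex of $X$ complete to it. Since $G$ is connected modulo $X_0$ (axiom \eqref{it:conn}) and $C \subseteq Y_0 \cup Y_L^*$ is anticomplete to $S$ by \eqref{it:y0} (recall $Y_0 \cup Y_L^* \subseteq V(G)\setminus(S \cup N(S)) \cup Y_L$, and $Y_L$-vertices have type of size one so still not complete to $S$) and anticomplete to $Y \setminus Y_L^*$, it follows that $N(C) \subseteq X_0 \cup X$, and every vertex of $X$ in $N(C)$ is complete to $C$ — contradiction unless $N(C) \subseteq X_0$. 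Thus $C$ only sees precolored vertices.

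Next, since $C$ only has neighbors in $X_0$, the question of whether a precoloring extension of $P$ colors $C$ correctly is independent of the rest of the graph: it is exactly the question of whether the precoloring $(G|(V(C) \cup A), A, f|_A)$ extends, where $A$ is the set of vertices of $X_0$ with a neighbor in $C$. I would observe that every vertex of $C$ has a list of size at least $3$ under $L_P$ — vertices in $Y_0$ have full list, vertices in $Y_L^*$ have list $L$ of size $3$ — and so after subtracting the colors forced by the (already proper) precoloring on $A$, every vertex of $C$ still has at least... well, not necessarily a list of size $2$; but $G|(V(C)\cup A)$ with these lists is a list-coloring instance on a $P_6$-free (hence in particular $P_7$-free) graph. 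Here is the key point: restricting to $C$, each vertex has list of size $\geq 3$ out of $4$ colors, which is $3$-colorable as a list instance iff... no — I should instead appeal directly to Theorem~\ref{3colP7} or Theorem~\ref{Edwards}. The clean route is: since each $v \in V(C)$ has $|L_P(v)| \ge 3$, and $|A|$ colors are removed, this is still large; but simplest is to note $G|(V(C) \cup A)$ is $P_7$-free and test $4$-list-colorability where the lists on $A$ are singletons and lists on $C$ have size $\geq 3$ — by iteratively removing any vertex of $C$ whose list still has size $\geq 3$ (it can always be colored last, since it has $\geq 3$ choices and $\leq$ ... ). Actually the robust statement to invoke is Theorem~\ref{3colP7}: delete from $V(C)$ any vertex with a full list (color it greedily at the end), and what remains has all lists contained in $L$, i.e.\ a $3$-coloring instance on a $P_6$-free graph, decidable by Theorem~\ref{3colP7}; I will flesh out this reduction carefully. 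If this instance is not colorable, then $P$ has no precoloring extension and we output $\mathcal{L} = \emptyset$; otherwise, $P$ has a precoloring extension iff $P \setminus V(C)$ does, so we set $Z \leftarrow Z \cup V(C)$ and delete $C$.

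Finally I would argue that after deleting all such components, the resulting $P' = (G \setminus Z, S, X_0, X, Y_0 \setminus Z, Y \setminus Z, f)$ satisfies \eqref{it:complete}: every surviving component of $G'|(Y_0' \cup Y_L'^*)$ is a component of the original $G|(Y_0 \cup Y_L^*)$ that had an $X$-vertex complete to it (deleting a $Z$-component neither creates new components nor removes the completeness witness, since that witness lies in $X$, disjoint from $Z$). Axioms \eqref{it:conn}--\eqref{it:mixedyl} are preserved because $P'$ is obtained by deleting a subset of $Y_0 \cup Y_L^*$: \eqref{it:conn} holds since each deleted $C$ had no neighbors outside $X_0$ so deleting it keeps $G' \setminus X_0$ connected; \eqref{it:seed}, \eqref{it:y0}, \eqref{it:lists}, \eqref{it:123star} depend only on $S$, which is unchanged, and on membership in $Y_0$ vs.\ $Y$, which is unchanged for surviving vertices; \eqref{it:mixedy0} and \eqref{it:mixedyl} are "no bad configuration" statements and remain true on an induced subgraph. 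Running time is polynomial: there are at most $|V(G)|$ components, and each test runs in polynomial time by Theorem~\ref{3colP7}, so $|\mathcal{L}| \leq 1$ and the total time is $O(|V(G)|^c)$ for an absolute constant $c$. The main obstacle I anticipate is the reduction of the "extend the precoloring on $A$ to $C$" subproblem to an instance that Theorem~\ref{3colP7} or Theorem~\ref{Edwards} can handle — one must be careful that vertices of $C \cap Y_0$ have list $[4]$, so a naive "$3$-coloring" framing fails, and the fix is to peel off full-list vertices greedily before invoking the $3$-coloring algorithm, checking that this peeling is valid.
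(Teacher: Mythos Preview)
Your overall architecture matches the paper exactly: identify the \emph{deficient} components $C$ of $G|(Y_0\cup Y_L^*)$ (those with no $X$-vertex complete to them), observe via \eqref{it:mixedyl} and Lemma~\ref{lem:mixed} that $X$ is then anticomplete to $C$ and hence $N(V(C))\subseteq S\cup X_0$ (note: not just $X_0$ --- vertices of $Y_L^*$ do have a neighbor in $S$, but this is harmless since $S$ is precolored), test whether $C$ can be colored given those fixed colors, and if so delete $V(C)$. The preservation of axioms \eqref{it:conn}--\eqref{it:mixedyl} after deletion is also fine.

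The genuine gap is your colorability test for $C$. Your proposal ``delete all vertices with a full list, $3$-color what remains via Theorem~\ref{3colP7}, then reinsert the deleted vertices greedily'' fails on both ends. First, a vertex $d\in V(C)\cap Y_0$ may have $X_0$-neighbors; its effective list $[4]\setminus f(N(d)\cap X_0)$ need not be contained in $L$ (take a single $X_0$-neighbor of color $c_1\in L$), so ``what remains has all lists $\subseteq L$'' is false unless you delete \emph{all} of $A=V(C)\cap Y_0$. Second, if you do delete all of $A$, the greedy reinsertion need not succeed even when $C$ is colorable, and its failure does not certify uncolorability: take $d\in A$ with $X_0$-neighbors of colors $c_2,c_3,c_4$ (so $d$ must receive $c_1$), adjacent to $b\in B$ with list $L=\{c_1,c_2,c_3\}$; if the $3$-coloring step happens to set $b=c_1$ you are stuck, yet $b=c_2$ extends. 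Conversely, if additionally $b$ has $X_0$-neighbors forcing $b=c_1$, then $C$ is genuinely uncolorable, but your test (``is $B$ $3$-colorable?'') still says yes. So the test is neither sound nor complete.

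The paper's fix is a gadget construction, not a peeling argument. Using \eqref{it:mixedy0}, each component $D$ of $G|A$ has some $b\in B$ complete to it, so any extension uses at most three colors on $D$ and omits some color of $L$. For each such $D$ one computes (by Theorem~\ref{3colP7}) the family $\mathcal P(D)$ of $\le 3$-element palettes with which $D$ is list-colorable (respecting its $X_0$-constraints), and then replaces $D$ by a stable set of clones, one per minimal \emph{non}-palette containing $c_4$, each clone getting the complementary list, which lies inside $L$. The resulting graph $C'$ is still $P_6$-free, all lists are contained in $L$, and one proves $C$ is colorable iff $C'$ is; now Theorem~\ref{3colP7} applies. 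This encoding of ``which $3$-color footprints are feasible for $D$'' into list constraints visible to $B$ is exactly the step your greedy approach is missing.
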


\begin{proof}
We may assume that $P$ does not satisfy \eqref{it:complete} for otherwise
we set $\mathcal{L}=\{P\}$. A component
$C$  of $G|(Y_0 \cup Y_L^*)$ is \emph{deficient} if  
no vertex of $X$ is complete to $V(C)$. 
Let $C$ be a deficient component.
It follows from
\eqref{it:mixedyl} that $X$ is anticomplete to $V(C)$.
Let $A = V(C) \cap Y_0$, $B = V(C) \setminus A$. For every vertex $v \in A \cup B$, let $L(v) = \sset{1,2,3,4} \setminus (f(N(v) \cap (S \cup X_0)))$. It follows that $L(v) \subseteq L$ for $v \in B$. Moreover, by \eqref{it:conn}, it follows that $B \neq \emptyset$. Let $L = \sset{c_1, c_2, c_3}$ and let $\sset{c_4} = \sset{1,2,3,4} \setminus L$. 

For every component $D$ of $G|A$, we proceed as follows. 

Let $\mathcal{P}(D)$ be the set of lists $L^* \subseteq \sset{1,2,3,4}$ with $|L^*| \leq 3$ such that $D$ can be colored with list assignment $L'(x) = L(x) \cap L^*$ for $x \in V(D)$. Since $G$ is $P_6$-free, it follows from Theorem~\ref{3colP7} that $\mathcal{P}(D)$ can be computed in polynomial time. 
Since $C$ is connected, it follows from \eqref{it:mixedy0} that some vertex of 
$B$ is complete to $D$. Consequently, in
 any precoloring extension of $P$, at most three colors appear in $D$,
and at least one color of $L$ does not appear in $D$.
Therefore, if $\mathcal{P}(D) = \emptyset$, or if $L \subseteq L'$ for
every $L' \in \mathcal{P}(D)$, 
then $P$ has no precoloring extension we set $\mathcal{L}=\emptyset$ and stop.
Let $\mathcal{P}^*(D)$ be the set of $L' \subseteq \sset{1,2,3,4}$ such that $L'\not\in \mathcal{P}(D)$, but for every proper superset $L'' \subseteq \sset{1,2,3,4}$ of $L'$ with $|L''| \leq 3$, we have that $L'' \in \mathcal{P}(D)$. 
Let $d \in V(D)$. We now replace $D$ by a stable set 
$R(D)=\{d(L^*)\}_{L^*}$   of
copies of $d$, one for each $L^* \in \mathcal{P}^*(D)$ with $c_4 \in L^*$, and 
set $L'(d(L^*))= \sset{1,2,3,4} \setminus L^*$. Then $L'(d(L^*)) \subseteq L$.
Let $C'$ denote the graph obtained by this process (repeated for every component
of $C|Y_0$) from $C$. 
Let $L'(v)=L(v)$ for every $v \in V(C) \setminus Y_0$.  Since $C'$ is obtained
from an induced subgraph of $G$ by replacing vertices with stable sets,
it follows that $C'$ is $P_6$-free.

We claim that $C$ has a proper  $L$-coloring if and only if $C'$ has a proper 
$L'$-coloring. Suppose that $C$ has a proper $L$-coloring $c$.
We need to show that $c|_{V(C) \setminus Y_0}$ can be extended to each $R(D)$.
We can consider each $D$ separately. 

Let $D$ be a component of $C|Y_0$.
Let $L^* = c(D)$. 
Let  $L^{**} = c(N(D))$. We claim that for every $r \in R(D)$, 
$L'(r) \setminus L^{**}  \neq \emptyset$. Suppose $L'(r) \subseteq L^{**}$.
Then $\{1,2,3,4\} \setminus L'(r) \in \mathcal{P}^*(D)$, but 
$L^* \subseteq \{1,2,3,4\} \setminus L^{**} \subseteq \{1,2,3,4\} \setminus L'(r)$,
a contradiction. This proves that for every $r \in R(D)$, there exists
$d(r)  \in L'(r) \setminus L^{**}$, and setting
$c(r)=d(r)$ we obtain a coloring of $C'$.

Next suppose that $C'$ has a proper $L'$-coloring $c$. Let $L^* = \sset{1,2,3,4} \setminus c(N(D))$. If $L^* \in \mathcal{P}(D)$, then we color $D$ with an $L$-coloring using only those colors in $L^*$; this is possible by the definition of $\mathcal{P}(D)$. Thus we may assume  that $L^* \not\in \mathcal{P}(D)$. Since $L(x) \subseteq L$ for all $x \in N(D) \subseteq B$, it follows that $c_4 \in L^*$. From the definition of $\mathcal{P}^*(D)$, it follows that 
some superset $L^{**}$ of $L^*$ is in $\mathcal{P}^*(D)$. Then
$L'(d'(L^{**}) = \sset{1,2,3,4} \setminus L^{**} \subseteq \sset{1,2,3,4} \setminus L^* = c(N(D))$. However,  $c(d') \in L'(d')$, and thus 
$c(d) \in c(N(D))=c(N(d))$,
contrary to the fact that $c$ is a proper coloring.
This proves that $C$ has a proper $L$-coloring
if and only if $C'$ has a proper $L'$-coloring. 

We have so far proved the following:
\begin{itemize}
\item $C'$ has a proper $L'$-coloring if and only if $C$ has a proper $L$-coloring;
\item $C'$ is $P_6$-free; and
\item for every $x \in V(C')$, we have that $L'(x) \subseteq L$.
\end{itemize}

By Theorem~\ref{3colP7}, we can decide in polynomial time if $C'$ has a proper $L'$-coloring, and thus if $C$ has a proper $L$-coloring. If not, then $P$ has no precoloring extension; we set $\mathcal{L}=\emptyset$ and stop. 
If $C$ has a proper $L$-coloring, then 
$(G \setminus V(C), S, X_0, X, Y_0 \setminus V(C), Y \setminus V(C), f)$ has a 
precoloring extension if and only if $P$ does. 

By repeatedly applying this algorithm to every deficient component $C$
of $G|(Y_0 \cup Y_L^*)$, 
and setting $Z=\bigcup V(C)$ where the union is taken over all such components, 
we set
$P' = (G \setminus Z, S, X_0, X, Y_0 \setminus Z, Y \setminus Z, f)$ 
and output $\mathcal{L}=\{P'\}$. Then $P'$ satisfies 
\eqref{it:conn}-\eqref{it:complete}, and Lemma~\ref{lem:complete} follows. 
\end{proof}

We call a seeded precoloring \emph{good} if it satisfies  \eqref{it:conn}, \eqref{it:seed}, \eqref{it:y0}, \eqref{it:mixedy0}, \eqref{it:lists}, \eqref{it:123star}, \eqref{it:mixedyl},  and \eqref{it:complete}. 

By applying Lemmas~\ref{lem:conn}, \ref{lem:seedy0}, \ref{lem:it:y0},
\ref{lem:lists},\ref{lem:123star}, \ref{lem:mixedyl} and \ref{lem:complete}, 
each to every seeded precoloring in the output of the previous one,
we finally derive the main theorem of Section~\ref{sec:axioms}. 
\begin{theorem} \label{thm:y0main} There is a constant $C$ such that the following holds. Let $G$ be a $P_6$-free graph, and let $(G,A,f)$ be a 4-precoloring of $G$. Then there exists a polynomial-time algorithm that computes a collection $\mathcal{L}$ of seeded precolorings such that 
\begin{itemize}
\item $\mathcal{L}$ is equivalent for $P$.
\item for every $(G',S',X_0',X',Y_0', Y',f') \in \mathcal{L}$, $G'$ is an induced subgraph of $G$, $A \subseteq X_0' \cup S'$ and $f'|_A=f|_A$.
\item every $P \in \mathcal{L}$ is good
\item every seeded precoloring in $\mathcal{L}$ has a seed of size at most $C$; 
\item $|\mathcal{L}| \leq |V(G)|^C$.
\end{itemize}
\end{theorem}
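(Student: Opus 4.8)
The plan is to obtain $\mathcal{L}$ by chaining Lemmas~\ref{lem:conn}, \ref{lem:seedy0}, \ref{lem:it:y0}, \ref{lem:lists}, \ref{lem:123star}, \ref{lem:mixedyl} and~\ref{lem:complete} in this order, applying each one to every seeded precoloring produced by the preceding step and taking the union of the resulting collections. First I would apply Lemma~\ref{lem:conn} to the $4$-precoloring $(G,A,f)$ (so $X_0=A$), obtaining a collection $\mathcal{L}_0$ of at most $|V(G)|$ seeded precolorings of the form $(G|(V(C)\cup A),\emptyset,A,\emptyset,V(C),\emptyset,f)$, one for each component $C$ of $G\setminus A$, each satisfying \eqref{it:conn}; by that lemma, deciding the original instance reduces to deciding every member of $\mathcal{L}_0$ and reconstructing componentwise. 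Then, to every seeded precoloring obtained so far I would successively apply: Lemma~\ref{lem:seedy0} (adding \eqref{it:seed} and \eqref{it:y0} while bounding each seed by an absolute constant), Lemma~\ref{lem:it:y0} (adding \eqref{it:mixedy0}), Lemma~\ref{lem:lists} invoked with $P'=P$, which is legitimate since $P$ is a normal subcase of itself once \eqref{it:seed} holds (adding \eqref{it:lists}), Lemma~\ref{lem:123star} (adding \eqref{it:123star}), Lemma~\ref{lem:mixedyl} (adding \eqref{it:mixedyl}), and finally Lemma~\ref{lem:complete} (adding \eqref{it:complete}). At each stage the input hypotheses of the lemma being applied are precisely the axioms established by the previous stage, so the chain is well defined; the final collection is $\mathcal{L}$, and every member of it is good.

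The remaining work is bookkeeping, which I expect to be routine. For seed sizes: Lemma~\ref{lem:seedy0} bounds every seed by an absolute constant, and each later lemma in the chain bounds the size of the new seed by some function, depending only on that lemma, of the size of the old seed (and Lemmas~\ref{lem:lists} and~\ref{lem:complete} leave the seed unchanged, as is clear from their proofs). Since the chain has fixed length and starts from a constant value, all seeds stay bounded by an absolute constant; enlarging this constant so that it also dominates the exponents appearing below, we obtain the constant $C$ of the statement. For the collection size: $|\mathcal{L}_0|\le|V(G)|$, and passing to the next collection multiplies the total size by at most $|V(G)|^{C}$ (using the seed bound); after the fixed number of steps this gives $|\mathcal{L}|\le|V(G)|^{C}$, and the total running time is polynomial because every lemma runs in polynomial time and is invoked polynomially many times. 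For the structural conditions: every step outputs seeded precolorings of induced subgraphs of $G$ — a normal subcase of the previous instance in Lemmas~\ref{lem:seedy0}, \ref{lem:it:y0}, \ref{lem:lists}, \ref{lem:123star} and~\ref{lem:mixedyl}, an induced restriction in Lemma~\ref{lem:conn}, and a vertex deletion in Lemma~\ref{lem:complete} — so $G'$ is always an induced subgraph of $G$; and since each step only moves vertices into the seed or the precolored set and colors the newly fixed vertices by a function compatible with $f$, the conditions $A\subseteq X_0'\cup S'$ and $f'|_A=f|_A$ are preserved throughout.

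Finally, the reducibility composes: if $\mathcal{M}$ is an equivalent collection for a seeded precoloring $P$ and each member of $\mathcal{M}$ is in turn replaced by an equivalent collection, then the union of these collections is again an equivalent collection for $P$, since ``has a precoloring extension'' is existential over a collection and the polynomial-time reconstruction maps compose. The only step not literally of this form is the first, where Lemma~\ref{lem:conn} replaces $(G,A,f)$ by a family all of whose members must be colorable; but composing this with the equivalent collections produced downstream still yields exactly a polynomial-time procedure that, given for each $P\in\mathcal{L}$ whether $P$ has a precoloring extension (together with such an extension when one exists), decides whether $(G,A,f)$ has a $4$-precoloring extension and reconstructs one, which is the sense in which $\mathcal{L}$ is equivalent for $(G,A,f)$. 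I do not anticipate a genuine obstacle: all the mathematical content resides in the seven preceding lemmas, and the only care needed is in the bookkeeping above and in checking that the axioms \eqref{it:conn}--\eqref{it:complete} are handed off correctly from one lemma in the chain to the next.
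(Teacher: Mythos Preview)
Your proposal is correct and follows essentially the same approach as the paper: the paper's proof of Theorem~\ref{thm:y0main} is a single sentence stating that one applies Lemmas~\ref{lem:conn}, \ref{lem:seedy0}, \ref{lem:it:y0}, \ref{lem:lists}, \ref{lem:123star}, \ref{lem:mixedyl} and~\ref{lem:complete} in this order, each to every seeded precoloring output by the previous one. Your write-up supplies the bookkeeping (seed-size propagation, collection-size bounds, the $A\subseteq X_0'\cup S'$ invariant, and the distinction that Lemma~\ref{lem:conn} requires \emph{all} members to be colorable rather than \emph{some}) that the paper leaves implicit.
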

By Theorem~\ref{thm:y0main}, to solve the 4-precoloring extension problem in polynomial time, it is sufficient to solve the precoloring extension problem for good seeded precolorings of $P_6$-free graphs (with seed size bounded by a constant) in polynomial time.

\section{Establishing the Axioms on $Y$} \label{sec:axiomsy}

In the previous section, we arranged that components of $G|(Y_0 \cup Y)$ containing a vertex of $Y_0$ are well-behaved. In this section, we deal with components of $G|(Y_0 \cup Y)$ that do not contain a vertex of $Y_0$. 

Let $P$ be a starred precoloring.
We say that a collection $\mathcal{L}$ of starred precolorings is an \emph{equivalent collection} for $P$ if $P$ has a precoloring extension if and only if at least one of the starred precolorings in $\mathcal{L}$ does. 

The following are the axioms we want to establish for starred precolorings. 
\begin{enumerate}[(I)]
\item Every vertex $y$ in $Y$ satisfies $|L_P(y)| =3$. \label{it:size3}
\item Let $L_1, L_2 \subseteq \sset{1,2,3,4}$ with $|L_1| = |L_2| = 3$ and $L_1 \neq L_2$. Then there is no path $a-b-c$ with $L_P(a) = L_1$, $L_P(b) = L_P(c) = L_2$ with $a, b, c \in Y$. \label{it:122}
\item Let $L_1, L_2, L_3 \subseteq \sset{1,2,3,4}$ with $|L_1| = |L_2| = |L_3| = 3$ and $L_1 \neq L_2 \neq L_3 \neq L_1$. Then there is no path $a-b-c$ with $L_P(a) = L_1$, $L_P(b) = L_2$, $L_P(c) = L_3$ with $a, b, c\in Y$. \label{it:123}
\item Let $L_1 \subseteq \sset{1,2,3,4}$ with $|L_1| = 3$. Then there is no path $a-b-c$ with $L_P(b) = L_P(c) = L_1$ and $a \in X$, $b,c \in Y$. \label{it:x11}
\item Let $L_1, L_2 \subseteq \sset{1,2,3,4}$ with $|L_1| = |L_2| = 3$. Then there is no path $a-b-c$ with $L_P(b) = L_1, L_P(c) = L_2$ and $a \in X$ with $L_P(a) \neq L_1 \cap L_2$. \label{it:x12}
\item For every component $C$ of $G|Y$, for which there is a  vertex of $X$
is mixed on $C$, 
there exist $L_1, L_2 \subseteq \sset{1,2,3,4}$ with $|L_1| = |L_2| = 3$ such that $C$ contains a vertex $x_i$ with $L_P(x_i) = L_i$ for $i=1,2$, every vertex $x$ in $C$ satisfies $L_P(x) \in \sset{L_1, L_2}$, and every $x \in X$ mixed
on $C$ satisfies $L_P(x)=L_1 \cap L_2$. \label{it:l1l2}
\item For every component $C$ of $G|Y$ such that some vertex of $X$ is
mixed on $C$, and for $L_1, L_2$ as in \eqref{it:l1l2}, $L_P(v) = L_1 \cap L_2$ for every vertex $v \in X$ with a neighbor in $C$. \label{it:orthogonal}
\item $Y=\emptyset$. \label{it:yisempty}
\end{enumerate}

We begin by showing that starred precolorings exist, and we establish axiom \eqref{it:size3}. 
\begin{lemma} \label{lem:starred}
 Let $P$ be a good seeded precoloring of a $P_6$-free graph $G$. Then $$P' =(G, S, X_0, X, Y \setminus Y_L^*, Y_L^* \cup Y_0, f)$$ (with $Y_L^*$ as in \eqref{it:mixedyl}) is a starred precoloring satisfying \eqref{it:size3} and $P'$ has a precoloring extension if and only if $P$ does, and every precoloring extension of $P'$ is a precoloring extension of $P$.  
\end{lemma}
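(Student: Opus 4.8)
The plan is to verify directly that $P' = (G, S, X_0, X, Y \setminus Y_L^*, Y_L^* \cup Y_0, f)$ satisfies axioms (A)--(H) of the definition of a starred precoloring, and then to check \eqref{it:size3} and the equivalence of precoloring extensions. Throughout, write $Y^* = Y_L^* \cup Y_0$ and $Y_{\mathrm{new}} = Y \setminus Y_L^*$, so the vertex set decomposition $V(G) = S \cup X_0 \cup X \cup Y_{\mathrm{new}} \cup Y^*$ is exactly the partition of $P$ with $Y_0 \cup Y_L^*$ merged into $Y^*$ and the rest of $Y$ kept as $Y_{\mathrm{new}}$; this immediately gives (B). Axiom (A) is inherited verbatim from the fact that $P$ is a seeded precoloring (so $f$ is a proper coloring of $G|(S \cup X_0)$). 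Axiom (C) is exactly \eqref{it:seed} for $P$. For (D): every vertex of $Y_{\mathrm{new}} \subseteq Y$ has a neighbor in $S$ by \eqref{it:lists} (its list has size $3$, hence it sees a vertex of $S$); alternatively this follows from \eqref{it:y0}. For (E): every vertex $x \in X$ has $|L_P(x)| = 2$ by \eqref{it:lists}, which says $|f(N(x) \cap S)| = 2$, giving (E).

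The substantive axioms are (F), (G), (H). For (F), I must show $Y_{\mathrm{new}}$ is anticomplete to $Y^* = Y_0 \cup Y_L^*$. A vertex of $Y_{\mathrm{new}} = Y \setminus Y_L^*$ with a neighbor in $Y_0$ would contradict \eqref{it:123star} combined with the definition of $Y_L^*$ in \eqref{it:mixedyl} — indeed \eqref{it:mixedyl} explicitly states ``no vertex of $Y \setminus Y_L^*$ has a neighbor in $Y_0 \cup Y_L^*$'', which is precisely (F). For (G) — no vertex of $X$ is mixed on a component of $G|Y^*$ — note that a component of $G|Y^*$ is a component of $G|(Y_0 \cup Y_L^*)$. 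If such a component meets $Y_0$, then \eqref{it:mixedyl} (``no vertex in $X$ is mixed on an edge of $Y_0 \cup Y_L^*$'') together with Lemma~\ref{lem:mixed} (applied to the connected set $V(C)$) shows no $x \in X$ is mixed on it; if such a component is contained in $Y_L^*$ alone, then by the definition of $Y_L^*$ in \eqref{it:mixedyl} every component of $G|(Y_0 \cup Y_L)$ meeting $Y_L^*$ meets $Y_0$, so no component of $G|Y^*$ is disjoint from $Y_0$, and this case does not arise. Axiom (H) — for every component $C$ of $G|Y^*$ there is a vertex in $S \cup X_0 \cup X$ complete to $V(C)$ — is exactly \eqref{it:complete}, which gives such a vertex in $X$.

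For \eqref{it:size3}: every $y \in Y_{\mathrm{new}} \subseteq Y$ satisfies $|L_P(y)| = 3$ by \eqref{it:lists}, and the list $L_{P'}(y)$ in the starred precoloring equals $[4] \setminus f(N(y) \cap S) = L_P(y)$ since the seed is unchanged; hence $|L_{P'}(y)| = 3$. Finally, for the equivalence of precoloring extensions: a precoloring extension of $P'$ is a function $f'$ on $V(G) \setminus (S \cup X_0)$ with $f \cup f'$ proper on $G$, which is literally the same object as a precoloring extension of the seeded precoloring $P$ — the only difference between $P$ and $P'$ is the labelling of which vertices of $V(G) \setminus (S \cup X_0)$ lie in the ``$Y$'' versus ``$Y_0$/$Y^*$'' parts, and neither the domain nor the properness condition depends on this labelling. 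So the two notions of precoloring extension coincide, giving both directions at once. The main obstacle — and it is a minor one — is bookkeeping: making sure that the merge $Y^* = Y_0 \cup Y_L^*$ is compatible with all of (F)--(H) simultaneously, in particular checking that no component of $G|Y^*$ lies entirely inside $Y_L^*$, which is what the definition of $Y_L^*$ in \eqref{it:mixedyl} guarantees. Everything else is a direct translation of the good-seeded-precoloring axioms into the starred-precoloring axioms.
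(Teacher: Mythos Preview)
Your proof is correct and takes exactly the approach the paper does: the paper's proof is the single sentence ``This is easily verified by checking the definition of a starred precoloring,'' and you have simply written out that verification axiom by axiom. Your case split in the check of (G) is slightly more than needed --- since \eqref{it:mixedyl} already forbids any $x\in X$ from being mixed on an edge of $Y_0\cup Y_L^*$, Lemma~\ref{lem:mixed} handles every component of $G|(Y_0\cup Y_L^*)$ at once --- but this is harmless.
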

\begin{proof}
This is easily verified by checking the definition of a starred precoloring. 
\end{proof}

Our next goal is to establish axiom \eqref{it:122}, which we restate. 
\begin{enumerate}
\item[\eqref{it:122}] Let $L_1, L_2 \subseteq \sset{1,2,3,4}$ with $|L_1| = |L_2| = 3$ and $L_1 \neq L_2$. Then there is no path $a-b-c$ with $L_P(a) = L_1$, $L_P(b) = L_P(c) = L_2$ with $a, b, c \in Y$.
\end{enumerate}
This lemma will also be useful for proving \eqref{it:x11}. 
\begin{lemma} \label{lem:pairoflists}
There is a function $q: \mathbb{N} \rightarrow \mathbb{N}$ such that the following holds. Let $L_1 \subseteq \sset{1,2,3,4}$ with $|L_1| = 3$, and let $P = \sspc$ be a starred precoloring of a $P_6$-free graph $G$ with $P$ satisfying \eqref{it:size3}. Then there is an algorithm with running time $O(|V(G)|^{q(|S|)})$ that outputs an equivalent collection $\mathcal{L}$ for $P$ such that
\begin{itemize}
\item $|\mathcal{L}| \leq |V(G)|^{q(|S|)}$;
\item every $P' \in \mathcal{L}$ is a starred precoloring of $G$;
\item every $P' \in \mathcal{L}$ with seed $S'$ satisfies $|S'| \leq q(|S|)$; 
\item every $P' = (G, S', X_0', X', Y', Y^*, f') \in \mathcal{L}$ satisfies \eqref{it:size3} and $Y' \subseteq Y$; 
\item if there is no path $a-b-c$ with $L_{P}(a) \neq L_1'$, $L_{P}(b) = L_{P}(c) = L_1'$ with $a, b, c \in Y$  for some $L_1'$ with $|L_1'| = 3$,
   then there is no path $a-b-c$ with $L_{P'}(a) \neq L_1'$, $L_{P'}(b) = L_{P'}(c) = L_1'$ with $a, b, c \in Y'$; 
 \item if $P$ satisfies \eqref{it:122}, and 
   if there is no path $a-b-c$ with $L_{P}(a) \neq L_1'$, $L_{P}(b) = L_{P}(c) = L_1'$ with $a, b, c \in X \cup Y$  for some $L_1'$ with $|L_1'| = 3$,
   then there is no  path $a-b-c$ with $L_{P'}(a) \neq L_1'$, $L_{P'}(b) = L_{P'}(c) = L_1'$ with $a, b, c \in X' \cup Y'$; and 
\item there is no path $a-b-c$ with $L_{P'}(a) \neq L_1$, $L_{P'}(b) = L_{P'}(c) = L_1$ with $a, b, c \in X' \cup Y'$.
\end{itemize}
Moreover, for every $P' \in \mathcal{L}$, given a precoloring extension of $P'$, we can compute a precoloring extension for $P$ in polynomial time, if one exists.
\end{lemma}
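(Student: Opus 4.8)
Throughout write $c_1$ for the unique element of $[4]\setminus L_1$, so that $Y_{L_1}$ is exactly the set of vertices of $Y$ all of whose seed-neighbors are colored $c_1$. Since $|L_1|=3$ and $P$ satisfies \eqref{it:size3}, a path $a-b-c$ with $a,b,c\in X'\cup Y'$, $L_{P'}(a)\neq L_1$ and $L_{P'}(b)=L_{P'}(c)=L_1$ is precisely a vertex $a\in(X'\cup Y')\setminus Y'_{L_1}$ mixed on the edge $bc$ of $Y'_{L_1}$; and since enlarging the seed can only enlarge the set of colored neighbors of a vertex, hence only shrink its list, one has $Y'_{L'}\subseteq Y_{L'}$ for every $3$-set $L'$. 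So the last bullet asks us to destroy every vertex mixed on an edge of $Y'_{L_1}$, while the two preservation bullets ask that we create no new such configuration for any other $3$-set (or within $Y$ alone). The plan is to replay, with $R:=Y_{L_1}$ in the role of $Y_0$, the two-part construction by which axiom \eqref{it:mixedy0} was established (Lemmas~\ref{lem:it:y0} and~\ref{sublem:it:mixedy0}).

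We may assume $G$ has no $K_5$, since otherwise $P$ has no precoloring extension and we output $\mathcal{L}=\emptyset$. As in Lemma~\ref{lem:it:y0}, first bound the ``reach into $Y_{L_1}$'': greedily build a clique $S^\sharp\subseteq(X\cup Y)\setminus Y_{L_1}$ by repeatedly adding a vertex complete to the current $S^\sharp$ that begins a path $z-a-b-c$ with $a,b,c$ in $Y_{L_1}$ and anticomplete to $S^\sharp$; by $K_5$-freeness $|S^\sharp|\le4$. Add $S^\sharp$ to the seed, branch over all proper colorings of $S^\sharp$ extending $f$, re-sort the vertices of $Y$ whose list has shrunk (those with a list of size at most $2$ leave $Y$ for $X$ or $X_0$, exactly as in Lemma~\ref{lem:lists}), and let $R$ be the set of vertices still in $Y$ with list $L_1$. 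Arguing as for \eqref{eq:no3path} --- using that $S$ is connected, that every vertex of $X\cup Y$ has a neighbor in $S$, that every vertex with list $L_1$ has all its seed-neighbors colored $c_1$, and that every vertex of $(X\cup Y)\setminus Y_{L_1}$ has a seed-neighbor colored $\neq c_1$ --- one shows that after this step no path $z-a-b-c$ exists with $z\in(X'\cup Y')\setminus R$ and $a,b,c\in R$. (Unlike the $Y_0$ case, a vertex of $Y_{L_1}$ adjacent only to $c_1$-colored vertices of $S^\sharp$ keeps its list; one checks that such a vertex, being a neighbor of a seeded vertex that admits a $3$-path into $R$, cannot itself start a $3$-path into $R$, again by a $P_6$ argument.)

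Now the remaining hypothesis of the Lemma~\ref{sublem:it:mixedy0} template holds automatically: every vertex of $(X\cup Y)\setminus R$ has a list of size at most $2$ or a $3$-set distinct from $L_1$, hence distinct from the list of every vertex of $R$. So I run the construction in the proof of Lemma~\ref{sublem:it:mixedy0}, carried out in the starred setting with $Y^*$ playing the role of $Y_0$ and with $c_1$, $L_1$ in the roles of $c_4$, $L$: for each type $T$ and each clique size $j=4,3,2$ in turn, greedily build a bounded sub-seed (at most $4$ vertices, by $K_5$-freeness) of type-$T$ vertices with exactly one neighbor in a $K_j$ inside the current $R$, branch over their colors, and --- just as in the $c_4$-forcing branch there --- record the type-$T$ vertices whose color is thereby forced to $c_1$ and move them to $X_0$. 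The intermediate claims, proved verbatim as \eqref{eq:no3pathk4} and \eqref{eq:mixedk4}, are that no vertex outside $R$ has a $3$-path into the shrunken $R$, and that no vertex outside $R$ has exactly one neighbor in a $K_j\subseteq R$; the case $j=2$ says that no vertex of $(X'\cup Y')\setminus R$ is mixed on an edge of $R$. Since $Y'_{L_1}\subseteq R$ and every edge of $Y'_{L_1}$ is an edge of $R$, this yields the last bullet; and a vertex of $R$ that was moved to $X'$ and is mixed on an edge of $Y'_{L_1}$ would produce a $P_4$ contradicting \eqref{eq:no3pathk4} at the level at which it was absorbed.

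Finally there is the bookkeeping. The seed grows by at most $4+12\cdot2^{|S|+4}$ vertices over all steps, so $|S'|\le q(|S|)$ for a suitable $q$, and the number of branches (colorings of the seed additions, choices of absorbed vertices) is at most $|V(G)|^{q(|S|)}$; equivalence of $\mathcal{L}$ with $P$, and reconstruction of a precoloring extension of $P$ from one of a member of $\mathcal{L}$, follow as in Lemma~\ref{sublem:it:mixedy0} by replaying all the greedy choices using the colors of a given extension. For the preservation bullets: since every newly seeded vertex lies in $(X\cup Y)\setminus Y_{L_1}$ or in $Y_{L_1}$, a hypothetical new bad path $a-b-c$ for a $3$-set $L_1'$ in $X'\cup Y'$ would, after discarding the newly seeded vertices, give a bad path for $L_1'$ in $X\cup Y$ --- the case where one of $a,b,c$ has a neighbor among the newly seeded vertices being ruled out by a $P_6$ argument in the style of \eqref{eq:no3pathyl}, using \eqref{it:122} for $P$ --- contradicting the hypothesis; the within-$Y$ variant is the same argument restricted to $Y$. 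I expect this last step to be the main obstacle: it requires tracking exactly which vertices are relocated at each stage and excluding all the $P_6$'s that the newly seeded vertices could form together with a putative bad path.
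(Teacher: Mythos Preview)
Your analogy with Lemmas~\ref{lem:it:y0} and~\ref{sublem:it:mixedy0} is the right intuition, and the per-type $j=4,3,2$ descent you outline in the second stage is close in spirit to what the paper does. The gap is in your first stage: the single greedy clique $S^\sharp$ does not transfer from Lemma~\ref{lem:it:y0}.

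In Lemma~\ref{lem:it:y0} the proof of \eqref{eq:no3path} works because $S$ is anticomplete to $Y_0$: given $z$ with a $3$-path into $Y_0^5$ and a non-neighbor $z'\in S^5$, you connect them by a path with interior in $S\cup S^5$, and this interior misses $\{a,b,c\}$ since $S$ misses $Y_0$ and $S^5$ misses $Y_0^5$. Here the first of these fails: seed vertices colored $c_1$ can be adjacent to $Y_{L_1}$, so the connecting path through $S$ need not avoid $\{a,b,c\}$, and your witness $w\in S^\sharp$ may have a different type from $z$, so there is no single seed vertex $s$ with $f(s)\in L_1$ adjacent to both. There is a second problem as well: when you branch over all colorings of $S^\sharp$, any $w\in S^\sharp$ that receives color $c_1$ leaves its neighbors in $Y_{L_1}$ with list $L_1$, so your updated $R$ is strictly larger than $Y_{L_1}\setminus N(S^\sharp)$. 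A vertex $z$ that had no $3$-path into $Y_{L_1}\setminus N(S^\sharp)$ --- and hence was never even a candidate for $S^\sharp$ --- may still have a $3$-path into $R$, with no non-neighbor in $S^\sharp$ to serve as witness at all. Your parenthetical remark about vertices of $Y_{L_1}$ adjacent only to $c_1$-colored vertices of $S^\sharp$ does not address either issue.

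The paper sidesteps both problems by making the first stage per-type rather than a single clique. For each type $T$ with $f(T)\neq\{c_1\}$ it guesses a clique $Q_T\cup R_T\subseteq(X\cup Y)(T)$ of size at most two and assigns each of its vertices a color in $L_1$; for types with $Q_T=\emptyset$ every vertex of that type is forced to color $c_1$. Because every newly seeded vertex receives a color in $L_1$, it is automatically anticomplete to the new $Y'_{L_1}$; and any surviving $z$ of type $T$ has a non-neighbor $y\in Q_T\cup R_T$ of the \emph{same} type, so a single $s\in T$ with $f(s)\in L_1$ is adjacent to both, and $y\text{--}s\text{--}z\text{--}a\text{--}b\text{--}c$ is the $P_6$ directly --- no path through $c_1$-colored seed vertices is needed. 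The $j=4,3,2$ stage then repeats this per-type guessing, now with a maximality condition on $N(\cdot)\cap Y_1'$, and the preservation bullets are checked along the way by a short argument using that (when \eqref{it:122} holds) every newly seeded vertex lies in $X$.
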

\begin{proof}
Let $L_1 \subseteq \sset{1,2,3,4}$ with $|L_1| = 3$, and let $P = \sspc$ be a starred precoloring of a $P_6$-free graph $G$ with $P$ satisfying \eqref{it:size3}. We check in polynomial time if $G$ contains a $K_5$. If so, then $P$ does not have a precoloring extension and we output $\mathcal{L} = \emptyset$ as an equivalent collection. Therefore, for the remainder of the proof we may assume that $G$ contains no $K_5$. 

Let $\mathcal{L} = \emptyset$. Let $Y_1 = \sset{y \in Y: L_P(y) = L_1}$. Let $\mathcal{T} = \sset{T_1, \dots, T_r}$ be the set of types $T \subseteq S$ with $f(T) \neq \sset{1,2,3,4} \setminus L_1$ and $|f(T)| \leq 2$, and if $P$ satisfies \eqref{it:122}, $|f(T)| = 2$. 
Let $\mathcal{Q}$ be the set of all $r$-tuples of quadruples $((Q_1, R_1 c_1, d_1), \dots, (Q_r, R_r, c_r, d_r))$ such that for every $i \in \sset{1, \dots, r}$,
\begin{itemize}
\item $c_i, d_i \in L_1$; 
\item $1 \geq |Q_i| \geq |R_i|$ and $Q_i \cup R_i$ is a clique; and
\item $Q_i \cup R_i \subseteq (X \cup Y)(T_i)$. 
\end{itemize}

For every $Q = ((Q_1, R_1, c_1, d_1), \dots, (Q_r, R_r, c_r, d_r)) \in \mathcal{Q}$, we proceed as follows. Let $S'^Q = Q_1 \cup R_1 \cup \dots \cup Q_r \cup R_r$, and let $f': S' \rightarrow L_1$ be such that $f'(q_i) = c_i$ for all $i$ for which $Q_i = \sset{q_i}$, and $f'(r_i) = d_i$ for all $i$ for which $R_i = \sset{r_i}$. Let $$\tilde{Y}^Q = \bigcup_{i: Q_i = \emptyset} (X \cup Y)(T_i),$$
and let $g^Q: \tilde{Y}^Q \rightarrow \sset{1,2,3,4} \setminus L_1$ be the constant function. Let $$\tilde{Z}^Q = \bigcup_{i: R_i = \emptyset, Q_i \neq \emptyset} ((X \cup Y)(T_i) \cap N(Q_i)),$$ and let $g'^Q: \tilde{Z}^Q \rightarrow \sset{1,2,3,4} \setminus L_1$ be the constant function.

For $i \in \sset{1, \dots, r}$, let $\tilde{X}_i$ and $g''^Q_i$ be defined as follows. If $|f(T_i)| = 1$, we let $\tilde{X}_i = X(T_i) \cap N(Q_i) \cap N(R_i)$. If $|f(T_i)| = 2$, we let $\tilde{X}_i = X(T_i) \cap N(Q_i)$. We let $g''^Q(\tilde{X}_i) = \sset{1,2,3,4} \setminus (f'(T_i) \cup f'(Q_i) \cup f'(R_i))$.  Let $\tilde{X}^Q = \tilde{X}_1 \cup \dots \cup \tilde{X}_r$. 

Then, if $f \cup f' \cup g^Q \cup g'^Q \cup g''^Q$ is a proper coloring of $G|(S \cup S'^Q \cup X_0 \cup \tilde{Y}^Q \cup \tilde{Z}^Q \cup \tilde{X}^Q)$, we add the starred precoloring 
\begin{align*}
  P'^Q =& (G, S \cup S'^Q, \\
        &X_0 \cup \tilde{X}^Q \cup \tilde{Y}^Q \cup \tilde{Z}^Q, \\
        &X \setminus (\tilde{X}^Q \cup \tilde{Y}^Q \cup \tilde{Z}^Q \cup S'^Q),\\
        &Y \setminus (\tilde{X}^Q \cup \tilde{Y}^Q \cup \tilde{Z}^Q \cup S'^Q),\\
  &Y^*, f \cup f' \cup g^Q \cup g'^Q \cup g''^Q)
\end{align*}
to $\mathcal{L}$.

This starred precoloring satisfies \eqref{it:size3}. Every precoloring extension of $P'^Q$ is a precoloring extension of $P$. Moreover, suppose that $c$ is a precoloring extension of $P$. Let $Q = ((Q_1, R_1 c_1, d_1), \dots, (Q_r, R_r, c_r, d_r))$ be defined as follows:
\begin{itemize}
\item For every type $T_i \in \mathcal{T}$ such that $c((X \cup Y)(T_i)) \subseteq \sset{1,2,3,4} \setminus L_1$, we let $Q_i = R_i = \emptyset$ and $c_i, d_i \in L_1$ arbitrary. 
\item For every type $T_i \in \mathcal{T}$ such that there exist $x,y \in (X \cup Y)(T_i)$ with  $c(x), c(y) \in L_1$ and $xy \in E(G)$, we let $Q_i = \sset{x}$, $R_i = \sset{y}$ and $c_i = c(x), d_i = c(y)$.
\item For every type $T_i \in \mathcal{T}$ such that do not there exist $x,y$ as above, but there is a vertex $v \in (X \cup Y)(T_i)$ with $c(v) \in L_i$, we let $Q_i = \sset{v}, R_i = \emptyset, c_i = c(v), d_i = d(v)$. 
\end{itemize}
Note that if $|Q_i \cup R_i| < 2$, then every vertex $v$ in $(X \cup Y)(T_i)$ complete to $Q_i \cup R_i$ satisfies $c(v) \not\in L_1$, and so $g$ and $g'$ agree with $c$ on  $\tilde{Y}$ and $\tilde{Z}$, respectively. It follows that $P'^Q \in \mathcal{L}$ and $c$ is a precoloring extension of $P'^Q$. Consequently, that $\mathcal{L}$ is an equivalent collection for $P$. 

We now prove that every $P'^Q \in \mathcal{L}$ satisfies the claims of the lemma. Let $Q = ((Q_1, R_1 c_1, d_1), \dots, (Q_r, R_r, c_r, d_r))$ with $P'^Q \in \mathcal{L}$, and write $P' = P'^Q \in \mathcal{L}$ with $P' = (G, S', X_0', X', Y', Y^*, f')$. Let $Y_1' = \sset{y \in Y': L_{P'}(y) = L_1}$.

\vspace*{-0.4cm}\begin{equation}\vspace*{-0.4cm} \label{eq:long}
  \longbox{\emph{If there is no  path $a-b-c$ with $L_{P}(a) \neq L_1'$, $L_{P}(b) = L_{P}(c) = L_1'$ with $a, b, c \in Y$  for some $L_1'$ with $|L_1'| = 3$,
   then there is no path $a-b-c$ with $L_{P'}(a) \neq L_1'$, $L_{P'}(b) = L_{P'}(c) = L_1'$ with $a, b, c \in Y'$; and if $P$ satisfies \eqref{it:122}, and 
   if there is no  path $a-b-c$ with $L_{P}(a) \neq L_1'$, $L_{P}(b) = L_{P}(c) = L_1'$ with $a, b, c \in X \cup Y$  for some $L_1'$ with $|L_1'| = 3$,
   then there is no path $a-b-c$ with $L_{P'}(a) \neq L_1'$, $L_{P'}(b) = L_{P'}(c) = L_1'$ with $a, b, c \in X' \cup Y'$.}}
\end{equation}

Suppose not; and let $a-b-c$ be such a path. Since $b, c \in Y' \subseteq Y$, it follows that $L_{P}(b) = L_{P}(c) = L_1'$. By the assumption of \eqref{eq:long}, it follows that $L_{P}(a) \neq L_{P'}(a)$, and so $a \in Y \cap X'$. This implies that $|L_{P'}(a)| = 2$. Since $a \not\in Y'$, it follows that the first statement of \eqref{eq:long} is proved. 

Therefore, we may assume that \eqref{it:122} holds for $P$. Since $P$ satisfies \eqref{it:122}, it follows that $L_{P}(a) = L_1'$. Moreover, there is a vertex $s \in S' \setminus S$ with $f(s) \in L_1'$ and $as \in E(G)$. Since $b \in Y'$, it follows that $s-a-b$ is a path. But since $P$ satisfies \eqref{it:122}, it follows that $S' \setminus S \subseteq X$ by construction, and so $s \in X$. But then the path $s-a-b$ contradicts the assumption of \eqref{eq:long2}. This implies \eqref{eq:long2}.

\vspace*{-0.4cm}\begin{equation}\vspace*{-0.4cm} \label{eq:no3pathy1}
  \longbox{\emph{There is no path $z-a-b-c$ with $z \in (X' \cup Y') \setminus Y_1'$ and $a, b, c \in Y_1'$.}}
\end{equation}

Suppose not; and let $z-a-b-c$ as in \eqref{eq:no3pathy1}. It follows that $z \in X \cup Y$ and $a, b, c \in Y_1$. Let $T_i = N(z) \cap S \in \mathcal{T}$. Since $z \not\in X_0'$, it follows that $z\not\in \tilde{X}^Q \cup \tilde{Y}^Q \cup \tilde{Z}^Q$. Therefore, $Q_i\cup R_i$ contains a vertex $y$ non-adjacent to $z$.  Since $c_i, d_i \in L_1$, it follows that $y$ is anticomplete to $\{z, a, b, c\}$. Let $s \in T_i$ with $f(s) \in L_1$; then $s$ is a common neighbor of $y$ and $z$. It follows that $s$ is not adjacent to $a, b, c$. But then $y-s-z-a-b-c$ is a $P_6$ in $G$, a contradiction. This proves \eqref{eq:no3pathy1}. 

\bigskip

Let $\mathcal{L}_5 = \mathcal{L}$. We repeat the following procedure for $j=4,3,2$. For every $P' = (G, S', X_0', X', Y', Y^*, f') \in \mathcal{L}_{j+1}$, we proceed as follows. We let $\mathcal{L}_j(P') = \emptyset$. Let $Y_1' = \sset{y \in Y': L_{P'}(y) = L_1}$. Let $Y_1^*$ be the set of vertices $y$ in $(X' \cup Y') \setminus Y_1'$ such that there is a clique $\sset{a_1, \dots, a_j} \subseteq Y_1'$ and $N(y) \cap \sset{a_1, \dots, a_j} = \sset{a_1}$. Let $\mathcal{T}^j = \sset{T_1^j, \dots, T_{r_j}^j}$ be the set of all types $T \subseteq S'$ such that $f(T) \neq \sset{1,2,3,4} \setminus L_1$ and $|f(T)| \leq 2$, and if $P'$ satisfies \eqref{it:122}, $|f(T)| = 2$. Let $\mathcal{Q}(P')$ be the set of all $r_j$-tuples of quadruples $((Q_1, R_1 c_1, d_1), \dots, (Q_r, R_r, c_r, d_r))$ such that  for every $i \in \sset{1, \dots, r_j}$,
\begin{itemize}
\item $c_i, d_i \in L_1$; 
\item $1 \geq |Q_i| \geq |R_i|$ and $Q_i \cup R_i$ is a clique; and
\item $Q_i \cup R_i \subseteq (X \cup Y)(T_i)$. 
\end{itemize}

For every $Q = ((Q_1, R_1 c_1, d_1), \dots, (Q_r, R_r, c_r, d_r)) \in \mathcal{Q}$, we proceed as follows. Let $S'^Q = Q_1 \cup R_1 \cup \dots \cup Q_r \cup R_r$, and let $g^Q: S' \rightarrow L_1$ such that $g^Q(q_i) = c_i$ for all $i$ such that $Q_i = \sset{q_i}$, and $g^Q(r_i) = d_i$ for all $i$ such that $R_i = \sset{r_i}$.

For $i \in \sset{1, \dots, r_j}$, we let $Z_i$ be the set of vertices $z \in (X \cup Y)(T_i)$ such that one of the following holds:
\begin{itemize}
\item $Q_i = \emptyset$; 
\item $Q_i = \sset{q_i}$, and $N(q_i) \cap Y_1' \subsetneq N(z) \cap Y_2'$;
\item $Q_i = \sset{q_i}$, $R_i = \sset{r_i}$, $z$ is adjacent to $q_i$ and $N(r_i) \cap Y_2' \subsetneq N(z) \cap Y_1'$;
\end{itemize}
We let $\tilde{Z}^Q = Z_1 \cup \dots \cup Z_{r_j}$ and $g'^Q : \tilde{Z}^Q \rightarrow \sset{1,2,3,4} \setminus L_1$.
Let $$\tilde{X}^Q = \bigcup_{i: R_i = \emptyset, Q_i \neq \emptyset} ((X \cup Y)(T_i) \cap N(S_i)),$$ and let $g''^Q: \tilde{X}^Q \rightarrow \sset{1,2,3,4} \setminus L_1$ be the constant function.
Let
\begin{align*}
  P'^Q = &( G, S' \cup S^Q, X_0' \cup \tilde{Z}^Q \cup \tilde{X}^Q, \\
         &X' \setminus (S^Q \cup \tilde{Z}^Q \cup \tilde{X}^Q),\\
         &Y' \setminus (S^Q \cup \tilde{Z}^Q \cup \tilde{X}^Q), Y^*,\\
         &f' \cup g^Q \cup g'^Q \cup g''^Q).
\end{align*}
If $f' \cup g^Q \cup g'^Q \cup g''^Q$ is proper coloring of $G|(S' \cup S^Q \cup \tilde{Z}^Q \cup \tilde{X}^Q)$, then we add $P'^Q$ to $\mathcal{L}_j(P')$.

It follows that for every $Q \in \mathcal{Q}(P')$, every precoloring extension of $P'^Q$ is a precoloring extension of $P'$. Moreover, suppose that $c$ is a precoloring extension of $P'$. We define $Q =((Q_1, R_1 c_1, d_1), \dots, (Q_r, R_r, c_r, d_r))$ as follows:
\begin{itemize}
\item For every type $T_i \in \mathcal{T}$ such that $c((X \cup Y)(T_i)) \cap L_1 = \emptyset$, we let $Q_i = R_i = \emptyset$ and $c_i, d_i \in L_1$ arbitrary. 
\item For every type $T_i \in \mathcal{T}$ such that  $c((X \cup Y)(T_i)) \cap L_1 \neq \emptyset$, we let $v$ a vertex $v \in (X \cup Y)(T_i)$ with $c(v) \in L_1$ with $N(v) \cap Y_1$ maximal. We let $Q_i = \sset{v}, c_i = c(v)$. If there is a vertex $w$ in $N(v) \cap (X \cup Y) (T_i)$ with $c(w) \in L_1$, then we choose such a vertex with $N(w) \cap Y_1$ maximal and let $R_i = \sset{w}, d_i = c(w)$; otherwise we let $R_i = \emptyset$ and $d_i \in L_1$ arbitrary. 
\end{itemize}
The second bullet implies that $c(x) \not\in L_1$ for every $x \in (X \cup Y)(T_i)$ such that $Q_i = \sset{q_i}$ and $N(q_i) \cap Y_1' \subsetneq N(v) \cap Y_1'$. Similarly, $c(x) \not\in L_1$ for every $x \in (X \cup Y)(T_i) \cap N(Q_i)$ such that $R_i = \sset{r_i}$ and $N(r_i) \cap Y_1' \subsetneq N(v) \cap Y_1'$. It follows that $Q \in \mathcal{Q}(P')$, $P'^Q \in \mathcal{L}_j(P')$, and $c$ is a precoloring extension of $P'^Q$. Thus $\mathcal{L}_j(P')$ is an equivalent collection for $P'$. By construction, $P'^Q$ satisfies \eqref{it:size3} for every $Q \in \mathcal{Q}(P')$. 

Now let $$\mathcal{L}_j = \bigcup_{P' \in \mathcal{L}_{j+1}} \mathcal{L}_j(P').$$
Since $\mathcal{L}_{j+1}$ is an equivalent collection for $P$ and since $\mathcal{L}_j$ is the union of equivalent collections for every $P' \in \mathcal{L}_{j+1}$, it follows that $\mathcal{L}_j$ is an equivalent collection for $P$.

Let $P' \in \mathcal{L}_{j+1}$. Let $Q =((Q_1, R_1 c_1, d_1), \dots, (Q_r, R_r, c_r, d_r)) \in \mathcal{Q}(P')$, and let $P'^Q = (G, S'', X_0'', X'', Y'', Y^*, f'') \in \mathcal{L}_j(P')$. Let $Y_1'' = \sset{y \in Y'': L_{P'^Q}(y) = L_1}$. From the previous step ($j+1$) of our argument, we may assume that~\eqref{eq:mixedk4y} and~\eqref{eq:no3pathy1} hold for $j+1$ for $P'$ and $Y_1'$. This is true when $j=4$ as well, since $G$ contains no $K_5$. 

\vspace*{-0.4cm}\begin{equation}\vspace*{-0.4cm} \label{eq:mixedk4y}
  \longbox{\emph{There is no vertex $z \in (X'' \cup Y'') \setminus Y_1''$ with $N(z) \cap \sset{a_1, \dots, a_j} = \sset{a_1}$ for a clique $\sset{a_1, \dots, a_j} \subseteq Y_1''$.}}
\end{equation}

Suppose for a contradiction that $z$ is such a vertex. Write $P' = (G, S', X_0', X', Y', Y^*, f')$. Let $Y_1' = \sset{y \in Y': L_{P'}(y) = L_1}$ for $i=1,2$. Suppose first that $z \in Y_1'$. Then $z$ has a neighbor $s \in S'' \setminus S'$. It follows that $f''(s) \in L_1$ and $s \not\in Y_1'$. Consequently, $s$ is anticomplete to $\{a_1, \dots, a_j\}$. But then the path $s-z-a_1-a_j$ contradicts the fact that \eqref{eq:no3pathy1} holds for $P'$. 

It follows that $z \in (X' \cup Y') \setminus Y_1'$ and $\sset{a_1, \dots, a_j} \subseteq Y_1'$. Let $i$ such that $S' \cap N(z) = T_i$. Since $z \not\in X_0''$, it follows $Q_i \neq \emptyset$; say $Q_i = \sset{q_i}$. If $z$ is non-adjacent to $q_i$, let $s = q_i$. Otherwise, it follows that $R_i = \sset{r_i}$, say; let $s = r_i$. In both cases, it follows that $s$ is non-adjacent to $z$. 

Since $a_1, \dots, a_j \not\in X''$, it follows that $s$ is non-adjacent to $a_1, \dots, a_j$. The definition of $Z_i$ implies  that $N(s) \cap Y_2' \not\subset N(z) \cap Y_2'$. Since $a_1 \in (N(z) \setminus N(s)) \cap Y_1'$, we deduce that there exists a vertex $y \in (N(z) \setminus N(s)) \cap Y_1'$.

Let $s' \in T_i$ with $f(s') \in L_1$. Then, $s'$ is non-adjacent to $a_1, \dots, a_j$.  But $y-s-s'-z-a_1-a_j$ is not a $P_6$ in $G$, and thus $y$ has a neighbor in $\sset{a_1, \dots, a_j}$. But $y$ is not complete to $\sset{a_1, \dots, a_j}$, since $P'$ satisfies \eqref{eq:mixedk4y} for $j+1$. It follows that $y$ is mixed on $\sset{a_1, \dots, a_j}$, and thus by Lemma~\ref{lem:mixed} there is a path $y-a-b$ with $a, b \in \sset{a_1, \dots, a_j}$. But then $s-y-a-b$ is a path, contrary to the fact that $P'$ satisfies \eqref{eq:no3pathy1}. This concludes the proof of \eqref{eq:mixedk4y}. 

\vspace*{-0.4cm}\begin{equation}\vspace*{-0.4cm} \label{eq:no3pathy1reprise}
  \longbox{\emph{There is no path $z-a-b-c$ with $z \in (X'' \cup Y'') \setminus Y_1'$ and $a, b, c \in Y_1''$.}}
\end{equation}

Suppose not; and let $z-a-b-c$ be such a path. Since $Y_1'' \subseteq Y_1'$, 
the fact that $P'$ satisfies \eqref{eq:no3pathy1} implies
that $z \not\in X' \cup Y'$, and thus $z \in Y_1'$. Thus $z$ has a neighbor $s \in S'' \setminus S'$ with $f(s) \in L_1$. It follows that $s \in X' \cup Y'$, and thus $s-z-a-b$ is a path, contrary to the fact that \eqref{eq:no3pathy1} holds for $P'$. This proves \eqref{eq:no3pathy1reprise}.

\vspace*{-0.4cm}\begin{equation}\vspace*{-0.4cm} \label{eq:long2}
  \longbox{\emph{If there is no  path $a-b-c$ with $L_{P'}(a) \neq L_1'$, $L_{P'}(b) = L_{P'}(c) = L_1'$ with $a, b, c \in Y'$  for some $L_1'$ with $|L_1'| = 3$,
   then there is no path $a-b-c$ with $L_{P''}(a) \neq L_1'$, $L_{P''}(b) = L_{P''}(c) = L_1'$ with $a, b, c \in Y''$; and if $P'$ satisfies \eqref{it:122}, and 
   if there is no path $a-b-c$ with $L_{P'}(a) \neq L_1'$, $L_{P'}(b) = L_{P'}(c) = L_1'$ with $a, b, c \in X \cup Y$  for some $L_1'$ with $|L_1'| = 3$,
   then there is no path $a-b-c$ with $L_{P'}(a) \neq L_1'$, $L_{P''}(b) = L_{P''}(c) = L_1'$ with $a, b, c \in X'' \cup Y''$.}}
\end{equation}

Suppose not; and let $a-b-c$ be such a path. Since $b, c \in Y'' \subseteq Y'$, it follows that $L_{P'}(b) = L_{P'}(c) = L_1'$. By the assumption of \eqref{eq:long2}, it follows that $L_{P'}(a) \neq L_{P''}(a)$, and so $a \in Y' \cap X''$. This implies that $|L_{P''}(a)| = 2$. Since $a \not\in Y''$, it follows that the first statement of \eqref{eq:long2} is proved. 

Therefore, we may assume that \eqref{it:122} holds for $P'$. Since $P'$ satisfies \eqref{it:122}, it follows that $L_{P'}(a) = L_1'$. Moreover, there is a vertex $s \in S'' \setminus S'$ with $f'(s) \in L_1'$ and $as \in E(G)$. Since $b \in Y''$, it follows that $s-a-b$ is a path. But since $P'$ satisfies \eqref{it:122}, it follows that $S'' \setminus S' \subseteq X'$ by construction, and so $s \in X'$. But then the path $s-a-b$ contradicts the assumption of \eqref{eq:long2}. This implies \eqref{eq:long2}.

\bigskip

It follows that \eqref{eq:no3pathy1} and \eqref{eq:long2}  holds for $P'^Q$ for every $P' \in \mathcal{L}_{j+1}$ and $Q \in \mathcal{Q}(P')$. Moreover, by construction, $\mathcal{L}_j$ is an equivalent collection for $P$. If $j > 2$, we repeat the procedure for $j-1$; otherwise, we stop. 

At termination, we have constructed an equivalent collection $\mathcal{L}_2$ for $P$ and every $P' = (G,S',X_0',X',Y',Y_0',f') \in \mathcal{L}_2$ satisfies \eqref{it:size3} and \eqref{eq:mixedk4y} for $j=2$, and thus the last bullet of the lemma. The third-to-last and second-to-last bullets of the lemma follow from \eqref{eq:long} and \eqref{eq:long2}. Thus, $\mathcal{L}_2$ satisfies the properties of the lemma, and hence, the lemma is proved. 
\end{proof}

\begin{lemma}  \label{lem:122}
There is a function $q: \mathbb{N} \rightarrow \mathbb{N}$ such that the following holds. Let $P = \sspc$ be a starred precoloring of a $P_6$-free graph $G$ with $P$ satisfying \eqref{it:size3}. Then there is an algorithm with running time
$O(|V(G)|^{q(|S|)})$ that outputs an equivalent collection $\mathcal{L}$ for 
$P$ such that
\begin{itemize}
\item $|\mathcal{L}| \leq |V(G)|^{q(|S|)}$;
\item every $P' \in \mathcal{L}$ is a starred precoloring of $G$;
\item every $P' \in \mathcal{L}$ with seed $S'$ satisfies $|S'| \leq q(|S|)$;  and
\item every $P' \in \mathcal{L}$ satisfies \eqref{it:size3} and \eqref{it:122}. 
\end{itemize}
Moreover, for every $P' \in \mathcal{L}$, given a precoloring extension of $P'$, we can compute a precoloring extension for $P$ in polynomial time, if one exists. 
\end{lemma}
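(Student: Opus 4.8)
The plan is to iterate Lemma~\ref{lem:pairoflists} over all four three-element color lists. Enumerate the subsets of $\sset{1,2,3,4}$ of size three as $L^{(1)}, L^{(2)}, L^{(3)}, L^{(4)}$. Note first that, under \eqref{it:size3}, axiom \eqref{it:122} is equivalent to the statement that for every $k \in \sset{1,2,3,4}$ there is no path $a-b-c$ with $L_P(a) \neq L^{(k)}$, $L_P(b) = L_P(c) = L^{(k)}$ and $a,b,c \in Y$: indeed, every $y \in Y$ has $|L_P(y)| = 3$, so if $a \in Y$ and $L_P(a) \neq L^{(k)}$ then $L_P(a) = L^{(j)}$ for some $j \neq k$. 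Hence it suffices to establish this ``no-path'' property for each of the four lists in turn.

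Set $\mathcal{L}_0 = \sset{P}$, and for $k = 1, 2, 3, 4$ let $\mathcal{L}_k$ be the union, over all $P' \in \mathcal{L}_{k-1}$, of the equivalent collections obtained by applying Lemma~\ref{lem:pairoflists} to $P'$ with $L_1 = L^{(k)}$. We claim, by induction on $k$, that every $P' = (G, S', X_0', X', Y', Y^*, f') \in \mathcal{L}_k$ satisfies \eqref{it:size3}, has $Y' \subseteq Y$, and admits no path $a-b-c$ with $L_{P'}(a) \neq L^{(j)}$, $L_{P'}(b) = L_{P'}(c) = L^{(j)}$, $a,b,c \in Y'$, for every $j \leq k$. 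The case $k=0$ is vacuous. For the inductive step, fix $P' \in \mathcal{L}_{k-1}$; by the inductive hypothesis $P'$ satisfies \eqref{it:size3}, so Lemma~\ref{lem:pairoflists} applies with $L_1 = L^{(k)}$, and every member $P''$ of the resulting collection satisfies \eqref{it:size3}, has $Y'' \subseteq Y' \subseteq Y$, admits no path over $X'' \cup Y''$ — hence none over $Y''$ — with the ``$a\neq L^{(k)}$, $b=c=L^{(k)}$'' pattern by the last bullet of Lemma~\ref{lem:pairoflists}, and, by the ``$Y$-preservation'' bullet of Lemma~\ref{lem:pairoflists} together with the inductive hypothesis, admits no such path over $Y''$ for each $L^{(j)}$ with $j < k$. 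The one subtlety here is that we may invoke only the ``$Y$-version'' preservation clause of Lemma~\ref{lem:pairoflists}, not its stronger ``$X \cup Y$-version'' clause, since the latter requires the \emph{input} precoloring to already satisfy \eqref{it:122}, which the intermediate precolorings do not; fortunately the $Y$-version is precisely what \eqref{it:122} asks for, so this is harmless. This completes the induction, and for $k = 4$, since $\sset{L^{(1)}, \dots, L^{(4)}}$ exhausts all three-element lists, every member of $\mathcal{L}_4$ satisfies both \eqref{it:size3} and \eqref{it:122}; we output $\mathcal{L} = \mathcal{L}_4$.

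It remains to verify the quantitative claims. Since $\mathcal{L}_{k-1}$ is an equivalent collection for $P$ and $\mathcal{L}_k$ is a union of equivalent collections for the members of $\mathcal{L}_{k-1}$, each $\mathcal{L}_k$ is an equivalent collection for $P$, and a precoloring extension for $P$ can be reconstructed in polynomial time by composing the reconstruction procedures supplied by Lemma~\ref{lem:pairoflists} at the four stages. If $q_0$ denotes the function of Lemma~\ref{lem:pairoflists}, then an easy induction using its ``$|S'| \leq q_0(|S|)$'' bullet shows that every seed appearing in $\mathcal{L}_k$ has size at most the $k$-fold composition $q_0(q_0(\cdots q_0(|S|)))$; since this remains bounded by a function of $|S|$ alone, each of the four applications enlarges the collection by a factor of at most $|V(G)|$ raised to such a bounded exponent, so $|\mathcal{L}_4| \leq |V(G)|^{q(|S|)}$ and the total running time is $O(|V(G)|^{q(|S|)})$ for a suitable $q$ dominating all the bounds that arise. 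This is the only bookkeeping point that needs attention — keeping the seed size controlled so that the next application's bounds stay polynomial in $|V(G)|$ — and it follows directly from the corresponding bullet of Lemma~\ref{lem:pairoflists}. This proves Lemma~\ref{lem:122}.
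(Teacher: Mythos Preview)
Your proof is correct and follows essentially the same approach as the paper: iterate Lemma~\ref{lem:pairoflists} over the three-element lists, using its last bullet to establish the property for the current list and its $Y$-preservation bullet to keep the property for lists handled earlier. The paper's write-up phrases the iteration as ``for every pair $L_1,L_2$'' (matching the wording of axiom~\eqref{it:122}), but since Lemma~\ref{lem:pairoflists} is parameterized by a single list your four-step iteration is equivalent and, if anything, more precise.
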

\begin{proof}
Let $\mathcal{L} = \sset{P}$. We repeat the following for every pair $L_1, L_2$ of distinct lists of size three contained in $\sset{1,2,3,4}$.
We apply Lemma~\ref{lem:pairoflists} to every starred precoloring $P' \in \mathcal{L}$, and replace $\mathcal{L}$ by the union of the equivalent collections 
produced by Lemma~\ref{lem:pairoflists}. Then
we move to the next pair of lists. 
\end{proof}

The next lemma is a simple tool that we will use to establish further axioms. 
\begin{lemma}\label{lem:nunv} Let $G$ be a $P_6$-free graph with $u, v \in V(G)$ such that $V(G) = \sset{u,v} \cup N(u) \cup N(v)$, $uv \not\in E(G)$, $N(u) \cap N(v) = \emptyset$, and $N(u), N(v)$ stable. Then there is a partition $A_0, A_1, \dots, A_k$ of $N(u)$ and a partition $B_0, B_1, \dots, B_k$ of $N(v)$ with $k \geq 0$ such that 
\begin{itemize}
\item $A_0$ is complete to $N(v)$;
\item $B_0$ is complete to $N(u)$; and 
\item for $i = 1, \dots, k$, $A_i, B_i \neq \emptyset$ and $A_i$ is complete to $N(v) \setminus B_i$ and $B_i$ is complete to $N(u) \setminus A_i$, and $A_i$ is anticomplete to $B_i$. 
\end{itemize}
\end{lemma}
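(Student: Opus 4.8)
The plan is to analyze the structure of the bipartite graph $H$ between $N(u)$ and $N(v)$ (with edges only those of $G$), using $P_6$-freeness of $G$ together with the vertices $u,v$ at the ends. First I would observe that the only obstruction that can arise is a long ``alternating'' or ``path-like'' configuration: since $u$ is complete to $N(u)$ and $v$ is complete to $N(v)$, any induced path of length $4$ with two vertices in $N(u)$ and two in $N(v)$, say $a-a'-b'-b$ with $a,a'\in N(u)$, $b',b\in N(v)$, would extend to $u-a-a'-b'-b-v$, which is a $P_6$ unless $u$ or $v$ has an extra adjacency; but $N(u)\cap N(v)=\emptyset$ and $uv\notin E(G)$ force this to be induced. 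Wait---$N(u)$ is stable, so $a-a'$ is not an edge; so the only $P_4$'s with two vertices on each side are of the form $a-b'-a'-b$? No: with $N(u),N(v)$ stable, a $P_4$ inside $H$ alternates sides, so it has the form $a-b-a'-b'$ with $a,a'\in N(u)$, $b,b'\in N(v)$. Then $u-a-b-a'-b'-v$ is a path; it is induced because $u\not\sim b,b',v$ (as $N(u)\cap N(v)=\emptyset$, $uv\notin E$) and $v\not\sim a,a'$ similarly, and $a\not\sim a'$, $b\not\sim b'$ since each side is stable; the only possible chord is $a\sim b'$, but if $a\sim b'$ then $a-b-a'-b'-a$ would be a $4$-cycle, which is allowed, so we must rule that out: if $a\sim b'$ the path is not induced, so in that case there is no $P_6$. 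Hence:

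\begin{equation}\label{eq:c4dense}
\text{$H$ contains no induced $P_4$; equivalently every induced $P_4$ of $H$ has a chord, so $H$ is $P_4$-free as a bipartite graph.}
\end{equation}

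Next I would use the standard structure of $P_4$-free bipartite graphs (``bipartite cographs''/difference graphs): a bipartite graph with no induced $P_4$ has the property that, after removing vertices complete to the opposite side (these go into $A_0$ and $B_0$), the remaining graph is a disjoint union---in the ``complement of bipartite complement'' sense---of complete bipartite pieces; more precisely the neighborhoods on each side are linearly ordered by inclusion, and one obtains the claimed partition $A_1,\dots,A_k$, $B_1,\dots,B_k$ where $A_i$ is complete to everything on the $v$-side except the ``twin class'' $B_i$, and anticomplete to $B_i$. Concretely I would proceed by induction on $|V(G)|$: if some vertex of $N(u)$ is complete to $N(v)$ or some vertex of $N(v)$ is complete to $N(u)$, put it in $A_0$ (resp.\ $B_0$) and recurse on the graph with that vertex deleted, noting the hypotheses are preserved. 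Otherwise every vertex on each side has a non-neighbor on the other side; pick $a\in N(u)$ with $N(a)\cap N(v)$ inclusion-minimal and let $B_1=N(v)\setminus N(a)\neq\emptyset$, and let $A_1$ be the set of vertices of $N(u)$ whose non-neighborhood in $N(v)$ equals $B_1$. The $P_4$-freeness \eqref{eq:c4dense} forces that $A_1$ is complete to $N(v)\setminus B_1$ and anticomplete to $B_1$, that $B_1$ is complete to $N(u)\setminus A_1$, and that $\{A_1,B_1\}$ splits off cleanly so that $G\setminus(A_1\cup B_1)$ (which is $\{u,v\}\cup(N(u)\setminus A_1)\cup(N(v)\setminus B_1)$ if this is nonempty, otherwise we are done with $k=1$) again satisfies the hypotheses; recurse to get $A_2,\dots,A_k$, $B_2,\dots,B_k$.

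The main obstacle I expect is verifying the ``clean split'' step rigorously: namely that with $A_1,B_1$ defined by the inclusion-minimal non-neighborhood, (i) $A_1$ really is anticomplete to $B_1$ and complete to $N(v)\setminus B_1$, (ii) $B_1$ is complete to $N(u)\setminus A_1$, and (iii) no vertex outside $A_1$ has non-neighborhood contained in $B_1$, so that the recursion hypotheses genuinely hold. All of these reduce to short case checks using \eqref{eq:c4dense} (i.e.\ tracing out a would-be induced $P_4$ between the two sides and extending it through $u$ and $v$ to a forbidden $P_6$); e.g.\ if $a\in A_1$ had a non-neighbor $b\notin B_1$, then since $b\notin B_1$ there is $a'\in N(u)$ with $a'\sim b$... wait, $b\notin B_1$ means $a\sim b$ by definition of $B_1=N(v)\setminus N(a)$, contradiction---so $A_1$ is by construction anticomplete to $B_1$ and complete to $N(v)\setminus B_1$, handling (i) immediately. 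For (ii), if some $b\in B_1$ had a non-neighbor $a''\in N(u)\setminus A_1$, then since $a''\notin A_1$ its non-neighborhood in $N(v)$ is not equal to $B_1$, and minimality of $N(a)\cap N(v)$ means it is not a proper subset of $N(a)\cap N(v)$ either; one then finds $b'\in (N(v)\setminus B_1)$ with $b'\not\sim a''$, $b'\sim a$, $b\not\sim a$ (since $b\in B_1$), $b\not\sim a''$, giving the induced $P_4$ $\;b-a-b'-a''$? check adjacencies: $b\sim a$? no, $b\in B_1$ so $b\not\sim a$---so that is not a $P_4$. I would instead use $a''-b'-a-b$? $a''\sim b'$? no. These little adjacency bookkeeping checks are the only real content; I would organize them as one lemma-internal claim ``\eqref{eq:c4dense} holds'', then a second claim giving the inclusion-linear-order of non-neighborhoods on $N(u)$, from which the partition is read off directly, and the recursion becomes bookkeeping.
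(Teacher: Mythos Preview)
Your key claim \eqref{eq:c4dense} is false, and the error is concrete: in the proposed path $u-a-b-a'-b'-v$ you checked the chords $u\sim b$, $u\sim b'$, $u\sim v$, $v\sim a$, $v\sim a'$, $a\sim a'$, $b\sim b'$, $a\sim b'$, but you forgot $u\sim a'$. Since $a'\in N(u)$, that edge is present and kills the $P_6$. In fact $H$ can contain an induced $P_4$: take $N(u)=\{a,a'\}$, $N(v)=\{b,b'\}$ with edges $ab$, $a'b$, $a'b'$ (so $a-b-a'-b'$ is an induced $P_4$ in $H$); one checks that the resulting $6$-vertex graph has $7$ edges and hence no induced $P_6$, yet the conclusion of the lemma still holds via $A_0=\{a'\}$, $B_0=\{b\}$, $A_1=\{a\}$, $B_1=\{b'\}$. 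So $P_4$-freeness of $H$ is neither true nor what the lemma asserts. Your subsequent plan (``neighborhoods linearly ordered by inclusion'') describes chain graphs ($2K_2$-free bipartite), which is yet another class and also not what is at stake here.

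What the lemma actually says is that the \emph{bipartite complement} $\bar H$ (edges $=$ non-edges of $G$ between $N(u)$ and $N(v)$) is a disjoint union of complete bipartite graphs, i.e.\ $\bar H$ is $P_4$-free. And that does follow from $P_6$-freeness: an induced $P_4$ in $\bar H$ means $a,a'\in N(u)$, $b,b'\in N(v)$ with $a\not\sim b$, $a'\not\sim b$, $a'\not\sim b'$, $a\sim b'$ in $G$, and then $b-v-b'-a-u-a'$ is an induced $P_6$ (now the ``dangerous'' pairs $u,a'$ and $v,b$ sit at the ends, so they are consecutive and their adjacency is required rather than forbidden). Once $\bar H$ is $P_4$-free, it is a disjoint union of complete bipartite graphs, and the partition $(A_i,B_i)$ is read off from its components, with $A_0,B_0$ the isolated vertices on each side. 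So your overall strategy is salvageable with this one-line correction, but as written the central step fails. The paper, by contrast, avoids the bipartite-complement reformulation and argues directly by taking a maximal partial partition and extending it, exhibiting explicit $P_6$'s through $u$ and $v$ of the form $w-v-x-z-u-y$ and $x-v-w-z-u-y$.
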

\begin{proof}
Let $G, u, v$ as in the lemma. The result holds if $N(u) = \emptyset$ or $N(v) = \emptyset$; thus we may assume that both sets are non-empty. Let $a \in N(u), b \in N(v)$. If $ab \in E(G)$, we let $A_0 = \sset{a}, B_0 = \sset{b}$; otherwise, we let $A_1 = \sset{a}, B_1 = \sset{b}$. Now let $A_0, A_1, \dots, A_k, B_0, B_1, \dots, B_k$ be chosen such that their union is maximal subject to satisfying the conditions of the lemma. If their union is $V(G) \setminus \sset{u,v}$, then there is nothing to show; thus we may assume that there is a vertex $x \not\in \sset{u,v}$ not contained in their union. Without loss of generality, we may assume that $x \in N(v)$. 

If $x$ is complete to $A = A_0 \cup A_1 \cup \dots \cup A_k$, we can add $x$ to $B_0$, contrary to the maximality of our choice of sets. Suppose first that $x$ is complete to $A_1 \cup \dots \cup A_k$. Let $A_{k+1} = A_0 \setminus N(x)$. Then $A_{k+1}$ is non-empty, since $x$ has a non-neighbor in $A$. But then $A_0 \setminus A_{k+1}, A_1, \dots, A_k, A_{k+1}, B_0, B_1, \dots, B_k, \sset{x}$ satisfies the conditions of the lemma and has strictly larger union; a contradiction. 

It follows that $x$ has a non-neighbor in $A \setminus A_0$; without loss of generality we may assume that there is $y \in A_1$ non-adjacent to $x$. 
Let $w \in B_1$.
Suppose that $x$ has a neighbor $z \in A_1$. Then $w-v-x-z-u-y$ is a $P_6$ in $G$, a contradiction. It follows that $x$ has no neighbor in $A_1$. If $x$ is complete to $A \setminus A_1$, we can add $x$ to $B_1$ and enlarge the structure, a contradiction; hence $x$ has a non-neighbor $z$ in $A \setminus A_1$. It follows that $z$ is adjacent to $w$. But then $x-v-w-z-u-y$ is a $P_6$ in $G$, a contradiction. This concludes the proof of the lemma. 
\end{proof}

The purpose of the following lemmas is to establish the following axiom, which we restate: 
\begin{enumerate}
\item[\eqref{it:123}] Let $L_1, L_2, L_3 \subseteq \sset{1,2,3,4}$ with $|L_1| = |L_2| = |L_3| = 3$ and $L_1 \neq L_2 \neq L_3 \neq L_1$. Then there is no  path $a-b-c$ with $L_P(a) = L_1$, $L_P(b) = L_2$, $L_P(c) = L_3$ with $a, b, c\in Y$.
\end{enumerate}
\begin{lemma}\label{lem:fixedl1l2l3} There is a function $q: \mathbb{N} \rightarrow \mathbb{N}$ such that the following holds. Let $L_1, L_2, L_3 \subseteq \sset{1,2,3,4}$ with $|L_1| = |L_2| = |L_3| = 3$  and $L_1 \neq L_2 \neq L_3 \neq L_1$. Let $P = \sspc$ be a starred precoloring of a $P_6$-free graph $G$ with $P$ satisfying \eqref{it:size3} and \eqref{it:122}. Then there is an
algorithm with running time $O(|V(G)|^{q(|S|)})$ 
that outputs an equivalent collection $\mathcal{L}$ for $P$ such that
\begin{itemize}
\item $|\mathcal{L}| \leq |V(G)|^{q(|S|)}$;
\item every $P' \in \mathcal{L}$ is a starred precoloring of $G$;
\item every $P' \in \mathcal{L}$ with seed $S'$ satisfies $|S'| \leq q(|S|)$;  
\item every $P' \in \mathcal{L}$ satisfies \eqref{it:size3} and \eqref{it:122};  
\item every $P' = (G, S', X_0', X', Y', Y^*, f') \in \mathcal{L}$ satisfies that there is no  path $a-b-c-d$ with $L_{P'}(a) = L_1$, $L_{P'}(b) = L_{P'}(d) = L_2$, $L_{P'}(c) = L_3$ with $a, b, c, d\in Y'$; and 
\item if $P$ satisfies the previous bullet for $L_1, L_2, L_3$ and for $L_3, L_2, L_1$, then every $P' = (G, S', X_0', X', Y', Y^*, f') \in \mathcal{L}$ satisfies that there is no  path $a-b-c$ with $L_{P'}(a) = L_1$, $L_{P'}(b) = L_2$, $L_{P'}(c) = L_3$ with $a, b, c \in Y'$.
\end{itemize}
Moreover, for every $P' \in \mathcal{L}$, given a precoloring extension of $P'$, we can compute a precoloring extension for $P$ in polynomial time. 
\end{lemma}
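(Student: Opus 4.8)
The plan is to follow the template of Lemma~\ref{lem:pairoflists}, combined with the iterated ``clique‑layer'' device of Lemma~\ref{sublem:it:mixedy0}. First, we may assume $G$ has no $K_5$ (otherwise $P$ has no precoloring extension and we output $\mathcal{L}=\emptyset$). Write $c_i$ for the unique element of $\sset{1,2,3,4}\setminus L_i$ and $Y_i=\sset{y\in Y: L_P(y)=L_i}$; by \eqref{it:size3}, $Y_i$ is exactly the set of $y\in Y$ with $f(N(y)\cap S)=\sset{c_i}$. We first record the structural consequences of \eqref{it:122} that isolate the ``local obstruction'' to be destroyed: in any path $a-b-c-d$ with $L_P(a)=L_1$, $L_P(b)=L_P(d)=L_2$, $L_P(c)=L_3$ and $a,b,c,d\in Y$, the path $a-b-d$ would violate \eqref{it:122} unless $b$ is nonadjacent to $d$; thus the obstruction is a vertex $c\in Y_3$ with two nonadjacent neighbours $b,d\in Y_2$, where $b$ has a neighbour $a\in Y_1$ with $a$ anticomplete to $\sset{c,d}$. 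Similarly, every seed vertex we shall colour with a colour of $L_i$ is automatically anticomplete to $Y_i'$ in any resulting instance, since such a neighbour $y$ satisfies $f'(N(y)\cap S')=\sset{c_i}$.

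Next, the guessing. For each type $T\subseteq S$ with $|f(T)|\le 2$ (and, when we must preserve \eqref{it:122}, with $|f(T)|=2$, so that the vertices we move to the seed lie in $X$ and keep list size at most two, exactly as in Lemma~\ref{lem:pairoflists}), we enumerate all choices of a clique of bounded size among the vertices of that type in $(X\cup Y)(T)$, chosen so as to be \emph{extremal} with respect to its neighbourhood inside $Y_1\cup Y_2\cup Y_3$; we add these cliques to the seed, fix their colours (for $Y_3$‑type representatives the colour $c_4:=c_1$ or the colour forced by extremality, so that the representative is anticomplete to the relevant $Y_j'$), and move to $X_0$ every vertex of the same type that is dominated by the guessed clique, colouring it by the colour forced by the extremal choice. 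As in Lemma~\ref{sublem:it:mixedy0} we iterate this for $j=4,3,2$, at step $j$ additionally destroying vertices having exactly one neighbour in a $K_j$ built from the relevant subset of $Y_2$ (respectively $Y_3$); since $G$ has no $K_5$, each guessed clique has size at most four, so the seed grows by only $O(2^{|S|})$ per step, giving $|S'|\le q(|S|)$ and $|\mathcal{L}|\le|V(G)|^{q(|S|)}$ for a suitable $q$. Each resulting starred precoloring satisfies \eqref{it:size3} by construction, and \eqref{it:122} is preserved precisely as in Lemma~\ref{lem:pairoflists}, because the new seed vertices all lie in $X$ in that case.

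Equivalence of $\mathcal{L}$ with $P$ is routine in the ``$\Leftarrow$'' direction; for ``$\Rightarrow$'', given a precoloring extension $d$ of $P$ we read off the correct guess from $d$ (for each type: the empty clique if $d$ uses no colour of the relevant list there, otherwise an extremal clique coloured as under $d$), exactly as in Lemma~\ref{lem:pairoflists}, and lifting of precoloring extensions is then immediate. The heart of the argument is that after the guessing no member of $\mathcal{L}$ contains a forbidden path $a-b-c-d$: if it did, the offending vertex of $Y_3$ (or $Y_2$) would not lie in $X_0'$, so some guessed seed vertex $z$ of the same type would be nonadjacent to it but, by extremality, would have a neighbour $w\in Y_1\cup Y_2\cup Y_3$ that the offending vertex misses; combining $w$, a seed vertex $t$ of the common type $T$ with $f(t)\in\sset{c_2,c_3}$ (which is anticomplete to the $Y_1$‑ and $Y_3$‑vertices of the path), the offending vertex, and the path $a-b-c-d$ yields a $P_6$, a contradiction. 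The various residual nonadjacencies needed to make this an \emph{induced} $P_6$ (e.g.\ $w$ nonadjacent to the two $Y_2$‑vertices of the path) are exactly what the iteration over $j=4,3,2$ and Lemma~\ref{lem:mixed} are designed to guarantee, just as in \eqref{eq:mixedk4y} and \eqref{eq:no3pathy1}. Finally, for the last bullet: under the hypothesis that $P$ has no forbidden $a-b-c-d$ for $(L_1,L_2,L_3)$ and for $(L_3,L_2,L_1)$, one checks that any surviving path $a-b-c$ with $L_{P'}(a)=L_1$, $L_{P'}(b)=L_2$, $L_{P'}(c)=L_3$ in $a,b,c\in Y'$ would, together with a seed vertex introduced during the guessing, either extend to one of these two length‑$3$ forbidden paths — whose absence is preserved by the same mechanism as \eqref{eq:long} and \eqref{eq:long2} — or create a $P_6$, so no such $a-b-c$ remains.

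The main obstacle is the step just described: verifying that the iterated extremal guessing genuinely destroys the length‑$4$ path (and, under the extra hypothesis, the length‑$3$ path) while not reintroducing violations of \eqref{it:size3} or \eqref{it:122}. Concretely, this amounts to proving the analogues of \eqref{eq:mixedk4y}, \eqref{eq:no3pathy1}, \eqref{eq:long} and \eqref{eq:long2} in the present, more elaborate three‑list configuration, which requires tracking which of $Y_1$, $Y_2$, $Y_3$ each guessed seed vertex is anticomplete to and handling the case analysis of how the extra neighbour $w$ attaches to the path $a-b-c-d$.
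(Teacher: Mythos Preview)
Your plan follows the wrong template. The configuration you must destroy is a path $a\!-\!b\!-\!c\!-\!d$ with $a\in Y_1$, $b,d\in Y_2$, $c\in Y_3$, where (as you note) $b$ and $d$ are non-adjacent. The clique-layer iteration of Lemma~\ref{sublem:it:mixedy0}/Lemma~\ref{lem:pairoflists} is designed to eliminate ``vertex mixed on an edge of a homogeneous set'': it kills $z$ with exactly one neighbour in a $K_j$ inside the target set, and ultimately forbids $z\!-\!a\!-\!b$ with $a,b$ in that set and $ab\in E(G)$. Here there is no single homogeneous set on which someone is mixed: the obstruction mixes three list classes, and the two $Y_2$-vertices are \emph{not} adjacent, so ``exactly one neighbour in a $K_j\subseteq Y_2$'' is simply the wrong local invariant. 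Your $P_6$-construction sketch (``extremal $z$ of the same type has a neighbour $w$ the offender misses'') does not survive contact with this: you never say which vertex is the offender, which set the extremality is taken over, or why $w$ avoids all four of $a,b,c,d$; and since \eqref{it:122} already forbids $z\!-\!b\!-\!d$ for adjacent $b,d\in Y_2$, the clique-layer step is doing no new work.

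The paper takes a genuinely different route. It guesses only for types $T$ with $f(T)=\{1,2,3,4\}\setminus L_1$ (i.e.\ types of $Y_1$-vertices), and for each such type guesses a \emph{stable} pair $u,v\in Y_1(T)$, not a clique, with $f'(u)\in L_1\cap L_3\setminus L_2$ and $f'(v)\in L_1\cap L_2\setminus L_3$ (or a single vertex coloured in $L_2\cap L_3$ if no such pair exists). The point of the asymmetric colours is that $u$ kills its $Y_3$-neighbours and $v$ kills its $Y_2$-neighbours, so what remains is $A=(N(u)\setminus N(v))\cap Y_2$ versus $B=(N(v)\setminus N(u))\cap Y_3$. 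The paper then applies Lemma~\ref{lem:nunv} to the bipartite structure between $A$ and $B$ (using \eqref{it:122} to collapse components to single vertices), and guesses at most two further vertices $R_i^1\subseteq A$, $R_i^2\subseteq B$ to arrange that the surviving parts of $A$ and $B$ are anticomplete (claim~\eqref{eq:empty}). With that in hand, a hypothetical $z\!-\!a\!-\!b\!-\!c$ with $z\in Y_1'$, $a,c\in Y_2'$, $b\in Y_3'$ forces $u$ adjacent to $b$ and $v$ adjacent to $a$ or $c$, contradicting anticompleteness; the last bullet is handled by the same mechanism together with the extra hypothesis. None of this is an iterated $j=4,3,2$ argument, and Lemma~\ref{lem:nunv} is the structural tool you are missing.
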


\begin{proof}
We say that \emph{the conditions of the last bullet hold for $P$} if $P$ satisfies the second-to-last bullet for $L_1, L_2, L_3$ and $L_3, L_2, L_1$. 
  
Let $Y_i = \sset{y \in Y: L_P(y) = L_i}$ for $i = 1,2,3$. Let $\mathcal{T} = \sset{T_1, \dots, T_r}$ be the set of types $T \subseteq S$ with $f(T) = \sset{1,2,3,4} \setminus L_1$. We let $\mathcal{Q}$ be the set of all $r$-tuples $(Q_1, \dots, Q_r)$, where for each $i$, $Q_i = (S_i^1, S_i^2, R_i^1, R_i^2, c_i^1, c_i^2, c_i^3, c_i^4)$ such that the following hold:
\begin{enumerate}
\item $\sset{c_i^1, c_i^2} \subseteq \sset{1,2,3,4}$. 
\item $1 \geq |S_i^1| \geq |S_i^2| \geq  |R_i^1| \geq |R_i^2|$. 
\item $S_i^1 \neq \emptyset$ if and only if one of the following holds: \label{empty}
  \begin{itemize}
  \item there is a path $a-b-c-d$ with $a \in Y_1, b, d \in Y_2, c \in Y_3$ and $N(a) \cap S = T_i$; or
  \item the conditions of the last bullet hold for $P$ and there is a path $a-b-c$ with $a \in Y_1, b \in Y_2, c \in Y_3$ and $N(a) \cap S = T_i$. 
  \end{itemize}
\item $S_i^1 \cup S_i^2$ is a stable set, and 
$S_i^1 \cup S_i^2 \subseteq Y_1(T_i)$.
\item If $S_i^1 = \{s_i^1\}$, then $s_i^1$ has a neighbor in $Y_2$.
\item If $S_i^2 = \{s_i^2\}$, then $s_i^2$ has a neighbor in $Y_2$.
\item If $S_i^2 \neq \emptyset$, then $\sset{c_i^1, c_i^2} = L_1 \setminus (L_2 \cap L_3)$ and $c_i^1 \in L_3, c_i^2 \in L_2$. 
\item $R_i^1 \subseteq (N(S_i^1) \setminus N(S_i^2)) \cap Y_2.$
\item $R_i^2 \subseteq (N(S_i^2) \setminus N(S_i^1)) \cap Y_3.$
\item $R_i^1 \cup R_i^2$ is a stable set.
\item $\sset{c_i^3, c_i^4} \subseteq L_2 \cap L_3$. 
\end{enumerate}

We let $S'^Q = \bigcup_{i=1}^r (S_i^1 \cup S_i^2)$ and $T'^Q = \bigcup_{i=1}^r (R_i^1 \cup R_i^2)$. Define $f'^Q : S'^Q \cup T'^Q \rightarrow \sset{1,2,3,4}$ by setting $f'^Q(v) = c_i^j$ if $S_i^j = \sset{v}$ for $j = 1,2$ and $f'^Q(v) = c_i^{j+2}$ if $R_i^j = \sset{v}$ for $j=1,2$. Let $S'_1$ be the set of $v \in (T'^Q \cup S'^Q)$ such that $f'^Q(v) \in L_2 \cap L_3$. Let $S_2'$ be the set of $v \in (T'^Q \cup S'^Q)$ such that $f'^Q(v) \in L_2 \setminus L_3$, and let $S_3'$ be the set of $v \in (T'^Q \cup S'^Q)$ such that $f'^Q(v) \in L_3 \cap L_2$. 
Let
$$\tilde{X}^Q = (N(S_1') \cap (Y_{1} \cup Y_{2} \cup Y_{3})) \cup (N(S_2') \cap (Y_{1} \cup Y_{2})) \cup (N(S_3') \cap (Y_{1} \cup Y_{2})) \cup (N(T'^Q) \cap (Y_{2} \cup Y_{3})).$$

For $i \in \sset{1, \dots, r}$, we further define $\tilde{Z}_i = \emptyset$ if $|S_i^1 \cup S_i^2| < 2$ or $|R_i^1| > 0$, and $\tilde{Z}_i = (N(S_i^1) \setminus N(S_i^2)) \cap Y_2$ otherwise. We let $\tilde{Z}^Q = \bigcup_{i=1}^r \tilde{Z}_i$. Let $g^Q: \tilde{Z} \rightarrow L_2 \setminus L_3$ be the constant function.
For $i \in \sset{1, \dots, r}$, we let $\tilde{Y}_i = \emptyset$ if $|S_i^1 \cup S_i^2| < 2$ or $|R_i^1 \cup R_i^2| \neq 1$, and $\tilde{Y}_i = (N(S_i^2) \setminus (N(S_i^1) \cup N(R_i^1))) \cap Y_3$ otherwise. We let $\tilde{Y}^Q = \bigcup_{i=1}^r \tilde{Y}_i$. Let $g'^Q: \tilde{Y} \rightarrow L_3 \setminus L_2$ be the constant function. 
For $i \in \sset{1, \dots, r}$, we let $\tilde{W}_i = \emptyset$ if $|S_i^1 \cup S_i^2| \neq 1$ or $c_i^1 \in L_1 \cap L_2 \cap L_3$, and $\tilde{W}_i = Y_1(T_i) \setminus S_i^1$ otherwise. We let $\tilde{W}^Q = \bigcup_{i=1}^r \tilde{W}_i$. We define $g''^Q: \tilde{W} \rightarrow L_1$ by setting $g''(\tilde{W}_i \setminus N(S_i^1)) = \sset{c_i^1}$ and $g''(\tilde{W}_i \cap N(S_i^1)) = L_1 \setminus (\sset{c_i^1} \cup (L_2 \cap L_3)$. 

Let $P'^Q$ be the starred precoloring 
$$(G, S \cup S'^Q \cup T'^Q, X_0 \cup \tilde{W}^Q \cup \tilde{Y}^Q \cup \tilde{Z}^Q, X \cup \tilde{X}^Q, Y \setminus (S'^Q \cup T'^Q \cup \tilde{W}^Q \cup \tilde{X}^Q \cup \tilde{Y}^Q \cup \tilde{Z}^Q), Y^*, f \cup f'^Q \cup g^Q \cup g'^Q \cup g''^Q).$$
Since $P$ satisfies \eqref{it:122}, it follows that $P'$ satisfies \eqref{it:122} as well. We let $\mathcal{L} = \sset{P'^Q: Q \in \mathcal{Q},  f \cup f'^Q \cup g^Q \cup g'^Q \cup g''^Q \mbox{ is a proper coloring}}$.

\vspace*{-0.4cm}\begin{equation}\vspace*{-0.4cm} \label{eq:equiv}
  \longbox{\emph{$\mathcal{L}$ is an equivalent collection for $P$.}}
\end{equation}

Let $L_1 = \sset{c^1, c^2, c^3}, L_2 = \sset{c^1, c^2, c^4}$ and $L_3 = \sset{c^1, c^3, c^4}$. Let $Y_1^*$ denote the set of vertices in $Y_1$ with a neighbor in $Y_2$. 
Every precoloring extension for $P'^Q \in \mathcal{L}$ is a precoloring extension for $P$. Now suppose that $P$ has a precoloring extension $c : V(G) \rightarrow \sset{1,2,3,4}$. We define an $r$-tuple $(Q_1, \dots, Q_r)$, where for each $i$, $Q_i = (S_i^1, S_i^2, R_i^1, R_i^2, c_i^1, c_i^2, c_i^3, c_i^4)$. For $i \in \sset{1, \dots, r}$, we define $Q_i = (S_i^1, S_i^2, R_i^1, R_i^2, c_i^1, c_i^2, c_i^3, c_i^4)$ as follows:
\begin{itemize}
\item If neither bullet of \ref{empty} is satisfied, we let $Q_i = (\emptyset, \emptyset, \emptyset, \emptyset, c^1, c^1, c^1, c^1)$. 
\item If $Y_1^*(T_i)$ contains a vertex $v$ with $c(v) = c^1$, we let $Q_i = (\sset{v}, \emptyset, \emptyset, \emptyset, c^1, c^1, c^1, c^1)$. 
\item If $Y_1^*(T_i)$ contains a vertex $v$ with $c(v) = c^2$ such that $c(Y_1^*(T_i) \setminus N(v)) \subseteq \sset{c^3}$, we let $Q_i = (\sset{v}, \emptyset, \emptyset, \emptyset, c^2, c^1, c^1, c^1)$.  
\item If $Y_1^*(T_i)$ contains a vertex $v$ with $c(v) = c^3$ such that $c(Y_1^*(T_i) \setminus N(v)) \subseteq \sset{c^2}$, we let $Q_i = (\sset{v}, \emptyset, \emptyset, \emptyset, c^3, c^1, c^1, c^1)$.
\item Let $u, v \in Y_1^*(T_i)$ such that $c(u) = c^2, c(v) = c^3$ and $uv \not\in E(G)$. We let $A = (N(u) \setminus N(v)) \cap Y_2$ and $B = (N(v) \setminus N(u)) \cap Y_3$. We proceed as follows:
  \begin{itemize}
  \item If $c(A) \subseteq L_2 \setminus L_3$, we let $Q_i = (\sset{u}, \sset{v}, \emptyset, \emptyset, c^2, c^3, c^1, c^1)$.
  \item If there is a vertex $x \in A$ such that $c(x) \in L_2 \cap L_3$ and $c(B \setminus N(x)) \subseteq L_3 \setminus L_2$, we let $Q_i = (\sset{u}, \sset{v}, \sset{x}, \emptyset, c^2, c^3, c(x), c^1)$.
  \item If there is $x \in A$ and $y \in B$ such that $c(x), c(y) \in L_2 \cap L_3$ and $xy \not\in E(G)$, we let $Q_i = (\sset{u}, \sset{v}, \sset{x}, \sset{y}, c^2, c^3, c(x), c(y))$. 
  \end{itemize}
\end{itemize}
It follows from the definitions of $\tilde{Y}^Q, \tilde{Z}^Q, \tilde{W}^Q$ that $c|_{(\tilde{Y}^Q \cup \tilde{Z}^Q \cup \tilde{W}^Q)} = g^Q|_{\tilde{Z}^Q} \cup g'^Q|_{\tilde{Y}^Q} \cup g''^Q|_{\tilde{W}^Q}$. It follows that $Q \in \mathcal{Q}$ and $c$ is a precoloring extension of $P'^Q$. Thus $\mathcal{L}$ is an equivalent collection for $P$, which proves \eqref{eq:equiv}. 

\bigskip

Let $Q \in \mathcal{Q}$ and let $P'^Q \in \mathcal{L}$ with $P'^Q = (G, S', X_0', X', Y', Y^*, f')$, and let $Y_i' = \sset{y \in Y': L_{P'}(y) = L_i}$ for $i = 1,2,3$. We claim the following. 

\vspace*{-0.4cm}\begin{equation}\vspace*{-0.4cm} \label{eq:empty}
  \longbox{\emph{For every $i \in \sset{1, \dots, r}$ such that $S_i^1 = \sset{u}, S_i^2 = \sset{v}$, we have that $N(u) \cap (Y_2' \cup Y_3')$ is anticomplete to $N(v) \cap (Y_2' \cup Y_3')$.}}
\end{equation}

From the properties of $Q$, we know that $f'(u) \in L_1 \cap L_3$ and $f'(v) \in L_1 \cap L_2$. Since $u,v \in S'$,  it follows that $N(u) \cap Y_3' = \emptyset$, since $N(u) \cap Y_3 \subseteq \tilde{X}^Q$; similarly, $N(v) \cap Y_2' = \emptyset$. We let $A = (N(u) \setminus N(v)) \cap Y_2$ and $B = (N(v) \setminus N(u)) \cap Y_3$. It follows that $v$ is anticomplete to $A$ and $u$ is anticomplete to $B$. Let $a_1, \ldots, a_t$ be the components of $G|A$, and
let $b_1, \ldots, b_s$ be the components of $G|B$. Since $P$ satisfies \eqref{it:122}, it follows that for every $i \in [t]$ and $j \in [s]$,
$V(a_i)$ is either complete or anticomplete to $V(b_j)$.

Let $H$ be the graph with vertex set  
$\{u,v\} \cup \{a_1, \ldots, a_t\} \cup \{b_1, \ldots b_s\}$; 
where $N_H(u)=\{a_1, \ldots, a_t\}$, $N_H(v)=\{b_1, \ldots, b_s\}$,
the sets $\{a_1, \ldots, a_t\}$ and $\{b_1, \ldots, b_s\}$ are stable,
and $a_i$ is adjacent to $b_j$ if and only if $V(a_i)$ is complete to $V(b_j)$ 
in $G$.  Apply \ref{lem:nunv} to $H$, $u$ and $v$ to  obtain a 
partition
$A'_0, A'_1, \dots, A'_k$ of $\{a_1, \ldots, a_t\}$ and 
a partition $B'_0, B'_1, \dots, B'_k$ of $\{b_1, \ldots, b_t\}$. 
For $i \in [k]$, let $A_i=\bigcup_{a_j \in A_i}V(a_j)$ and 
$B_i=\bigcup_{b_j \in B_i}V(b_j)$. 

It follows from the definition of $H$ that in $G$,
\begin{itemize}
\item $A_0$ is complete to $B$;
\item $B_0$ is complete to $A$; and 
\item for $j = 1, \dots, k$, $A_j, B_j \neq \emptyset$ and $A_j$ is complete to $B \setminus B_j$ and $B_j$ is complete to $A \setminus A_j$, and $A_j$ is anticomplete to $B_j$.
\end{itemize}

If $R_i^1 = \emptyset$, then $A \subseteq \tilde{Z}^Q$, and so $A \cap Y' = \emptyset$, and \eqref{eq:empty} follows. Thus $R_i^1 \neq \emptyset$. Suppose that $R_i^2 = \emptyset$. Then one of the following holds:
\begin{itemize}
\item $R_i^1 \subseteq A_0$, and so $B \subseteq \tilde{X}^Q$; or
\item $R_i^1 \subseteq A_j$ for some $j > 0$, and so $B \setminus B_j \subseteq \tilde{X}^Q$ and $B_j \subseteq \tilde{Y}^Q$.
\end{itemize}
It follows that $N(v) \cap Y_2' = \emptyset$, and \eqref{eq:empty} follows. 
Thus we may assume that $R_i^2 \neq \emptyset$, then there exists a $j > 0$ such that $R_i^1 \subseteq A_j$ and $R_i^2 \subseteq B_j$, and so $(A \setminus A_j) \cup (B \setminus B_j) \subseteq \tilde{X}^Q$, and again, \eqref{eq:empty} holds.

\vspace*{-0.4cm}\begin{equation}\vspace*{-0.4cm} \label{eq:1232}
  \longbox{\emph{There is no path $z-a-b-c$ with $z \in Y_1'$, $a, c \in Y_2'$ and $b \in Y_3'$.}}
\end{equation}

Suppose that $z-a-b-c$ is such a path. Let $i \in \sset{1, \dots, r}$ such that $N(z) \cap S = T_i$. Since $z \not\in X_0'$, it follows that $S_i^1 \neq \emptyset$. Write $S_i^1=\{u\}$.  Let $s \in T_i$; then  
$f'(s) \in L_2 \cup L_3$,  and therefore $s$ is  anticomplete to $\{a,b,c\}$.

Suppose that $S_i^2 = \emptyset$.  Then $f'(u) \in L_2 \cap L_3$, and thus $u$ 
is  non-adjacent to $z, a, b, c$. Now $u-s-z-a-b-c$ is a  $P_6$ in $G$, a 
contradiction. 
Thus it follows that $S_i^2 = \sset{v}$, and $z$ is non-adjacent to $u$ and $v$. By construction, it follows that $f'(u) \in L_2 \setminus L_3$, and $f'(v) \in L_3 \setminus L_2$. 
Since neither $u-s-z-a-b-c$ nor $v-s-z-a-b-c$ is a $P_6$  in $G$, it follows that $u, v$ each have a neighbor in $\sset{a,b,c}$. Since neighbors of $u$ in $Y_2$ are in $\tilde{X}^Q$, it follows that $u$ is non-adjacent to $a$ and $c$, and hence $u$ is adjacent to $b$.  Since neighbors of $v$ in $Y_3$ are in $\tilde{X}^Q$, it follows that $v$ is non-adjacent to $b$, and $v$ is adjacent to $a$ or $c$. This contradicts \eqref{eq:empty}, and thus \eqref{eq:1232} follows. 

\vspace*{-0.4cm}\begin{equation}\vspace*{-0.4cm} \label{eq:123}
  \longbox{\emph{If the conditions of the last bullet hold for $P$, then there is no path $z-a-b$ with $z \in Y_1'$, $a \in Y_2'$ and $b \in Y_3'$.}}
\end{equation}

Suppose not, and let $z-a-b$ be such a path. Let $i \in \sset{1, \dots, r}$ such that $N(z) \cap S = T_i$. Let $s \in T_i$. Then $f'(s) \in L_2 \cap L_3$, since $f'(s) \not\in L_1$, and hence $s$ is anticomplete to $a, b$.
Since $z \not\in X_0'$, it follows that $S_i^1 \neq \emptyset$, say $S_i^1 = \sset{u}$.  Suppose first that $S_i^2 = \emptyset$. Since $z \not\in X_0'$, it follows that $f'(u) \in L_2 \cap L_3$, and thus $u$ is non-adjacent to $z, a, b$.   By construction, $u$ has a neighbor $y$ in $Y_2$, and since $u$ is anticomplete to $a, b$, it follows that $y \neq a, b$. Since $y-u-s-z-a-b$ is not a $P_6$ in $G$, it follows that $y$ has a neighbor in $\sset{z,a,b}$. Since $P$ satisfies \eqref{it:122}, it follows that $u-y-a$ is not a path and so $y$ is not adjacent to $a$. Since $P$ satisfies the second-to-last bullet for $L_1, L_2, L_3$, it follows that $u-y-b-a$ is not a path, and so $u$ is not adjacent to $b$. But then $u$ is adjacent to $z$; and $b-a-z-u$ is a  path contrary to the second-to-last bullet for $L_3, L_2, L_1$. This is a contradiction, and hence $S_i^2 \neq \emptyset$, say $S_i^2 = \sset{u}$. 

By construction, it follows that $f'(u) \in L_2 \setminus L_3$, and $f'(v) \in L_3 \setminus L_2$. If one of $u, v$ has no neighbor in $\sset{a,b}$, then we reach a contradiction as above. Since neighbors of $u$ in $Y_2$ are in $\tilde{X}^Q$, it follows that $u$ is adjacent to $b$, but not $a$. Since neighbors of $v$ in $Y_3$ are in $\tilde{X}^Q$, it follows that $v$ is adjacent to $a$, but not $b$. This contradicts \eqref{eq:empty}, and proves \eqref{eq:123}.

\bigskip
We now replace every $P' \in \mathcal{L}$ by $P''$ satisfying \eqref{it:size3} by moving vertices with lists of size less than three from $Y'$ to $X'$. It follows that $P''$ still satisfies \eqref{it:122} and \eqref{eq:1232}. This concludes the proof of the lemma. 
\end{proof}

\begin{lemma} \label{lem:123}
There is a function $q: \mathbb{N} \rightarrow \mathbb{N}$ such that the following holds. Let $P = \sspc$ be a starred precoloring of a $P_6$-free graph $G$ with $P$ satisfying \eqref{it:size3} and \eqref{it:122}. Then there is 
an algorithm with running time $O(|V(G)|^{q(|S|)})$
that outputs an equivalent collection $\mathcal{L}$ for $P$ such that
\begin{itemize}
\item $|\mathcal{L}| \leq |V(G)|^{q(|S|)}$;
\item every $P' \in \mathcal{L}$ is a starred precoloring of $G$;
\item every $P' \in \mathcal{L}$ with seed $S'$ satisfies $|S'| \leq q(|S|)$;  and
\item every $P' \in \mathcal{L}$ satisfies \eqref{it:size3}, \eqref{it:122} and \eqref{it:123}. 
\end{itemize}
Moreover, for every $P' \in \mathcal{L}$, given a precoloring extension of $P'$, we can compute a precoloring extension for $P$ in polynomial time.
\end{lemma}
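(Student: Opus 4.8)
The plan is to obtain the statement by iterating Lemma~\ref{lem:fixedl1l2l3} over all ordered triples $(L_1,L_2,L_3)$ of pairwise distinct three-element subsets of $\sset{1,2,3,4}$; there are exactly $24$ such triples. The only subtlety is that the last bullet of Lemma~\ref{lem:fixedl1l2l3}, which is the one producing a forbidden $P_3$ rather than merely a forbidden $P_4$, requires the instance to already satisfy the second-to-last bullet (the ``no $P_4$'' statement) for both $(L_1,L_2,L_3)$ and the reversed triple $(L_3,L_2,L_1)$. Hence the argument runs in two phases.

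First I would record a preservation observation. Every application of Lemma~\ref{lem:fixedl1l2l3} produces, from a starred precoloring $P$ satisfying \eqref{it:size3}, starred precolorings $P'$ satisfying \eqref{it:size3} with seed $S' \supseteq S$, with $Y' \subseteq Y$, and with $X' \setminus X \subseteq Y$. For $v \in Y'$ we have $L_{P'}(v) \subseteq L_P(v)$ (since $S \subseteq S'$ and $f'$ extends $f$), and $|L_{P'}(v)| = |L_P(v)| = 3$ by \eqref{it:size3}, so $L_{P'}(v) = L_P(v)$. Therefore any path contained in $Y'$ that would violate \eqref{it:122}, \eqref{it:123}, or either of the last two bullets of Lemma~\ref{lem:fixedl1l2l3} would already be present, with the same lists, in $P$. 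Consequently each of these ``no short path'' conditions, once it holds, continues to hold under every subsequent application of Lemma~\ref{lem:fixedl1l2l3}.

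In the first phase, set $\mathcal{L} = \sset{P}$ and, for each of the $24$ ordered triples $(L_1,L_2,L_3)$ in turn, apply Lemma~\ref{lem:fixedl1l2l3} to every member of $\mathcal{L}$, replacing $\mathcal{L}$ by the union of the resulting equivalent collections. After a triple is processed, every member of $\mathcal{L}$ satisfies \eqref{it:size3}, \eqref{it:122}, and the ``no $P_4$'' condition for that triple, and by the preservation observation this persists as the remaining triples are handled; so at the end of the first phase every member satisfies the ``no $P_4$'' condition for all $24$ triples. In the second phase, run through all $24$ ordered triples again, applying Lemma~\ref{lem:fixedl1l2l3} to every member for $(L_1,L_2,L_3)$: since the current instances satisfy the ``no $P_4$'' condition for both $(L_1,L_2,L_3)$ and $(L_3,L_2,L_1)$, the last bullet of Lemma~\ref{lem:fixedl1l2l3} now applies and forbids every path $a-b-c$ with $(L_{P'}(a),L_{P'}(b),L_{P'}(c)) = (L_1,L_2,L_3)$ and $a,b,c \in Y'$. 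By the preservation observation this survives the rest of the phase, as do the ``no $P_4$'' conditions needed as hypotheses at each later step; so at termination every member of $\mathcal{L}$ satisfies \eqref{it:size3}, \eqref{it:122}, and \eqref{it:123}.

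For the quantitative part, we perform $48$ rounds, each applying Lemma~\ref{lem:fixedl1l2l3} once per current member; each round multiplies $|\mathcal{L}|$ by a factor polynomial in $|V(G)|$ with exponent bounded in terms of the current seed size, and increases the seed size by a bounded amount, so after a constant number of rounds $|\mathcal{L}| \leq |V(G)|^{q(|S|)}$ and every seed has size at most $q(|S|)$ for a suitable function $q$, with total running time $O(|V(G)|^{q(|S|)})$. Given a precoloring extension of some member of $\mathcal{L}$, we recover one for $P$ by composing, in reverse order, the polynomial-time reconstruction maps from the individual applications of Lemma~\ref{lem:fixedl1l2l3}. The only point requiring care is the interplay between the two-phase structure and the preservation observation; beyond Lemma~\ref{lem:fixedl1l2l3} there is no new combinatorial content.
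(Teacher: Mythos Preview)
Your proposal is correct and follows essentially the same two-phase strategy as the paper's own proof: iterate Lemma~\ref{lem:fixedl1l2l3} over all ordered triples once to obtain the ``no $P_4$'' condition, then iterate again so that the hypothesis of the last bullet is available and the ``no $P_3$'' condition (hence \eqref{it:123}) follows. The paper's proof is terser and simply asserts that after the first pass the second-to-last bullet holds for every triple; your explicit preservation observation (that $Y'\subseteq Y$ and $L_{P'}|_{Y'}=L_P|_{Y'}$, so any forbidden path in $Y'$ would already be one in $Y$) is exactly the justification the paper leaves implicit, and it is correct.
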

\begin{proof} 
Let $\mathcal{L}=\{P\}$. For every triple $(L_1,L_2,L_3)$ of distinct lists 
of size three included in $[4]$ we  repeat the following. Apply 
Lemma~\ref{lem:fixedl1l2l3} to every member of $\mathcal{L}$; replace 
$\mathcal{L}$ by the union of the collections thus obtained, and 
move to the next triple of lists.  At the end of this process we 
have  an equivalent collection $\mathcal{L}$ for $P$, in which every starred 
precoloring satisfies the second-to-last bullet of Lemma~\ref{lem:fixedl1l2l3} for every $(L_1, L_2, L_3)$. 

Repeat the procedure described in the previous paragraph.
Since the second-to-last bullet of the conclusion of Lemma~\ref{lem:fixedl1l2l3} holds for each starred precoloring we input this time, it follows that the last bullet of Lemma~\ref{lem:fixedl1l2l3} holds for the output for every 
$(L_1, L_2, L_3)$. Thus \eqref{it:123} holds; this concludes the proof. 
\end{proof}

Let $P=\sspc$ be a starred precoloring.
For $W \subseteq V(G)$ and $L \subseteq [4]$, we say that $W$ {\em meets}
$L$ if $L_P(w)=L$ for some $w \in W$.
We now have the following convenient property.
\begin{lemma} 
\label{lem:Ycomps}
Let $P=\sspc$ be a starred precoloring of a $P_6$-free graph $G$
satisfying \eqref{it:size3}, \eqref{it:122} and \eqref{it:123}. 
Let $L_1, L_2,L_3,L_4$ be the subsets of $[4]$ of size three.
Let $C$ be  a component of $G|Y$ that meets at least three of the lists 
$L_1, L_2,L_3,L_4$.
For $i \in [4]$, let $C_i=\{v \in V(C) \; : \; L_P(v)=L_i\}$.
Then for every $i \neq j$, $C_i$ is complete to $C_j$.
\end{lemma}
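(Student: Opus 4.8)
The plan is to argue by contradiction. Suppose some $C_i$ fails to be complete to some $C_j$ with $i\neq j$, and fix non-adjacent $u\in C_i$ and $v\in C_j$ (if $C_i$ or $C_j$ is empty there is nothing to prove). Everything happens inside the connected subgraph $G|V(C)\subseteq G|Y$, so every vertex mentioned below lies in $Y$ and has a list of size $3$ by \eqref{it:size3}. The first step is to record the local content of \eqref{it:122} and \eqref{it:123}: if $a-b-c$ is an induced three-vertex path with $a,b,c\in Y$, then $\{L_P(a),L_P(b),L_P(c)\}$ has at most two elements (by \eqref{it:123}), and if it has exactly two, the coinciding pair is necessarily $L_P(a)=L_P(c)$ — indeed $L_P(a)=L_P(b)\neq L_P(c)$ would contradict \eqref{it:122} applied to the reversed path $c-b-a$, and $L_P(b)=L_P(c)\neq L_P(a)$ would contradict \eqref{it:122} applied to $a-b-c$.

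Next I would upgrade this to a statement about arbitrary induced paths in $C$. Call an edge of an induced path $q_0-\cdots-q_m$ (all $q_\ell\in Y$) \emph{balanced} if its endpoints have equal lists. The local observation shows that no balanced edge shares a vertex with an unbalanced one: if $q_{\ell-1}q_\ell$ is balanced and $q_\ell q_{\ell+1}$ unbalanced (or vice versa, by reversing), the three-vertex induced subpath $q_{\ell-1}-q_\ell-q_{\ell+1}$ has exactly two lists but with the wrong coinciding pair, a contradiction. Hence every induced path in $C$ either is monochromatic (all $q_\ell$ share one list) or has all edges unbalanced; in the latter case consecutive vertices have distinct lists, and each consecutive triple forces $L_P(q_{\ell-1})=L_P(q_{\ell+1})$, so the path \emph{alternates} between exactly two lists, the list of $q_\ell$ depending only on the parity of $\ell$.

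Now I would bring in the hypothesis. Take a shortest $u$–$v$ path $Q\colon u=p_0-p_1-\cdots-p_k=v$ in $G|V(C)$; it is induced and $k\geq 2$ since $u\not\sim v$. It is not monochromatic, so it alternates between exactly two lists, which must be $L_i,L_j$, with $L_P(p_\ell)=L_i$ for $\ell$ even and $L_j$ for $\ell$ odd. Since $C$ meets at least three lists, there is a list $L_m\notin\{L_i,L_j\}$ and a vertex $w\in C$ with $L_P(w)=L_m$, and $w\notin V(Q)$. Let $p_t\in V(Q)$ be a vertex of $V(Q)$ nearest $w$ in $G|V(C)$, and let $R\colon w=r_0-\cdots-r_s=p_t$ be a shortest $w$–$p_t$ path; then $s\geq1$, $r_0,\dots,r_{s-1}\notin V(Q)$, and $R$ is induced with endpoints of distinct lists. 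By the alternation statement, $R$ alternates between $L_m$ and $L_P(p_t)\in\{L_i,L_j\}$, so $L_P(r_{s-1})=L_m\notin\{L_i,L_j\}$; set $z:=r_{s-1}$, which is adjacent to $p_t$.

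Finally I would show $z$ is complete to $V(Q)$ by induction along $Q$ outward from $p_t$: if $z\sim p_\ell$ but $z\not\sim p_{\ell\pm1}$, then $z-p_\ell-p_{\ell\pm1}$ is an induced three-vertex path whose vertices carry the pairwise distinct lists $L_m$, $L_P(p_\ell)$, $L_P(p_{\ell\pm1})$ (the last two being $L_i$ and $L_j$, as $p_\ell,p_{\ell\pm1}$ are consecutive on $Q$), contradicting \eqref{it:123}. Hence $z\sim p_0=u$ and $z\sim p_k=v$, so $u-z-v$ is an induced path with the three distinct lists $L_i,L_m,L_j$, contradicting \eqref{it:123} once more; this contradiction proves $C_i$ is complete to $C_j$. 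The main obstacle is the step in the third paragraph: producing a \emph{single} vertex $z$ that has a list different from both $L_i,L_j$ \emph{and} is adjacent to the shortest $u$–$v$ path. The alternation lemma for induced paths is precisely what converts "$C$ is connected and contains a vertex of a third list" into such a $z$; once $z$ is found, the spreading induction and the final contradiction are routine.
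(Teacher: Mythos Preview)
Your proof is correct and follows essentially the same strategy as the paper: take a shortest induced path between the non-adjacent pair, observe that its lists alternate between $L_i$ and $L_j$, locate a vertex of a third list adjacent to the path, spread adjacency along the path using \eqref{it:123}, and finish with the three-list induced path $u$--$z$--$v$. The paper's only real twist is that it minimizes the path over \emph{all} pairs $(i,j)$ simultaneously, which pins the length down to exactly four and lets it argue about the whole component of $C|(C_i\cup C_j)$ via Lemma~\ref{lem:mixed} rather than just $V(Q)$; your explicit alternation lemma replaces that device and handles arbitrary path length just as cleanly.
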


\begin{proof}
Let $P=p_1- \ldots-p_k$ be a path such that
for some $i \neq j$ $p_1 \in C_i$, $p_k \in C_j$, $p_1$ is non-adjacent to $p_k$,
and subject to that with $k$ minimum. Since $P$ satisfies \eqref{it:122},
it follows that $p_2 \not \in C_i$; say $p_2 \in C_l$. 
Since $P$ satisfies \eqref{it:122} and \eqref{it:123}, it follows that
$p_3 \in C_i$. Similarly, $p_4 \not \in C_i$. By the minimality of $k$,
we deduce that $k=4$. By \eqref{it:123} applied to $p_2-p_3-p_4$,
we deduce that $l=j$. Let $C'$ be a component of $C|(C_i \cup C_j)$ with
$p_1, \ldots, p_4 \in V(C')$. Since $C$ is connected, and
since $V(C) \neq C_i \cup C_j$, there exists $c \in C_l$
with $l \neq i,j$ such that $c$ has a neighbor in $C'$. Since
$P$ satisfies \eqref{it:122} and \eqref{it:123}, it follows from 
Lemma~\ref{lem:mixed} that $c$ is complete to  $C'$.  But now
$p_1-c-p_4$ contradicts the fact that $P$ satisfies \eqref{it:123}.
This proves Lemma~\ref{lem:Ycomps}.
\end{proof}

Our next goal is to establish axiom \eqref{it:x11}, which we restate. 
\begin{enumerate}
\item[\eqref{it:x11}] Let $L_1 \subseteq \sset{1,2,3,4}$ with $|L_1| = 3$. Then there is no path $a-b-c$ with $L_P(b) = L_P(c) = L_1$ and $a \in X$, $b, c \in Y$.
\end{enumerate}

\begin{lemma} \label{lem:x11}
There is a function $q: \mathbb{N} \rightarrow \mathbb{N}$ such that the following holds. Let $P = \sspc$ be a starred precoloring of a $P_6$-free graph $G$ with $P$ satisfying \eqref{it:size3}. Then there is an
algorithm with running time $O(|V(G)|^{q(|S|)})$
that outputs an equivalent collection $\mathcal{L}$ for $P$ such that
\begin{itemize}
\item $|\mathcal{L}| \leq |V(G)|^{q(|S|)}$;
\item every $P' \in \mathcal{L}$ is a starred precoloring of $G$;
\item every $P' \in \mathcal{L}$ with seed $S'$ satisfies $|S'| \leq q(|S|)$;  and
\item every $P' \in \mathcal{L}$ satisfies \eqref{it:size3}, \eqref{it:122}, \eqref{it:123} and \eqref{it:x11}. 
\end{itemize}
Moreover, for every $P' \in \mathcal{L}$, given a precoloring extension of $P'$, we can compute a precoloring extension for $P$ in polynomial time. 
\end{lemma}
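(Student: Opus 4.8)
The plan is to assemble $\mathcal{L}$ by composing constructions we already have. First I would apply Lemma~\ref{lem:122} to $P$ (whose hypothesis \eqref{it:size3} is given), and then apply Lemma~\ref{lem:123} to every member of the resulting collection (its hypotheses \eqref{it:size3} and \eqref{it:122} are guaranteed by Lemma~\ref{lem:122}); this yields an equivalent collection $\mathcal{L}_1$ for $P$ of starred precolorings of $G$, each with seed of size bounded by a function of $|S|$ and satisfying \eqref{it:size3}, \eqref{it:122} and \eqref{it:123}, with $|\mathcal{L}_1|\le |V(G)|^{q_1(|S|)}$. Now let $L^{(1)},\dots,L^{(4)}$ enumerate the subsets of $\sset{1,2,3,4}$ of size three, set $\mathcal{M}_0=\mathcal{L}_1$, and for $k=1,\dots,4$ apply Lemma~\ref{lem:pairoflists} with $L_1=L^{(k)}$ to every member of $\mathcal{M}_{k-1}$ and let $\mathcal{M}_k$ be the union of the collections produced; output $\mathcal{L}=\mathcal{M}_4$. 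Each step replaces a collection by an equivalent one and each of the corresponding reconstructions runs in polynomial time, so $\mathcal{L}$ is equivalent for $P$ and precoloring extensions pull back in polynomial time. Since the number of steps is a fixed constant, each step multiplies the size of the collection by at most $|V(G)|^{q'(\cdot)}$, and each step grows the seed to size at most $q'(\cdot)$ of the previous seed, there is a function $q$ with $|S'|\le q(|S|)$ for every $P'\in\mathcal{L}$, with $|\mathcal{L}|\le |V(G)|^{q(|S|)}$, and with total running time $O(|V(G)|^{q(|S|)})$.

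It remains to verify the four axioms for every $P'\in\mathcal{L}$. Axiom \eqref{it:size3} is part of the conclusion of each of Lemmas~\ref{lem:122}, \ref{lem:123} and \ref{lem:pairoflists}, so it holds at every stage. For \eqref{it:122} and \eqref{it:123} I would use the observation that an application of Lemma~\ref{lem:pairoflists} only enlarges the seed and extends the coloring: write $\bar P=(G,\bar S,\bar X_0,\bar X,\bar Y,Y^*,\bar f)$ for its input and $P''=(G,S'',X_0'',X'',Y'',Y^*,f'')$ for an output, so $\bar S\subseteq S''$ and $\bar f\subseteq f''$. If $y\in Y''$ then $y\in\bar Y$ (by the bullet $Y''\subseteq\bar Y$), and since $\bar P$ and $P''$ satisfy \eqref{it:size3} we get $|\bar f(N(y)\cap\bar S)|=|f''(N(y)\cap S'')|=1$, which forces $L_{P''}(y)=L_{\bar P}(y)$. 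Hence any path $a-b-c$ inside $Y''$ violating \eqref{it:122} or \eqref{it:123} for $P''$ is the same path in $G$ with the same lists for $\bar P$, so it violates the corresponding axiom for $\bar P$. Since $\mathcal{M}_0$ satisfies \eqref{it:122} and \eqref{it:123}, an induction on $k$ shows every member of $\mathcal{M}_k$, and in particular of $\mathcal{L}$, satisfies \eqref{it:122} and \eqref{it:123}.

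For \eqref{it:x11} I would show by induction on $k$ that every member of $\mathcal{M}_k$ has the property: for every $j\le k$, there is no path $a-b-c$ with $L_P(a)\neq L^{(j)}$, $L_P(b)=L_P(c)=L^{(j)}$ and $a,b,c\in X\cup Y$. The case $k=0$ is vacuous. In the inductive step, when we apply Lemma~\ref{lem:pairoflists} with $L_1=L^{(k)}$ to a member of $\mathcal{M}_{k-1}$, the last bullet of Lemma~\ref{lem:pairoflists} gives the property for $j=k$, and for each $j<k$ the penultimate bullet of Lemma~\ref{lem:pairoflists} gives it, using that the input satisfies \eqref{it:122} (established above) and, by the inductive hypothesis, already has the property for $L^{(j)}$. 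Thus every $P'=(G,S',X_0',X',Y',Y^*,f')\in\mathcal{L}$ has this property for all four size-three lists. If $a-b-c$ were a path with $a\in X'$, $b,c\in Y'$ and $L_{P'}(b)=L_{P'}(c)=L_1$ for some $L_1$ with $|L_1|=3$, then $L_1=L^{(j)}$ for some $j$ and $|L_{P'}(a)|\le 2$ gives $L_{P'}(a)\neq L_1$, so $a,b,c\in X'\cup Y'$ would contradict the property; hence $P'$ satisfies \eqref{it:x11}.

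The point needing the most care is the preservation argument of the second paragraph: Lemma~\ref{lem:pairoflists} is stated without reference to \eqref{it:122} or \eqref{it:123}, so one must check by hand that its outputs create no new violations of these axioms, which is exactly what the list-invariance identity $L_{P''}(y)=L_{\bar P}(y)$ for $y\in Y''$ provides. The other delicate point is accumulating the $X\cup Y$ path-freeness over all four size-three lists rather than only the most recently processed one; this is precisely what the conditional penultimate bullet of Lemma~\ref{lem:pairoflists} is designed to support, and the induction in the third paragraph exploits it.
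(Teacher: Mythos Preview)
Your proof is correct and follows the paper's route: iterate Lemma~\ref{lem:pairoflists} over the four size-three lists, using its last two bullets to build up the ``no $a\!-\!b\!-\!c$ in $X'\cup Y'$'' property and hence \eqref{it:x11}. The paper's own proof is exactly this loop (starting from $\mathcal{L}=\{P\}$), without your preliminary applications of Lemmas~\ref{lem:122} and~\ref{lem:123}.

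In fact your version is more careful than the paper's. As stated, Lemma~\ref{lem:x11} assumes only \eqref{it:size3}, yet the paper's one-line proof does not establish \eqref{it:123}, and its use of the penultimate bullet of Lemma~\ref{lem:pairoflists} (needed to carry the $X\cup Y$ property for the earlier lists through later iterations) presupposes \eqref{it:122} on the intermediate inputs. In the paper's global pipeline this is harmless, because Lemma~\ref{lem:x11} is only ever applied after Lemmas~\ref{lem:122} and~\ref{lem:123}; but as a self-contained proof of the lemma as stated, your extra step of first invoking Lemmas~\ref{lem:122} and~\ref{lem:123} is what makes the argument complete. Your list-invariance observation ($L_{P''}(y)=L_{\bar P}(y)$ for $y\in Y''$, from $Y''\subseteq \bar Y$ together with \eqref{it:size3} on both sides) is the right way to see that \eqref{it:122} and \eqref{it:123} persist through each call to Lemma~\ref{lem:pairoflists}; the paper leaves this implicit.
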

\begin{proof}
Let $\mathcal{L}=\{P\}$. For every list $L \subseteq \sset{1,2,3,4}$ of size three, apply Lemma \ref{lem:pairoflists} to every member of $\mathcal{L}$, 
replace $\mathcal{L}$ by the union of the equivalent collections thus obtained,
and move to the next list. At the end of the process we obtained
the required equivalent collection for $\{P\}$.
\end{proof}

We now begin to establish the following axiom, which we restate below.
\begin{enumerate}
\item[\eqref{it:x12}] Let $L_1, L_2 \subseteq \sset{1,2,3,4}$ with $|L_1| = |L_2| = 3$. Then there is no  path $a-b-c$ with $L_P(b) = L_1, L_P(c) = L_2$ and $a \in X$ with $L_3=L_P(a) \neq L_1 \cap L_2$.
\end{enumerate}

We define the following auxiliary statement: 

\vspace*{-0.4cm}\begin{equation}\vspace*{-0.4cm} \label{it:star}
  \longbox{\emph{Let $L_1, L_2 \subseteq \sset{1,2,3,4}$ with $|L_1| = |L_2| = 3$. Then there is no  path $a-b-c-d$ with $L_P(b) = L_P(d) = L_1, L_P(c) = L_2$ and $a \in X$ with $L_3 = L_P(a) \neq L_1 \cap L_2$.}}
\end{equation}

\begin{lemma}\label{lem:fixedl1l2l3x} There is a function $q: \mathbb{N} \rightarrow \mathbb{N}$ such that the following holds. 
Let $L_1, L_2\subseteq \sset{1,2,3,4}$ with $|L_1| = |L_2| = 3$  and $L_1 \neq L_2$, and let $L_3 \subseteq \sset{1,2,3,4}$ with $|L_3| = 2$ and $L_3 \neq L_1 \cap L_2$. Let $P = \sspc$ be a starred precoloring of a $P_6$-free graph $G$ with $P$ satisfying \eqref{it:size3}, \eqref{it:122}, \eqref{it:123} and \eqref{it:x11}. Then there is an
algorithm with running time $O(|V(G)|^{q(|S|)})$
that outputs an equivalent collection $\mathcal{L}$ for $P$ such that
\begin{itemize}
\item $|\mathcal{L}| \leq |V(G)|^{q(|S|)}$;
\item every $P' \in \mathcal{L}$ is a starred precoloring of $G$;
\item every $P' \in \mathcal{L}$ with seed $S'$ satisfies $|S'| \leq q(|S|)$;  
\item every $P' \in \mathcal{L}$ satisfies \eqref{it:size3}, \eqref{it:122}, \eqref{it:123} and \eqref{it:x11};  
\item every $P' \in \mathcal{L}$ satisfies \eqref{it:star} for every three lists $L_1', L_2', L_3'$ such that $P$ satisfies \eqref{it:star} for $L_1', L_2', L_3'$;   
\item if $P$ satisfies \eqref{it:star} for every three lists, then every $P' \in \mathcal{L}$ satisfies \eqref{it:x12} for every three lists $L_1', L_2', L_3'$ such that $P$ satisfies \eqref{it:x12} for $L_1', L_2', L_3'$;   
\item every $P' \in \mathcal{L}$ satisfies \eqref{it:star} for $L_1, L_2, L_3$. 
\item if $P$ satisfies \eqref{it:star} for every three lists $L_1', L_2', L_3'$ such that $|L_1'| = |L_2'| = 3, L_1' \neq L_2', |L_3'| = 2, L_3' \neq L_1' \cap L_2'$, then every $P' = (G, S', X_0', X', Y', Y^*, f') \in \mathcal{L}$ satisfies that there is no  path $a-b-c$ with $L_{P'}(a) = L_3$, $L_{P'}(b) = L_1$, $L_{P'}(c) = L_2$ with $a \in X$, $b, c \in Y'$.
\end{itemize}
Moreover, for every $P' \in \mathcal{L}$, given a precoloring extension of $P'$, we can compute a precoloring extension for $P$ in polynomial time. 
\end{lemma}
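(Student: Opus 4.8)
The plan is to follow the proof of Lemma~\ref{lem:fixedl1l2l3} almost verbatim, the distinguished endpoint of the forbidden path now being the vertex $a\in X$ of list $L_3$ rather than a vertex of $Y$ of list $L_1$. Since $L_1\neq L_2$ are both of size three, $|L_1\cap L_2|=2$ and $L_1\cup L_2=\{1,2,3,4\}$; writing $\{c_4\}=\{1,2,3,4\}\setminus L_1$ and $\{c_3\}=\{1,2,3,4\}\setminus L_2$ gives $L_1\cap L_2=\{1,2,3,4\}\setminus\{c_3,c_4\}$. Because $|L_3|=|L_1\cap L_2|=2$ and $L_3\neq L_1\cap L_2$, the two-element sets $\{1,2,3,4\}\setminus L_3$ and $L_1\cap L_2$ are not complementary and hence meet, so every type $T\subseteq S$ with $f(T)=\{1,2,3,4\}\setminus L_3$ (equivalently, the type of a vertex of $X$ with list $L_3$) contains a vertex $s$ with $f(s)\in L_1\cap L_2$, and such $s$ is anticomplete to every vertex of $Y$ of list $L_1$ or $L_2$. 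This is the exact analogue of the fact used in Lemma~\ref{lem:fixedl1l2l3} that a common neighbour of two list-$L_1$ vertices is coloured $c_4\in L_2\cap L_3$; it is what makes the final $P_6$-argument work.

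First I would handle the case $K_5\subseteq G$ (output $\mathcal{L}=\emptyset$). Write $Y_i$ for the vertices of $Y$ with $L_P=L_i$ $(i=1,2)$ and $X_3$ for the vertices of $X$ with $L_P=L_3$, and enumerate the types $\mathcal{T}=\{T_1,\dots,T_r\}$ with $f(T_i)=\{1,2,3,4\}\setminus L_3$. For each $T_i$ guess a bounded structure $Q_i$ consisting of a stable set $S_i^1\cup S_i^2$ of at most two vertices of $X_3(T_i)$ with prescribed colours in $L_3$, together with at most one $Y_1$-neighbour $R_i^1$ of $S_i^1$ and $R_i^2$ of $S_i^2$ (each non-adjacent to the other seed vertex) with prescribed colours; the colour constraints are dictated, as in Lemma~\ref{lem:fixedl1l2l3}, by which of $L_3$ and $L_1\cap L_2$ the colours must lie in, and $S_i^1$ is forced to be nonempty exactly when some vertex of type $T_i$ starts a path of the shape $a$-$b$-$c$-$d$ (or, under the stronger hypothesis, $a$-$b$-$c$) with $L_P(b)=L_P(d)=L_1$ and $L_P(c)=L_2$. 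Having fixed $Q$, add $\bigcup_i(S_i^1\cup S_i^2)$ and the $R$'s to the seed and move the vertices of $Y_1\cup Y_2$ whose lists thereby shrink into $X_0$ (with forced colour) or into $X$, using the analogues of the auxiliary sets $\tilde W^Q,\tilde Z^Q,\tilde Y^Q,\tilde X^Q$ of Lemma~\ref{lem:fixedl1l2l3}; this produces the starred precoloring $P'^Q$ (whenever the resulting colouring is proper), and $\mathcal{L}$ is the collection of all such $P'^Q$.

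The equivalence $\mathcal{L}\equiv P$ is proved as \eqref{eq:equiv}: from a precoloring extension $c$ of $P$ one reads off, for each $T_i$, correctly-coloured choices of $S_i^1,S_i^2$ (maximising the appropriate neighbourhood) and of $R_i^1,R_i^2$, invoking Lemma~\ref{lem:nunv} in the branching case (both $S_i^1,S_i^2$ used) on the bipartite-type structure between the $Y_1$-neighbourhoods of the two seed vertices to decide which vertices are pushed into $X_0$, exactly as in the proof of \eqref{eq:empty}. For the no-path statements one supposes $P'^Q$ contains a path $z$-$b$-$c$-$d$ (resp. $z$-$b$-$c$) with $z\in X$, $L_{P'}(z)=L_3$, $L_{P'}(b)=L_{P'}(d)=L_1$, $L_{P'}(c)=L_2$, takes the type $T_i$ of $z$, picks $s$ in the seed adjacent to $z$ with colour in $L_1\cap L_2$, notes that $S_i^1=\{u\}\neq\emptyset$ (since $z$ itself witnesses such a path), notes $u$ is non-adjacent to $z$ (else $f'(u)\in L_3$ would be forbidden in $L_{P'}(z)=L_3$), and obtains the $P_6$ $u$-$s$-$z$-$b$-$c$-$d$ after checking that $u$ and $s$ are anticomplete to $\{b,c,d\}$; anticompleteness of $s$ is the combinatorial fact above, while anticompleteness of $u$ is forced by the $\tilde W/\tilde Z/\tilde Y/\tilde X$-construction (a vertex of $\{b,c,d\}$ adjacent to $u$ would have been moved out of $Y$) or lies in a part controlled by the Lemma~\ref{lem:nunv}-partition. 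Finally, reclassifying into $X'$ each vertex of $Y'$ with list of size $<3$ restores \eqref{it:size3}, and the preservation bullets (for \eqref{it:star}, and for \eqref{it:x12} when $P$ satisfies \eqref{it:star} for all triples) follow exactly as \eqref{eq:long} and \eqref{eq:long2}, using that every seed vertex added lies in $X$ and hence creates no list-$3$ vertex of the wrong list.

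I expect the main obstacle to be the colour bookkeeping in the analogues of $\tilde W^Q$ and $\tilde X^Q$. Because $L_3$ has size two, $L_3\cap L_1\cap L_2$ is a single colour in general but is \emph{empty} in the degenerate case $L_3=\{1,2,3,4\}\setminus(L_1\cap L_2)=\{c_3,c_4\}$, so the ``safe'' colour that exists in Lemma~\ref{lem:fixedl1l2l3} need not exist here; one must split on $|L_3\cap L_1\cap L_2|\in\{0,1\}$ and, in the degenerate case, force essentially all of $X_3(T_i)$ together with the relevant $Y_1$- and $Y_2$-neighbours out of $Y$. A secondary point of care, absent from Lemma~\ref{lem:fixedl1l2l3}, is that a vertex of $Y$ may migrate into $X'$ when its list drops to $L_3$ during the construction, so one must either track types relative to the enlarged seed or argue that such a vertex cannot start a forbidden path. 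Verifying that no forced move ever pushes into $X$ a vertex whose list is still of size three (so that \eqref{it:size3} is preserved) and that the moves genuinely destroy every $P_6$ in each sub-case is where the real work lies; everything else is a routine adaptation of Lemma~\ref{lem:fixedl1l2l3} and Lemma~\ref{lem:pairoflists}.
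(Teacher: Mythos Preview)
Your high-level plan is right: enumerate types $T$ with $f(T)=\{1,2,3,4\}\setminus L_3$, guess a bounded anchor in $X_3(T)$, add it to the seed, and derive $P_6$'s. But the transfer from Lemma~\ref{lem:fixedl1l2l3} is not nearly as routine as you suggest, and two ingredients that the paper uses are missing from your outline.

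First, \emph{bad components}. In Lemma~\ref{lem:fixedl1l2l3} the seed vertex $u\in Y_1$ has a list of size three, and you can always choose its colour in $L_2\cap L_3$ to keep it anticomplete to the continuation of the path. Here $u\in X_3$ has only the two colours of $L_3$ available, and in the branching case $L_3\cap L_1\cap L_2=\emptyset$ you must use $f'(u)\in L_1\setminus L_2$ and $f'(v)\in L_2\setminus L_1$. Then a $Y_2$-neighbour of $u$ is \emph{not} pushed out of $Y'$ by the colouring of $u$, so your assertion ``a vertex of $\{b,c,d\}$ adjacent to $u$ would have been moved out of $Y$'' fails in general. The paper controls this by additionally guessing, for each $S_i^j$ that is mixed on a component of $G|Y$ meeting three lists (a ``bad'' component), one such component $C_i^j$ together with two vertices $X_i^{j,1},X_i^{j,2}$ whose colours collapse the entire component into $X_0'\cup X'$. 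Without this, the preservation bullet for \eqref{it:x12} (your ``follow exactly as \eqref{eq:long} and \eqref{eq:long2}'') breaks: in the paper's argument for \eqref{eq:claimy} the key step is ``$C_i^j\cap Y'=\emptyset$ by construction, hence $C_i^j=\emptyset$'', which simply has no analogue in your scheme.

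Second, \emph{the $A$/$B$ case analysis}. Your two auxiliary vertices $R_i^1,R_i^2$ mirror Lemma~\ref{lem:fixedl1l2l3}, but here that is not enough to separate $A=N(u)\cap(Y_2\setminus N(v))$ from $B=N(v)\cap(Y_1\setminus N(u))$. The paper needs up to \emph{four} vertices $R_i^1,\dots,R_i^4$ together with a six-way case split $case_i\in\{\emptyset,(a),\dots,(f)\}$; in case $(f)$ the set $\{u,v\}\cup R$ is a six-cycle, and it is the $P_6$ contained in this cycle plus any extra pendant that kills the remaining $A$--$B$ edges (this is the content of \eqref{eq:eq16}). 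With only two $R$'s you cannot realise case $(f)$, and the anticomplete conclusion you need for \eqref{eq:claimmain1} and \eqref{eq:claimmain2} does not follow. A symptom of the same issue is the internal inconsistency in your last paragraph: you first place $R_i^j$ in $Y_1$ and later claim ``every seed vertex added lies in $X$''; in fact $S'\setminus S$ meets $Y$, and the preservation arguments \eqref{eq:claimx}, \eqref{eq:claimstar}, \eqref{eq:claimy} must each handle the $s'\in Y$ and $s'\in X$ sub-cases separately, using \eqref{it:122}, \eqref{it:123} for the former and bad-component bookkeeping for the latter.
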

\begin{proof}
  Let $\mathcal{L} = \emptyset$. Let $Y_i = \sset{y \in Y: L_P(y) = L_i}$ for $i = 1,2$, and let $X_3$ be the set of vertices $v$ in $X$ with list $L_3$  such that $v$ starts a path $v-b-c-d$ ($v-b-c$ if the condition of the last bullet holds for $P$) with $v \in X, b,d \in Y_1, c \in Y_2$. 
Let $L_4, L_5$ be the two three-element lists in $\sset{1,2,3,4}$ that are not $L_1, L_2$, and let $Y_i = \sset{y \in Y: L_P(y) = L_i}$ for $i = 4,5$. We call a component $C$ of $G|Y$ \emph{bad} if $V(C) \cap Y_1 \neq \emptyset, V(C) \cap Y_2 \neq \emptyset$ and $V(C) \cap Y_i \neq \emptyset$ for some $i \in \sset{4,5}$. 

Let $\mathcal{T} = \sset{T_1, \dots, T_r}$ be the set of types $T \subseteq S$ with $f(T) = \sset{1,2,3,4} \setminus L_3$. We let $\mathcal{Q}$ be the set of all $r$-tuples $(Q_1, \dots, Q_r)$, where for each $i$, $$Q_i = (S_i^1, S_i^2, R_i^1, R_i^2, R_i^3, R_i^4, C_i^1, C_i^2, X_i^{1,1},X_i^{1,2},X_i^{2,1}, X_i^{2,2}, f_i, case_i)$$ such that the following hold:
\begin{enumerate}
\item $f_i : S_i^1 \cup S_i^2 \cup R_i^1 \cup R_i^2 \cup R_i^3 \cup R_i^4 \cup X_i^{1,1} \cup X_i^{1,2} \cup X_i^{2,1} \cup X_i^{2,2} \rightarrow \sset{1,2,3,4}$. 
\item $f_i(S_i^1 \cup S_i^2) \subseteq L_3$.
\item $1 \geq |S_i^1| \geq |S_i^2| \geq  |R_i^1| \geq |R_i^2| \geq |R_i^3| \geq |R_i^4|$.
\item $S_i^1 \cup S_i^2$ is a stable set and $S_i^1 \cup S_i^2 \subseteq X_3(T_i)$. 
\item If $S_i^1 = \emptyset$, then $X_3(T_i) = \emptyset$.
\item If $S_i^2 \neq \emptyset$, then $f_i(S_i^1 \cup S_i^2) = L_3$ and $L_3 \cap L_1 \cap L_2  = \emptyset$.
\item For $j = 1,2$, if $S_i^j = \sset{s_i^j}$ and $s_i^j$ is mixed on a bad component, then $C_i^j$ is the vertex set of a bad component on which $s_i^j$ is mixed; otherwise, $C_i^j = \emptyset$.
\item For $j,k = 1,2$, $|X_i^{j,k}| \leq 1$, and $|X_i^{j,k}| = 1$ if and only if $C_i^j \neq \emptyset$.
\item For $j = 1,2$, if $C_i^j \neq \emptyset$, then there exist $p \neq q$ such that $X_i^{j,1} \cap C_i^j \cap Y_p \neq \emptyset$ and $X_i^{j,2} \cap C_i^j \cap Y_q \neq \emptyset$.
\item For $j = 1,2,3,4$, $f_i(R_i^j) \subseteq L_1 \cap L_2$.
\item $case_i \in \sset{\emptyset, (a), (b), (c), (d), (e), (f)}$.
\item $case_i \in \sset{\emptyset, (a), (b)}$ if and only if $R_i^j = \emptyset$ for all $j \in \sset{1,2,3,4}$.
\item $case_i \in \sset{(c), (d), (e)}$ if and only if $R_i^3, R_i^4 = \emptyset$ and $R_i^1, R_i^2 \neq \emptyset$.
\item $case_i = (f)$ if and only if $R_i^j \neq \emptyset$ for all $j \in \sset{1,2,3,4}$. 
\item If $S_i^2 = \emptyset$, then $case_i = \emptyset$.
\item If $case_i \neq \emptyset$, then let $\sset{u,v} = S_i^1 \cup S_i^2$ such that $u \in S_i^1$ if and only if $f_i(u) \in L_1$; then $R_i^1, R_i^3 \subseteq N(u) \cap (Y_2 \setminus N(v))$ and $R_i^2, R_i^4 \subseteq N(v) \cap (Y_1 \setminus N(u))$.
\item If $case_i = (c)$, $R_i^1$ is anticomplete to $R_i^2$.
\item If $case_i \in \sset{(d), (e)}$, $R_i^1$ is complete to $R_i^2$.
\item If $case_i = (f)$, then $R_i^1$ is complete to $R_i^2$ and anticomplete to $R_i^4$, and $R_i^3$ is anticomplete to $R_i^2$ and anticomplete to $R_i^4$. 
\end{enumerate}
We let $$S'^Q = \bigcup_{i \in \sset{1, \dots, r}} (S_i^1 \cup  S_i^2 \cup  R_i^3 \cup R_i^4 \cup X_i^{1,1} \cup X_i^{1,2} \cup X_i^{2,1} \cup X_i^{2,2}) \cup \bigcup_{i \in \sset{1, \dots, r}, case_i \neq (c)} (R_i^1 \cup R_i^2),$$
and let $f'^Q  = f_1 \cup \dots \cup f_r$. 

For every $i \in \sset{1, \dots, r}$, we let $\tilde{Y}_i = \bigcup_{j,k \in \sset{1,2}} \bigcup_{p \in \sset{1,2,4,5}, X_i^{j,k} \cap C_i^j \cap Y_p \neq \emptyset}(C_i^j \cap Y_p)$, and we let $h_i(C_i^j \cap Y_p \cap \tilde{Y}_i) \subseteq f_i(X^{j,k}_i)$.
Let $\tilde{Z}_i = (C_i^1 \cup C_i^2) \setminus \tilde{Y}_i$. Let $\tilde{Y}^Q = \bigcup_{i \in \sset{1, \dots, r}} \tilde{Y}_i$ and $\tilde{Z}^Q = \bigcup_{i \in \sset{1, \dots, r}} \tilde{Z}_i$ and $h^Q = h_1 \cup \dots \cup h_r$.

Let $S'_1$ be the set of $v \in S'^Q$ such that $f'(v) \in L_1 \cap L_2$; let $S_2'$ be the set of $v \in S'^Q$ such that $f'(v) \in L_1 \setminus L_2$, and let $S_3'$ be the set of $v \in S'^Q$ such that $f'(v) \in L_2 \setminus L_1$. Let
$$\tilde{X}^Q = (N(S_1') \cap (Y_{1} \cup Y_{2})) \cup (N(S_2') \cap (Y_{1})) \cup (N(S_3') \cap (Y_{2})).$$

Let $\tilde{W}_i = X_3(T_i)$ if $S_i^1 = \sset{v}, S_i^2 = \emptyset$ and $f'(v) \not\in L_1 \cap L_2 \cap L_3$, and $\tilde{W}_i = \emptyset$ otherwise. If $\tilde{W}_i \neq \emptyset$, we let $g''_i : \tilde{W}_i \rightarrow L_3$ such that $g''(y) = f'(v)$ is $y$ if non-adjacent to $v$, and $g''(y)$ is the unique color in $L_3 \setminus (\sset{f'(v)})$ otherwise. Let $\tilde{W}^Q = \bigcup_{i \in \sset{1, \dots, r}} \tilde{W}_i$ and let $g''^Q = g_1''^Q \cup \dots \cup g_r''^Q$.

Let $\tilde{V}^Q$ be the set of vertices $v$ in $X$ with list $L_3$ such that $S'^Q$ contains a neighbor $s$ of $v$, and let $h'^Q : \tilde{V} \rightarrow L_3$ such that $h'^Q(v) \in L_3 \setminus (f'(s))$.

Let $\tilde{U}_i$ be the set of all vertices $x \in X_3(T_i)$ such that $S_i^1 = \sset{v}$ and such that $f'(v) \in L_1 \cap L_2$ and $N(v) \cap Y_1 \subsetneq N(x) \cap Y_1$, and let $g_i: \tilde{U}_i \rightarrow L_3 \setminus (L_1 \cap L_2)$. Let $\tilde{U}^Q = \bigcup_{i \in \sset{1, \dots, r}} \tilde{U}_i$ and $g^Q = g_1 \cup \dots \cup g_r$.

Let $\tilde{U'}_i$ be the set of all vertices $x \in X_3(T_i)$ such that $S_i^1 = \sset{u}, S_i^2 = \sset{v}$ such that $xu \not\in E(G)$, and $N(v) \cap Y_1 \subsetneq N(x) \cap Y_1$, and let $g'_i: \tilde{U}_i \rightarrow \sset{f'(u)}$. Let $\tilde{U'}^Q = \bigcup_{i \in \sset{1, \dots, r}} \tilde{U'}_i$ and $g'^Q = g'_1 \cup \dots \cup g'_r$.

Finally, we define $\tilde{T}_i$ as follows: If $case_i = \emptyset$, then $\tilde{T}_i = \emptyset$. Otherwise, let $\sset{u,v} = S_i^1 \cup S_i^2$ such that $f'(u) \in L_1$, and let $A = N(u) \cap (Y_2 \setminus N(v))$ and $B = N(v) \cap (Y_1 \setminus N(u))$.  If $case_i = $
\begin{enumerate}[(a)]
  \item then $\tilde{T}_i = A$; 
  \item then $\tilde{T}_i = B$; 
  \item then $\tilde{T}_i = (A \cap N(R_i^2)) \cup (B \cap N(R_i^1))$; 
  \item then $\tilde{T}_i = B \setminus N(R_i^1)$; 
  \item then $\tilde{T}_i = A \setminus N(R_i^2)$; 
  \item then $\tilde{T}_i = \emptyset$.
  \end{enumerate}

We let $\tilde{T}^Q = \bigcup_{i \in \sset{1, \dots, r}} \tilde{T}_i$ and let $h''^Q: \tilde{T}^Q \rightarrow (L_1 \setminus L_2) \cup (L_2 \setminus L_1)$ be the unique function such that $h''^Q(v) \in L_P(v)$ for all $v \in \tilde{T}^Q$. 

The following statement could be proved using Lemma \ref{lem:nunv}, but we give a shorter proof here: 

\vspace*{-0.4cm}\begin{equation}\vspace*{-0.4cm} \label{eq:eq16}
  \longbox{\emph{Let $i$ such that $\sset{u,v} = S_i^1 \cup S_i^2$ and $f(u) \in L_1$. Let $R = R_i^1 \cup R_i^2 \cup R_i^3 \cup R_i^4$ if $case_i \neq (c)$ and $R = \emptyset$ otherwise. Then $(N(u) \cap Y_2) \setminus (N(v) \cup \tilde{T}_i \cup N(R)))$ is anticomplete to $(N(v) \cap Y_1) \setminus (N(u) \cup \tilde{T}_i \cup N(R)))$.}}
\end{equation}

Let $A' = A \setminus (\tilde{T}_i \cup N(R))$, $B' = B \setminus (\tilde{T}_i \cup N(R))$; then it suffices to prove that $A'$ is anticomplete to $B'$. If $case_i = (a), (b), (d), (e)$, this follows since $A'$ or $B'$ is empty in each of these cases. In case $(f)$, we have that $G|(\sset{u,v} \cup R)$ is a six-cycle. Since the graph arising from a six-cycle by adding a vertex with exactly one neighbor in the cycle contains a $P_6$, it follows that $A', B' = \emptyset$. In case $(c)$, we let $x'y'$ be an edge from $A'$ to $B'$, and we let $x \in A_i^1, y \in A_i^2$. Then $x-u-x'-y'-v-y$ is a $P_6$ in $G$, a contradiction. Again it follows that $A'$ is anticomplete to $B'$, and \eqref{eq:eq16} follows.

Let $P'^Q$ be the starred precoloring obtained from 
\begin{align*}  
(G, &S \cup S'^Q\\
 &X_0 \cup \tilde{Y}^Q \cup \tilde{W}^Q \cup \tilde{V}^Q \cup \tilde{U}^Q \cup \tilde{U'}^Q \cup \tilde{T}^Q\\
 &(X \setminus (\tilde{W}^Q \cup \tilde{V}^Q \cup \tilde{U}^Q \cup \tilde{U'}^Q))\cup \tilde{Z}^Q \cup \tilde{X}^Q\\
 &Y \setminus (\tilde{Y}^Q \cup \tilde{Z}^Q \cup \tilde{X}^Q \cup \tilde{T}^Q) \\
 &Y^*, f \cup f'^Q \cup h^Q \cup h'^Q \cup h''^Q \cup g^Q \cup g_i'^Q \cup g''^Q)
\end{align*}
by moving every vertex with a list of size at most two $X$, and every vertex with a list of size at most one to $X_0$. Since $P$ satisfies \eqref{it:122} and \eqref{it:123}, it follows that $P'^Q$ satisfies \eqref{it:122} and \eqref{it:123} as well. Moreover, $P'^Q$ satisfies \eqref{it:size3}.

We let $$\mathcal{L} = \sset{P'^Q : Q \in \mathcal{Q},  f \cup f'^Q \cup h^Q \cup h'^Q \cup h''^Q \cup g^Q \cup g_i'^Q \cup g''^Q \mbox{ is a proper coloring}}.$$ 

\vspace*{-0.4cm}\begin{equation}\vspace*{-0.4cm} \label{eq:equivalent}
  \longbox{\emph{$\mathcal{L}$ is an equivalent collection for $P$.}}
\end{equation}

For every $P'^Q \in \mathcal{L}$, every precoloring extension of $P'^Q$ is a precoloring extension of $P$. Conversely, let $c$ be a precoloring extension of $P$, and define $Q = (Q_1, \dots, Q_r)$, where for each $i$, $$Q_i = (S_i^1, S_i^2, R_i^1, R_i^2, R_i^3, R_i^4, C_i^1, C_i^2, X_i^{1,1},X_i^{1,2},X_i^{2,1}, X_i^{2,2}, f_i', case_i)$$
is defined as follows:
\begin{itemize}
\item If $X_3(T_i) = \emptyset$, then $Q_i = (\emptyset, \emptyset, \emptyset, \emptyset, \emptyset, \emptyset, \emptyset, \emptyset, \emptyset, \emptyset, \emptyset, \emptyset, f_i, \emptyset)$, where $f_i$ is the empty function.
\item If $X_3(T_i)$ contains a vertex $v$ with $c(v) \in L_1 \cap L_2$, we choose $v$ with $N(v) \cap Y_1$ maximal and let $S_i^1 = \sset{v}$, $case = \emptyset$. In this case, we let $S_i^2 = \emptyset$. 
\item If $X_3(T_i)$ contains no vertex $v$ with $c(v) \in L_1 \cap L_2$, we let $u \in X_3(T_i)$ with $N(u) \cap Y_1$ maximal, and set $S_i^1 = \sset{u}$. If there is a vertex $v \in X_3(T_i)$ with $c(v) \neq c(u)$ and $uv \not\in E(G)$, we choose $v$ with $N(v) \cap Y_1$ maximal and set $S_i^2 = \sset{v}$; otherwise we let $S_i^2 = \emptyset$.
\item If $S_i^2 = \emptyset$, we let $case_i = \emptyset$ and $R_i^j = \emptyset$ for $j = 1,2,3,4$. Otherwise, we let $\sset{u,v} = S_i^1 \cup S_i^2$ such that $c(u) \in L_1$. We let $A = N(u) \cap (Y_2 \setminus N(v))$ and $B = N(v) \cap (Y_1 \setminus N(u))$.  Let $a_1, \ldots, a_t$ be the components of $G|A$, and
let $b_1, \ldots, b_s$ be the components of $G|B$. Since $P$ satisfies \eqref{it:122}, it follows that for every $i \in [t]$ and $j \in [s]$,
$V(a_i)$ is either complete or anticomplete to $V(b_j)$.

Let $H$ be the graph with vertex set  
$\{u,v\} \cup \{a_1, \ldots, a_t\} \cup \{b_1, \ldots b_s\}$; 
where $N_H(u)=\{a_1, \ldots, a_t\}$, $N_H(v)=\{b_1, \ldots, b_s\}$,
the sets $\{a_1, \ldots, a_t\}$ and $\{b_1, \ldots, b_s\}$ are stable,
and $a_i$ is adjacent to $b_j$ if and only if $V(a_i)$ is complete to $V(b_j)$ 
in $G$.  Apply \ref{lem:nunv} to $H$, $u$ and $v$ to  obtain a 
partition
$A'_0, A'_1, \dots, A'_k$ of $\{a_1, \ldots, a_t\}$ and 
a partition $B'_0, B'_1, \dots, B'_k$ of $\{b_1, \ldots, b_t\}$. 
For $i \in [k]$, let $A_i=\bigcup_{a_j \in A_i}V(a_j)$ and 
$B_i=\bigcup_{b_j \in B_i}V(b_j)$.

It follows from the definition of $H$ that in $G$,
\begin{itemize}
\item $A_0$ is complete to $N(v)$;
\item $B_0$ is complete to $N(u)$; and 
\item for $j = 1, \dots, k$, $A_j, B_j \neq \emptyset$ and $A_j$ is complete to $N(v) \setminus B_j$ and $B_j$ is complete to $N(u) \setminus A_j$, and $A_j$ is anticomplete to $B_j$. 
\end{itemize}

 If $A_0 = B_0 = \emptyset$ and $k=1$, then $A$ is anticomplete to $B$, and we let $case_i = \emptyset$. Otherwise, we consider the following cases, setting $case_i = $ 
\begin{enumerate}[(a)]
\item if $c(A) \subseteq L_2 \setminus L_1$;  \label{it:casea1}
\item if $c(B) \subseteq L_1 \setminus L_2$;  \label{it:casea2}
\item if there is an $i \in \sset{1, \dots, k}$ such that $c(A \setminus A_i) \subseteq L_2 \setminus L_1$, and $c(B\setminus B_i) \subseteq L_1 \setminus L_2$;  \label{it:casea3}
\item if there exist $x \in A$, $y \in B$ adjacent such that $c(x), c(y) \in L_2 \cap L_1$ and $c(B \setminus N(x)) \subseteq L_1 \setminus L_2$; \label{it:caseb1}
\item if there exist $x \in A$, $y \in B$ adjacent such that $c(x), c(y) \in L_2 \cap L_1$ and $c(A \setminus N(y)) \subseteq L_2 \setminus L_1$; \label{it:caseb2}
\item if there exist $x,x' \in A, y,y' \in B$, with $x, y$ adjacent, $x'$ non-adjacent to $y$, $y'$ non-adjacent to $x$, and (consequently) $x'$ adjacent to $y'$, and $c(x),c(y),c(x'),c(y') \in L_2 \cap L_1$. \label{it:caseb3}
\end{enumerate}
It is easy to verify that one of these cases occurs.

With the notation as above, if $case_i = $
\begin{enumerate}[(a)]
\item then we let $R_i^{j} = \emptyset$ for $j = 1,2,3,4$;
\item then we let $R_i^{j} = \emptyset$ for $j = 1,2,3,4$;
\item then we let $x \in A_i, y \in B_i$ and set $R_i^1 = \sset{x}$, $R_i^2 = \sset{y}$, $R_i^3 = R_i^4 = \emptyset$;
\item then we let $R_i^1 = \sset{x}$, $R_i^2 = \sset{y}$, $R_i^3 = R_i^4 = \emptyset$;
\item then we let $R_i^1 = \sset{x}$, $R_i^2 = \sset{y}$, $R_i^3 = R_i^4 = \emptyset$;
\item then we let $R_i^1 = \sset{x}$, $R_i^2 = \sset{y}$, $R_i^3 = \sset{x'}$, $R_i^4 = \sset{y'}$.
\end{enumerate}
\item For $j = 1,2$, we proceed as follows. If $S_i^j = \emptyset$ or the vertex $v \in S_i^j$ is not mixed on a bad component, then we let $X_i^{j,1} = X_i^{j,2} = C_i^j = \emptyset$. Otherwise, let $v \in S_i^j$ and let $C$ be a bad component of $G|Y$ on which $v$ is mixed. We set $C_i^j = V(C)$. By Lemma \ref{lem:Ycomps} applied to $C$, it follows that for $p \neq q$, $V(C) \cap Y_p$ is complete to $V(C) \cap Y_q$. Since $Y_p \cap V(C) \neq \emptyset$ for at least three different $p \in \sset{1,2,4,5}$, it follows that there exist $p,q \in \sset{1,2,4,5}$ with $p \neq q$ such that $|c(V(C) \cap Y_p)| = 1$ and $|c(V(C) \cap Y_q)| = 1$. Let $X_i^{j,1} \subseteq V(C) \cap Y_p$, $X_i^{j,2} \subseteq V(C) \cap Y_q$, such that $|X_i^{j,k}| = 1$ for $k = 1,2$.
\end{itemize}
We let $f_i = c|_{S_i^1 \cup S_i^2 \cup R_i^1 \cup R_i^2 \cup  R_i^3 \cup R_i^4 \cup X_i^{1,1} \cup X_i^{1,2} \cup X_i^{2,1} \cup X_i^{2,2}}$. 
It follows from the definition of $Q$ that $Q \in \mathcal{Q}$. Moreover, $c$ is a precoloring extension of $P'^Q$ by the definition of $Q$ and $P'^Q$. This proves \eqref{eq:equivalent}. 

\bigskip

Let $P' \in \mathcal{L}$ with $P' = (G, S', X_0', X', Y', Y^*, f')$ such that $P' = P'^Q$ for $Q = (Q_1, \dots, Q_r)$, where for each $i$, $$Q_i = (S_i^1, S_i^2, R_i^1, R_i^2, R_i^3, R_i^4, C_i^1, C_i^2, X_i^{1,1},X_i^{1,2},X_i^{2,1}, X_i^{2,2}, f_i, case_i).$$ Let $Y_i' = \sset{y \in Y': L_{P'}(y) = L_i}$ for $i = 1,2$. We claim the following. 

\vspace*{-0.4cm}\begin{equation}\vspace*{-0.4cm} \label{eq:claimx}
  \longbox{\emph{$P'$ satisfies \eqref{it:x11}.}}
\end{equation}

Suppose not; and let $x-a-b$ be a path with $x \in X'$ and $a, b \in Y'$ with $L_{P'}(a) = L_{P'}(b) = L$. Since $P$ satisfies \eqref{it:122} and \eqref{it:x11}, it follows that $x \not\in X$, and so $x \in Y$ and $L_P(x) = L$. Moreover, since $x \in X' \setminus X$, it follows that $x$ has a neighbor $s' \in S' \setminus S$ with $f'(s') \in L$. Since $P$ satisfies \eqref{it:122} and \eqref{it:x11}, and since $s'$ is adjacent to $x$ but not $a$, it follows that $s' \in Y$ and $L_P(s') = L$. Since $s'$ has a neighbor $x \in Y$ with a neighbor $a \in Y'$, it follows that $x \not\in \tilde{Y}^Q \cup \tilde{Z}^Q$. Since $s' \not\in X$, it follows that $s' \not\in S_i^1 \cup S_i^2$, and hence there exists $i \in \sset{1, \dots, r}$ such that $s' \in R_i^j$ for some $j \in \sset{1,2,3,4}$. Thus $L_P(s') \in \sset{L_1, L_2}$. Let $\sset{u,v} = S_i^1 \cup S_i^2$ such that $s' \in N(u) \setminus N(v)$. It follows that $case_i \in \sset{(d), (e), (f)}$, and hence there is a vertex $t' \in R_i^1 \cup R_i^2 \cup R_i^3 \cup R_i^4$ such that $t'$ is adjacent to $s'$ and $v$, but $t'$ is not adjacent to $u$, and $L_P(t') \in \sset{L_1, L_2} \setminus \sset{L}$, and $f'(t') \in L_1 \cap L_2$. But then $t'-s'-x$ or $t'-x-a$ is a path (since $a\in Y'$ it follows that $a$ is not adjacent to $t'$); contrary to the fact that \eqref{it:122} holds for $P$. This is a contradiction, and \eqref{eq:claimx} follows. 

\vspace*{-0.4cm}\begin{equation}\vspace*{-0.4cm} \label{eq:claimstar}
  \longbox{\emph{If $P$ satisfies \eqref{it:star} for lists $L_1', L_2', L_3'$, then $P'$ satisfies \eqref{it:star} for $L_1', L_2', L_3'$.}}
\end{equation}

Suppose not; and let $x-a-b-c$ be a path such that $L_{P'}(x) = L_3'$ with $|L_3'| = 2$ and $L_3' \neq L_1' \cap L_2'$, $L_{P'}(a) = L_1' = L_{P'}(c)$, $L_{P'}(b) = L_2'$. Since $P$ satisfies \eqref{it:122}, \eqref{it:star} for $L_1', L_2', L_3'$, and \eqref{it:123}, it follows that $L_P(x) = L_2'$. Consequently, $x$ has a neighbor $s'$ in $S' \setminus S$ with $f'(s') \in L_2'$. Since $L_3' \neq L_1' \cap L_2'$, it follows that $f'(s') \in L_1'$. Thus $s'-x-a-b-c$ is a path. Suppose first that $s' \in Y$. It follows that $s' \not\in S_i^1 \cup S_i^2$. Since $s'$ has a neighbor $x \in Y$ with a neighbor $a \in Y'$, it follows that $x \not\in \tilde{Y}^Q \cup \tilde{Z}^Q$. This implies that there exist $i \in \sset{1,\dots, r}$ and $j \in \sset{1,2,3,4}$ such that $s' \in R_i^j$. Since $P$ satisfies \eqref{it:122} and \eqref{it:123}, it follows that $L_P(s') = L_1'$. Let $\sset{u,v} = S_i^1 \cup S_i^2$ such that $u$ is adjacent to $s'$ and $v$ is not. It follows that $case_i \in \sset{(d), (e), (f)}$, and hence there is a vertex $t' \in R_i^1 \cup R_i^2 \cup R_i^3 \cup R_i^4$ such that $t'$ is adjacent to $s'$ and $v$, but $t'$ is not adjacent to $u$, and $f'(t') \in L_1' = L_P(s')$. Since $t'-s'-x-a-b-c$ is not a $P_6$ in $G$, it follows that $f'(t') \not\in L_1' \cap L_2'$. Therefore, $L_P(t') \not\in \sset{L_1', L_2'}$. Since $P$ satisfies \eqref{it:123}, it follows that $t'$ is adjacent to $x$ (since $t'-s'-x$ is not a path). Since $f'(t') \in L_1'$, it follows that $t'$ is not adjacent to $a$. Now $t'-x-a$ is a path in $G$, contrary to the fact that $P$ satisfies \eqref{it:123}. Thus, $s' \in X$. 

Suppose that $L_P(s') \neq L_1' \cap L_2'$. Then $s'$ has a neighbor $s$ in $S$ with $f'(s) \in L_1' \cap L_2'$. Now $s-s'-x-a-b-c$ is a $P_6$ in $G$, a contradiction. It follows that $s' \in X$ and $L_P(s') = L_1' \cap L_2'$. Since $s' \in S_i^1 \cup S_i^2$, it follows that there is a path $s'-y-z$ with $y \in L_1, z \in L_2$, and $L_P(s') \neq L_1 \cap L_2$. It follows that either $L_1 \not\in \sset{L_1', L_2'}$ or $L_2 \not\in \sset{L_1', L_2'}$. Since $z-y-s'-x-a-b-c$ is not a $P_7$ in $G$, it follows that $G|\sset{z,y,x,a,b,c}$ is connected. Let $w \in \sset{y,z}$ such that $L_P(w) \not\in \sset{L_1', L_2'}$. Since $P$ satisfies \eqref{it:123}, it follows that $w$ is complete to $x, a, b, c$. But then $x-w-c$ is a path, contrary to the fact that \eqref{it:123} holds for $P$. This implies \eqref{eq:claimstar}.

\vspace*{-0.4cm}\begin{equation}\vspace*{-0.4cm} \label{eq:claimy}
  \longbox{\emph{If $P$ satisfies \eqref{it:x12} for lists $L_1', L_2', L_3'$, and $P$ satisfies \eqref{it:star} for all lists, then $P'$ satisfies \eqref{it:x12} for $L_1', L_2', L_3'$.}}
\end{equation}

Suppose not; and let $x-a-b$ be a path such that $L_{P'}(x) = L_3'$ with $|L_3'| = 2$ and $L_3' \neq L_1' \cap L_2'$, $L_{P'}(a) = L_1'$, $L_{P'}(b) = L_2'$. Since $P$ satisfies \eqref{it:122}, \eqref{it:x12} for $L_1', L_2', L_3'$, and \eqref{it:123}, it follows that $L_P(x) = L_2'$. Consequently, $x$ has a neighbor $s'$ in $S' \setminus S$ with $f'(s') \in L_2'$. Since $L_3' \neq L_1' \cap L_2'$, it follows that $f'(s') \in L_1'$. Thus $s'-x-a-b$ is a path. Suppose first that $s' \in X$. Then there exist $i \in \sset{1, \dots, r}$ and $j \in \sset{1,2}$ such that $s' \in S_i^j$. It follows that $L_P(s') = L_1' \cap L_2'$, since $P$ satisfies \eqref{it:star} for all lists. By construction, it follows that there is a path $s'-y-z$ with $y \in L_1, z \in L_2$, and $L_P(s') \neq L_1 \cap L_2$. We choose such $y, z \in C_i^j$ if $C_i^j \neq \emptyset$. Since $z-y-s'-x-a-b$ is not a six-vertex path in $G$, it follows that $G|\sset{z,y,x,a,b}$ is connected. Since $C_i^j \cap Y' = \emptyset$ by construction, it follows that $C_i^j = \emptyset$, and so $s'$ is not mixed on a bad component. Since $L_1 \cap L_2 \neq L_1' \cap L_2'$, it follows that either $L_1 \not\in \sset{L_1', L_2'}$ or $L_2 \not\in \sset{L_1', L_2'}$.  Let $w \in \sset{y,z}$ such that $L_P(w) \not\in \sset{L_1', L_2'}$. Then $G|\sset{z,y,x,a,b}$ is contained in a component of $G|Y$ containing vertices with lists $L_1', L_2'$ and $L_P(w)$, hence a bad component. But since $s'-x-a-b$ is a path, $s'$ is mixed on this bad component, a contradiction. It follows that $s' \in Y$. 

Since $P$ satisfies \eqref{it:122} and \eqref{it:123}, it follows that $L_P(s') = L_1'$. Since $s'$ has a neighbor $x \in Y$ with a neighbor $a \in Y'$, it follows that $s' \not\in \tilde{Y}^Q \cup \tilde{Z}^Q$. Thus, there exist $i \in \sset{1, \dots, r}$ and $j \in \sset{1,2,3,4}$ such that $s' \in R_i^j$. By construction, it follows that $L_P(s') \in \sset{L_1, L_2}$. Let $\sset{u,v} = S_i^1 \cup S_i^2$ such that $u$ is adjacent to $s'$ and $v$ is not. It follows that $case_i \in \sset{(d), (e), (f)}$, and hence there is a vertex $t' \in R_i^1 \cup R_i^2 \cup R_i^3 \cup R_i^4$ such that $t'$ is adjacent to $s'$ and $v$, but $t'$ is not adjacent to $u$, and $f'(t') \in L_1 \cap L_2$. 

Suppose first that $\sset{L_1', L_2'} = \sset{L_1, L_2}$. Then $f'(s'), f'(t') \in L_1' \cap L_2'$. Let $s \in S$ be a common neighbor of $u,v$ with $f'(s) \in L_1 \cap L_2$. Since $s-u-s'-x-a-b$ is not a $P_6$ in $G$, it follows that $u$ is adjacent to $a$. Since $t'-v-s-u-a-b$ is not a $P_6$ in $G$, it follows that $v$ has a neighbor in $\sset{u,a,b}$. Since $f'(v) \in L_1$, it follows that $v$ is non-adjacent to $a$. Thus $v$ is adjacent to $b$. Since $a, b \not\in \tilde{T}^Q$, it follows that $case_i = (f)$. By symmetry, we may assume that $s' \in R_i^1, t' \in R_i^2$. Let $x' \in R_i^3,y' \in R_i^4$.  Then $x', y'$ are non-adjacent to $a, b$. But then $x'-u-a-b-v-y$ is a $P_6$ in $G$, a contradiction. It follows that $\sset{L_1', L_2'} \neq \sset{L_1, L_2}$.

Consequently, $L_P(t') \not\in \sset{L_1', L_2'}$. Since $P$ satisfies \eqref{it:123}, it follows that $t'-s'-x$ is not a path, and so $t'$ is adjacent to $x$. Since $f'(t') \in L_1'$, it follows that $t'$ is not adjacent to $a$. Now $t'-x-a$ is a path, contrary to the fact that \eqref{it:123} holds for $P$. This proves \eqref{eq:claimy}.

\vspace*{-0.4cm}\begin{equation}\vspace*{-0.4cm} \label{eq:claimmain1}
  \longbox{\emph{$P'$ satisfies \eqref{it:star} for $L_1, L_2, L_3$.}}
\end{equation}

Suppose not; and let $z-a-b-c$ be a path with $L_{P'}(z) = L_3$, $L_{P'}(a) = L_{P'}(c) = L_1$, $L_{P'}(b) = L_2$. Suppose first that $z \in X$. Let $i$ such that $T_i = N(z) \cap S$. Then $S_i^1 \neq \emptyset$. Let $s' \in S_i^1 \cup S_i^2$, and let $s$ be a common neighbor of $s'$ and $z$ in $S$ with $f(s) \in L_1 \cap L_2$. Since $s'-s-z-a-b-c$ is not a path, it follows that $z,a,b,c$ contains a neighbor of $s'$ for every $s' \in S_i^1 \cup S_i^2$. But $z$ is anticomplete to $S_i^1 \cup S_i^2$, for otherwise, $z \in \tilde{V}^Q$. If $S_i^2 = \emptyset$, then, since $z \not\in X_0'$, it follows that $f(s') \in L_1 \cap L_2$ and so $z$ is anticomplete to $a,b,c$, a contradiction. Therefore, $S_i^2 \neq \emptyset$. But then $S_i^1 \cup S_i^2 = \sset{u,v}$ with $f'(u) \in L_2 \setminus L_1$, say. Since $a, b, c \in Y'$, it follows that $u$ is adjacent to $a$ or $c$, and $v$ is adjacent to $b$; and no other edges between $u,v$ and $a,b,c$ exist. Now, $Y'$ contains an edge between $N(u) \cap (Y_1 \setminus N(v))$ and $N(v) \cap (Y_2 \setminus N(u))$; but this contradicts \eqref{eq:eq16}.

Since $P$ satisfies \eqref{it:122} and \eqref{it:123}, it follows that $L_P(z) = L_2$. Then $z$ has a neighbor $s' \in S' \setminus S$ with $f'(s') \in L_1 \cap L_2$ (for if $f'(s') \not\in L_1$, then $L_{P'}(z) = L_1 \cap L_2 \neq L_3$), and $s'-z-a-b-c$ is a path. Suppose first that $s' \in Y$. Since $P$ satisfies \eqref{it:122} and \eqref{it:123}, it follows that $L_P(s') = L_1$. Moreover, by construction, $s'$ has a neighbor $t' \in S'$ with $L_P(t') = L_2$ and $f'(t') \in L_1 \cap L_2$. But then $t'-s'-z-a-b-c$ is a $P_6$ in $G$, a contradiction. It follows that $s' \in X$. 

Since $s' \in X$, it follows that $L(s') = L_3$, and so $s'$ has a neighbor $s \in S$ with $f(s) \in L_1 \cap L_2$. But then $s-s'-z-a-b-c$ is a $P_6$ in $G$, a contradiction. This proves \eqref{eq:claimmain1}. 

\vspace*{-0.4cm}\begin{equation}\vspace*{-0.4cm} \label{eq:claimmain2}
  \longbox{\emph{If $P$ satisfies \eqref{it:star} for every three lists, then $P'$ satisfies \eqref{it:x12} for $L_1, L_2, L_3$.}}
\end{equation}

Suppose not; and let $z-a-b$ be a path with $L_{P'}(z) = L_3$, $L_{P'}(a) = L_1$, $L_{P'}(b) = L_2$.

Suppose first that $z \in X$. Let $i \in \sset{1, \dots, r}$ such that $T_i = N(z) \cap S$. By construction, it follows that $S_i^1 \neq \emptyset$. Let $s' \in S_i^1 \cup S_i^2$, and let $s$ be a common neighbor of $s'$ and $z$ in $S$ with $f(s) \in L_1 \cap L_2$. Let $c$ be a neighbor of $s'$ in $Y_1$; by construction, we may choose $c$ to be non-adjacent to $z$. Then $c \neq a,b$ (since $b \not\in Y_1$). Since $c-s'-s-z-a-b$ is not a path, it follows that either $s'$ or $c$ has a neighbor in $\sset{a,b}$. Since $P$ satisfies \eqref{it:x11}, it follows that $s'-c-a$ is not a path. Since $P$ satisfies \eqref{it:star} for all lists, it follows that $z-a-b-c$ is not a path. Consequently, $s'$ has a neighbor in $\sset{a,b}$. It follows that $f'(s') \not\in L_1 \cap L_2$. Therefore, $S_i^1 \cup S_i^2 = \sset{u,v}$ and both $u,v$ have a neighbor in $\sset{a,b}$. Since $a, b in Y'$, it follows that both $a, b$ have a non-neighbor in $\sset{u,v}$. This is a contradiction by \eqref{eq:eq16}. 

Since $z \in Y$ and $P$ satisfies \eqref{it:122} and \eqref{it:123}, it follows that $L_P(z) = L_2$. Consequently, $z$ has a neighbor $s'$ in $S' \setminus S$ with $f'(s') \in L_2$. Since $L_3 \neq L_1 \cap L_2$, it follows that $f'(s') \in L_1$. Thus $s'-z-a-b$ is a path. Since $s'$ has a neighbor $z \in Y$ with a neighbor $a \in Y'$, it follows that $s' \not\in \tilde{Y}^Q \cup \tilde{Z}^Q$.  Suppose first that $s' \in X$. Then there exist $i \in \sset{1, \dots, r}$ and $j \in \sset{1,2}$ such that $s' \in S_i^j$. It follows that $L_P(s') = L_1 \cap L_2$ since $P$ satisfies \eqref{it:star} for all lists. But $S_i^j \subseteq X_3$ and so $L_P(s') \neq L_1 \cap L_2$, a contradiction. It follows that $s' \in Y$. 

Since $P$ satisfies \eqref{it:122} and \eqref{it:123}, it follows that $L_P(s') = L_1$, and there exist $i \in \sset{1, \dots, r}$ and $j \in \sset{1,2,3,4}$ such that $s' \in R_i^j$. Moreover, $S_i^1 \cup S_i^2 = \sset{u, v}$. By symmetry, we may assume that $u$ is adjacent to $s'$ and $v$ is not. It follows that $case_i \in \sset{(d),(e),(f)}$, and hence there is a vertex $t' \in R_i^1 \cup R_i^2 \cup R_i^3 \cup R_i^4$ such that $t'$ is adjacent to $s'$ and $v$, but $t'$ is not adjacent to $u$. By construction, it follows that $f'(s'), f'(t') \in L_1 \cap L_2$. Let $s \in T_i$ with $f'(s) \in L_1 \cap L_2$. Since $s-u-s'-z-a-b$ is not a $P_6$ in $G$, it follows that $u$ is adjacent to $a$ or to $z$. Note that if $uz \in E(G)$, then $z$ is adjacent to both $s'$ and $u$, both of which are in $S'$ and $f(s', u) \subseteq L_1$. This implies that $z \in X_0'$. It follows that $u$ is adjacent to $a$. Since $t'-v-s-u-a-b$ is not a $P_6$ in $G$, it follows that $v$ is adjacent to $b$. This contradicts \eqref{eq:eq16} and concludes the proof of \eqref{eq:claimmain2}. 

\bigskip
The statement of the lemma follows; we have proved every claim in \eqref{eq:claimx}, \eqref{eq:claimstar}, \eqref{eq:claimy}, \eqref{eq:claimmain1} and \eqref{eq:claimmain2}. 
\end{proof}

\begin{lemma} \label{lem:x12}
There is a function $q: \mathbb{N} \rightarrow \mathbb{N}$ such that the following holds. Let $P = \sspc$ be a starred precoloring of a $P_6$-free graph $G$ with $P$ satisfying \eqref{it:size3}, \eqref{it:122}, \eqref{it:123} and \eqref{it:x11}. Then there is an
algorithm with running time $O(|V(G)|^{q(|S|)})$
that outputs an equivalent collection $\mathcal{L}$ for $P$ such that
\begin{itemize}
\item $|\mathcal{L}| \leq |V(G)|^{q(|S|)}$;
\item every $P' \in \mathcal{L}$ is a starred precoloring of $G$;
\item every $P' \in \mathcal{L}$ with seed $S'$ satisfies $|S'| \leq q(|S|)$;  and
\item every $P' \in \mathcal{L}$ satisfies \eqref{it:size3}, \eqref{it:122}, \eqref{it:123}, \eqref{it:x11} and \eqref{it:x12}. 
\end{itemize}
Moreover, for every $P' \in \mathcal{L}$, given a precoloring extension of $P'$, we can compute a precoloring extension for $P$ in polynomial time, if one exists. 
\end{lemma}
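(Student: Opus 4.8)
The plan is to prove Lemma~\ref{lem:x12} by iterating Lemma~\ref{lem:fixedl1l2l3x}, once for every relevant triple of lists, carried out in two passes: the first pass establishes \eqref{it:star} for all triples simultaneously, and only then does the second pass establish \eqref{it:x12}. Call a triple $(L_1,L_2,L_3)$ of subsets of $\sset{1,2,3,4}$ \emph{relevant} if $|L_1|=|L_2|=3$, $L_1\neq L_2$, $|L_3|=2$, and $L_3\neq L_1\cap L_2$; there are only a constant number of relevant triples. First note that the instances of \eqref{it:x12} with $L_1=L_2$ are ruled out directly by \eqref{it:x11} (a path $a-b-c$ with $a\in X$ and $L_P(b)=L_P(c)$ violates \eqref{it:x11}), and likewise any path $a-b-c-d$ witnessing a failure of \eqref{it:star} with $L_1=L_2$ contains the forbidden subpath $a-b-c$ of \eqref{it:x11}; since $|L_P(a)|\le 2$ for $a\in X$, the only remaining instances of \eqref{it:x12} and \eqref{it:star} are those corresponding to relevant triples. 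Hence it suffices to arrange that every output precoloring satisfies \eqref{it:x12} for every relevant triple.

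\emph{First pass.} Set $\mathcal{L}=\sset{P}$. For each relevant triple $(L_1,L_2,L_3)$ in turn, apply Lemma~\ref{lem:fixedl1l2l3x} to every member of $\mathcal{L}$ and replace $\mathcal{L}$ by the union of the resulting equivalent collections. By the fourth bullet of Lemma~\ref{lem:fixedl1l2l3x}, every member of $\mathcal{L}$ keeps satisfying \eqref{it:size3}, \eqref{it:122}, \eqref{it:123} and \eqref{it:x11} throughout. By the seventh bullet, immediately after $(L_1,L_2,L_3)$ is processed every member of $\mathcal{L}$ satisfies \eqref{it:star} for $(L_1,L_2,L_3)$, and by the fifth bullet this property is preserved under each subsequent application of the lemma. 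Consequently, once all relevant triples have been processed, every member of $\mathcal{L}$ satisfies \eqref{it:star} for every relevant triple.

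\emph{Second pass.} Now process every relevant triple $(L_1,L_2,L_3)$ once more, in the same fashion. At the beginning of the second pass every member of $\mathcal{L}$ satisfies \eqref{it:star} for all relevant triples, and by the fifth bullet this remains true after each application of Lemma~\ref{lem:fixedl1l2l3x}; in particular the hypothesis ``$P$ satisfies \eqref{it:star} for every relevant triple'' of the sixth and last bullets holds at every step of this pass. Therefore, when $(L_1,L_2,L_3)$ is processed, the last bullet guarantees that every member of the new collection satisfies \eqref{it:x12} for $(L_1,L_2,L_3)$, while the sixth bullet guarantees that \eqref{it:x12} still holds for every relevant triple processed earlier in the second pass. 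After all relevant triples are processed, every member of $\mathcal{L}$ satisfies \eqref{it:x12} for all relevant triples, and hence \eqref{it:x12} in full, together with \eqref{it:size3}, \eqref{it:122}, \eqref{it:123} and \eqref{it:x11}. For the quantitative bounds, Lemma~\ref{lem:fixedl1l2l3x} is invoked a constant number of times (twice per relevant triple); each invocation replaces the bound $|S'|$ on the seed size by $q_0(|S'|)$ and multiplies $|\mathcal{L}|$ by at most $|V(G)|^{q_0(|S'|)}$, where $q_0$ is the function furnished by Lemma~\ref{lem:fixedl1l2l3x}. Composing $q_0$ with itself the (constant) number of times it is applied gives a function $q$ with $|S'|\le q(|S|)$ for every member and $|\mathcal{L}|\le |V(G)|^{q(|S|)}$, and the total running time is $O(|V(G)|^{q(|S|)})$; equivalence of $\mathcal{L}$ with $P$ and the polynomial-time conversion of extensions follow by transitivity from the corresponding claims in Lemma~\ref{lem:fixedl1l2l3x}.

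The main point requiring care — there is no genuinely hard step, since all the substance is contained in Lemma~\ref{lem:fixedl1l2l3x} — is the two-pass structure: the \eqref{it:x12}-conclusions of Lemma~\ref{lem:fixedl1l2l3x} are conditional on \eqref{it:star} holding for all relevant triples, so \eqref{it:star} must be established globally in the first pass and then shown to be preserved throughout the second pass before \eqref{it:x12} can be deduced. Keeping track of exactly which conditional bullets are in force at each step is the only nontrivial bookkeeping.
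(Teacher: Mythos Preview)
Your proposal is correct and follows essentially the same approach as the paper: iterate Lemma~\ref{lem:fixedl1l2l3x} over all relevant triples in two passes, first to secure \eqref{it:star} globally (via the seventh bullet, preserved by the fifth), then to obtain \eqref{it:x12} (via the last bullet, preserved by the sixth). Your write-up is in fact more careful than the paper's terse version---you correctly identify the relevant triples as having $|L_3|=2$ (the paper's phrase ``lists of size three'' is a slip), you explicitly dispose of the degenerate $L_1=L_2$ case via \eqref{it:x11}, and you spell out which bullet is doing what at each step.
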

\begin{proof}
Let $\mathcal{L}=\{P\}$. For every triple $(L_1,L_2,L_3)$ of lists of size three, we repeat the following. Apply Lemma \ref{lem:fixedl1l2l3x} to 
every member of $\mathcal{L}$, replace $\mathcal{L}$ with the union of the equivalent collections thus obtained, and move to the next triple. At the end 
of thus process \eqref{it:star} holds for every $P' \in \mathcal{L}$.

Now repeat the procedure of the previous paragraph.
Since at this stage  all inputs satisfy \eqref{it:star} for every triple of 
lists, it follows that  \eqref{it:x12} holds for every starred precoloring of 
the output. 
\end{proof}

We now observe that the next axiom, which we restate, holds. 
\begin{enumerate}
\item[\eqref{it:l1l2}]  For every component $C$ of $G|Y$, for which there is a  vertex of $X$
is mixed on $C$, 
there exist $L_1, L_2 \subseteq \sset{1,2,3,4}$ with $|L_1| = |L_2| = 3$ such that $C$ contains a vertex $x_i$ with $L_P(x_i) = L_i$ for $i=1,2$, every vertex $x$ in $C$ satisfies $L_P(x) \in \sset{L_1, L_2}$, and every $x \in X$ mixed
on $C$ satisfies $L_P(x)=L_1 \cap L_2$. 
\end{enumerate}

\begin{lemma}
\label{lem:l1l2}
Let $P=\sspc$ of a $P_6$-free graph $G$ satisfying 
\eqref{it:size3}-\eqref{it:x12}, and let $C$ be a component of $G|Y$
such that some vertex $x \in X$ is mixed on $C$. Then  $C$ meets
exactly two lists $L_1,L_2$, and $L_P(x)=L_1 \cap L_2$.
\end{lemma}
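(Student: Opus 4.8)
The plan is to extract a short path witnessing the mixedness and then feed it, together with paths obtained via Lemma~\ref{lem:Ycomps}, into the structural axioms \eqref{it:x11} and \eqref{it:x12}.

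First I would invoke Lemma~\ref{lem:mixed}: since $V(C)$ is connected, $x\notin V(C)$ (as $x\in X$ while $V(C)\subseteq Y$), and $x$ is mixed on $V(C)$, there is an edge $ab$ of $V(C)$ with $x$ adjacent to $a$ and non-adjacent to $b$. Thus $x-a-b$ is a path with $x\in X$ and $a,b\in Y$. Applying \eqref{it:x11} to this path rules out $L_P(a)=L_P(b)$, so I may set $L_1:=L_P(a)$ and $L_2:=L_P(b)$ with $L_1\neq L_2$; by \eqref{it:size3} these lists have size three, hence $|L_1\cap L_2|=2$. Applying \eqref{it:x12} to $x-a-b$ then forces $L_P(x)=L_1\cap L_2$. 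This already shows that $C$ meets at least the two lists $L_1,L_2$ and pins down $L_P(x)$; it remains only to exclude a third list.

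Next I would assume for a contradiction that $C$ meets some list $L_3\notin\{L_1,L_2\}$, so $C$ meets at least three of the four three-element subsets of $\{1,2,3,4\}$. Then Lemma~\ref{lem:Ycomps} applies and yields that the classes $C_i=\{v\in V(C):L_P(v)=L_i\}$ are pairwise complete; in particular a vertex $c\in C_3$ is adjacent to both $a$ and $b$. A short split on whether $x$ is adjacent to $c$ then produces a second path of the shape forbidden by \eqref{it:x12} unless $L_P(x)$ equals $L_2\cap L_3$ (if $x$ is adjacent to $c$, via $x-c-b$) or $L_1\cap L_3$ (if $x$ is non-adjacent to $c$, via $x-a-c$). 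Combined with $L_P(x)=L_1\cap L_2$, either case makes two distinct two-element subsets of $\{1,2,3,4\}$ coincide; writing each $L_i$ as the complement of a singleton, this immediately forces two of $L_1,L_2,L_3$ to be equal, a contradiction. Hence $C$ meets exactly $L_1,L_2$, and $L_P(x)=L_1\cap L_2$, as required.

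I do not expect a serious obstacle: the only points needing care are verifying the hypotheses of Lemmas~\ref{lem:mixed} and \ref{lem:Ycomps} (connectivity of $V(C)$, the disjointness $x\notin V(C)$, and ``$C$ meets at least three lists''), the elementary observation that distinct three-element subsets of a four-element set have distinct pairwise intersections, and the bookkeeping of which list plays the role of $L_1$ and which of $L_2$ in each of the three applications of \eqref{it:x12}. All of these are routine, so the main ``difficulty'' is simply organising the small case analysis cleanly.
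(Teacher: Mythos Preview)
Your proposal is correct and follows essentially the same approach as the paper's proof: obtain the path $x\text{--}a\text{--}b$ via Lemma~\ref{lem:mixed}, use \eqref{it:x11} and \eqref{it:x12} to pin down $L_P(x)=L_1\cap L_2$, then invoke Lemma~\ref{lem:Ycomps} for a hypothetical third list and derive a contradiction from \eqref{it:x12} via the case split on the edge $xc$. The paper's write-up compresses your final case split into the single line ``$x$ is mixed on one of $\{a,c\},\{b,c\}$, contrary to \eqref{it:x12},'' leaving the distinct-intersections observation implicit, but the argument is the same.
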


\begin{proof}
Since $P$ satisfies \eqref{it:x11}, Lemma~\ref{lem:mixed}
implies that $C$ meets more than one list.
By Lemma~\ref{lem:mixed}, there exist
$a,b$ in $C$ such that $x-a-b$ is a path. By \eqref{it:x11} $L_P(a)\neq L_P(b)$,
and by \eqref{it:x12} $L_P(x)=L_P(a) \cap L_P(b)$. 
Let $c \in V(C)$ be such that $L_P(c) \neq L_P(a), L_P(b)$.
By Lemma~\ref{lem:Ycomps} $c$ is complete to $\{a,b\}$.
But then $x$ is mixed on one of $\{a,c\}$, $\{b,c\}$, contrary
to \eqref{it:x12}. This proves Lemma~\ref{lem:l1l2}.
\end{proof}

The following lemma establishes that: 
\begin{enumerate}
\item [\eqref{it:orthogonal}] For every component $C$ of $G|Y$ such that some vertex of $X$ is
mixed on $C$, and for $L_1, L_2$ as in \eqref{it:l1l2}, $L_P(v) = L_1 \cap L_2$ for every vertex $v \in X$ with a neighbor in $C$. 
\end{enumerate}
\begin{lemma} \label{lem:orthogonal}
There is a function $q: \mathbb{N} \rightarrow \mathbb{N}$ such that the following holds. Let $P = \sspc$ be a starred precoloring of a $P_6$-free graph $G$ with $P$ satisfying \eqref{it:size3}, \eqref{it:122}, \eqref{it:123}, \eqref{it:x11}, \eqref{it:x12} and \eqref{it:l1l2}. Then there is an
algorithm with running time $O(|V(G)|^{q(|S|)})$
that outputs an equivalent collection $\mathcal{L}$ for $P$ such that
\begin{itemize}
\item $|\mathcal{L}| \leq |V(G)|^{q(|S|)}$;
\item every $P' \in \mathcal{L}$ is a starred precoloring of $G$;
\item every $P' \in \mathcal{L}$ with seed $S'$ satisfies $|S'| \leq q(|S|)$;  and
\item every $P' \in \mathcal{L}$ satisfies \eqref{it:size3}, \eqref{it:122}, \eqref{it:123}, \eqref{it:x11}, \eqref{it:x12}, \eqref{it:l1l2} and \eqref{it:orthogonal}. 
\end{itemize}
Moreover, for every $P' \in \mathcal{L}$, given a precoloring extension of $P'$, we can compute a precoloring extension for $P$ in polynomial time, if one exists. 
\end{lemma}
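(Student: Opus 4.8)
The plan is to follow the same guessing-and-adding-to-the-seed paradigm used throughout the paper, specialized to the situation addressed by axiom~\eqref{it:orthogonal}. Fix a component $C$ of $G|Y$ on which some vertex of $X$ is mixed. By Lemma~\ref{lem:l1l2} we know $C$ meets exactly two lists $L_1,L_2$, and every vertex of $X$ mixed on $C$ has list $L_1\cap L_2$. Let $L=L_1\cap L_2$, so $|L|=2$, and write $L_1\cup L_2=[4]$ (since $L_1\neq L_2$ and both have size three). Set $C_i=\{v\in V(C):L_P(v)=L_i\}$ for $i=1,2$; by Lemma~\ref{lem:Ycomps} (applicable since $C$ meets two lists and hence, together with the neighbours in $X$, the relevant three-list structure is controlled) we have that $C_1$ is complete to $C_2$. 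A vertex $x\in X$ with a neighbour in $C$ but with $L_P(x)\neq L$ must then be complete to $C$ by Lemma~\ref{lem:mixed} (anything mixed on the connected set $V(C)$ would, via \eqref{it:x11}/\eqref{it:x12}, be forced to have list $L$), so the only obstruction to \eqref{it:orthogonal} is the existence of vertices $x\in X$ with list $\neq L$ that are \emph{complete} to $C$. The idea is: for each such bad component $C$, guess the colour used by such an $x$ (or guess that no such $x$ is coloured outside $L$), add a bounded number of carefully chosen vertices of $C$ to the seed so that the colours of all these completely-adjacent $x$'s become determined, and move them to $X_0$.

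Concretely: for each component $C$ as above, with lists $L_1,L_2$, and for each pair $(a_1,a_2)\in C_1\times C_2$ (these are adjacent, since $C_1$ is complete to $C_2$), and each admissible pair of colours $(c_1,c_2)\in L_1\times L_2$ with $c_1\neq c_2$, add $\{a_1,a_2\}$ to the seed with $f'(a_1)=c_1$, $f'(a_2)=c_2$. Now any $x\in X$ complete to $C$ has a neighbour of colour $c_1$ and a neighbour of colour $c_2$ in the new seed, so $L_P(x)$ shrinks to $[4]\setminus\{c_1,c_2\}$, which has size at most two, forcing $x\notin X$ after re-running Lemma~\ref{lem:lists}-type cleanup — but here $P$ is a starred precoloring, so the appropriate normalization is: move every vertex whose list drops below size three out of $Y$, and every vertex with list of size $\le 1$ into $X_0$. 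One also guesses, for each bad component, whether \emph{some} $x$ complete to $C$ receives a colour in $L$ (the "generic" case, in which $C$ can be given colours so that neighbours of $x$ in $C$ avoid that colour — here one instead chooses $c_1,c_2$ appropriately to force exactly this). Since a $P_6$-free graph with no $K_5$ has the property that two vertices complete to the connected set $V(C)$ behave predictably, and since we only ever add at most two vertices of $C$ to the seed, the seed grows by at most $2$ per bad component — but there may be many bad components, so to keep $|S'|$ bounded one must argue, as in the earlier lemmas, that it suffices to handle components type-by-type: group components by the type (w.r.t.\ $S$) of a mixed vertex $x$, and add at most one pair per type. This is exactly the bookkeeping of Lemmas~\ref{lem:mixedyl}, \ref{lem:123} and \ref{lem:x12}, and it yields $|S'|\le q(|S|)$ and $|\mathcal L|\le |V(G)|^{q(|S|)}$.

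The verification that the new precolorings still satisfy \eqref{it:size3}--\eqref{it:l1l2} is the routine part: adding vertices of $Y$ to the seed and then re-normalizing preserves each of the earlier path-forbidding axioms by the same "suppose a forbidden path exists in $P'$, trace it back to a vertex that must have a neighbour in $S'\setminus S$, produce a $P_6$" argument used in, e.g., \eqref{eq:no3pathy1}, \eqref{eq:long2}, \eqref{eq:claimx}, \eqref{eq:claimstar}. The equivalence of $\mathcal L$ with $P$ is again standard: a precoloring extension of a member of $\mathcal L$ extends $P$ trivially, and conversely, given a precoloring extension $c$ of $P$, one reads off from $c$ which pair $(a_1,a_2)$ and colours $(c_1,c_2)$ to guess for each bad component so that $c$ becomes a precoloring extension of the corresponding member. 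The main obstacle, and the place requiring the most care, is the combinatorial case analysis showing that after adding the guessed pair $\{a_1,a_2\}$ (with its colours) the list of every $x\in X$ complete to $C$ that should end up with list $\neq L$ indeed drops below size three, \emph{while} for vertices that legitimately keep list $L$ nothing breaks — i.e.\ that one genuinely can always make the right guess; this requires knowing that $C$ admits, for the relevant colour patterns on $N(C)\cap X$, an $L$-colouring of $C$ realizing them, which one establishes via Theorem~\ref{3colP7} (or \ref{Edwards}) exactly as in the proof of Lemma~\ref{lem:complete}. I would structure the proof as: (1) reduce to one bad component / one type; (2) set up the guessing of $(a_1,a_2,c_1,c_2)$ and the derived sets to be moved to $X_0$; (3) prove an analogue of \eqref{eq:equivalent} that $\mathcal L$ is equivalent; (4) prove \eqref{it:orthogonal} holds for each output via the $P_6$ argument; (5) prove \eqref{it:size3}--\eqref{it:l1l2} are preserved; (6) assemble the bounds on $|S'|$ and $|\mathcal L|$ and note the polynomial running time.
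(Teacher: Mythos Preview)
Your approach diverges from the paper's in a crucial way: you add vertices of $Y$ (a pair $(a_1,a_2)$ from the bad component $C$) to the seed, whereas the paper adds vertices of $X$. This difference is exactly where your seed-size bound breaks down. Adding $(a_1,a_2)\in V(C)$ can only affect vertices adjacent to $C$; it does nothing for a different bad component $C'$ whose problematic $v\in X$ need not see $a_1$ or $a_2$ at all. Your proposed fix---``group components by the type of a mixed vertex $x$ and add at most one pair per type''---does not help, because two components $C,C'$ sharing a mixed-vertex type can be completely disjoint in their $X$-neighbourhoods, so the pair you add for $C$ is invisible to the offenders for $C'$. You would genuinely need one pair per bad component, and there is no bound on the number of bad components in terms of $|S|$.

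The paper's mechanism is different and is what makes the bound go through. For each triple (type $T_i$ in $X$, vertex $s_j\in S$, list pair $(L_1^k,L_2^k)$) it guesses at most one vertex $z\in X(T_i)$, chosen to maximise the \emph{number of bad components} (meeting $L_1^k,L_2^k$, containing a neighbour of $s_j$) in which $z$ has a neighbour, and adds $z$ to the seed. The verification of \eqref{it:orthogonal} then goes: if some $v\in X'$ of the same type still offends at a component $C$, maximality of $z$ produces a second component $C'$ that $z$ sees but $v$ does not, and a $P_6$ is assembled by walking from $C$ through $v$, a common seed neighbour, $z$, into $C'$ (with a short case analysis). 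This is why the seed grows only by $r\cdot s\cdot t$, a function of $|S|$.

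One further slip: you invoke Lemma~\ref{lem:Ycomps} to conclude $C_1$ is complete to $C_2$, but that lemma requires $C$ to meet at least \emph{three} lists; a component meeting exactly two lists need not have this completeness (cf.\ the proof of Lemma~\ref{lem:yisempty}, where the parts $A_1,\dots,A_t$ are merely pairwise complete or anticomplete). This does not sink your plan by itself---you only need \emph{some} adjacent pair $a_1\in C_1$, $a_2\in C_2$, and $v$ being complete to $C$ guarantees adjacency to both regardless---but it does indicate the structure of $C$ is less rigid than you assume.
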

\begin{proof}Let $\mathcal{R}=\{T_1, \ldots, T_r\}$ be the set of all 
$T \subseteq S$ with  $|f(T)|=2$, let
$S = \sset{s_1, \dots, s_s}$, and let $\mathcal{T} = \sset{(L_1^1, L_2^1), \dots, (L_1^t, L_2^t)}$ be the set of all pairs $(L_1, L_2)$ with $|L_1| = |L_2| = 3$ and $L_1 \neq L_2$. We let $\mathcal{Q}$ be the set of all $(rst + 1)$-tuples $Q = (Q_{1,1,1}, \dots, Q_{r,s,t}, f')$, where $i \in [r], j \in [s]$ and $k \in [t]$, and for each $i,j,k$ the following statements hold:
\begin{itemize}
\item $Q_{i,j,k} \subseteq X(T_i)$ and $|Q_{i,j,k}| \leq 1$;
\item $Q_{i,j,k} = \emptyset$ if $[4] \setminus f(T_i) = L_1^k \cap L_2^k$ or $f(s_j) \in f(T_i)$;
\item if $Q_{i,j,k} = \sset{x}$, then there is a component $C$ of $G|Y$ such that
  \begin{itemize}
  \item $s_j$ has a neighbor in $V(C)$;
  \item some vertex of $X$ is mixed on $C$, and $C$ meets $L_1^k, L_2^k$ as 
in \eqref{it:l1l2};
  \item $x$ has neighbors in $V(C)$ 
  \end{itemize}
  and $x$ has the maximum number  of such components $C$ among all vertices 
in $X(T_i)$; 
\item if $Q_{i,j,k} = \emptyset$, then no vertex $x \in X(T_i)$ and component $C$ as above exist,

\item  Let $\tilde{Q} = \bigcup_{i \in \sset{1, \dots, r}, j \in \sset{1, \dots, s}, k \in \sset{1, \dots, t}} Q_{i,j,k}$,  then $f' : \tilde{Q} \rightarrow \sset{1,2,3,4}$ satisfies that $f' \cup f$ is a proper coloring of $G|(S \cup X_0 \cup \tilde{Q})$. 
\end{itemize}

For $Q \in \mathcal{Q}$, we construct a starred precoloring $P^Q$ from $P$ as follows. We let $\tilde{Z}^Q$ be the set of vertices $z$ in $X \setminus \tilde{Q}$ such that $\tilde{Q}$ contains a neighbor $x$ of $z$ with $f'(x) \in L_P(z)$, and let $g^Q : \tilde{Z}^Q \rightarrow \sset{1,2,3,4}$ be the unique function such that $g^Q(z) \in L_P(z) \setminus f'(N(z) \cap \tilde{Q})$. We let $\tilde{X}^Q$ be the set of vertices $z$ in $Y$ such that $\tilde{Q}$ contains a neighbor $x$ of $z$ with $f'(x) \in L_P(z)$. 

We let
$$P^Q = (G, S \cup \tilde{Q}, X_0 \cup \tilde{Z}^Q, (X \setminus (\tilde{Z}^Q \cup \tilde{Q})) \cup \tilde{X}^Q, Y \setminus \tilde{X}^Q, Y^*, f \cup f' \cup g^Q),$$
and let $\mathcal{L} = \sset{P^Q : Q \in \mathcal{Q}, f \cup f' \cup g^Q \mbox{ is a proper coloring}}$. It is easy to check that $\mathcal{L}$ is an equivalent collection for $P$. 

Let $Q \in \mathcal{Q}$, and let 
$P^Q = (G', S', X_0', X', Y', Y^*, f')$.
By construction, $P^Q$ satisfies \eqref{it:size3}. Since $P$ satisfies \eqref{it:122}, \eqref{it:123}, so does $P^Q$. 
Since $P$ satisfies \eqref{it:122}, it follows that $P^Q$
satisfies  \eqref{it:x11}.

\vspace*{-0.4cm}\begin{equation}\vspace*{-0.4cm} \label{eq:satx12}
  \longbox{\emph{$P^Q$ satisfies \eqref{it:x12}.}}
\end{equation}

Suppose not; and let $a-b-c$ be a path with $a \in X'$, $b, c \in Y'$ such that $L_{P^Q}(a) = L_3, L_{P^Q}(b) = L_1, L_{P^Q}(c) = L_2$ and $L_1 \neq L_2$, $L_3 \neq L_1 \cap L_2$. Since $P$ satisfies \eqref{it:x12}, it follows that $a \in Y$. Since $P$ satisfies \eqref{it:122} and \eqref{it:123}, it follows that $L_P(a) = L_2$, and there is a vertex $x \in \tilde{Q}$, say $x \in Q_{i,j,k}$ such that $x$ is adjacent to $a$ and $f'(x) \in L_P(a)$.  Since $c \in Y'$, it follows that $x$ is not adjacent to $c$. Since $x$ is mixed on a component of $G|Y$ 
meeting $L_1$ and $L_2$, and since $P$ satisfies \eqref{it:l1l2}, 
it follows that $L_P(x) = L_1 \cap L_2$. Thus $x-a-b-c$ is a path, and there is a component $C$ of $G|Y$ such that $V(C)$ meets $L_1^k, L_2^k$ and $x$ has a neighbor in $C$ and $L_1^k \cap L_2^k \neq L_P(x) = L_1 \cap L_2$. It follows that $a, b,c \not\in V(C)$, and so $V(C)$ is anticomplete to $a, b, c$. By symmetry, we may assume that $L_1^k \not\in \sset{L_1, L_2}$. Let $d \in V(C)$ with $L_P(d) = L_1^k$. Since $P$ satisfies \eqref{it:x12} and \eqref{it:x11}, and since $x$ has a neighbor in $C$, it follows that $x$ is complete to $C$ and thus adjacent to $d$. Since $L_P(d) \not\in \sset{L_1, L_2}$, it follows that there is a vertex $s \in S$ with $f(s) \in L_1 \cap L_2$ and $s$ adjacent to $d$. But then $c-b-a-x-d-s$ is a  $P_6$ in $G$, a contradiction. This proves \eqref{eq:satx12}. 

\bigskip

Now by Lemma~\ref{lem:l1l2}, $P^Q$ satisfies \eqref{it:l1l2}.

\vspace*{-0.4cm}\begin{equation}\vspace*{-0.4cm} \label{eq:orthogonal}
  \longbox{\emph{$P^Q$ satisfies \eqref{it:orthogonal}.}}
\end{equation}

Suppose not. Let $C$ be a component of $G'|Y'$ such that some
vertex of $X'$ is mixed on $C$, and 
with $L_1, L_2$ as in \eqref{it:l1l2}, and let $v \in X'$ with $ N(v)\cap C\neq \emptyset $ such that $L_{P^Q}(v) \neq L_1 \cap L_2$.

Since $L_{P^Q}(v) \neq L_1 \cap L_2$, 
we may assume that $[4] \setminus L_1 \subseteq L_P(v)$.
Let $s \in S$ with $f(s)=[4] \setminus L_1$, such that $s$ has a neighbor in 
$C$. Since $P^Q$ satisfies \eqref{it:l1l2}, it follows that $v$ is complete
to $C$.

We claim that every $x \in X' \cap Y$ is complete to $C$.
Suppose that $x \in Y \cap X'$ is mixed on $C$. 
Since $P^Q$ satisfies \eqref{it:l1l2}, it follows that
$L_{P^Q}(x)=L_1 \cap L_2$.  By symmetry, we may assume that $L_P(x) = L_1$, and
therefore, $x$ has a neighbor $s$ in $\tilde{Q} \cap X$ 
and $f(s)=L_1 \setminus L_2$. But then $s$ is mixed on the component
$\tilde{C}$ of $G|Y$ containing $V(C) \cup \{x\}$, $\tilde{C}$ meets $L_1$ and $L_2$, and  $L_P(s) \neq L_1 \cap L_2$, contrary to the fact that $P$ 
satisfies \eqref{it:l1l2}. This proves the claim. Now since some vertex of $X'$ 
is mixed on $C$, it follows that some vertex of $X$  is mixed on $C$.

Next we claim that $v \in X$. Suppose $v \in Y$. Then there is a component
$\tilde{C}$ of $G|Y$ such that $V(C) \cup \{v\} \subseteq V(\tilde{C})$. 
Since some $x \in X$ is mixed on $C$, and since $P$ satisfies 
\eqref{it:l1l2}, we deduce that $L_P(v) \in \{L_1,L_2\}$. Consequently,
$v$  has a neighbor $s$ in $\tilde{Q}$. Therefore $q \in X$.
Since $v$ is complete to $C$, it follows that $v$ has a neighbor
$n$ in $C$ with $L_P(n)=L_P(v)$. But then $x$ is mixed on the edge
$vn$, contrary to the fact that $P$  satisfies \eqref{it:x11}. This proves that 
$v \in X$.

By construction, $Q$ contains an entry $Q_{i,j,k}$ with $T_i=T(v)$, $s_j = s$ and $(L_1^k, L_2^k) = (L_1, L_2)$, and in view of the claims of the
previous two paragraphs,   $Q_{i,j,k} \neq \emptyset$. 
Write  $Q_{i,j,k} = \sset{z}$. Let $C'$ be a component of $G|Y$ meeting 
both $L_1$ and $L_2$, such that some
vertex of $X$ is mixed on $C'$, and both $s$ and $z$ have a neighbor in $C'$. 
Since $f'(z) \in L_1 \cup L_2$, it follows that $z$ is not complete 
to $C$. Since $L_P(z) \neq L_1 \cap L_2$, it follows from the fact that
$P$ satisfies \eqref{it:l1l2} that $z$ is not mixed on either of $C,C'$.
Consequently, $z$ is complete to $C'$, and $z$ is anticomplete to $C$.
Now by the maximality of $z$  we may assume that $v$ is
anticomplete to $C'$. Since $[4] \setminus L_1 \subseteq L_P(z)=L_P(v)$,
it follows that $s$ is anticomplete to $\{z,v\}$.

Let $a \in V(C) \cap N(s)$ and $a' \in V(C') \cap N(s)$. Since
each of $C,C'$ meets $L_2$, we can also choose
$b \in V(C) \setminus N(s)$ and $b' \in V(C') \setminus N(s)$.
$L_P(z) \neq L_1 \cap L_2$, there exists $t \in T_i$
with $f(t) \in L_1 \cap L_2$. Then $t$ is anticomplete to $V(C) \cup V(C')$.
It $t$ is non-adjacent to $s$, then $s-a-v-t-z-a'$ is a $P_6$
in $G$, so $t$ is adjacent to $s$. If $a$ is non-adjacent to $b$,
then  $b-v-a-s-a'-z$ is a $P_6$, so $a$ is adjacent to $b$. 
But now $b-a-s-t-z-a'$ is a $P_6$, a contradiction.
Thus, \eqref{eq:orthogonal} follows. 

\bigskip
This concludes the proof of the Lemma~\ref{lem:orthogonal}.
\end{proof}

We are now ready to prove the final axiom. 
\begin{enumerate}
\item[\eqref{it:yisempty}] $Y = \emptyset$. 
\end{enumerate}
\begin{lemma} \label{lem:yisempty}
There is a function $q: \mathbb{N} \rightarrow \mathbb{N}$ such that the following holds. Let $P = \sspc$ be a starred precoloring of a $P_6$-free graph $G$ with $P$ satisfying \eqref{it:size3}, \eqref{it:122}, \eqref{it:123}, \eqref{it:x11}, \eqref{it:x12}, \eqref{it:l1l2}, \eqref{it:orthogonal}. 
Then there is an
algorithm with running time $O(|V(G)|^{q(|S|)})$
that outputs collection $\mathcal{L}$ of starred precolorings such that
\begin{itemize}
\item if we know for every $P' \in \mathcal{L}$ whether $P'$ has a precoloring extension or not, then we can decide if $P$ has a precoloring extension in polynomial time;
\item $|\mathcal{L}| \leq |V(G)|^{q(|S|)}$;
\item every $P' \in \mathcal{L}$ is a starred precoloring of $G$;
\item every $P' \in \mathcal{L}$ with seed $S'$ satisfies $|S'| \leq q(|S|)$;  and
\item every $P' \in \mathcal{L}$ satisfies \eqref{it:yisempty}. 
\end{itemize}
Moreover, for every $P' \in \mathcal{L}$, given a precoloring extension of $P'$, we can compute a precoloring extension for $P$ in polynomial time, if one exists. 
\end{lemma}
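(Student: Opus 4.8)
The plan is to eliminate $Y$ by handling the components of $G|Y$ in three types, according to how $X$ interacts with them, using axioms \eqref{it:size3}--\eqref{it:orthogonal} to control the structure. First I would partition the components $C$ of $G|Y$ into: (1) components on which no vertex of $X$ is mixed, (2) components on which some vertex of $X$ is mixed (so by \eqref{it:l1l2} and \eqref{it:orthogonal} $C$ meets exactly two lists $L_1,L_2$, every vertex of $C$ has list $L_1$ or $L_2$, and every $x\in X$ with a neighbor in $C$ has list $L_1\cap L_2$). For components of type (1): by \eqref{it:x11}, \eqref{it:x12} and \eqref{it:123}, a vertex $x\in X$ with a neighbor in $C$ is complete to $C$; I would move all of $V(C)$ into $Y^*$, since the starred-precoloring axioms (G), (H) are then satisfied for the new components (no $X$-vertex mixed, and each component has a complete vertex in $X\cup S\cup X_0$ — the latter needs to be arranged, possibly by guessing, when no such complete vertex exists, but \eqref{it:conn}-type connectivity and the remaining axioms should give one). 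One must also check condition (F), that $Y$ is anticomplete to $Y^*$; I would verify this using the path axioms.

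Next, for components $C$ of type (2): these meet exactly $L_1,L_2$. I would first guess (enumerate over a bounded set) a bounded number of seed vertices to handle the interaction between $C$-components and $X$. The key structural point is that every vertex of $X$ seeing $C$ has list $L_1\cap L_2$, a $2$-element list; a vertex with a $2$-list colored by one of two colors, together with the bipartite-like structure on $C$ (with parts having lists $L_1$ and $L_2$, say differing in color $c_1\in L_1\setminus L_2$ and $c_2\in L_2\setminus L_1$), makes these components amenable to a direct $2$-list-coloring argument via Theorem~\ref{Edwards}. Concretely, I expect to guess, for each relevant type $T\subseteq S$, at most one vertex of $X(T)$ with its color (adding it to the seed), after which the remaining vertices of $X(T)$ seeing such a component get forced colors; once enough of $X$ is fixed, each type-(2) component $C$ has the property that the uncolored part sees a bounded list-structure, and I can either (a) push $C$ into $Y^*$ after arranging a complete vertex, or (b) determine directly by Theorem~\ref{Edwards}, applied componentwise to $G|(V(C)\cup A)$ where $A$ is the set of already-colored neighbors, whether $C$ can be colored, and if so remove it from the graph (as in Lemma~\ref{lem:complete}). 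The bookkeeping is: for each guess, form a normal/starred subcase, apply Lemma~\ref{lem:x12} (and the earlier lemmas) to restore axioms \eqref{it:size3}--\eqref{it:orthogonal}, then iterate until $Y=\emptyset$.

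The precise construction I would write: set $\mathcal{T}$ to be the set of types $T\subseteq S$ with $|f(T)|=2$; let $\mathcal{Q}$ be the set of tuples $(Q_T,c_T)_{T\in\mathcal{T}}$ with $|Q_T|\le 1$, $Q_T\subseteq X(T)$, $c_T\in L_P(Q_T)$, and $Q_T$ chosen (when nonempty) to be a vertex of $X(T)$ mixed on a type-(2) component, maximizing the number of such components it meets. For each $Q\in\mathcal{Q}$ and admissible extension $f'$, add $Q$ to the seed, push into $X_0$ the forced vertices of $X\cup Y$ (those with a neighbor in $\tilde Q$ of a compatible color), and then, for every type-(2) component $C$ of the resulting $G|Y$: if some vertex of $X\cup S\cup X_0$ is complete to $C$ and no vertex of $X$ is mixed on $C$, move $V(C)$ to $Y^*$; otherwise, by Theorem~\ref{Edwards} decide if $C$ has a proper list-coloring compatible with its already-colored neighbors, and if so delete $V(C)$ (recording the coloring), else discard this branch. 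Type-(1) components are pushed to $Y^*$ (after a similar completeness check / guess). Finally apply Lemmas~\ref{lem:122}, \ref{lem:123}, \ref{lem:x11}, \ref{lem:x12}, \ref{lem:orthogonal} in turn to each surviving precoloring so that \eqref{it:size3}--\eqref{it:orthogonal} are re-established, and since $Y$ has strictly shrunk, iterate; the process terminates in polynomially many rounds because each round is bounded and $|Y|$ decreases.

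The hard part, and the main obstacle, will be arranging that after pushing a component $C$ of $G|Y$ into $Y^*$, the starred-precoloring axioms (F) ($Y$ anticomplete to $Y^*$), (G) (no $X$-vertex mixed on a component of $G|Y^*$), and (H) (each component of $G|Y^*$ has a complete vertex in $S\cup X_0\cup X$) all hold simultaneously — in particular (H) may fail and must be fixed by guessing a complete vertex, which is only possible when such a vertex exists; ruling out the bad case (a component $C$ with no vertex of $X\cup S\cup X_0$ complete to it and no $X$-vertex mixed on it) requires a $P_6$-freeness argument combined with the connectivity axiom \eqref{it:conn}, analogous to the argument in Lemma~\ref{lem:complete}, and similarly handling type-(2) components requires carefully checking that after the guesses the uncolored part of $C$ together with its colored neighborhood is a $2$-list-coloring instance (each uncolored vertex of $C$ sees at most... ) so Theorem~\ref{Edwards} applies — this is where I expect the bulk of the casework, using \eqref{it:x11}, \eqref{it:x12}, \eqref{it:l1l2}, \eqref{it:orthogonal} repeatedly to bound how a vertex outside $C$ attaches to $C$.
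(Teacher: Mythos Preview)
Your outline has the right classification of components of $G|Y$ and the right idea that type-(2) components reduce to a $2$-list problem, but there is a genuine gap: the iterative scheme you describe does not keep the seed bounded.  In each round you add guessed vertices to the seed and then re-apply Lemmas~\ref{lem:122}--\ref{lem:orthogonal}; each of those lemmas replaces $|S|$ by $q(|S|)$.  Doing this a polynomial (in $|V(G)|$) number of times drives the seed size to something depending on $|V(G)|$, which the statement forbids.  The paper avoids iteration entirely: it never enlarges $S$ in this lemma.

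The key idea you are missing is the connectivity argument for the auxiliary graph $H$.  After first reducing to the case that $G\setminus(S\cup X_0)$ is connected, the paper takes, for each unordered pair $\{L_1,L_2\}$, \emph{all} two-list components $C^1,\dots,C^l$ with $X_{C^i}\neq\emptyset$ at once, contracts each $A_i$ (a component of $C|C_3$ or $C|C_4$) to a single vertex with list $\subseteq\{1,2\}$ (the set of colors in $L_1\cap L_2$ that ``work'' for $A_i$), and forms the graph $H$ on these contracted vertices together with $\bigcup_i X_{C^i}$.  A short $P_6$ argument shows $H$ is connected, so $(H,T)$ has at most two proper colorings; hence $\bigcup_i X_{C^i}$ has at most two valid colorings, and one simply records these in a set $F^{i,j}$ with $|F^{i,j}|\le 2$, moves $\bigcup_i X_{C^i}$ into $X_0$, and deletes $\bigcup_i V(C^i)$.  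Over all six list-pairs this gives $|F|\le 2^6$ precolorings, with the seed unchanged.  Your per-type guessing does not achieve this: after guessing one vertex per type and forcing what you can, there may remain many $X$-vertices mixed on type-(2) components whose colors are still free, and you have no mechanism to bound the number of joint colorings of all of them without the connectivity-of-$H$ observation.

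A secondary gap is in your handling of type-(1) components.  When no vertex of $X$ is mixed on $C$ \emph{and} no vertex of $S\cup X_0\cup X$ is complete to $C$, you cannot push $C$ into $Y^*$ (axiom (H) fails), and there is no $P_6$-argument that manufactures a complete vertex here.  The paper instead observes that in this situation $N(V(C))\subseteq S\cup X_0$, splits further by how many lists $C$ meets, and colors $C$ directly (via Theorem~\ref{3colP7} when $C$ meets one list, via Theorem~\ref{Edwards} and Lemma~\ref{lem:Ycomps} when it meets three or more), then deletes $V(C)$.  Only components with a genuine complete vertex in $S\cup X_0\cup X$ and no $X$-vertex mixed are moved into $Y^*$.
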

\begin{proof}
Let $P = \sspc$.  For every component $C$ of $G \setminus (S \cup X_0)$,
Let $P_C$ be the starred precoloring
$$(G|(V(C) \cup S \cup X_0), S, X_0, X \cap V(C), Y \cap V(C), Y^*\cap V(C), f). $$
Then $P_C$ satisfies \eqref{it:size3}--\eqref{it:orthogonal}.
Let $\mathcal{L}_0$ be the collection of all such starred precolorings $P_C$. 
Clearly $P$ has a precoloring extension if and only if every member of $\mathcal{L}_0$ does, so from now on we focus on constructing an equivalent collection for each $P_C$ separately. To simplify notation, from now we will simply assume that $G \setminus (X_0 \cup S)$ is connected.

In the remainder of the proof we either find that $P$ has no precoloring extension, output $\mathcal{L}=\emptyset$ and stop, or 
construct two disjoint subsets 
$U$ and $W$ of $Y$,  and a subset $\tilde{X}_0$ of $X$ such that
\begin{itemize}
\item $U \cup W=Y$, 
\item No vertex of $X$ is mixed on a component of $G|W$,
\item For every component $C$ of $G|W$, some vertex of $X \cup X_0 \cup S$ is complete to $C$.
\item There is a set $F$ with $|F| \leq 2^6$ of colorings of $G|\tilde{X}_0$  
that contains every coloring of $G|\tilde{X}_0$ that extends to  a 
precoloring extension of $P$, and $F$ can be computed in polynomial time. 
\item $P$ has a precoloring extension  if and only if for some $f' \in F$
$$(G \setminus U,  S, X_0 \cup \tilde{X}_0, X \setminus \tilde{X}_0, W, Y^*, f \cup f')$$  has a precoloring extension.
\end{itemize}
Having constructed such $U,W, \tilde{X}_0$ and $F$,  for each $f' \in F$ we set
$$P_{f'}=(G \setminus U, S, X_0 \cup \tilde{X}_0, X \setminus \tilde{X_0}, \emptyset, Y^* \cup W, f \cup f')$$  
and output the collection $\mathcal{L}=\{P_{f'}\}_{f' \in F}$, which has the 
desired  properties.

Start with $U=W=\tilde{X}_0=\emptyset$. 
For $v \in Y$, let  $M(v)=L_P(v) \setminus f(N(v) \cap (S \cup X_0))$. For
$L \subseteq [4]$, we denote by $M_L$ the list assignment
$M_L(v)=M(v) \cap L$.
To construct $U, W$ and $\tilde{X}_0$, we first examine each component of $G|Y$ 
separately. Every time we enlarge $U$, we will ``restart'' the algorithm with
$(G,S,X_0,X,Y,Y^*,f)$ replaced by $(G \setminus U,S,X_0,X,Y \setminus U,Y^*,f)$.
Since we only do this when $U$ is enlarged, there will be at most $|V(G)|$ such 
iterations, and so it is enough to ensure that each iteration can be done in
polynomial time.

Let $C$ be a component of $G|Y$. If no vertex of $X$ is mixed on 
$C$, and some vertex of $S \cup X_0 \cup X$ is complete to  $C$, we add $V(C)$ 
to  $W$. So we may assume that either some vertex of $X$ is mixed on $C$, or 
no vertex of $X$ is complete to $C$.
Let $C_i=\{v \in V(C) \; :\; L_P(v)=[4] \setminus \{i\}\}$.
Since $P$ satisfies \eqref{it:size3}, it follows that $V(C)=\bigcup_{i=1}^4 C_i$,

Suppose first  that $C$ meets exactly one list  $L$. Since $P$ satisfies
\eqref{it:l1l2}, it follows that no vertex of $X$ is mixed on $C$, and so 
$N(V(C)) \subseteq S \cup X_0$. By Theorem~\ref{3colP7}, we can test in 
polynomial time if $(C,M)$ is colorable.
If not, then $P$ has no precoloring extension, we set $\mathcal{L}=\emptyset$
and stop. If $(C,M)$ is colorable, then deleting $V(C)$ does not
change the existence of a precoloring extension for $P$, and we add $V(C)$ to 
$U$.

Now suppose that $C$ meets at least three lists. 
By Lemma~\ref{lem:Ycomps}  $C_i$ is complete to $C_j$ for every $i \neq j$.
Since $P$ satisfies \eqref{it:l1l2}, it follows that no vertex of $X$ is mixed 
on $C$, and so  $N(V(C)) \subseteq S \cup X_0$.  Since $C_i$ is non-empty
for at least three values of $i$, it follows that in every proper coloring of
$C$, at most two colors appear in $C_i$, and for $i \neq j$ the sets of colors that appear in $C_i$ and $C_j$ are disjoint.
By Theorem~\ref{Edwards},  for every $L \subset [4]$ with  $|L| \leq 2$ and for 
every $i$, we can test in polynomial time if $(C|C_i, M_L)$ is colorable.
If there exist disjoint lists $L_1, \ldots, L_4$ such that
$(G_i, M_{L_{i}})$ is colorable for all $i$, then deleting $V(C)$ does not
change the existence of a precoloring extension for $P$, and we add $V(C)$ to 
$U$. If no such $L_1, \ldots, L_i$ exist, then $P$ has no precoloring 
extension, we set $\mathcal{L}=\emptyset$ and stop.

Thus we may assume that $C$ meets exactly two lists, say $V(C)=C_3 \cup C_4$.
Let $A_1, \ldots, A_k$ be the components of $C|C_3$ and
$A_{k+1}, \ldots, A_t$ be the components of $C|C_4$. Since $P$ satisfies 
\eqref{it:122}, for every $i \in [k]$ and $j \in \{k+1, \ldots, t\}$, $A_i$ is 
either complete or anticomplete to $A_j$, and since $P$ satisfies \eqref{it:x11}, for every $i \in [t]$ no vertex of 
$X$ is mixed on $A_i$. Since $P$ satisfies \eqref{it:orthogonal}, if $x \in X$ 
has a  neighbor in $C$, then $L_P(x)=\{1,2\}$. By Theorem~\ref{3colP7}, for every $A_i$ and for every $L \subseteq [4]$
with $|L \cap \{1,2\}| \leq 1$, we can test in polynomial time if
$(A_i,M_L)$ is colorable. If $(A_i,M_L)$ is colorable, we say that the set
$M_L \cap \{1,2\}$ {\em works} for $A_i$.
Suppose that $\emptyset$ works for $i$. We may assume $i=1$. 
It follows that $(A_1,M)$ can be colored with color $3$.
Since $N(V(A_1)) \subseteq S \cup X_0 \cup X_{\{1,2\}} \cup C_4$, it follows that
deleting $A_i$ does not change the existence of a precoloring extension for $P$, and so we add $V(A_i)$ to $U$.  Thus we may assume that $\emptyset$ does not
work for any $i$. 

Since $C$ is connected and both $C_3,C_4$ are non-empty, it follows
that for every $i$ there is $j$ such that $A_i$ is complete to $A_j$,
and so in every proper coloring of $C$, at most one of the colors
$1,2$ appears in each $V(A_i)$. Since $\emptyset$ does not work for
any $i$, it follows that in every precoloring extension of $P$,
exactly one of the colors $1,2$ appears in each $V(A_i)$, and both $1$
and $2$ appear in $V(C)$. If some $x \in X$ is complete to $C$, then
$x \in X_{\{1,2\}}$, and so $G$ has no precoloring extension; we set
$\mathcal{L}=\emptyset$, and stop. Thus we may assume that no vertex
of $X$ is complete to $V(C)$.

Let $X_C$ be the set of vertices of $X$ that are mixed on 
$V(C)$. Then $X_C \subseteq X_{\{1,2\}}$, and 
$N(V(C)) \subseteq S \cup X_0 \cup X_C$.
Let $A_C=\{a_1, \ldots, a_t\}$.
Let $H_C$ be the graph with vertex set $X_C \cup A_C$, 
where 
\begin{itemize}
\item $a_ia_j \in E(H_C)$ if and only if $A_i$ is complete to $A_j$,
\item for $x \in X_C$, $xa_i \in E(H_C)$ if and only if $x$ is 
complete to $A_i$, and 
\item $H_C|(X_C)=G|(X_C)$.
\end{itemize}
Let $T_C(a_i)$ be the  the union of all the sets that work for $i$.
Suppose first that $X_C=\emptyset$. Then $N(V(C)) \subseteq S \cup X_0$.
By Theorem~\ref{Edwards} we can test in polynomial time if $(H_C,T_C)$
is colorable.  If $(H_C, T_C)$ is not colorable, then
$P$ has no precoloring extension; we output $\mathcal{L}=\emptyset$
and stop. Thus we may assume that $(H_C,T_C)$ is colorable.
Since  $N(V(C)) \subseteq S \cup X_0$, deleting $V(C)$ does
not change the existence of a precoloring extension, and
we add $V(C)$ to $U$. Thus we may assume that $X_C \neq \emptyset$.

Now let $C^1, \ldots, C^l$ be all the components  of $G|Y$ 
for which $V(C^i)=C^i_3 \cup C^i_4$ and $X_C \neq \emptyset$.
Let $H$ be the graph with vertex set 
$\bigcup_{i=1}^l V(H_{C^i})$ and such that
$uv \in E(H)$ if and only if either 
\begin{itemize}
\item $uv \in E(H_{C^i})$ for some $i$, or 
\item $u,v \in X$ and $uv \in E(G)$.
\end{itemize}
Let $T(v)=T_C(v)$ if $v \in V(H) \setminus X$, 
and let $T(v)=M(v)$ if $v \in V(H) \cap X$.
By Theorem~\ref{Edwards}, we can test in polynomial time if
$(H,T)$ is colorable. If $(H, T)$ is not colorable, then
$P$ has no precoloring extension; we output $\mathcal{L}=\emptyset$
and stop. Thus we may assume that $(H,T)$ is colorable. Note that
$T(v) \subseteq \{1,2\}$ for every $v \in V(H)$.

Next we will show $H$ is connected, and therefore $(H,T)$ has at most
two proper colorings, and we can compute the set of all proper 
colorings of $(H,T)$
in polynomial time. Suppose that $H$ is not connected. Since each $C^i$
is connected, it follows that $H|A_{C^i}$ is connected for all $i$,
and since for every $i$, every vertex of $X_{C^i}$ has a neighbor in $A_{C^i}$, 
it follows that $H|V(H_{C^i})$ is connected for every $i$. Let $D_1,D_2$
be distinct components of $H$. Since $G \setminus (S \cup X_0)$ is
connected, there is exist $p,q \in [l]$ such that $V(H_{C^p}) \subseteq D_1$,
$V(H_{C^q}) \subseteq D_2$, and there is a path  $P=p_1- \ldots -p_m$ in 
$G \setminus (S \cup X_0)$ with  $p_1 \in V(C^p) \cup X_{C^p}$, 
$p_m \in  V(C^q) \cup X_{C^q}$, and $P^*$ is disjoint from 
$\bigcup_{i=1}^l (V(C^i) \cup X_{C^i})$. Since for every $i$,
$N(V(C^i)) \subseteq S \cup X_0 \cup X_{C^i}$, it follows
that $p_1 \in X_{C^p}$ and $p_m \in X_{C^q}$,  and $P^*$ is anticomplete
to $V(C^p) \cup V(C^q)$.
By Lemma~\ref{lem:mixed}, 
there exist $a_p,b_p \in V(C^p)$ such that $p_m-a_p-b_p$ is a path,
and there exist $a_q,b_q \in V(C^q)$ such that $p_m-a_q-b_q$ is a path.
But now $b_p-a_p-p_1-P-p_m-a_q-b_q$ is a path of length at least six in $G$, a contradiction. This proves that $H$ is connected.

Let $\tilde{X}_0^{3,4}=V(H) \cap X$, and let $F^{3,4}$ be the set
of all proper colorings of $(G|\tilde{X}_0^{3,4},M)$ that extend to a coloring
of $(H,T)$. Then $|F^{3,4}| \leq 2$, and we can compute $F^{3,4}$ in
polynomial time. Let $U^{3,4}=\bigcup_{i=1}^lV(C^i)$.
Since for each $i$, $N(C^i) \subseteq \tilde{X}_0^{3,4} \cup S \cup X_0$, it
follows that 

\vspace*{-0.4cm}
  \begin{equation}\vspace*{-0.4cm} \label{Uij}
    \longbox{\emph{
$P$ has a precoloring extension if and only
if for some $f' \in F^{3,4}$
$$(G \setminus U^{3,4}, S, X_0 \cup \tilde{X}_0^{3,4}, X \setminus  \tilde{X}_0^{3,4}, Y \setminus U^{3,4}, Y^*, f \cup f')$$ 
has a precoloring extension.}}
\end{equation}

For every $i,j \in [4]$ with $i \neq j$ define 
$U^{i,j}$, $F^{i,j}$ and  $\tilde{X}_0^{i,j}$ similarly. 
Let $\tilde{X}_0=\bigcup \tilde{X}_0^{i,j}$. Let $F$ be the set of all
functions $f':\tilde{X}_0 \rightarrow [4]$ such that
$f'|_{\tilde{X_0}^{i,j}} \in F^{i,j}$. Then $|F| \leq 2^6$.
Let $U'=\bigcup U^{i,j}$.

If follows from \eqref{Uij} that $P$ has a precoloring extension if and only if
$$(G \setminus U', S, X_0 \cup \tilde{X}, X \setminus  \tilde{X}_0, Y \setminus U', Y^*, f \cup f')$$ 
has a precoloring extension for some $f' \in F$.
Now we add $U'$ to $U$, and Lemma~\ref{lem:yisempty} follows.
\end{proof}

We are now ready to prove our the main result, which we restate: 
\begin{theorem}
There exists an integer $C>0$ and  a polynomial-time algorithm with the following specifications.
\\
\\
{\bf Input:}  A 4-precoloring $(G,X_0,f)$ of a $P_6$-free graph $G$.
\\
\\
{\bf Output:}  A collection $\mathcal{L}$ of  excellent starred  precolorings of $G$ such 
that 
\begin{enumerate}
\item $|\mathcal{L}| \leq |V(G)|^C$,
\item for every $(G',S',X_0',X',\emptyset,Y^*,f') \in \mathcal{L}$ 
\begin{itemize}
\item $|S'| \leq C$, 
\item $X_0 \subseteq S' \cup X_0'$,
\item $G'$ is an induced subgraph of $G$, and
\item $f'|_{X_0}=f|_{X_0}$.
\end{itemize}
\item if we know for every $P \in \mathcal{L}$ whether $P$ has a precoloring extension, then we can decide in polynomial time if $(G, X_0, f)$ has a 4-precoloring extension; and  
\item given a precoloring extension for every $P \in \mathcal{L}$ such
  that $P$ has a precoloring extension, we can compute a 4-precoloring
  extension for $(G, X_0, f)$ in polynomial time, if one exists.
\end{enumerate}
\end{theorem}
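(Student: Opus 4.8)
The plan is to chain together the two main theorems of the two sections. First I would invoke Theorem~\ref{thm:y0main} applied to the input $4$-precoloring $(G,X_0,f)$: this produces, in polynomial time, a collection $\mathcal{L}_0$ of \emph{good} seeded precolorings, each with seed of size at most a universal constant $C_0$, each on an induced subgraph of $G$, with $X_0 \subseteq S' \cup X_0'$ and $f'|_{X_0}=f|_{X_0}$, and with $|\mathcal{L}_0| \leq |V(G)|^{C_0}$; moreover $\mathcal{L}_0$ is ``equivalent'' for $(G,X_0,f)$ in the precise sense spelled out there (and the ``Moreover'' clauses give us back a $4$-precoloring extension of the original instance from extensions of the members, in polynomial time). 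So it suffices to process each good seeded precoloring $P \in \mathcal{L}_0$ separately into a polynomial-size collection of excellent starred precolorings, and then take the union.

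Next, for a fixed good seeded precoloring $P$, I would apply Lemma~\ref{lem:starred} to turn $P$ into a starred precoloring $P^{(1)}$ satisfying \eqref{it:size3}, with the same graph, seed, and precolored set, and with the same existence-of-extension status. Then I would apply, in order, Lemma~\ref{lem:122} (to get \eqref{it:122}), Lemma~\ref{lem:123} (to get \eqref{it:123}), Lemma~\ref{lem:x11} (to get \eqref{it:x11}), Lemma~\ref{lem:x12} (to get \eqref{it:x12}), then the observation Lemma~\ref{lem:l1l2} gives \eqref{it:l1l2} ``for free'' once \eqref{it:size3}--\eqref{it:x12} hold, then Lemma~\ref{lem:orthogonal} (to get \eqref{it:orthogonal}), and finally Lemma~\ref{lem:yisempty} (to get \eqref{it:yisempty}, i.e.\ $Y=\emptyset$). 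Each lemma is applied to every member of the collection produced by the previous step; each multiplies the collection size by at most $|V(G)|^{q(|S|)}$ and increases the seed size by a function of the current seed size only. Since the seed starts bounded by $C_0$ and there are a constant number of steps, the final seeds are bounded by a universal constant $C$, the final collection has size at most $|V(G)|^{C}$ for a suitable $C$, each member is a starred precoloring with $Y=\emptyset$ (hence an excellent starred precoloring $(G',S',X_0',X',Y^*,f')$), every $G'$ is an induced subgraph of $G$, and $X_0 \subseteq S' \cup X_0'$, $f'|_{X_0}=f|_{X_0}$ are preserved throughout (the seed and precolored set only grow, and colors on $X_0$ never change).

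To finish, I would verify the equivalence bookkeeping: at each step the new collection is ``equivalent'' to the old one, meaning $P$ has a precoloring extension iff some member does, and a precoloring extension of a member yields one of $P$ in polynomial time. Composing these across the constant number of steps and then across Theorem~\ref{thm:y0main}, we conclude that $(G,X_0,f)$ has a $4$-precoloring extension iff some $P' \in \mathcal{L}$ does (item 3), and that given extensions of the solvable members we can reconstruct one for $(G,X_0,f)$ in polynomial time (item 4); items 1 and 2 follow from the size and seed bounds accumulated above. One subtlety to record carefully is that Lemma~\ref{lem:yisempty} is stated only as producing a collection $\mathcal{L}$ such that knowing solvability of each member determines solvability of $P$ (and allows reconstruction of an extension), rather than literally an ``equivalent collection''; this is exactly the weaker notion used in the theorem's items 3 and 4, so no strengthening is needed, but the write-up must use this weaker notion from that point on rather than ``equivalent collection.''

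The main obstacle is not any single deep step — all the hard combinatorics is already packaged in the lemmas — but rather the careful accounting: checking that the hypotheses of each lemma in the chain are met by the output of the previous one (in particular that each lemma preserves all previously established axioms, which is precisely what the lemma statements assert), that the ``$G'$ is an induced subgraph of $G$'' and ``$X_0 \subseteq S'\cup X_0'$, $f'|_{X_0}=f|_{X_0}$'' invariants survive every transformation (including the passage through Theorem~\ref{thm:y0main} and Lemma~\ref{lem:starred}), and that the composition of polynomially-many polynomial-size branchings with constant-depth seed growth stays polynomial with a single uniform constant $C$. I would present this as a short induction/composition argument rather than re-deriving anything.
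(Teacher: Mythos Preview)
Your proposal is correct and follows essentially the same approach as the paper's own proof: apply Theorem~\ref{thm:y0main}, then Lemma~\ref{lem:starred}, then Lemmas~\ref{lem:122}, \ref{lem:123}, \ref{lem:x11}, \ref{lem:x12}, \ref{lem:l1l2}, \ref{lem:orthogonal}, and \ref{lem:yisempty} in sequence, and take the union. Your write-up is in fact more careful than the paper's short proof in flagging the bookkeeping invariants and the subtlety that Lemma~\ref{lem:yisempty} yields the weaker decision-procedure guarantee of items~3 and~4 rather than an ``equivalent collection.''
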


\begin{proof}
Let $(G, X_0, f)$ be a 4-precoloring of a $P_6$-free graph $G$. We apply Theorem~\ref{thm:y0main} to $(G, X_0, f)$ to obtain a collection $\mathcal{L}_0$ of good seeded precolorings with the desired properties. Then we apply Lemma \ref{lem:starred} to each seeded precoloring in $\mathcal{L}_0$ to obtain a starred precoloring satisfying \eqref{it:size3}; let $\mathcal{L}_1$ be the collection thus obtained. Next, starting with $\mathcal{L}_1$,  apply Lemma \ref{lem:122}, Lemma \ref{lem:123}, Lemma \ref{lem:x11}, Lemma \ref{lem:x12}, Lemma \ref{lem:l1l2}, 
Lemma \ref{lem:orthogonal} and Lemma \ref{lem:yisempty} to each element 
in the output of the previous one, to finally obtain a collection
$\mathcal{L}$. Then $\mathcal{L}$ is an equivalent collection for $P$, and 
every element of $\mathcal{L}$ 
satisfies \eqref{it:122}, \eqref{it:123}, \eqref{it:x11}, \eqref{it:x12}, \eqref{it:l1l2}, \eqref{it:orthogonal} and \eqref{it:yisempty}. Finally, \eqref{it:yisempty} implies that each starred precoloring in $\mathcal{L}$ is excellent, as claimed. 
\end{proof}

\section{Acknowledgments}
This material is based upon work supported in part by the U. S. Army  Research 
Laboratory and the U. S. Army Research Office under    grant number 
W911NF-16-1-0404. The authors are also grateful to Pierre Charbit, Bernard Ries,
Paul Seymour, Juraj Stacho and Maya Stein for many useful discussions.

\end{document}